%%%%%%%%%%%%%%%%%%%%%%%%%%%%%%%%%%%%%%%%%%%%%%%%%%%%%%%%%% 
%
%  Derived Hochschild functors over commutative adic algebras
%
%  Liran Shaul
% 
%  written in AMSLaTex
%
%%%%%%%%%%%%%%%%%%%%%%%%%%%%%%%%%%%%%%%%%%%%%%%%%%%%%%%%%%
%  journal typesetter: 
%   * command \linebreak used
%   * please respect fonts and styles! 
%%%%%%%%%%%%%%%%%%%%%%%%%%%%%%%%%%%%%%%%%%%%%%%%%%%%%%%%%%

% ** class etc for final **
\documentclass[10pt]{amsart}
\usepackage{amsmath,amscd}
\usepackage{amssymb}

\usepackage[all]{xy}

\usepackage[T1]{fontenc}

\usepackage{times}

%\linespread{1.2}

%\usepackage{times}
\title{Derived Hochschild functors over commutative adic algebras} 
% ** end class etc for final **

% ** top matter **
\author{Liran Shaul}
\address{Department of  Mathematics 
The Weizmann Institute of Science, 
Rehovot 76100, 
Israel}
\email{liran.shaul@weizmann.ac.il}
%
%
%

%  ** new environments **
\newtheorem{thm}[equation]{Theorem}
\newtheorem{cor}[equation]{Corollary}
\newtheorem{prop}[equation]{Proposition}
\newtheorem{lem}[equation]{Lemma}
\theoremstyle{definition}
\newtheorem{dfn}[equation]{Definition}
\newtheorem{rem}[equation]{Remark}
\newtheorem{exa}[equation]{Example}

\numberwithin{equation}{section}

%  ** new commands **

\newcommand{\inj}{\hookrightarrow}

\newcommand{\xar}{\xrightarrow}

\newcommand{\opn}{\operatorname}
 % less space
\newcommand{\cat}[1]{\operatorname{\mathsf{#1}}}

\newcommand{\mfrak}[1]{\mathfrak{#1}}

\newcommand{\mrm}[1]{\mathrm{#1}}

\newcommand{\tup}[1]{\textup{#1}}
\newcommand{\bsym}[1]{\boldsymbol{#1}}

\renewcommand{\k}{\Bbbk}

\renewcommand{\d}{\mathrm{d}}

\newcommand{\mf}[1]{\mathfrak{#1}}
\renewcommand{\a}{\mfrak{a}}
\renewcommand{\b}{\mfrak{b}}
\renewcommand{\c}{\mfrak{c}}

\begin{document}

\begin{abstract}
Let $\k$ be a commutative ring, and let $(A,\mfrak{a})$ be an adic ring which is a $\k$-algebra. We study complete and torsion versions of the derived Hochschild homology and cohomology functors of $A$ over $\k$. To do this, we first establish weak proregularity of certain ideals in flat base changes of noetherian rings. Next, we develop a theory of DG-affine formal schemes, extending the Greenlees-May duality and the MGM equivalence to this setting. Finally, we define complete and torsion derived Hochschild homology and cohomology functors in this setting, 
and show that if $\k$ is noetherian and $(A,\mfrak{a})$ is essentially of finite type (in the adic sense) over $\k$, then there are formulas to compute them that stay inside the noetherian category. In the classical case, where $\k$ is a field, we deduce that topological Hochschild cohomology and discrete Hochschild cohomology are isomorphic.
\end{abstract}

\maketitle
%\setcounter{tocdepth}{1}

%\tableofcontents
%% ** section 0 **
\setcounter{section}{-1}
\section{Introduction}

All rings in this paper are assumed to be commutative. Let $\k$ be a commutative ring, let $A$ be a commutative noetherian $\k$-algebra, and let $\a \subseteq A$ be an ideal. Assume that $A$ is $\a$-adically complete. In this paper we study adic variants of the Hochschild homology and cohomology functors of $A$ over $\k$.

For a ring $A$, we denote by $\opn{Mod} A$ the category of $A$-modules. We denote by $\mrm{D}(\opn{Mod} A)$ the derived category of $A$-modules. We shall in general follow the notations of \cite{RD} concerning derived categories. We denote by $\Gamma_{\a} (-)$ and $\Lambda_{\a}(-)$ the $\a$-torsion and $\a$-completion functors respectively, and by $\mrm{R}\Gamma_{\a}(-)$ and $\mrm{L}\Lambda_{\a}(-)$ their derived functors.

If $\k$ is a field, then the Hochschild homology and cohomology of an $A\otimes_{\k} A$-module $M$ of $A$ over $\k$ may be defined as the cohomologies of the complexes
\[
A\otimes^{\mrm{L}}_{A\otimes_{\k} A} M
\]
and
\[
\mrm{R}\opn{Hom}_{A\otimes_{\k} A}(A, M) 
\]
respectively.

When $\k$ is not necessarily a field, and the map $\k \to A$ might fail to be flat, an important generalization of the second construction above is a variant of the Shukla cohomology, known as the derived Hochschild cohomology. This functor, recently studied in \cite{AILN}, may be defined as the functor $\mrm{D}(\opn{Mod} A) \times \mrm{D}(\opn{Mod} A) \to \mrm{D}(\opn{Mod} A)$ given by
\begin{equation}\label{eqn-derived-hoc}
\mrm{R}\opn{Hom}_{A\otimes^{\mrm{L}}_{\k} A}(A,M\otimes^{\mrm{L}}_{\k} N).
\end{equation}
for any $M,N \in \mrm{D}(\opn{Mod} A)$.
Here, $A\otimes^{\mrm{L}}_{\k} A$ is the derived tensor product of $A$ with itself over $\k$ in the category of (super-commutative) differential graded algebras. All necessary fact about differential graded algebras used in this paper are recalled in Section 2 below. An important special case of this functor where $M=N$ was recently studied in \cite{YZ1,YZ1E}. In a more general context of ind-coherent sheaves over a DG-scheme, this functor was recently studied in \cite{GA}, where the notation $M \overset{!}\otimes N$ was used for it.

Even in the simplest case where $\k$ is a field, there is a major challenge when working with these formulas in the adic category. The problem is that usually the map $\k \to A$ is not of finite type, and hence, in many cases of interest, the ring $A\otimes_{\k} A$ is not noetherian. This is the case for example if $\k$ is a field of characteristic $0$, and $A=\k[[t]]$ is the ring of formal power series over $\k$. 

The ring $A\otimes_{\k} A$ also carries a natural topology on it, namely, the $\a^e = \a \otimes_{\k} A + A\otimes_{\k} \a$-adic topology. In most interesting cases, the completion $\widehat{A\otimes_{\k} A}$ is a noetherian ring. Thus, it is natural to replace the ordinary Hochschild cohomology with the functor
\begin{equation}\label{eqn-adic-derived-hoc}
\mrm{R}\opn{Hom}_{\widehat{A\otimes_{\k} A}}(A, K)
\end{equation}
where $K \in \mrm{D}(\opn{Mod} (\widehat{A\otimes_{\k} A}) )$.

Given $M,N \in \mrm{D}(\opn{Mod} A)$, we would like to imitate equation (\ref{eqn-derived-hoc}) above, and replace $K$ in equation (\ref{eqn-adic-derived-hoc}) with an image of $M\otimes^{\mrm{L}}_{\k} N$ in $\mrm{D}(\opn{Mod} (\widehat{A\otimes_{\k} A}) )$. There are two natural homological ways to do this, one is to consider the derived completion $\mrm{L}\Lambda_{\a^e}(M\otimes^{\mrm{L}}_{\k} N) \in \mrm{D}(\opn{Mod} (\widehat{A\otimes_{\k} A}) )$, and the other is to consider the derived torsion 
$\mrm{R}\Gamma_{\a^e}(M\otimes^{\mrm{L}}_{\k} N) \in \mrm{D}(\opn{Mod} (\widehat{A\otimes_{\k} A}) )$. 

But now, another difficulty arises, namely, that we must work with the derived completion and derived torsion functors over the (possibly) non-noetherian ring $A\otimes_{\k} A$. 

Over a noetherian ring, there is a well known explicit formula for computing the derived torsion $\mrm{R}\Gamma_{\a} (M)$, given by tensoring $M$ with the infinite dual Koszul complex of the ideal $\a$.  A similar result is true for the derived completion $\mrm{L}\Lambda_{\a}$. Over a non-noetherian ring, such results are not always true. Recently, it was determined when such results are true, and this is the content of the Greenlees-May theory, initiated in \cite{GM2}, and developed in \cite{AJL,SC,PSY1}. 

According to this theory (which is recalled in details in Section 1 below), given a ring $A$, and a finitely generated ideal $\a \subseteq A$, there is a technical notion called a weakly proregular ideal, which $\a$ may or may not satisfy (More precisely, it is a property of the adic topology on $A$). This theory shows that the derived completion and derived torsion functors have good behavior (for instance, finite cohomological dimension, and the explicit formulas which were valid over noetherian rings), exactly when $\a$ is weakly proregular. In particular, in a noetherian ring, every ideal is weakly proregular. 

Thus, the first main result of this paper is that the ideal $\a^e$ discussed above is weakly proregular. More precisely:

\begin{thm}\label{thm-ind-wpr}
Let $\k$ be a noetherian ring. Let $(A,\a)$ be an adic ring which is a flat  essentially formally of finite type $\k$-algebra. Then the ideal $\a^e = \a \otimes_{\k} A + A\otimes_{\k} \a \subseteq A\otimes_{\k} A$ is weakly proregular.
\end{thm}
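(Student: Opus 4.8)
The plan is to exhibit $\a^e$ as an ideal which, up to radical, is extended along a flat ring homomorphism from a \emph{noetherian} ring, where the Greenlees--May theory recalled in Section~1 makes weak proregularity automatic. Two general facts will do the work. First, in a noetherian ring every ideal is weakly proregular. Second, weak proregularity is inherited under flat base change: if $R$ is a ring, $\mfrak{r}\subseteq R$ a finitely generated weakly proregular ideal, and $R\to S$ a flat ring homomorphism, then $\mfrak{r}S$ is weakly proregular in $S$. The second is the statement about flat base changes of noetherian rings alluded to in the abstract, and it follows immediately from the Koszul criterion: if $\ul{a}=(a_1,\dots ,a_r)$ generates $\mfrak{r}$, then for each $n\ge 1$ one has $\opn{K}(S;\ul{a}^{\,n})\cong \opn{K}(R;\ul{a}^{\,n})\otimes_R S$, so flatness gives $H_i\bigl(\opn{K}(S;\ul{a}^{\,n})\bigr)\cong H_i\bigl(\opn{K}(R;\ul{a}^{\,n})\bigr)\otimes_R S$ compatibly with the Koszul transition maps; hence if the pro-system $\{H_i(\opn{K}(R;\ul{a}^{\,n}))\}_n$ is pro-zero for every $i>0$, so is $\{H_i(\opn{K}(S;\ul{a}^{\,n}))\}_n$. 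I shall also use that weak proregularity of a finitely generated ideal depends only on its radical, equivalently only on the adic topology it defines, which is likewise part of the material recalled in Section~1.

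Next I would unwind the hypothesis on $(A,\a)$. Since $\k$ is noetherian and $(A,\a)$ is essentially formally of finite type over $\k$, there is a $\k$-algebra $C$ essentially of finite type over $\k$ (hence noetherian), together with an ideal $\c\subseteq C$, such that $A\cong\Lambda_{\c}(C)$ and the $\a$-adic topology on $A$ agrees with the $\c A$-adic topology; in particular $\sqrt{\a}=\sqrt{\c A}$. The completion homomorphism $C\to A$ is flat because $C$ is noetherian. Now apply $(-)\otimes_{\k}(-)$: the ring $C\otimes_{\k}C$ is again essentially of finite type over $\k$, hence noetherian; the homomorphism $C\otimes_{\k}C\to A\otimes_{\k}A$ is flat, being the $\k$-tensor product of the two flat maps $C\to A$; and, writing $\c^e:=\c\otimes_{\k}C+C\otimes_{\k}\c\subseteq C\otimes_{\k}C$, the extension of $\c^e$ along $C\otimes_{\k}C\to A\otimes_{\k}A$ is the ideal generated by the images of $\c\otimes 1$ and $1\otimes\c$, that is, $(\c A)\cdot(A\otimes_{\k}A)+(A\otimes_{\k}A)\cdot(\c A)$. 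Since $\sqrt{\a}=\sqrt{\c A}$, a power of each element $a\otimes 1$ (resp.\ $1\otimes a$) with $a\in\a$ lies in this ideal and conversely, so it has the same radical in $A\otimes_{\k}A$ as $\a^e=\a\otimes_{\k}A+A\otimes_{\k}\a$.

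Assembling the pieces: $\c^e$ is a finitely generated ideal of the noetherian ring $C\otimes_{\k}C$, hence weakly proregular; by flat base change along $C\otimes_{\k}C\to A\otimes_{\k}A$ its extension $\c^e\cdot(A\otimes_{\k}A)$ is weakly proregular in $A\otimes_{\k}A$; and since weak proregularity of a finitely generated ideal depends only on its radical, $\a^e$ — having the same radical as $\c^e\cdot(A\otimes_{\k}A)$ — is weakly proregular, which is the assertion.

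I expect the only real friction to lie in the second step: pinning down the structural description of $(A,\a)$ as a $\c$-adic completion of an essentially-of-finite-type $\k$-algebra, and checking the ring-theoretic identifications, namely that $\c^e\cdot(A\otimes_{\k}A)$ is the ideal $(\c A)\otimes_{\k}A+A\otimes_{\k}(\c A)$ and that this ideal shares its radical with $\a^e$. These rest only on flatness of $C\to A$ and of $C\otimes_{\k}C\to A\otimes_{\k}A$ together with elementary radical computations, so no serious obstacle should arise; and if the definition of ``essentially formally of finite type'' incorporates a localization after completion, the argument is unaffected, since localizing keeps $C\otimes_{\k}C$ noetherian and merely composes the relevant maps with flat localizations. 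The genuinely load-bearing inputs — ``noetherian implies weakly proregular'' and the Koszul proof that weak proregularity survives flat base change — are standard or immediate.
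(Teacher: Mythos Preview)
Your approach differs from the paper's and has a real gap at the structural step. You assert that $A\cong\Lambda_{\c}(C)$ for some $\k$-algebra $C$ essentially of finite type, but the hypothesis ``essentially formally of finite type'' only says that $A/\a$ is essentially of finite type over $\k$; it does \emph{not} say that $A$ itself arises as the completion of such a $C$. The standard structure result (the one the paper cites from \cite{LNS}, Lemma 2.4.3) gives only a \emph{surjection} from a nice adic ring onto $A$, not an identification of $A$ with the completion of an essentially finite type algebra. Concretely, $A$ is a quotient of some $T^{-1}\k[y_1,\dots,y_m][[x_1,\dots,x_n]]$, and for your $C$ to exist the defining ideal would have to be generated, after a formal change of coordinates, by elements of an essentially finite type subring; there is no reason for this to hold. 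Your parenthetical about localization does not address this, and flatness of $A$ over $\k$ does not help either (take $\k$ a field).

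The paper's proof avoids this entirely by working asymmetrically. It only uses that $A$ itself is noetherian: the flat map $A\to A\otimes_{\k}A$ through one tensor factor shows, by the flat base change you quote, that the \emph{half} ideal $A\otimes_{\k}\a$ is weakly proregular in $A\otimes_{\k}A$; and $(A\otimes_{\k}A)/(A\otimes_{\k}\a)\cong A\otimes_{\k}(A/\a)$ is noetherian because $A/\a$ is essentially of finite type over $\k$ and $A$ is noetherian. Thus $(A\otimes_{\k}A,\,A\otimes_{\k}\a)\in\mathcal{PAWN}$. The passage from $A\otimes_{\k}\a$ to the full ideal $\a^e\supseteq A\otimes_{\k}\a$ is then handled by Theorem~\ref{wpr-thm}, which says that in any $(R,\mfrak{r})\in\mathcal{PAWN}$ every finitely generated ideal containing $\mfrak{r}$ is weakly proregular; this rests on the flasqueness statement of Proposition~\ref{prop-tor-of-inj-is-flasque} and is the genuine new content of Section~1. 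Your flat-base-change argument alone can only ever reach an ideal extended from a single noetherian factor, and without the unjustified claim about $C$ there is no noetherian source whose ideal extends to all of $\a^e$.
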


This is contained in Corollary \ref{cor-wpr-of-efft} in the body of the paper. Here, being essentially formally of finite type means that $A/\a$ is a localization of a finitely generated $\k$-algebra.

If one is only interested in working over a base field, then this result is sufficient to develop the theory of adic Hochschild homology and cohomology developed in Sections 3 and 4 below. To treat the derived Hochschild homology and cohomology theories in the adic category, we must first develop a theory of adic DG-algebras, and this is done in Section 2 below.

Let $A$ be a super-commutative non-positive DG-algebra. We denote by $\widetilde{\mrm{D}}(\opn{DGMod} A)$ the derived category of DG-modules over $A$.

We introduce a technical notion, called an internally flat DG-algebra (See Definition \ref{dfn-internally-flat}). We show that any DG-algebra is quasi-isomorphic to an internally flat DG-algebra (Proposition \ref{prop-existence-of-res}).

Given a DG-algebra $A$, and a finitely generated DG-ideal $\a \subseteq A$ which is generated by degree $0$ elements,  we extend the $\a$-torsion and $\a$-completion functors to the category of DG-modules, 

Under the internal flatness assumption, we extend the Greenlees-May theory to the setting of DG-algebras. The results of Section 2 are summarized in the following theorem:

\begin{thm}
Let $A$ be an internally flat DG-algebra, and let $\a \subseteq A$ be a finitely generated DG-ideal, generated by degree $0$ elements. 
Let $\mathbf{a}\subseteq A$ be a finite sequence of degree $0$ elements that generate $\a$. Assume that the ideal generated by $\mathbf{a}$ in $A^0$ is weakly proregular. Then the following holds:
\begin{enumerate}
\item There are isomorphisms
\[
\mrm{R}\Gamma_{\a} (-) \cong \opn{K}^{\vee}_{\infty}(A;\mathbf{a})\otimes_A -
\]
and
\[
\mrm{L}\Lambda_{\a} (-) \cong \opn{Hom}_A(\opn{Tel}(A;\mathbf{a}), -)
\]
of functors $\widetilde{\mrm{D}}(\opn{DGMod} A) \to \widetilde{\mrm{D}}(\opn{DGMod} A)$.
\item \textbf{MGM equivalence}: For any $M\in \widetilde{\mrm{D}}(\opn{DGMod} A)$ the functorial maps
\[
\mrm{R}\Gamma_{\a} (\tau^L_M) : \mrm{R}\Gamma_{\a}(M) \to \mrm{R}\Gamma_{\a}(\mrm{L}\Lambda_{\a} (M) )
\]
and
\[
\mrm{L}\Lambda_{\a} (\sigma^R_M) : \mrm{L}\Lambda_{\a} (\mrm{R}\Gamma_{\a}(M)) \to \mrm{L}\Lambda_{\a} (M)
\]
are isomorphisms.
\item \textbf{Greenlees-May duality}: For any $M,N\in \widetilde{\mrm{D}}(\opn{DGMod} A)$ the morphisms
\[ \begin{aligned}
& \mrm{R}\opn{Hom}_A \bigl( \mrm{R} \Gamma_{\a} (M), \mrm{R} \Gamma_{\a} (N) \bigr)
\xar{\mrm{R}\opn{Hom}(1, \sigma^{\mrm{R}}_N)}
\mrm{R}\opn{Hom}_A \bigl( \mrm{R} \Gamma_{\a} (M), N \bigr) 
\\
& \qquad\xar{\mrm{R}\opn{Hom}(1, \tau^{\mrm{L}}_N)}
\mrm{R}\opn{Hom}_A \bigl( \mrm{R} \Gamma_{\a} (M), \mrm{L} \Lambda_{\a} (N) \bigr) 
\xleftarrow{\mrm{R}\opn{Hom}(\sigma^{\mrm{R}}_M, 1)}
\\ & \qquad  \qquad
\mrm{R}\opn{Hom}_A \bigl( M, \mrm{L} \Lambda_{\a} (N) \bigr)
\xleftarrow{\mrm{R}\opn{Hom}(\tau^{\mrm{L}}_M, 1)}
\mrm{R}\opn{Hom}_A \bigl( \mrm{L} \Lambda_{\a} (M), \mrm{L} \Lambda_{\a} (N) \bigr)
\end{aligned} \]
in $\widetilde{\mrm{D}}(\opn{DGMod} A)$ are isomorphisms.
\end{enumerate}
\end{thm}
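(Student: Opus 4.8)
The plan is to reduce everything to the classical (non-DG) Greenlees–May theory over the ring $A^0$, using internal flatness to move freely between $A$ and $A^0$. First I would fix the finite sequence $\mathbf{a} = (a_1,\dots,a_n)$ of degree $0$ elements generating $\a$, and recall the construction of the infinite dual Koszul complex $\opn{K}^{\vee}_{\infty}(A;\mathbf{a})$, the telescope complex $\opn{Tel}(A;\mathbf{a})$, and the canonical morphisms $\sigma^{\mrm{R}}_M : \mrm{R}\Gamma_{\a}(M) \to M$ and $\tau^{\mrm{L}}_M : M \to \mrm{L}\Lambda_{\a}(M)$; all of these are built entirely out of the elements $a_i \in A^0$, so the same complexes compute the corresponding derived functors over $A^0$ after restriction of scalars. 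The first reduction is thus: since $\mathbf{a}$ generates a weakly proregular ideal in $A^0$, part (1) holds over $A^0$ by the classical theory recalled in Section 1. To lift this to $A$, I would note that $\opn{K}^{\vee}_{\infty}(A;\mathbf{a}) \cong A \otimes_{A^0} \opn{K}^{\vee}_{\infty}(A^0;\mathbf{a})$ and $\opn{Tel}(A;\mathbf{a}) \cong A \otimes_{A^0} \opn{Tel}(A^0;\mathbf{a})$, and that internal flatness is exactly what guarantees that these tensor products compute the derived functors without needing a $K$-flat resolution to intervene — i.e. that the $\a$-torsion and $\a$-completion functors on $\opn{DGMod} A$ are computed by the same telescope/dual-Koszul recipe. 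This is the heart of part (1).

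For part (2), the MGM equivalence, the strategy is the usual two-step argument. Using part (1), $\mrm{R}\Gamma_{\a}(M)$ is represented by $\opn{K}^{\vee}_{\infty}(A;\mathbf{a}) \otimes_A M$ and $\mrm{L}\Lambda_{\a}(M)$ by $\opn{Hom}_A(\opn{Tel}(A;\mathbf{a}),M)$. The key idempotence facts — that $\mrm{R}\Gamma_{\a}$ applied to an object already in the essential image of $\mrm{R}\Gamma_{\a}$ is an isomorphism, and dually for $\mrm{L}\Lambda_{\a}$ — follow from the corresponding facts for the complexes $\opn{K}^{\vee}_{\infty}$ and $\opn{Tel}$, which again are inherited from $A^0$ by base change along the internally flat map $A^0 \to A$. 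Then the two displayed maps are checked to be isomorphisms by the standard adjunction/telescope manipulation: one shows both composites $\mrm{R}\Gamma_{\a} \circ \mrm{L}\Lambda_{\a} \to \mrm{R}\Gamma_{\a}$ and $\mrm{L}\Lambda_{\a} \circ \mrm{R}\Gamma_{\a} \to \mrm{L}\Lambda_{\a}$ become isomorphisms after evaluating on $\opn{K}^{\vee}_{\infty}(A;\mathbf{a})$ resp. $\opn{Tel}(A;\mathbf{a})$, using that $\opn{K}^{\vee}_{\infty}(A;\mathbf{a}) \otimes_A \opn{Tel}(A;\mathbf{a}) \simeq \opn{K}^{\vee}_{\infty}(A;\mathbf{a})$ in $\widetilde{\mrm{D}}(\opn{DGMod} A)$ (a base change of the analogous $A^0$-statement).

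For part (3), Greenlees–May duality, I would deduce it formally from parts (1) and (2) exactly as in the non-DG case (e.g. following the argument in \cite{PSY1} or \cite{AJL}). The four morphisms in the displayed chain are checked to be isomorphisms one at a time: the outer two, $\mrm{R}\opn{Hom}(1,\sigma^{\mrm{R}}_N)$ and $\mrm{R}\opn{Hom}(\tau^{\mrm{L}}_M,1)$, are isomorphisms because $\sigma^{\mrm{R}}_N$ becomes an isomorphism after applying $\mrm{R}\Gamma_{\a}$ and a source of the form $\mrm{R}\Gamma_{\a}(M)$ only sees the $\mrm{R}\Gamma_{\a}$-part of its target (and dually for $\tau^{\mrm{L}}_M$ with $\mrm{L}\Lambda_{\a}$); the inner two, involving $\sigma^{\mrm{R}}_M$ and $\tau^{\mrm{L}}_N$, reduce to the derived tensor–Hom adjunction $\mrm{R}\opn{Hom}_A(\opn{K}^{\vee}_{\infty}\otimes_A M, -) \cong \mrm{R}\opn{Hom}_A(M, \opn{Hom}_A(\opn{Tel},-))$ together with the MGM isomorphisms from part (2). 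The main obstacle throughout is not the diagram-chasing — that is identical to the classical case — but verifying carefully that the internal flatness hypothesis really does make the DG torsion and completion functors computable by $\opn{K}^{\vee}_{\infty}(A;\mathbf{a})\otimes_A -$ and $\opn{Hom}_A(\opn{Tel}(A;\mathbf{a}),-)$, i.e. that no additional resolution is needed and that base change along $A^0 \to A$ commutes with the relevant derived functors; once part (1) is solidly in place, parts (2) and (3) are formal consequences.
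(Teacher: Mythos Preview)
Your overall strategy---reduce to the known results over $A^0$ via internal flatness, then deduce parts (2) and (3) formally---is exactly the paper's approach, and your identification of part (1) as the only non-formal step is correct. However, your description of how part (1) actually goes through is imprecise on the one point that matters.

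You write that internal flatness ``guarantees that these tensor products compute the derived functors without needing a K-flat resolution to intervene,'' but this misdescribes the mechanism. Knowing that $\mrm{R}\Gamma_{\a}(M)$ and $\opn{K}^{\vee}_{\infty}(A;\mathbf{a})\otimes_A M$ agree after applying the forgetful functor $Q_A$ to $\mrm{D}(\opn{Mod} A^0)$ does \emph{not} by itself produce an isomorphism in $\widetilde{\mrm{D}}(\opn{DGMod} A)$. What the paper does is: first observe that the classical comparison morphisms $v_{\mathbf{a},M}:\Gamma_{\a}(M)\to\opn{K}^{\vee}_{\infty}(A^0;\mathbf{a})\otimes_{A^0}M$ and $\opn{tel}_{\mathbf{a},M}:\opn{Hom}_{A^0}(\opn{Tel}(A^0;\mathbf{a}),M)\to\Lambda_{\a}(M)$ from \cite{PSY1} are in fact $A$-linear when $M$ is a DG $A$-module (this requires a short check); then take a K-injective resolution $M\to I$ over $A$ (respectively K-flat $P\to M$) and use internal flatness to conclude that $Q_A(I)$ is K-injective over $A^0$ (respectively $Q_A(P)$ is K-flat)---this is the actual content of the internal flatness hypothesis, and is the paper's Proposition on forgetful functors preserving K-injectives/K-flats. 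Now $Q_A(v_{\mathbf{a},I})=v_{\mathbf{a},Q_A(I)}$ is a quasi-isomorphism by the classical weakly proregular theory, hence $v_{\mathbf{a},I}$ is a quasi-isomorphism in $\opn{DGMod} A$, and the derived isomorphism follows. You should make this two-step structure (existence of an $A$-linear comparison map; quasi-isomorphism checked via $Q_A$) explicit, since without the first step there is no map to prove is an isomorphism.

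Once part (1) is established this way, your sketches of parts (2) and (3) match the paper's: for (2), the paper reduces $\mrm{R}\Gamma_{\a}(\tau^L_M)$ to an explicit map of the form $1_{\opn{Tel}}\otimes\opn{Hom}(u_{\mathbf{a}},1_M)$ and checks it is a quasi-isomorphism after applying $Q_A$ and citing \cite{PSY1}; for (3), the paper says it follows formally from (1), (2), hom-tensor adjunction, and the homotopy equivalence $\opn{Tel}\otimes\opn{Tel}\simeq\opn{Tel}$, exactly as you propose.
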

This theorem is contained in Theorems \ref{thm-rgamma}, \ref{thm-llambda}, \ref{thm-RgammaLLambda}, \ref{thm-LLambdaRGamma} and Corollary \ref{cor-DG-GM} in the body of the paper.

Here, the DG-modules $\opn{K}^{\vee}_{\infty}(A;\mathbf{a})$ 
and $\opn{Tel}(A;\mathbf{a})$ are the infinite dual Koszul and Telescope DG-modules respectively, introduced in Remark \ref{rem-koszul-dg} below.

With this theorem in hand, we study in Section 3 the adic derived Hochschild cohomology functor. The main result of section 3 is:

\begin{thm}
Let $\k$ be a commutative ring. Let $A$ be a $\k$-algebra, and let $\a \subseteq A$ be a finitely generated ideal. Assume that $\a$ is weakly proregular, and that $A$ is $\a$-adically complete. Let $\k \to \widetilde{A} \to A$ be a flat DG-resolution of $\k \to A$ which is internally flat. Let $I\subseteq (\widetilde{A}\otimes_{\k} \widetilde{A})^0$ be a weakly proregular ideal, such that its image under the 
composed map 
\[
(\widetilde{A}\otimes_{\k} \widetilde{A})^0 \to \widetilde{A}^0 \to A
\]
is equal to $\a$. Then there is an isomorphism 
\[
\mrm{L}\Lambda_{\a} (\mrm{R}\opn{Hom}_{A\otimes^{\mrm{L}}_{\k} A} ( A, -\otimes^{\mrm{L}}_{\k} -) )  
\cong 
\mrm{R}\opn{Hom}_{\Lambda_I(\widetilde{A}\otimes_{\k} \widetilde{A})} (A, \mrm{L}\Lambda_I ( - \otimes^{\mrm{L}}_{\k} - ) )
\]
of functors
\[
\mrm{D}(\opn{Mod} A) \times \mrm{D}(\opn{Mod} A) \to \mrm{D}(\opn{Mod} A)_{\opn{\a-com}}
\]
\end{thm}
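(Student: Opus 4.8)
The plan is to reduce both functors to a single expression built from derived Hom over the DG-algebra $B := \widetilde{A}\otimes_{\k}\widetilde{A}$. First I would recall from Section 2 that, since $\widetilde{A}$ is a flat DG-resolution of $A$ over $\k$, the DG-algebra $B$ is a model for $A\otimes^{\mrm{L}}_{\k}A$, is internally flat, and that $I\subseteq B^{0}$ is a finitely generated weakly proregular ideal, so the DG Greenlees-May theorem of Section 2 applies to the pair $(B,I)$. I would fix a finite sequence $\mathbf{a}\subseteq B^{0}$ generating $I$, let $\bar{\mathbf{a}}\subseteq A$ be its image (a finite sequence generating $\a$), and write $Q := M\otimes^{\mrm{L}}_{\k}N$, regarded as an object of $\widetilde{\mrm{D}}(\opn{DGMod} B)$. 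Under these identifications the left-hand functor is $\mrm{L}\Lambda_{\a}\bigl(\mrm{R}\opn{Hom}_{B}(A,Q)\bigr)$, where $A$ is a $B$-DG-algebra via the composite $B\to\widetilde{A}\to A$.

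For the left-hand side, the key observation is that derived completion depends only on the adic topology, not on the ambient ring. Since the $I$-adic topology on $B^{0}$ restricts to the $\a$-adic topology on $A$ (this is exactly the hypothesis on the image of $I$), and since both completions are computed by applying $\opn{Hom}(\opn{Tel}(\mathbf{a}),-)$, the restriction to $\widetilde{\mrm{D}}(\opn{DGMod} B)$ of $\mrm{L}\Lambda_{\a}(X)$ agrees with $\mrm{L}\Lambda_{I}(X)$ for any $X\in\mrm{D}(\opn{Mod} A)$; concretely one uses $\opn{Tel}(A;\bar{\mathbf{a}})\cong A\otimes_{B^{0}}\opn{Tel}(B^{0};\mathbf{a})$ and the adjunctions along $B^{0}\to B$ and $B^{0}\to A$. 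Taking $X = \mrm{R}\opn{Hom}_{B}(A,Q)$, using $\mrm{L}\Lambda_{I}(-)\cong\opn{Hom}_{B}(\opn{Tel}(B;\mathbf{a}),-)$ together with Hom-tensor adjunction (and the $K$-flatness and boundedness of $\opn{Tel}(B;\mathbf{a})$), and using $\opn{Tel}(B;\mathbf{a})\otimes_{B}A \cong \opn{Tel}(A;\bar{\mathbf{a}})$, I obtain
\[
\mrm{L}\Lambda_{\a}\bigl(\mrm{R}\opn{Hom}_{B}(A,Q)\bigr)\;\cong\;\mrm{R}\opn{Hom}_{B}\bigl(\opn{Tel}(A;\bar{\mathbf{a}}),Q\bigr).
\]
Now I would invoke the single substantive input: because $\a$ is weakly proregular, $\opn{Tel}(A;\bar{\mathbf{a}})\simeq\opn{K}^{\vee}_{\infty}(A;\bar{\mathbf{a}})$ in $\widetilde{\mrm{D}}(\opn{DGMod} A)$, hence in $\widetilde{\mrm{D}}(\opn{DGMod} B)$; and $\opn{K}^{\vee}_{\infty}(A;\bar{\mathbf{a}}) = \opn{K}^{\vee}_{\infty}(B;\mathbf{a})\otimes_{B}A \simeq \mrm{R}\Gamma_{I}(A)$ by the DG Greenlees-May theorem. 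Thus the left-hand side equals $\mrm{R}\opn{Hom}_{B}\bigl(\mrm{R}\Gamma_{I}(A),Q\bigr)$.

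For the right-hand side, I would first note that $A$ is $\a$-adically, hence $I$-adically, complete, so $B\to A$ factors through $\widehat{B} := \Lambda_{I}(\widetilde{A}\otimes_{\k}\widetilde{A})$, which by Sections 1 and 2 is again internally flat with $I\widehat{B}$ weakly proregular. The object $\mrm{L}\Lambda_{I}Q$ carries its canonical $\widehat{B}$-structure and equals $\mrm{L}\Lambda_{I\widehat{B}}(Q\otimes^{\mrm{L}}_{B}\widehat{B})$, because the cone of $Q\to Q\otimes^{\mrm{L}}_{B}\widehat{B}$ is $Q\otimes^{\mrm{L}}_{B}\opn{cone}(B\to\widehat{B})$ and this is annihilated by $\mrm{L}\Lambda_{I}$ (one checks this via MGM, using $\mrm{L}\Lambda_{I}(B)\simeq\widehat{B}$, which holds since $B$ is internally flat). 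Applying Greenlees-May duality over $\widehat{B}$ gives $\mrm{R}\opn{Hom}_{\widehat{B}}(A,\mrm{L}\Lambda_{I}Q)\cong\mrm{R}\opn{Hom}_{\widehat{B}}\bigl(\mrm{R}\Gamma_{I\widehat{B}}(A),Q\otimes^{\mrm{L}}_{B}\widehat{B}\bigr)$, and $\mrm{R}\Gamma_{I\widehat{B}}(A) = \opn{K}^{\vee}_{\infty}(\widehat{B};\mathbf{a})\otimes_{\widehat{B}}A = \opn{K}^{\vee}_{\infty}(A;\bar{\mathbf{a}}) \simeq \mrm{R}\Gamma_{I}(A)$. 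Finally, since $\mrm{R}\Gamma_{I}(A)$ is $I$-torsion, restriction along $B\to\widehat{B}$ does not change $\mrm{R}\opn{Hom}$ out of it; and one may replace $Q\otimes^{\mrm{L}}_{B}\widehat{B}$ by $Q$, since the cone is again killed by $\mrm{R}\opn{Hom}_{B}(\mrm{R}\Gamma_{I}(A),-)$ by Greenlees-May duality over $B$ (its $\mrm{L}\Lambda_{I}$ vanishes). This lands on $\mrm{R}\opn{Hom}_{B}\bigl(\mrm{R}\Gamma_{I}(A),Q\bigr)$, matching the left-hand side. I would close by observing that every identification used is natural in $M$ and $N$, and that the common object is $\a$-complete (being $\mrm{L}\Lambda_{\a}$ of something), so the isomorphism is one of functors into $\mrm{D}(\opn{Mod} A)_{\opn{\a-com}}$.

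The conceptual heart is light, and I expect the main obstacle to be bookkeeping rather than ideas: keeping careful track of the three rings $A$, $B=\widetilde{A}\otimes_{\k}\widetilde{A}$ and $\widehat{B}$, verifying that $B$ and $\widehat{B}$ genuinely satisfy the internal-flatness and weak-proregularity hypotheses needed to invoke the DG Greenlees-May theorem, and checking the change-of-rings compatibilities for $\mrm{R}\Gamma$ and $\mrm{L}\Lambda$ — that they are determined by the adic topology alone and are therefore insensitive to passing between $A$, $B$ and $\widehat{B}$. The one genuinely substantive point is the quasi-isomorphism $\opn{Tel}(A;\bar{\mathbf{a}})\simeq\opn{K}^{\vee}_{\infty}(A;\bar{\mathbf{a}})$ for the weakly proregular ideal $\a$: this is precisely what converts the outer derived completion on the left into the torsion complex $\mrm{R}\Gamma_{I}(A)$ that Greenlees-May duality naturally produces on the right, so once it is used, the two sides meet.
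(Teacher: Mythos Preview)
Your left-hand side computation is essentially the paper's argument, just rerouted through $\mrm{R}\Gamma_I$ rather than $\mrm{L}\Lambda_I$: the paper packages your telescope/adjunction manipulation as a lemma stating $\mrm{L}\Lambda_{\c}\,\mrm{R}\opn{Hom}_B(C,-)\cong\mrm{R}\opn{Hom}_B(C,\mrm{L}\Lambda_{\b}(-))$, which lands directly at $\mrm{R}\opn{Hom}_B(A,\mrm{L}\Lambda_I Q)$. Your endpoint $\mrm{R}\opn{Hom}_B(\mrm{R}\Gamma_I(A),Q)$ is one Greenlees--May step away from that, so no disagreement there.

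The gap is on the right-hand side. You assert that $\widehat{B}=\Lambda_I(\widetilde{A}\otimes_{\k}\widetilde{A})$ is ``again internally flat with $I\widehat{B}$ weakly proregular, by Sections~1 and~2.'' Neither section proves this under the hypotheses of the theorem. The paper only obtains internal flatness of $\Lambda_{\b}(B)$ under the much stronger assumptions that each $B^i$ is projective over $B^0$ and that $\Lambda_{\b}(B^0)$ is noetherian (these are exactly the extra hypotheses appearing in the torsion-case theorem later on). In the present theorem you only know $B$ is internally flat, and nothing about noetherianity of the completion, so you cannot invoke Greenlees--May duality \emph{over} $\widehat{B}$, nor the telescope formula for $\mrm{L}\Lambda_{I\widehat{B}}$. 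Your whole reduction of the right-hand side runs through that step.

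The paper sidesteps this cleanly: instead of working over $\widehat{B}$, it observes that $\mrm{L}\Lambda_I Q$ is cohomologically $I$-adically complete over $B$, so $\mrm{R}\opn{Hom}_B(\widehat{B},\mrm{L}\Lambda_I Q)\cong\mrm{L}\Lambda_I Q$ in $\widetilde{\mrm{D}}(\opn{DGMod}\widehat{B})$ (this is the content of the lemma $\mrm{R}\opn{Hom}_B(\widehat{B},M)\cong M$ for cohomologically complete $M$), and hence by adjunction $\mrm{R}\opn{Hom}_B(A,\mrm{L}\Lambda_I Q)\cong\mrm{R}\opn{Hom}_{\widehat{B}}(A,\mrm{L}\Lambda_I Q)$. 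This change-of-rings step uses only the Greenlees--May machinery over $B$, never over $\widehat{B}$. If you replace your right-hand side argument with this one step, your proof goes through; as written, it does not.
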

This is repeated as Theorem \ref{thm-main-c-formula} in the body of the paper. In the noetherian case, and using Theorem \ref{thm-ind-wpr}, we may refine this, and get:
\begin{cor}
Let $\k$ be a noetherian ring. Let $A$ be a $\k$-algebra, and let $\a \subseteq A$ be a finitely generated ideal. Assume that $A$ is $\a$-adically complete and essentially formally of finite type over $\k$.  Then for any adic flat DG-resolution $\k \to (\widetilde{A},\widetilde{\a}) \to (A,\a)$ of $\k \to (A,\a)$, such that $(\widetilde{A}^0,\widetilde{\a})$ is essentially formally of finite type over $\k$, 
there is an isomorphism 
\[
\mrm{L}\Lambda_{\a} (\mrm{R}\opn{Hom}_{A\otimes^{\mrm{L}}_{\k} A} ( A, -\otimes^{\mrm{L}}_{\k} -) )  
\cong 
\mrm{R}\opn{Hom}_{\Lambda_{\widetilde{\a}^e} (\widetilde{A}\otimes_{\k} \widetilde{A})} (A, \mrm{L}\Lambda_{\widetilde{\a}^e} ( - \otimes^{\mrm{L}}_{\k} - ) )
\]
of functors
\[
\mrm{D}(\opn{Mod} A) \times \mrm{D}(\opn{Mod} A) \to \mrm{D}(\opn{Mod} A)_{\opn{\a-com}}
\]
where $\widetilde{\a}^e = \widetilde{\a} \otimes_{\k} \widetilde{A}^0 + \widetilde{A}^0 \otimes_{\k} \widetilde{\a}$ is the canonical ideal of definition of $\widetilde{A}^0 \otimes_{\k} \widetilde{A}^0$. Moreover, the ring $\Lambda_{\widetilde{\a}^e} (\widetilde{A}\otimes_{\k} \widetilde{A})^0$ is noetherian. Furthermore, such a resolution $(\widetilde{A},\widetilde{\a})$ always exist. If $A$ is flat over $\k$, then one can take $\widetilde{A} = A$ and  $\widetilde{\a} = \a$.
\end{cor}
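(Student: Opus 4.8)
The plan is to deduce the Corollary from the preceding Theorem (Theorem~\ref{thm-main-c-formula}) by verifying that, in the noetherian essentially formally of finite type setting, one can always produce a DG-resolution of the required kind, and that for such a resolution the canonical ideal $\widetilde{\a}^e$ plays the role of the abstract ideal $I$ in that theorem. The key observations to check, in order, are: (i) existence of an adic flat DG-resolution $\k \to (\widetilde{A},\widetilde{\a}) \to (A,\a)$ with $(\widetilde{A}^0,\widetilde{\a})$ essentially formally of finite type over $\k$ and internally flat; (ii) that $\widetilde{\a}^e \subseteq (\widetilde{A}\otimes_{\k}\widetilde{A})^0 = \widetilde{A}^0\otimes_{\k}\widetilde{A}^0$ is weakly proregular and that its image under $\widetilde{A}^0\otimes_{\k}\widetilde{A}^0 \to \widetilde{A}^0 \to A$ is $\a$; (iii) that $\Lambda_{\widetilde{\a}^e}(\widetilde{A}\otimes_{\k}\widetilde{A})^0$ is noetherian.

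For step (i), I would build $\widetilde{A}$ by a standard Tate-style semifree construction: start from a (possibly essentially of finite type) noetherian flat $\k$-algebra surjecting onto $\widetilde{A}^0$ with $\widetilde{\a}\subseteq\widetilde{A}^0$ mapping to $\a$, then adjoin polynomial/exterior variables in negative degrees to kill homology, keeping the number of generators finite in each stage so that each $\widetilde{A}^i$ stays essentially formally of finite type over $\k$. Here I need to be a little careful that "flat DG-resolution'' plus "internally flat'' (Definition~\ref{dfn-internally-flat}) can be arranged simultaneously in the adic setting; Proposition~\ref{prop-existence-of-res} gives the existence of an internally flat replacement, and the resolution is built degreewise from flat $\k$-modules, so each bidegree piece of $\widetilde{A}\otimes_{\k}\widetilde{A}$ is flat over the relevant factor. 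When $A$ is already flat over $\k$, the trivial resolution $\widetilde{A}=A$ in degree $0$ works, and it is automatically internally flat and adic with $\widetilde{\a}=\a$; this handles the last sentence of the Corollary.

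For step (ii), the crucial input is Theorem~\ref{thm-ind-wpr}: since $\k$ is noetherian and $(\widetilde{A}^0,\widetilde{\a})$ is flat essentially formally of finite type over $\k$, the ideal $\widetilde{\a}^e=\widetilde{\a}\otimes_{\k}\widetilde{A}^0+\widetilde{A}^0\otimes_{\k}\widetilde{\a}$ is weakly proregular. The claim about its image is immediate from the factorization of the two structure maps $\widetilde{A}^0\rightrightarrows \widetilde{A}^0\otimes_{\k}\widetilde{A}^0$ through the multiplication map down to $A$, both of which send $\widetilde{\a}$ to $\a$. Thus the hypotheses of Theorem~\ref{thm-main-c-formula} are met with $I=\widetilde{\a}^e$, and applying it verbatim yields the stated isomorphism of functors into $\mrm{D}(\opn{Mod} A)_{\opn{\a-com}}$. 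For step (iii), noetherianness of $\Lambda_{\widetilde{\a}^e}(\widetilde{A}^0\otimes_{\k}\widetilde{A}^0)$ follows from the standard fact that the $\mf{a}$-adic completion of a noetherian ring along a finitely generated ideal is noetherian, once one knows $\widetilde{A}^0\otimes_{\k}\widetilde{A}^0$ is noetherian: $\widetilde{A}^0$ is a localization of a finite-type $\k$-algebra, so $\widetilde{A}^0\otimes_{\k}\widetilde{A}^0$ is a localization of a finite-type $\k$-algebra, hence noetherian since $\k$ is; alternatively one argues after completion using that the completion is a quotient of a power-series ring over a localization of a polynomial ring over $\k$.

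I expect step (i) — arranging an adic flat DG-resolution that is simultaneously internally flat and keeps $\widetilde{A}^0$ (indeed all of $\widetilde{A}$, in the bookkeeping needed for the tensor square) essentially formally of finite type over $\k$ — to be the main obstacle, since one must control both the homological size (finite generation at each stage of the Tate construction) and the adic/topological structure (that $\widetilde{\a}$ is a genuine ideal of definition lifting $\a$ and that completion interacts well with the resolution). The other steps are then bookkeeping on top of Theorems~\ref{thm-ind-wpr} and~\ref{thm-main-c-formula}.
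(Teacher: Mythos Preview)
Your overall strategy matches the paper's: verify that $\widetilde{\a}^e$ is weakly proregular via Corollary~\ref{cor-wpr-of-efft} (the body version of Theorem~\ref{thm-ind-wpr}), check that its image in $A$ is $\a$, and then invoke Theorem~\ref{thm-main-c-formula} with $I=\widetilde{\a}^e$; existence of the resolution is Proposition~\ref{prop-existence-of-res}(2), and the flat case $\widetilde{A}=A$ is immediate.

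There is, however, a genuine error in your primary argument for step~(iii), and it is worth flagging because it runs straight into the central difficulty the whole paper is built to circumvent. You write that ``$\widetilde{A}^0$ is a localization of a finite-type $\k$-algebra, so $\widetilde{A}^0\otimes_{\k}\widetilde{A}^0$ is a localization of a finite-type $\k$-algebra, hence noetherian.'' But \emph{essentially formally of finite type} only says that $\widetilde{A}^0/\widetilde{\a}$ is essentially of finite type over $\k$, not $\widetilde{A}^0$ itself. In the basic example $\widetilde{A}^0=\k[[t]]$ with $\k$ a field of characteristic~$0$, the ring $\widetilde{A}^0\otimes_{\k}\widetilde{A}^0$ is \emph{not} noetherian---this is precisely the example from the introduction that motivates passing to the completion. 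Your ``alternatively'' clause points in the right direction but is vague. The clean argument, which is what the paper does, is: by flatness, $(\widetilde{A}^0\otimes_{\k}\widetilde{A}^0)/\widetilde{\a}^e \cong (\widetilde{A}^0/\widetilde{\a})\otimes_{\k}(\widetilde{A}^0/\widetilde{\a})$, which is essentially of finite type over the noetherian ring $\k$ and hence noetherian; then the fact recalled in Section~1 that $\Lambda_{\mfrak{c}}(C)$ is noetherian whenever $C/\mfrak{c}$ is noetherian gives noetherianness of $\Lambda_{\widetilde{\a}^e}(\widetilde{A}^0\otimes_{\k}\widetilde{A}^0)$.
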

This is repeated as Corollary \ref{cor-cformula-noetherian} in the body of the paper. Here, the left hand side is the derived completion of the (discrete) derived Hochschild cohomology, while the right hand side is an adic analogue of the derived Hochschild cohomology. 

A similar result under more restrictive assumption is given in the torsion case in Theorem \ref{thm-main-t-formula} below.

Restricting further to the case where $\k$ is a field, we obtain the following comparison theorem: 

\begin{cor}
Let $\k$ be a field. Let $A$ be a $\k$-algebra. Assume that $A$ is noetherian and $\a$-adically complete with respect to some ideal $\a\subseteq A$. Let $M$ be a finitely generated $A$-module. Then for all $n$, there is an isomorphism
\[
\mrm{HH}^n(A|\k;M) \cong \widehat{\mrm{HH}}^n(A|\k;M)
\]
between the $n$-th Hochschild cohomology of $A$ over $\k$ with coefficients in $M$, and the $n$-th topological Hochschild cohomology of $A$ over $\k$ with coefficients in $M$.
\end{cor}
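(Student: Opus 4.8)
I would deduce the statement from Corollary \ref{cor-cformula-noetherian}. Since $\k$ is a field, $A$ is free, hence flat, as a $\k$-module, so in that corollary one may take $\widetilde{A}=A$ and $\widetilde{\a}=\a$. Write $A^e=A\otimes_{\k}A$, $\a^e=\a\otimes_{\k}A+A\otimes_{\k}\a$ and $\widehat{A^e}=\Lambda_{\a^e}(A^e)$; by Corollary \ref{cor-cformula-noetherian} the ring $\widehat{A^e}$ is noetherian. Over a field the coefficient $M\otimes^{\mrm{L}}_{\k}N$ of Theorem \ref{thm-main-c-formula} may be replaced throughout its proof by an arbitrary $A^e$-module, so, taking the symmetric $A^e$-module $M$ as coefficient, we obtain a natural isomorphism
\[
\mrm{L}\Lambda_{\a}\bigl(\mrm{R}\opn{Hom}_{A^e}(A,M)\bigr)\;\simeq\;\mrm{R}\opn{Hom}_{\widehat{A^e}}\bigl(A,\mrm{L}\Lambda_{\a^e}(M)\bigr)
\]
in $\mrm{D}(\opn{Mod} A)$. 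The left side is the derived $\a$-adic completion of the complex $\mrm{R}\opn{Hom}_{A^e}(A,M)$ whose cohomology is the discrete Hochschild cohomology $\mrm{HH}^{\bullet}(A|\k;M)$; the right side is, by definition of $\widehat{\mrm{HH}}$, the complex over the noetherian ring $\widehat{A^e}$ computing topological Hochschild cohomology.

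The heart of the remaining argument is to show that, since $M$ is finitely generated, the complex $\mrm{R}\opn{Hom}_{A^e}(A,M)$ is already derived $\a$-adically complete; then the canonical map $\mrm{R}\opn{Hom}_{A^e}(A,M)\to\mrm{L}\Lambda_{\a}\bigl(\mrm{R}\opn{Hom}_{A^e}(A,M)\bigr)$ is an isomorphism. As $A$ is noetherian and $\a$-adically complete, a finitely generated $A$-module is $\a$-adically complete and, by the Artin--Rees lemma, has no higher derived $\a$-completion; thus $M$ is derived $\a$-complete over $A$. The $\a^e$-adic topology on any $A$-module agrees with its $\a$-adic one (via $A^e\twoheadrightarrow A$), and the composite of a tensor-factor inclusion $A\to A^e$ with the multiplication $A^e\to A$ is the identity; combining these facts, $M$ is derived $\a^e$-complete over $A^e$ with $\mrm{L}\Lambda_{\a^e}(M)\simeq M$, and moreover $\mrm{R}\opn{Hom}_{A^e}(A^e[b^{-1}],M)\simeq\mrm{R}\opn{Hom}_{A}(A[a_i^{-1}],M)=0$ for each generator $b\in\{a_i\otimes 1,\,1\otimes a_i\}$ of $\a^e$. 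Hence for any $X\in\mrm{D}(\opn{Mod} A^e)$ the complex $\mrm{R}\opn{Hom}_{A^e}(X,M)$ is derived $\a^e$-complete: by Hom--tensor adjunction
\[
\mrm{R}\opn{Hom}_{A^e}\bigl(A^e[b^{-1}],\,\mrm{R}\opn{Hom}_{A^e}(X,M)\bigr)\;\simeq\;\mrm{R}\opn{Hom}_{A^e}\bigl(A^e[b^{-1}]\otimes^{\mrm{L}}_{A^e}X,\,M\bigr),
\]
and the right side vanishes because $b$ acts invertibly on $A^e[b^{-1}]\otimes^{\mrm{L}}_{A^e}X$. Applying this with $X=A$ gives the claim, derived $\a^e$-completeness over $A^e$ being equivalent to derived $\a$-completeness over $A$ in this situation.

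Putting the two steps together,
\[
\mrm{R}\opn{Hom}_{A^e}(A,M)\;\simeq\;\mrm{L}\Lambda_{\a}\bigl(\mrm{R}\opn{Hom}_{A^e}(A,M)\bigr)\;\simeq\;\mrm{R}\opn{Hom}_{\widehat{A^e}}(A,M)
\]
in $\mrm{D}(\opn{Mod} A)$, and taking $H^{n}$ yields $\mrm{HH}^{n}(A|\k;M)\cong\widehat{\mrm{HH}}^{n}(A|\k;M)$. The only real obstacle is the first paragraph: obtaining the isomorphism there is precisely where one must invoke Theorem \ref{thm-ind-wpr} (weak proregularity of $\a^e$, which tames the derived completion and torsion functors over the possibly non-noetherian ring $A^e$) together with the DG Greenlees--May and MGM apparatus of Section 2 underlying Theorem \ref{thm-main-c-formula}; one should also check the bookkeeping points that over a field Theorem \ref{thm-main-c-formula} accepts an arbitrary $A\otimes_{\k}A$-module as coefficients, and that the finiteness hypotheses needed to apply Theorem \ref{thm-ind-wpr} are in force.
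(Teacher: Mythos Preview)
Your overall strategy matches the paper's, but there is one genuine gap and one place where you take a longer detour.

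\textbf{The gap.} You invoke Corollary \ref{cor-cformula-noetherian} and Theorem \ref{thm-ind-wpr} to obtain weak proregularity of $\a^e$ and the displayed isomorphism, and you end by saying one should ``check\ldots\ that the finiteness hypotheses needed to apply Theorem \ref{thm-ind-wpr} are in force.'' They are not: the statement assumes only that $A$ is noetherian, not that $(A,\a)$ is essentially formally of finite type over $\k$. Thus neither Theorem \ref{thm-ind-wpr} nor Corollary \ref{cor-cformula-noetherian} applies as stated, and the claim that $\widehat{A^e}$ is noetherian is unjustified (and, fortunately, unnecessary). The paper repairs this exactly where you flagged it: by Remark \ref{rem-ae-is-wpr} (i.e.\ \cite{PSY1}, Example 3.35), over a field the ideal $\a^e=\a\otimes_{\k}A+A\otimes_{\k}\a$ is weakly proregular whenever $A$ is noetherian. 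One then feeds this directly into the bimodule version noted in Remark \ref{rem-main-c-bimodule} (which is what you are really using), bypassing the eft hypotheses entirely.

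\textbf{The detour.} Your second step---showing $\mrm{R}\opn{Hom}_{A^e}(A,M)$ is already derived $\a$-complete---is correct but more elaborate than needed. The paper does this in two short strokes: Lemma \ref{lem-llamofhom} gives
\[
\mrm{L}\Lambda_{\a}\,\mrm{R}\opn{Hom}_{A^e}(A,M)\;\cong\;\mrm{R}\opn{Hom}_{A^e}\bigl(A,\mrm{L}\Lambda_{\a^e}(M)\bigr),
\]
and then one only needs $\mrm{L}\Lambda_{\a^e}(M)\cong M$. For this the paper argues, as you do, that $M$ is cohomologically $\a$-complete over $A$ (using \cite{PSY2}, Theorem 1.21), and then invokes Lemma \ref{lem-adic-cc} to transfer this to cohomological $\a^e$-completeness over $A^e$. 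Your localization argument (vanishing of $\mrm{R}\opn{Hom}_{A^e}(A^e[b^{-1}],-)$) reproves a special case of Lemma \ref{lem-llamofhom} by hand; it works, but relies on a characterization of derived completeness not developed in the paper, whereas Lemma \ref{lem-llamofhom} stays entirely within the telescope-complex framework already set up.
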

This is repeated as Corollary \ref{cor-comp-thm} in the body of the paper.

In the short and final Section 4, we give a similar result in the torsion case for derived Hochschild homology. The result is:
\begin{thm}
Let $\k$ be a commutative ring. Let $A$ be a $\k$-algebra, and let $\a \subseteq A$ be a finitely generated ideal. Assume that $\a$ is weakly proregular, and that $A$ is $\a$-adically complete. Let $\k \to \widetilde{A} \to A$ be a flat DG-resolution of $\k \to A$ which is internally flat. Let $I\subseteq (\widetilde{A}\otimes_{\k} \widetilde{A})^0$ be a weakly proregular ideal, such that its image under the composed map 
\[
(\widetilde{A}\otimes_{\k} \widetilde{A})^0 \to \widetilde{A}^0 \to A
\]
is equal to $\a$. Then there is an isomorphism 
\[
\mrm{R}\Gamma_{\a} (A \otimes^{\mrm{L}}_{A\otimes^{\mrm{L}}_{\k} A} \mrm{R}\opn{Hom}_{\k}(-,-)) \cong 
A \otimes^{\mrm{L}}_{\Lambda_I(\widetilde{A} \otimes_{\k} \widetilde{A})} \mrm{R}\Gamma_I(\mrm{R}\opn{Hom}_{\k}(-,-))
\]
of functors
\[
\mrm{D}(\opn{Mod} A) \times \mrm{D}(\opn{Mod} A) \to \mrm{D}(\opn{Mod} A)_{\opn{\a-tor}}.
\]
\end{thm}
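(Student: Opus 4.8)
The plan is to mimic the proof of Theorem \ref{thm-main-c-formula}, interchanging $\mrm{R}\opn{Hom}$ with $\otimes^{\mrm{L}}$ and $\mrm{L}\Lambda$ with $\mrm{R}\Gamma$; the torsion side is in fact a little easier, because $\mrm{R}\Gamma$ is computed by tensoring with the infinite dual Koszul complex and so commutes with base change on the nose. Fix the flat DG-resolution as in the hypothesis and put $B := \widetilde{A}\otimes_{\k}\widetilde{A}$. As in the proof of Theorem \ref{thm-main-c-formula}, $B$ is an internally flat DG-algebra and a model for $A\otimes^{\mrm{L}}_{\k}A$, so that $A\otimes^{\mrm{L}}_{A\otimes^{\mrm{L}}_{\k}A}(-)\cong A\otimes^{\mrm{L}}_B(-)$, where $A$ is regarded as the $B$-algebra $B\xar{\mu}\widetilde{A}\to A$. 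Choose a finite sequence $\mathbf{a}\subseteq B^0$ generating $I$; by hypothesis its image $\bar{\mathbf{a}}$ under $B^0\to\widetilde{A}^0\to A$ generates $\a$. Finally, for $M,N\in\mrm{D}(\opn{Mod} A)$ view $P:=\mrm{R}\opn{Hom}_{\k}(M,N)$ as an object of $\widetilde{\mrm{D}}(\opn{DGMod} B)$ in the usual way, via the left and right actions of $\widetilde{A}$.

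The core of the proof is the natural isomorphism $A\otimes^{\mrm{L}}_B\mrm{R}\Gamma_I(P)\cong\mrm{R}\Gamma_{\a}(A\otimes^{\mrm{L}}_B P)$. To establish it, apply part (1) of the DG Greenlees--May theorem (Theorem \ref{thm-rgamma}) to $(B,I)$ --- legitimate since $I$ is weakly proregular in $B^0$ and $B$ is internally flat --- to get $\mrm{R}\Gamma_I(P)\cong\opn{K}^{\vee}_{\infty}(B;\mathbf{a})\otimes_B P$, and use weak proregularity of $\a$ in the ring $A$ together with the classical Greenlees--May theory (\cite{PSY1}) to get $\mrm{R}\Gamma_{\a}(-)\cong\opn{K}^{\vee}_{\infty}(A;\bar{\mathbf{a}})\otimes_A(-)$. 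Since the infinite dual Koszul complex is built from the generators, is compatible with base change, and is K-flat, the canonical map is an isomorphism $A\otimes^{\mrm{L}}_B\opn{K}^{\vee}_{\infty}(B;\mathbf{a})\cong\opn{K}^{\vee}_{\infty}(A;\bar{\mathbf{a}})$. Then associativity of the derived tensor product along $B\to A$, in the form $A\otimes^{\mrm{L}}_B(Y\otimes^{\mrm{L}}_B Z)\cong(A\otimes^{\mrm{L}}_B Y)\otimes^{\mrm{L}}_A(A\otimes^{\mrm{L}}_B Z)$ applied with $Y=\opn{K}^{\vee}_{\infty}(B;\mathbf{a})$ and $Z=P$, yields the claim.

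It remains to recognise the right-hand side of the theorem. The object $\mrm{R}\Gamma_I(P)$ is $I$-torsion, hence lies naturally in $\widetilde{\mrm{D}}(\opn{DGMod}\Lambda_I B)$; and since $A$ is $\a$-adically complete and $B^0\to A$ carries $I$ into $\a$, the map $B^0\to A$ extends to $\Lambda_I(B^0)=(\Lambda_I B)^0\to A$, making $A$ a $\Lambda_I B$-algebra. With respect to these structures one has $A\otimes^{\mrm{L}}_B\mrm{R}\Gamma_I(P)\cong A\otimes^{\mrm{L}}_{\Lambda_I B}\mrm{R}\Gamma_I(P)$ --- the torsion counterpart of the replacement of $\mrm{R}\opn{Hom}_B$ by $\mrm{R}\opn{Hom}_{\Lambda_I B}$ used in the proof of Theorem \ref{thm-main-c-formula}; it rests on the identifications $B/I^n B\cong\Lambda_I B/I^n\Lambda_I B$ for all $n$ (so that the $I$-torsion subcategories over $B$ and over $\Lambda_I B$ coincide) together with weak proregularity of $I$. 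Taking $P=\mrm{R}\opn{Hom}_{\k}(M,N)$ and assembling the displayed isomorphisms with $A\otimes^{\mrm{L}}_{A\otimes^{\mrm{L}}_{\k}A}P\cong A\otimes^{\mrm{L}}_B P$ produces the asserted isomorphism of functors, whose common value is $\a$-torsion.

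The one genuinely delicate point is this last step: justifying that, after applying $A\otimes^{\mrm{L}}_{(-)}$, the torsion object $\mrm{R}\Gamma_I(P)$ does not see the difference between $B$ and its completion $\Lambda_I B$. This is exactly the bookkeeping --- the equivalence of the $I$-torsion subcategories over $B$ and over $\Lambda_I B$, and its compatibility with the derived tensor product --- developed in Section 2 and already invoked in the proof of Theorem \ref{thm-main-c-formula}; granting it, everything else is formal.
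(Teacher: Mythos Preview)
Your argument is correct and follows the paper's proof essentially step for step: both compute $\mrm{R}\Gamma$ via the infinite dual Koszul complex, use its base-change compatibility to commute $\mrm{R}\Gamma_{\a}$ past $A\otimes^{\mrm{L}}_B(-)$, and then pass from $B=\widetilde{A}\otimes_{\k}\widetilde{A}$ to $\Lambda_I B$ on the torsion object. The only sharpening worth noting is that the ``bookkeeping'' you invoke for the last step is precisely Lemma~\ref{lem-tortens} (the tensor-side analogue of Lemma~\ref{lemma-rhom-with-completion} used in Theorem~\ref{thm-main-c-formula}), applied after the associativity factorisation $A\otimes^{\mrm{L}}_B\mrm{R}\Gamma_I(P)\cong A\otimes^{\mrm{L}}_{\Lambda_I B}\bigl(\Lambda_I B\otimes^{\mrm{L}}_B\mrm{R}\Gamma_I(P)\bigr)$; your heuristic about $B/I^nB\cong\Lambda_I B/I^n\Lambda_I B$ is suggestive but does not by itself give the derived statement, since $B\to\Lambda_I B$ need not be flat.
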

This is repeated as Theorem \ref{thm-h-t-formula} in the body of the paper.

We end the introduction with mentioning our motivation for studying these functors: in the subsequent article \cite{SH}, we use the derived adic Hochschild cohomology theory developed here to develop a theory of rigid dualizing complexes over affine formal schemes, generalizing results of \cite{AIL,YZ1,YZ3} to the formal setting.

\textbf{Acknowledgments.}
The author would like to thank Amnon Yekutieli for many helpful suggestions and discussions.

\section{Weakly proregular preadic rings}

In this section we introduce and study the category of weakly proregular preadic rings. We do not assume rings are necessarily noetherian. 

Let $A$ be a ring. Given an ideal $\mfrak{a}\subseteq A$, recall that the $\mfrak{a}$-adic topology on $A$ is the linear topology generated by powers of $\mfrak{a}$.

\begin{dfn}\label{preadicdef}
The category of preadic rings, denoted by $\mathcal{PA}$, is defined as follows:
\begin{enumerate}
\item 
A \textbf{preadic ring}, is a commutative ring $A$ endowed with a topology, which is the $\mfrak{a}$-adic topology for some finitely generated ideal $\mfrak{a}\subseteq A$. (In \cite{EGA1}, Definition 7.1.9 this terminology is introduced without the finiteness condition). 
\item A presentation of a preadic ring $A$ is a pair $(A,\mfrak{a})$, where $\mfrak{a}$ is some finitely generated defining ideal of $A$. By abuse of notation, we shall say that $(A,\mfrak{a})$ is a preadic ring.
\item Given two preadic rings $(A,\mfrak{a})$, and $(B,\mfrak{b})$, a preadic morphism $f:A\to B$ is a ring homomorphism which is a continuous map with respect to the adic topologies. In this situation, we shall say that $(B,\mfrak{b})$ is a preadic $(A,\mfrak{a})$-algebra. This is equivalent to the fact that $\mfrak{a}^j \subseteq f^{-1}(\mfrak{b})$ for some $j \in \mathbb{N}$.
\end{enumerate}
\end{dfn}

\begin{rem}\label{remark-presentation}
Two presentations $(A,\mfrak{a})$ and $(A,\mfrak{a}')$ represent the same preadic ring if and only if $\sqrt{\mfrak{a}} = \sqrt{\mfrak{a}'}$.
\end{rem}

\begin{rem}
All rings in this paper are assumed to be preadic. Thus, if we say that $A$ is a ring, we will mean that $A$ is discrete, that is, that $(A,0)$ is a preadic ring.
\end{rem}

\begin{rem}
Finiteness conditions on preadic algebra maps are defined as follows: If $(A,\a)$ is a preadic ring, and if $(B,\b)$ is a preadic $(A,\a)$-algebra, then $B$ is (essentially) formally of finite type over $A$ if $B/\b$ is (essentially) of finite type over $A$. (Recall that being essentially of finite type over $A$ means being a quotient of a localization of a polynomial ring in finitely many variables over $A$).
This is the terminology of \cite{YE2}. In \cite{LNS}, such maps are called essentially pseudo-finite-type maps. 
\end{rem}

Given a preadic ring $(A,\mfrak{a})$, there are two functors associated to this data: the $\mfrak{a}$-torsion functor and the $\mfrak{a}$-completion functor, $\Gamma_{\a},\Lambda_{\a} :\cat{Mod}(A)\to \cat{Mod}(A)$, defined by
\[
\Gamma_{\a} M := \varinjlim \opn{Hom}_A(A/\a^n,M)
\]
and
\[
\Lambda_{\a} M := \varprojlim A/\a^n \otimes_A M.
\]
Note that by Remark \ref{remark-presentation}, these definitions are independent of the chosen presentation of $(A,\mfrak{a})$.

Let $(A,\mfrak{a})$ be a preadic ring. For any $A$-module $M$, there are canonical maps $\sigma_M:\Gamma_{\a} (M) \to M$, and $\tau_M:M\to \Lambda_{\a} (M)$. If these maps are bijective then $M$ is said to be $\a$-torsion and $\a$-adically complete respectively. The $A$-module $\Lambda_{\a} (A)$ has a structure of a commutative ring, and (since $\a$ is finitely generated), it is $\widehat{\a} = \a \Lambda_{\a}(A)$-adically complete. In general, a preadic ring $(A,\a)$ is called an adic ring if $A$ is $\a$-adically complete. The category of adic rings is a full-subcategory of the category of preadic rings. If $(A,\mfrak{a})$ is a preadic ring, then $\Lambda_{\a} (A)$ is a noetherian ring if and only if $A/\a$ is a noetherian ring. (\cite{CA}, Corollary 2 after
Proposition III.2.11.14). It follows that if $A$ is a noetherian ring, and if $(B,\b)$ is an adic ring which is an essentially formally of finite type $A$-algebra, then $B$ is also a noetherian ring.

Next, we recall the homological properties of the torsion and completion functors. Let $(A,\a)$ be a preadic ring. The torsion functor $\Gamma_{\a}$ is left exact. The completion functor $\Lambda_{\a}$ preserves surjections, and in general is neither left exact nor right exact. If $A$ is noetherian then $\Lambda_{\a}$ is exact on the category of finitely generated $A$-modules, and the map $A\to \Lambda_{\a}(A)$ is flat.

The derived functors $\mrm{R}\Gamma_{\a}, \mrm{L}\Lambda_{\a}:\mrm{D}(\opn{Mod} A)\to \mrm{D}(\opn{Mod} A)$ exist. They are calculated using K-injective and K-flat resolutions. (\cite{AJL}). 

For any complex $M$, there are canonical maps $\sigma^R_M:\mrm{R}\Gamma_{\a} M \to M$ and $\tau^L_M:M \to \mrm{L}\Lambda_{\a} M$. (\cite{PSY1}, Proposition 2.7 and Proposition 2.10). If these maps are isomorphisms then $M$ is said to be cohomologically $\a$-torsion or cohomologically $\a$-adically complete respectively. The full subcategories of $\mrm{D}(\opn{Mod} A)$ consisting of cohomologically $\a$-torsion and cohomologically $\a$-adically complete complexes are denoted by $\mrm{D}(\opn{Mod} A)_{\opn{\a-tor}}$ and $\mrm{D}(\opn{Mod} A)_{\opn{\a-com}}$ respectively. They are both triangulated subcategories.

\par We now discuss weak proregularity. 
First, recall that given a ring $A$, and a finite sequence of elements $\mathbf{a} = (a_1,\dots,a_n)$ in $A$, there is a bounded complex of countably generated free $A$-modules denoted by $\opn{Tel}(A;\mathbf{a})$, called the telescope complex, defined as follows (See \cite{PSY1} Section 4 for more details): First, for each element $a\in A$, let $\opn{F}_{\mrm{fin}}(\mathbb{N},A)$ be the free $A$-module of countable rank with basis $\delta_0,\delta_1,\dots$, and let
\[ \opn{Tel}(A; a) :=
\bigl( \cdots \to 0 \to \opn{F}_{\mrm{fin}}( \mathbb{N}, A) \xar{\d}
\opn{F}_{\mrm{fin}}( \mathbb{N}, A) \to 0 \to \cdots \bigr) \]
be the complex concentrated in degrees $0$ and $1$ with differential
\[ \d(\delta_i) := 
\begin{cases}
\delta_0  & \tup{ if } i = 0 , \\
\delta_{i- 1} - a \delta_{i} & \tup{ if } i \geq 1 .
\end{cases} \]
In general, if $\mathbf{a} = (a_1,\dots,a_n)$, we set
\[
\opn{Tel}(A;\mathbf{a}) :=  \opn{Tel}(A;a_1)\otimes_A \opn{Tel}(A;a_2) \otimes_A \dots \otimes_A \opn{Tel}(A;a_n)
\]
If $\mathbf{a}$ and $\mathbf{b}$ are two finite sequences which generate the ideals $\mfrak{a}$ and $\mfrak{b}$ respectively, and if $\sqrt{\mfrak{a}} = \sqrt{\mfrak{b}}$, then the two complexes $\opn{Tel}(A;\mathbf{a})$ and $\opn{Tel}(A;\mathbf{b})$ are homotopically equivalent (\cite{PSY1}, Theorem 5.1). If $f:A\to B$ is a ring homomorphism, $\mathbf{a}$ is a finite sequence of elements in $A$, and $\mathbf{b}$ is the image of $\mathbf{a}$ under $f$, then there is a canonical isomorphism of complexes
\[
\opn{Tel}(A;\mathbf{a}) \otimes_A B \cong \opn{Tel}(B;\mathbf{b})
\]

\begin{dfn}\label{wpr-def}
Let $A$ be a ring. 
\begin{enumerate}
\item A finite sequence $\mathbf{a}$ of elements of $A$ is said to be weakly proregular if for every injective $A$-module $I$, and for every $k\ne 0$, we have that
\[
H^k(\opn{Tel}(A;\mathbf{a}) \otimes_A I) = 0.
\]
\item An ideal $\mfrak{a}\subseteq A$ is said to be weakly proregular if there is a finite sequence $\mathbf{a}$ of $A$ which generates $\mfrak{a}$, such that $\mathbf{a}$ is weakly proregular.
\end{enumerate}
\end{dfn}

\begin{rem}
This definition was first given in \cite{AJLC}, following \cite{AJL}, Lemma 3.1.1. For the history and the etymology of this concept, we refer the reader to \cite{AJL} and \cite{SC}.
\end{rem}

\begin{rem}
If $\mathbf{a}$ and $\mathbf{a}'$ are two finite sequences which generate ideals with equal radicals, then $\mathbf{a}$ is weakly proregular if and only if $\mathbf{a}'$ is. (\cite{SC}, Proposition 1.2). In other words, weak proregularity is a property of adic topologies rather then finite sequences or ideals.
\end{rem}

\begin{rem}
If $A$ is a noetherian ring, then every ideal $\a \subseteq A$ is weakly proregular. (\cite{AJL}).
\end{rem}

The next theorem explains the importance of weak proregularity:

\begin{thm}\label{thm-ring-mgm}
Let $A$ be a ring. Let $\mfrak{a}\subseteq A$ be a weakly proregular ideal, and let $\mathbf{a}$ be a finite sequence of elements of $A$ which generates $A$.
\begin{enumerate}
\item For any $M\in \mrm{D}(\opn{Mod} A)$ there are functorial isomorphisms in $\mrm{D}(\opn{Mod} A)$:
\[
\mrm{R}\Gamma_{\mfrak{a}} M \cong \opn{Tel}(A;\mathbf{a}) \otimes_A M
\]
and
\[
\mrm{L}\Lambda_{\mfrak{a}} M \cong \opn{Hom}_A(\opn{Tel}(A;\mathbf{a}),M).
\]
\item \textbf{MGM equivalence}: For any $M\in \mrm{D}(\opn{Mod} A)$ the functorial maps
\[
\mrm{R}\Gamma_{\a} (\tau^L_M) : \mrm{R}\Gamma_{\a}(M) \to \mrm{R}\Gamma_{\a}(\mrm{L}\Lambda_{\a} (M) )
\]
and
\[
\mrm{L}\Lambda_{\a} (\sigma^R_M) : \mrm{L}\Lambda_{\a} (\mrm{R}\Gamma_{\a}(M)) \to \mrm{L}\Lambda_{\a} (M)
\]
are isomorphisms.
\item \textbf{Greenlees-May duality}: For any $M,N\in \mrm{D}(\opn{Mod} A)$ the morphisms
\[ \begin{aligned}
& \mrm{R}\opn{Hom}_A \bigl( \mrm{R} \Gamma_{\a} (M), \mrm{R} \Gamma_{\a} (N) \bigr)
\xar{\mrm{R}\opn{Hom}(1, \sigma^{\mrm{R}}_N)}
\mrm{R}\opn{Hom}_A \bigl( \mrm{R} \Gamma_{\a} (M), N \bigr) 
\\
& \qquad\xar{\mrm{R}\opn{Hom}(1, \tau^{\mrm{L}}_N)}
\mrm{R}\opn{Hom}_A \bigl( \mrm{R} \Gamma_{\a} (M), \mrm{L} \Lambda_{\a} (N) \bigr) 
\xleftarrow{\mrm{R}\opn{Hom}(\sigma^{\mrm{R}}_M, 1)}
\\ & \qquad  \qquad
\mrm{R}\opn{Hom}_A \bigl( M, \mrm{L} \Lambda_{\a} (N) \bigr)
\xleftarrow{\mrm{R}\opn{Hom}(\tau^{\mrm{L}}_M, 1)}
\mrm{R}\opn{Hom}_A \bigl( \mrm{L} \Lambda_{\a} (M), \mrm{L} \Lambda_{\a} (N) \bigr)
\end{aligned} \]
in $\mrm{D}(\opn{Mod} A)$ are isomorphisms.
\end{enumerate}
\end{thm}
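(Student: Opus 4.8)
The plan is to reduce the whole statement to part~(1), and to prove part~(1) by a direct computation with the telescope complex in which weak proregularity is used exactly twice --- once, formally, on the torsion side, and once, less formally, on the completion side. Write $T := \opn{Tel}(A;\mathbf{a})$. I would begin with three observations that do not use weak proregularity. First, $T$ is a bounded complex of free $A$-modules, hence K-flat and K-projective, so $T\otimes_A(-)$ and $\opn{Hom}_A(T,-)$ already preserve quasi-isomorphisms and compute $T\otimes^{\mrm{L}}_A(-)$ and $\mrm{R}\opn{Hom}_A(T,-)$. Second, there is an explicit quasi-isomorphism $T\xrightarrow{\simeq}\opn{K}^{\vee}_{\infty}(A;\mathbf{a})$ onto the infinite dual Koszul complex $\bigotimes_{i=1}^{n}(A\to A_{a_i})$, read off directly from the formula for $\d$; since localization is idempotent ($A_{a}\otimes_A A_{a}\cong A_{a}$) this gives $T\otimes_A T\simeq T$, and there is a natural augmentation $\epsilon\colon T\to A$. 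Third, granting these, derived Hom--tensor adjunction together with $T\otimes_A T\simeq T$ shows that the functors $T\otimes_A(-)$ and $\opn{Hom}_A(T,-)$ satisfy, among themselves, the analogues of parts~(2) and~(3); so once part~(1) is proved, parts~(2) and~(3) will follow by substituting its two isomorphisms and chasing $\epsilon$ through the maps $\sigma^{\mrm{R}}$ and $\tau^{\mrm{L}}$.

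For the torsion formula $\mrm{R}\Gamma_{\mfrak{a}}(M)\cong T\otimes_A M$, a direct computation valid for any $A$-module $M$ identifies $H^0\bigl(\opn{K}^{\vee}_{\infty}(A;\mathbf{a})\otimes_A M\bigr)=\bigcap_{i}\Gamma_{(a_i)}(M)=\Gamma_{\mfrak{a}}(M)$, while the definition of weak proregularity says precisely that $H^k(T\otimes_A I)=0$ for all $k\neq 0$ whenever $I$ is injective; hence for injective $I$ one has $T\otimes_A I\simeq\Gamma_{\mfrak{a}}(I)$, concentrated in degree~$0$. For a general complex $M$ I would take a K-injective resolution $M\xrightarrow{\simeq}I$ with injective components, use K-flatness of $T$ to get $T\otimes_A M\simeq T\otimes_A I$, and then observe that the bounded double complex $T\otimes_A I$ --- each of whose columns is $T$ tensored with an injective module, hence has cohomology the $\mfrak{a}$-torsion of that module concentrated in degree~$0$ --- is quasi-isomorphic to $\Gamma_{\mfrak{a}}(I)=\mrm{R}\Gamma_{\mfrak{a}}(M)$. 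Tracking $\epsilon$ shows this isomorphism is functorial and intertwines $\sigma^{\mrm{R}}_M$ with $\epsilon\otimes\opn{id}_M$.

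The completion formula $\mrm{L}\Lambda_{\mfrak{a}}(M)\cong\opn{Hom}_A(T,M)$ is, I expect, the main obstacle, since it is \emph{not} formally dual to the torsion case: the free modules occurring in $T$ have countable, not finite, rank, so $\opn{Hom}_A(T,-)$ involves products $\prod_{\mbb{N}}$ rather than direct sums, and weak proregularity must be invoked a second time --- now in the equivalent form that the pro-system $\{H_i(\opn{K}(A;\mathbf{a}^m))\}_m$ of Koszul homologies is pro-zero for $i>0$. Using this, one shows that for a flat $A$-module $P$ the complex $\opn{Hom}_A(T,P)$ has cohomology concentrated in degree~$0$, equal to $\Lambda_{\mfrak{a}}(P)$, the potential higher cohomology being $\varprojlim^i$-type contributions controlled exactly by those higher Koszul pro-homologies; passing to a K-flat resolution $P\xrightarrow{\simeq}M$ then yields $\opn{Hom}_A(T,M)\simeq\Lambda_{\mfrak{a}}(P)=\mrm{L}\Lambda_{\mfrak{a}}(M)$, functorially and compatibly with $\tau^{\mrm{L}}$. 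With both formulas of part~(1) established, substituting them into the MGM and Greenlees--May diagrams turns those diagrams into the Hom--tensor adjunction and idempotence isomorphisms recorded in the third observation above; this completes parts~(2) and~(3), modulo the bookkeeping that $\sigma^{\mrm{R}}$ and $\tau^{\mrm{L}}$ correspond, under part~(1), to the canonical maps built from the augmentation $\epsilon$.
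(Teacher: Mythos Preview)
Your outline is essentially correct and matches the strategy of the cited source, but you should be aware that the paper does not actually prove this theorem: its entire proof consists of pointers to \cite{PSY1} (Proposition~4.8 and Corollary~4.25 for part~(1), Lemmas~6.2 and~6.6 for part~(2), Theorem~6.12 for part~(3)). What you have written is a faithful sketch of the arguments \emph{in} \cite{PSY1}, so in that sense you are doing more than the paper does, and doing it along the same lines as the reference it defers to.

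A few small points worth tightening. First, when you write ``the bounded double complex $T\otimes_A I$'' you mean bounded in the $T$-direction only; the K-injective resolution $I$ need not be bounded, but boundedness of $T$ is enough for the column filtration to converge, so the spectral-sequence (or direct totalization) argument goes through. Second, for the completion side you pass from ``a flat $A$-module $P$'' to ``a K-flat resolution $P\xrightarrow{\simeq}M$''; in \cite{PSY1} the actual statement (Corollary~4.23) is that $\opn{tel}_{\mathbf{a},P}\colon \opn{Hom}_A(T,P)\to\Lambda_{\mfrak{a}}(P)$ is a quasi-isomorphism for any K-flat complex $P$, proved via the pro-zero Koszul homology characterization you invoke --- so your sketch is right in spirit but should be phrased at the level of K-flat complexes rather than single flat modules. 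Third, your reduction of parts~(2) and~(3) to part~(1) via $T\otimes_A T\simeq T$ and Hom--tensor adjunction is exactly how \cite{PSY1} proceeds (Lemmas~6.2, 6.6, 6.9 and Theorem~6.12), including the bookkeeping that identifies $\sigma^{\mrm{R}}$, $\tau^{\mrm{L}}$ with the maps induced by the augmentation; this is routine but does require care, as you note.
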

\begin{proof}
\begin{enumerate}
\item The first claim is \cite{PSY1}, Proposition 4.8. The second claim is \cite{PSY1}, Corollary 4.25.
\item This is \cite{PSY1}, Lemma 6.2 and Lemma 6.6.
\item This is \cite{PSY1}, Theorem 6.12.
\end{enumerate}
\end{proof}

\begin{rem}
Under additional mild assumptions, this theorem was proved in \cite{AJL}. See also \cite{SC}. Replacing total derived functors by classical derived functor, this result originated in \cite{GM2}, generalizing the classical Matlis duality.
\end{rem}

\begin{rem}
By \cite{SC}, Theorem 1.1, the first statement in the above theorem is actually equivalent to $\a$ being weakly proregular.
\end{rem}

\begin{dfn}
A preadic ring $(A,\a)$ is called a weakly proregular preadic ring if $\a$ is weakly proregular. The category of weakly proregular preadic rings, denoted by $\mathcal{PAW}$ is a full subcategory of the category $\mathcal{PA}$ of preadic rings. The full subcategory of $\mathcal{PAW}$ which contains the weakly proregular preadic rings $(A,\a)$ such that $\Lambda_{\a}(A)$ is noetherian is denoted by $\mathcal{PAWN}$.
\end{dfn}

\begin{exa}
Let $\k$ be a noetherian ring. Suppose that $(A,\a)$ and $(B,\b)$ are flat preadic $\k$-algebras which are essentially formally of finite type. Then $(A\otimes_{\k} B, \a\otimes_{\k} B + A\otimes_{\k} \b)$ is a preadic ring with noetherian completion. We will show below that $(A\otimes_{\k} B, \a\otimes_{\k} B + A\otimes_{\k} \b) \in \mathcal{PAWN}$.
\end{exa}

\begin{rem}\label{remark-wpr-flat}
If $(A,\a)$ is a weakly proregular preadic ring, $(B,\b)$ is a preadic flat $(A,\a)$-algebra, and $\b = \a \cdot B$, then $\b$ is also weakly proregular. This is because any injective $B$-module is also injective over $A$, and since if $\mathbf{a}$ is a finite sequence that generates $\a$, and $\mathbf{b}$ is its image in $B$, then there is an isomorphism of complexes $\opn{Tel}(A;\mathbf{a}) \otimes_A B \cong \opn{Tel}(B;\mathbf{b})$. This fact was first observed in \cite{AJL}, Example 3.0(B).
\end{rem}

Recall that if $(A,\a)$ is a preadic noetherian ring, and if $I$ is an injective $A$-module, then $\Gamma_{\a}(I)$ is also an injective $A$-module. (\cite{HA}, Lemma 3.2). We now state and prove a weaker form of this fact in the case when $A$ is not necessarily noetherian, but $\Lambda_{\a}(A)$ is.

If $(A,\a)$ is a preadic ring, and $M$ is an $A$-module, then $M$ is called $\a$-flasque if for each $k>0$, we have that $\mrm{H}_{\a}^k(M) = 0 $, where $\mrm{H}_{\a}^k(M):= \mrm{H}^k(\mrm{R}\Gamma_{\a}(M))$. Any injective module is $\a$-flasque. If $M$ is $\a$-flasque, then the canonical morphism $\Gamma_{\a} (M) \to \mrm{R}\Gamma_{\a} (M)$ is an isomorphism. (\cite{YZ2}, Proposition 1.21). The direct limit of $\a$-flasque modules is $\a$-flasque. (\cite{YZ2}, Proposition 1.20).

\begin{prop}\label{prop-tor-of-inj-is-flasque}
Let $(A,\a)$ be a preadic ring. Let $\b\subseteq A$ be a finitely generated ideal. Suppose that the ring $\widehat{A} = \Lambda_{\mfrak{a}}(A)$ is noetherian. Let $\widehat{\mfrak{b}} = \mfrak{b}\widehat{A}$. Then for any injective $A$-module $I$, the $\widehat{A}$-module $\Gamma_{\mfrak{a}} I$ is $\widehat{\mfrak{b}}$-flasque.
\end{prop}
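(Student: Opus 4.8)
The plan is to reduce the claim to the noetherian case via the structure of $I$ as a torsion module. First I would observe that $\Gamma_{\a} I$ is an $\a$-torsion $A$-module, and being a submodule of the injective module $I$ it is itself... well, not necessarily injective over $A$, but we can say more. The key structural input is that $\Gamma_{\a}$ of an injective is a direct sum of injective hulls of residue fields at primes containing $\a$ when $A$ is noetherian; since $A$ is only assumed to have noetherian completion, I would instead argue as follows. Write $J := \Gamma_{\a} I$. Since $\a$ is finitely generated, every element of $J$ is annihilated by some power of $\a$, hence by some power of $\widehat{\a} = \a\widehat A$ once we view $J$ as an $\widehat A$-module (the $A$-module structure on $J$ extends uniquely to an $\widehat A$-module structure because $J$ is $\a$-adically complete — indeed discrete and $\a$-power-torsion — so scalars in $\widehat A$ act through the quotients $\widehat A/\widehat{\a}^n = A/\a^n$). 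Thus $J$ is a genuine $\a$-torsion, equivalently $\widehat{\a}$-torsion, $\widehat A$-module.

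Next I would establish that $J$ is an injective object in the category of $\widehat{\a}$-torsion $\widehat A$-modules (equivalently, that it is $\widehat{\a}$-flasque, which is weaker and is all we need). The cleanest route: $\Gamma_{\a}(I) = \varinjlim_n \opn{Hom}_A(A/\a^n, I)$, and since $A/\a^n = \widehat A/\widehat{\a}^n$ is a finitely presented $\widehat A$-module and $I$ is injective over $A$, each $\opn{Hom}_A(A/\a^n, I) = \opn{Hom}_{\widehat A}(\widehat A/\widehat{\a}^n, I)$ is a module over the noetherian ring $\widehat A/\widehat{\a}^n$ on which $\widehat{\a}$ acts nilpotently. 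For $\widehat{\b}$-torsion purposes on a noetherian base, a module on which $\widehat{\a}$ acts nilpotently is $\widehat{\b}$-flasque provided it is injective relative to the subcategory it lives in; more efficiently, I would invoke that over the noetherian ring $\widehat A$, the functor $\Gamma_{\widehat{\b}}$ applied to an injective $\widehat A$-module is injective (the cited \cite{HA}, Lemma 3.2), combined with the fact recalled just before the proposition that a direct limit of $\widehat{\b}$-flasque modules is $\widehat{\b}$-flasque (\cite{YZ2}, Proposition 1.20). So it suffices to show each $\opn{Hom}_A(A/\a^n, I)$ is $\widehat{\b}$-flasque as an $\widehat A$-module.

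To get that, I would use that $\opn{Hom}_A(A/\a^n, I)$ is an injective $A/\a^n$-module (restriction of scalars along a surjection preserves injectivity via $\opn{Hom}_A(A/\a^n,-)$, applied to the injective $I$), and $A/\a^n = \widehat A/\widehat{\a}^n$ is a noetherian ring; hence it is an injective $\widehat A/\widehat{\a}^n$-module. Now $\Gamma_{\widehat{\b}}$ computed over $\widehat A$ agrees with $\Gamma_{\bar{\b}}$ computed over the quotient $\widehat A/\widehat{\a}^n$ (both are "kill everything annihilated by a power of $\b$", and $\mrm{R}\Gamma$ commutes with the base change to the quotient for modules already killed by $\widehat{\a}^n$ — concretely, $\mrm{R}\Gamma_{\widehat{\b}}$ of such a module can be computed by the telescope/Koszul complex on generators of $\b$, which only sees the $\widehat A/\widehat{\a}^n$-module structure, using Theorem \ref{thm-ring-mgm}(1) and noetherianity). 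By \cite{HA}, Lemma 3.2 applied over the noetherian ring $\widehat A/\widehat{\a}^n$, $\Gamma_{\bar{\b}}$ of the injective $\widehat A/\widehat{\a}^n$-module $\opn{Hom}_A(A/\a^n,I)$ is again injective over $\widehat A/\widehat{\a}^n$, in particular $\bar{\b}$-flasque; equivalently $\opn{Hom}_A(A/\a^n,I)$ is $\widehat{\b}$-flasque over $\widehat A$. Passing to the direct limit over $n$ gives that $J = \Gamma_{\a} I$ is $\widehat{\b}$-flasque, which is the assertion.

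The main obstacle I anticipate is the bookkeeping around the two topologies: one must be careful that $\mrm{R}\Gamma_{\widehat{\b}}$ over the non-noetherian-looking but actually-noetherian ring $\widehat A$, restricted to modules annihilated by a power of $\widehat{\a}$, really does coincide with $\mrm{R}\Gamma$ over the honest noetherian quotient $\widehat A/\widehat{\a}^n$ — and that "injective over $A/\a^n$" transfers correctly — but both follow from the telescope-complex description of $\mrm{R}\Gamma$ in Theorem \ref{thm-ring-mgm}(1) together with the compatibility $\opn{Tel}(A;\mathbf b)\otimes_A B \cong \opn{Tel}(B;\mathbf b)$ recorded above, so no genuinely new idea is required beyond assembling these facts in the right order.
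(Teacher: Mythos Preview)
Your approach is correct and essentially identical to the paper's: both write $\Gamma_{\a} I$ as the direct limit of the injective $A_j$-modules $I_j = \opn{Hom}_A(A/\a^{j+1}, I)$ over the noetherian quotients $A_j = A/\a^{j+1} \cong \widehat{A}/\widehat{\a}^{j+1}$, use telescope-complex base change (weak proregularity on both $\widehat{A}$ and $A_j$) to identify $H^k_{\widehat{\b}}(I_j)$ with $H^k_{\bar{\b}}(I_j)$, and conclude since injectives are flasque and direct limits of flasque modules are flasque. Your invocation of \cite{HA}, Lemma~3.2 is a detour---you need that $I_j$ itself is $\bar{\b}$-flasque (immediate from injectivity over $A_j$), not that $\Gamma_{\bar{\b}}(I_j)$ is injective---but this does not affect the correctness of the argument.
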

\begin{proof}
Let $A_j = A/\mfrak{a}^{j+1}$. Since $\mfrak{a}$ is finitely generated, there is an isomorphism $A_j \cong \widehat{A}/(\mfrak{a}\widehat{A})^{j+1}$. Note that by assumption, $A_j$ is noetherian. Let $\widehat{\mfrak{b}}_j$ be the image of $\widehat{\mfrak{b}}$ in $A_j$. Let $I_j = \opn{Hom}_A(A_j,I)$. Then $\Gamma_{\mfrak{a}} I = \varinjlim I_j$, so it is enough to show that $I_j$ is $\widehat{\mfrak{b}}$-flasque. Note also that $I_j$ is an injective $A_j$-module. Let $k>0$,  
let $\widehat{\mathbf{b}}$ be a finite sequence generating $\widehat{\mfrak{b}}$, and let $\mathbf{b}_j$ be its image in $A_j$.
Since $\widehat{A}$ is noetherian, $\widehat{\mfrak{b}}$ is weakly proregular, so that
\[
H^k_{\widehat{\mfrak{b}}} (I_j) \cong H^k(\opn{Tel}(\widehat{A};\widehat{\mathbf{b}}) \otimes_{\widehat{A}} I_j) \cong
H^k(\opn{Tel}(A_j;\mathbf{b}_j)\otimes_{A_j} I_j) \cong
H^k_{\widehat{\mfrak{b}}_j}(I_j)
\]
where the last isomorphism follows from the fact that $A_j$ is noetherian, so that $\widehat{\mfrak{b}}_j$ is weakly proregular. Since $I_j$ is injective over $A_j$, it follows that $H^k_{\widehat{\mfrak{b}}_j}(I_j) = 0$ for all $k>0$, which proves the claim. 
\end{proof}

Here is the main result of this section:

\begin{thm}\label{wpr-thm}
Let $(A,\a) \in \mathcal{PAWN}$ be a preadic weakly proregular ring with noetherian completion. Let $\b \subseteq A$ be a finitely generated ideal containing $\a$. Then $\b$ is weakly proregular, and $(A,\b) \in \mathcal{PAWN}$.
\end{thm}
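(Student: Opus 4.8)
The plan is to exhibit an explicit generating sequence of $\b$ and check its weak proregularity by transporting the telescope-complex computation into the noetherian completion $\widehat{A}:=\Lambda_{\a}(A)$, where Proposition~\ref{prop-tor-of-inj-is-flasque} is available. First I would dispose of the cheap half of the statement: $A/\b$ is a quotient of $A/\a$, and $A/\a$ is noetherian because $\Lambda_{\a}(A)$ is (by the quoted fact that $\Lambda_{\a}(A)$ is noetherian iff $A/\a$ is), so $A/\b$ is noetherian and hence $\Lambda_{\b}(A)$ is noetherian; thus once $\b$ is shown to be weakly proregular we immediately obtain $(A,\b)\in\mathcal{PAWN}$.

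For weak proregularity I would choose a weakly proregular finite sequence $\mathbf{a}=(a_1,\dots,a_n)$ generating $\a$ (one exists since $\a$ is weakly proregular), pick any finite sequence $\mathbf{c}=(c_1,\dots,c_m)$ with $\a+(\mathbf{c})=\b$ (e.g.\ any generating set of $\b$), and set $\mathbf{b}:=(a_1,\dots,a_n,c_1,\dots,c_m)$, which generates $\b$. To see $\mathbf{b}$ is weakly proregular, fix an injective $A$-module $I$; since the telescope complex sits in nonnegative degrees it suffices to show $H^k(\opn{Tel}(A;\mathbf{b})\otimes_A I)=0$ for $k>0$. Using $\opn{Tel}(A;\mathbf{b})\cong\opn{Tel}(A;\mathbf{a})\otimes_A\opn{Tel}(A;\mathbf{c})$ together with weak proregularity of $\mathbf{a}$, the complex $\opn{Tel}(A;\mathbf{a})\otimes_A I$ is concentrated in degree $0$ with value $\Gamma_{\a}(I)$ (by part~(1) of Theorem~\ref{thm-ring-mgm} and $I$ being $\a$-flasque), so the truncation map $\Gamma_{\a}(I)\to\opn{Tel}(A;\mathbf{a})\otimes_A I$ is a quasi-isomorphism; tensoring with the bounded complex of free modules $\opn{Tel}(A;\mathbf{c})$ preserves quasi-isomorphisms and yields $\opn{Tel}(A;\mathbf{b})\otimes_A I\simeq\Gamma_{\a}(I)\otimes_A\opn{Tel}(A;\mathbf{c})$. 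Now $\Gamma_{\a}(I)$ is naturally a $\widehat{A}$-module, since each of its elements is annihilated by a power of $\a$, and by the base-change property of the telescope complex $\Gamma_{\a}(I)\otimes_A\opn{Tel}(A;\mathbf{c})\cong\Gamma_{\a}(I)\otimes_{\widehat{A}}\opn{Tel}(\widehat{A};\widehat{\mathbf{c}})$, where $\widehat{\mathbf{c}}$ is the image of $\mathbf{c}$ in $\widehat{A}$; since $\widehat{A}$ is noetherian, part~(1) of Theorem~\ref{thm-ring-mgm} identifies this with $\mrm{R}\Gamma_{\widehat{\c}}(\Gamma_{\a}(I))$ for $\widehat{\c}:=(\widehat{\mathbf{c}})\widehat{A}$. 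Finally, Proposition~\ref{prop-tor-of-inj-is-flasque} applied to the finitely generated ideal $(\mathbf{c})\subseteq A$ says precisely that $\Gamma_{\a}(I)$ is $\widehat{\c}$-flasque, i.e.\ $\mrm{H}^k_{\widehat{\c}}(\Gamma_{\a}(I))=0$ for all $k>0$; hence $H^k(\opn{Tel}(A;\mathbf{b})\otimes_A I)=0$ for all $k\neq0$, so $\mathbf{b}$, and therefore $\b$, is weakly proregular.

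The crux, and the only step requiring a genuine idea, is this passage to $\widehat{A}$: once the weakly proregular part $\mathbf{a}$ of the sequence has been ``absorbed'' into $\Gamma_{\a}(I)$, what is left is a $\widehat{A}$-module, and it is the noetherianness of $\widehat{A}$ — channeled through Theorem~\ref{thm-ring-mgm} and Proposition~\ref{prop-tor-of-inj-is-flasque} — that controls the leftover sequence $\mathbf{c}$. The supporting facts (that $\Gamma_{\a}(I)$ carries a canonical $\widehat{A}$-structure, that $\opn{Tel}(A;\mathbf{c})\otimes_A\widehat{A}\cong\opn{Tel}(\widehat{A};\widehat{\mathbf{c}})$, and the associativity/base-change bookkeeping for the tensor products) I would only indicate, since they are routine; I do not anticipate any obstacle beyond arranging these reductions in the right order.
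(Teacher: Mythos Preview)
Your argument is correct and follows essentially the same route as the paper's proof: absorb the weakly proregular part $\mathbf{a}$ into $\Gamma_{\a}(I)$, base-change the remaining telescope complex to the noetherian ring $\widehat{A}$, and invoke Proposition~\ref{prop-tor-of-inj-is-flasque} for flasqueness. The only cosmetic difference is that the paper starts from an arbitrary generating sequence $\mathbf{b}$ of $\b$ and uses the homotopy equivalence $\opn{Tel}(A;\mathbf{b})\simeq\opn{Tel}(A;(\mathbf{a},\mathbf{b}))$ to adjoin $\mathbf{a}$, whereas you build the concatenated sequence $(\mathbf{a},\mathbf{c})$ from the outset and apply Proposition~\ref{prop-tor-of-inj-is-flasque} to the possibly smaller ideal $(\mathbf{c})$ rather than to $\b$ itself; both work.
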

\begin{proof}
We keep the notations of the above proposition. It is clear that $A/\b$ is noetherian, so it is enough to show that $\b$ is weakly proregular.
Let $\mathbf{a}$ be a finite sequence generating $\mfrak{a}$, and let $\mathbf{b}$ be a finite sequence generating $\mfrak{b}$. Let $I$ be an injective $A$-module. It is enough to show that $H^k(\opn{Tel}(A;\mathbf{b}) \otimes_A I) = 0$ for all $k > 0$. 
\par
Since $\mfrak{a}\subseteq\mfrak{b}$, the ideal generated by the concatenated sequence $(\mathbf{a},\mathbf{b})$ is equal to the ideal generated by $\mathbf{b}$. Hence, there is a homotopy equivalence
$\opn{Tel}(A;(\mathbf{a},\mathbf{b})) \cong \opn{Tel}(A;\mathbf{b})$.
Hence, there is an isomorphism in $\mrm{D}(\opn{Mod} A)$
\[
\opn{Tel}(A;\mathbf{b}) \otimes_A I \cong
\opn{Tel}(A;(\mathbf{a},\mathbf{b})) \otimes_A I \cong
\opn{Tel}(A;\mathbf{b}) \otimes_A \opn{Tel}(A;\mathbf{a}) \otimes_A I 
\]
Since $\mathbf{a}$ is a weakly proregular sequence, $I$ is an injective $A$-module, and $\opn{Tel}(A;\mathbf{b})$ is a bounded complex of flat modules, the latter is isomorphic in $\mrm{D}(\opn{Mod} A)$ to
\[
\opn{Tel}(A;\mathbf{b}) \otimes_A \Gamma_{\mfrak{a}} I
\]
Thus, it is enough to show that all the cohomologies (except the zeroth) of the complex of $A$-modules $\opn{Tel}(A;\mathbf{b}) \otimes_A \Gamma_{\mfrak{a}} I$ vanish. Note that since $\Gamma_{\mfrak{a}} I \in \opn{Mod}(\widehat{A})$, this complex also has the structure of a complex of $\widehat{A}$-modules. There is an isomorphism of complexes
\[
\opn{Tel}(A;\mathbf{b}) \otimes_A \Gamma_{\mfrak{a}} I \cong
\opn{Tel}(A;\mathbf{b}) \otimes_A (\widehat{A} \otimes_{\widehat{A}} \Gamma_{\mfrak{a}} I) \cong (\opn{Tel}(A;\mathbf{b})\otimes_A \widehat{A}) \otimes_{\widehat{A}}\Gamma_{\mfrak{a}} I
\]
Letting $\widehat{\mathbf{b}}$ be the image of the sequence $\mathbf{b}$ under the map $A\to \widehat{A}$, we obtain an isomorphism of complexes
\[
(\opn{Tel}(A;\mathbf{b})\otimes_A \widehat{A}) \otimes_{\widehat{A}}\Gamma_{\mfrak{a}} I \cong
\opn{Tel}(\widehat{A};\widehat{\mathbf{b}}) \otimes_{\widehat{A}} \Gamma_{\mfrak{a}}I
\]
Consider the image of $\opn{Tel}(\widehat{A};\widehat{\mathbf{b}}) \otimes_{\widehat{A}} \Gamma_{\mfrak{a}}I$ in $\mrm{D}(\opn{Mod}\widehat{A})$. By weak proregularity of the sequence $\widehat{\mathbf{b}}$, there is an isomorphism in $\mrm{D}(\opn{Mod}\widehat{A})$:
\[
\opn{Tel}(\widehat{A};\widehat{\mathbf{b}}) \otimes_{\widehat{A}} \Gamma_{\mfrak{a}}I \cong
\mrm{R}\Gamma_{\widehat{\mfrak{b}}} \Gamma_{\mfrak{a}}I
\]
By the above proposition, this is isomorphic in $\mrm{D}(\opn{Mod}\widehat{A})$ to
\[
\Gamma_{\widehat{\mfrak{b}}} \Gamma_{\mfrak{a}}I
\]
Since this complex is clearly concentrated in degree zero, it follows that all of its cohomologies except the zeroth vanish, which proves the result.
\end{proof}

\begin{cor}\label{cor-wpr-of-efft}
Let $\k$ be a noetherian ring. Let $(A,\a)$ be a flat preadic $\k$-algebra which is essentially formally of finite type. Let $(B,\b)$ be a flat preadic $\k$-algebra which is noetherian. Then $(A\otimes_{\k} B, \a \otimes_{\k} B + A\otimes_{\k} \b) \in \mathcal{PAWN}$.
\end{cor}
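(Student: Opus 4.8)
The plan is to reduce the corollary to the weak proregularity of $\a$ in $A$ itself — the case $(B,\b)=(\k,0)$ — and then to settle that case by a Koszul‑homology argument exploiting that, although $A$ need not be noetherian, all the rings $A/\a^n$ are. For the reduction, put $C:=A\otimes_\k B$, $\c_0:=\a\otimes_\k B$ and $\c:=\a\otimes_\k B+A\otimes_\k\b$. Both $\c_0$ and $\c$ are finitely generated (by the images of finite generating sequences of $\a$ and of $\b$) and $\c_0\subseteq\c$, so by Theorem \ref{wpr-thm} it is enough to show $(C,\c_0)\in\mcal{PAWN}$. Since $C/\c_0\cong(A/\a)\otimes_\k B$ and $A/\a$ is essentially of finite type over $\k$ while $B$ is noetherian, $C/\c_0$ is a localization of a finitely generated $B$‑algebra, hence noetherian, so $\Lambda_{\c_0}(C)$ is noetherian. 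For weak proregularity of $\c_0$, note that $B$ flat over $\k$ makes $C$ flat over $A$ and that $\c_0=\a\cdot C$; by Remark \ref{remark-wpr-flat} it therefore suffices to know that $\a$ is weakly proregular in $A$. Thus I am reduced to showing: \emph{if $\k$ is noetherian, $A$ is a flat $\k$‑algebra, and $\a\subseteq A$ is a finitely generated ideal with $A/\a$ essentially of finite type over $\k$, then $\a$ is weakly proregular in $A$}.

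For this I would fix a finite sequence $\mathbf{a}=(a_1,\dots,a_r)$ generating $\a$ and use the Koszul characterization of weak proregularity (\cite{AJL}): $\a$ is weakly proregular iff for every $i\geq1$ the inverse system $\{\mrm{H}_i(\opn{K}(A;\mathbf{a}^{(s)}))\}_s$ of Koszul homologies, with $\mathbf{a}^{(s)}:=(a_1^s,\dots,a_r^s)$ and the canonical transition maps, is pro‑zero. Write $H_i^{(s)}:=\mrm{H}_i(\opn{K}(A;\mathbf{a}^{(s)}))$. The decisive point is that each $A/\a^n$ is noetherian — this follows, as recalled in the proof of Proposition \ref{prop-tor-of-inj-is-flasque}, from $A/\a$ noetherian and $\a$ finitely generated — and $H_i^{(s)}$ is annihilated by $(a_1^s,\dots,a_r^s)\supseteq\a^{rs}$, hence is a module over the noetherian ring $A/\a^{rs}$. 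Now pass to $\widehat{A}:=\Lambda_\a(A)$, which is noetherian because $A/\a$ is. Using the isomorphisms $A/\a^n\cong\widehat{A}/\widehat{\a}{}^{\,n}$ and a Mittag--Leffler argument (all modules in sight being noetherian, the relevant $\varprojlim^1$ vanish), the Koszul complex over $\widehat{A}$ is the inverse limit over $n$ of the Koszul complexes over $A/\a^n$, and the natural maps $\opn{K}(A;\mathbf{a}^{(s)})\to\opn{K}(\widehat{A};\widehat{\mathbf{a}}^{(s)})$ induce a \emph{pro‑}isomorphism $\{H_i^{(s)}\}_s\cong\{\mrm{H}_i(\opn{K}(\widehat{A};\widehat{\mathbf{a}}^{(s)}))\}_s$ of inverse systems. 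Granting this, weak proregularity of $\widehat{\a}$ in the noetherian ring $\widehat{A}$ makes $\{\mrm{H}_i(\opn{K}(\widehat{A};\widehat{\mathbf{a}}^{(s)}))\}_s$ pro‑zero for each $i\geq1$; transporting this back shows $\{H_i^{(s)}\}_s$ is pro‑zero, so $\a$ is weakly proregular in $A$, and with the reduction this finishes the corollary.

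The hard part is justifying the pro‑isomorphism $\{H_i^{(s)}\}_s\cong\{\mrm{H}_i(\opn{K}(\widehat{A};\widehat{\mathbf{a}}^{(s)}))\}_s$. Forming homology does not commute with the right‑exact functors $-\otimes_A(A/\a^n)$, so $H_i^{(s)}$ is not simply recovered from the truncations $A/\a^n$; one must show that the discrepancy between $H_i^{(s)}$ and $\mrm{H}_i(\opn{K}(A/\a^n;\bar{\mathbf{a}}^{(s)}))$ — controlled by intersections $\a^m(\bwedge^i A^r)\cap Z_i^{(s)}$, with $Z_i^{(s)}$ the module of Koszul $i$‑cycles — dies after moving far enough in the $s$‑direction. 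That is an Artin--Rees estimate, available in the noetherian ring $\widehat{A}$ but not in $A$; the technical core is to make the Artin--Rees constants uniform and to descend them along $A/\a^n=\widehat{A}/\widehat{\a}{}^{\,n}$, choosing the truncation level large relative to both $s$ and the annihilator exponent of $H_i^{(s)}$. Everything else — reducing general $B$ to $B=\k$, and the noetherianity of $(A/\a)\otimes_\k B$ — is formal, using only Theorem \ref{wpr-thm}, Remark \ref{remark-wpr-flat}, and standard facts about essentially of finite type algebras.
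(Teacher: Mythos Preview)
Your reduction and the paper's are mirror images: you take the intermediate ideal $\c_0=\a\otimes_\k B$ and try to push weak proregularity up from the $A$-side, whereas the paper takes $A\otimes_\k\b$ and pushes it up from the $B$-side. Concretely, the paper notes that $\b$ is weakly proregular because $B$ is \emph{noetherian}, that $B\to A\otimes_\k B$ is flat because $A$ is flat over $\k$, and then applies Remark~\ref{remark-wpr-flat} and Theorem~\ref{wpr-thm}. The whole point of choosing that side is to avoid ever asking whether $\a$ is weakly proregular in $A$.

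That choice is not cosmetic: your key reduction target --- that $\a$ is weakly proregular in $A$ whenever $\k$ is noetherian, $A$ is $\k$-flat, $\a$ is finitely generated, and $A/\a$ is essentially of finite type over $\k$ --- is \emph{false}, so the Artin--Rees/pro-isomorphism step you flag as ``the hard part'' cannot be completed in general. Take $\k$ a field, set $M=\k[x,x^{-1}]/\k[x]$, let $A=\k[x]\ltimes M$ be the trivial square-zero extension, and put $\a=(x)$. Then $A$ is free over $\k$, $\a$ is principal, and $A/\a\cong\k$; but
\[
H_1\bigl(\opn{K}(A;x^s)\bigr)=\operatorname{Ann}_A(x^s)=0\oplus x^{-s}\k[x]/\k[x],
\]
and the transition map (multiplication by $x^{t-s}$) sends the basis $\bar x^{-t},\dots,\bar x^{-(t-s+1)}$ onto $\bar x^{-s},\dots,\bar x^{-1}$, i.e.\ is \emph{surjective} onto a nonzero target for every $t\ge s$. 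So the Koszul pro-system is not pro-zero and $\a$ is not weakly proregular. Your sketch implicitly assumes the map $H_i^{(s)}\to H_i(\opn{K}(\widehat A;\widehat{\mathbf a}^{(s)}))$ is a pro-isomorphism; here $\widehat A=\k[[x]]$ and the right-hand side is zero, while the left-hand side is the nonzero pro-constant system just described.

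The paper's first line asserts that $A$ itself is noetherian; this is what makes either route short (and in particular makes $\a$ weakly proregular trivially). That assertion is correct once $A$ is $\a$-adically complete --- as it is in every application in the paper --- but, as the example above shows, it is not forced by the preadic hypotheses alone. Under the additional assumption that $A$ is noetherian your argument collapses to one line (no Koszul homology needed), and then your route and the paper's are symmetric and equally valid.
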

\begin{proof}
Since $\k$ is noetherian, and $A$ is essentially formally of finite type over $\k$, $A$ is also noetherian. Since $B$ is noetherian, $\b$ is weakly proregular. Since $B$ is flat over $\k$, the map $B \to A\otimes_{\k} B$ is also flat. It follows from Remark \ref{remark-wpr-flat}, that the ideal $A\otimes_{\k} \b \subseteq A\otimes_{\k} B$ is weakly proregular. Because of flatness of $\k \to A$, it follows that $A\otimes_{\k} \b = \ker (A\otimes_{\k} B \to A\otimes_{\k} B/\b)$. Since $B/\b$ is essentially of finite type over $\k$, it follows that $A\otimes_{\k} B/\b$ is noetherian. Hence, $\Lambda_{A\otimes_{\k} \b} (A\otimes_{\k} B)$ is also noetherian. The claim now follows from the above theorem.
\end{proof}

We end this section with a dual result of Proposition \ref{prop-tor-of-inj-is-flasque}, replacing injective modules with projective modules, and flasque modules with flat modules:
\begin{prop}\label{prop-completion-of-projective-is-flat}
Let $(A,\a)$ be a preadic ring. Suppose that the ring $\Lambda_{\a}(A)$ is noetherian. Let $P$ be a projective $A$-module. Then $\Lambda_{\a}(P)$ is a flat $\Lambda_{\a}(A)$-module.
\end{prop}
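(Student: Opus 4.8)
The plan is to base-change the problem to the Noetherian completed ring $\widehat{A} := \Lambda_{\a}(A)$, where it becomes the classical fact that the adic completion of a flat module over a Noetherian ring is flat.

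First I would reduce as follows. Note that $(A/\a^n)\otimes_A P = P/\a^n P$, so $\Lambda_{\a}(P) = \varprojlim_n P/\a^n P$; moreover $P$ is flat, being projective. Put $\widehat{A} = \Lambda_{\a}(A)$ and $\widehat{\a} = \a\widehat{A}$; by hypothesis $\widehat{A}$ is Noetherian, and it is $\widehat{\a}$-adically complete. Because $\a$ is finitely generated, the canonical maps induce isomorphisms of rings $A/\a^n \cong \widehat{A}/\widehat{\a}^{\,n}$, compatibly in $n$ (this is the identification already used in the proof of Proposition \ref{prop-tor-of-inj-is-flasque}). Let $P' := \widehat{A}\otimes_A P$, a projective --- in particular flat --- $\widehat{A}$-module. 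Then $P/\a^n P \cong \widehat{A}/\widehat{\a}^{\,n}\otimes_{\widehat{A}} P' = P'/\widehat{\a}^{\,n} P'$, compatibly in $n$, so that $\Lambda_{\a}(P) \cong \Lambda_{\widehat{\a}}(P')$ as $\widehat{A}$-modules. This reduces the proposition to the following assertion: if $\widehat{A}$ is Noetherian and $\widehat{\a}$-adically complete and $P'$ is a flat $\widehat{A}$-module, then $\Lambda_{\widehat{\a}}(P')$ is flat over $\widehat{A}$. (If desired one may further assume $P' = \widehat{A}^{(S)}$ is free, since $\Lambda_{\widehat{\a}}$ is additive and a direct summand of a flat module is flat, but this is not needed.)

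For the reduced statement I would appeal to the theory of flatness and completion over Noetherian rings; see \cite{CA}, Chapter III, \S 5. Concretely: write $N := \Lambda_{\widehat{\a}}(P') = \varprojlim_n P'/\widehat{\a}^{\,n}P'$. The transition maps of this inverse system are surjective, so $\varprojlim^1$ vanishes for it and its subsystems; using this together with the Artin--Rees lemma over the Noetherian ring $\widehat{A}$ and the flatness of $P'$, one checks that $N$ is $\widehat{\a}$-adically complete and that the natural maps $N/\widehat{\a}^{\,n}N \to P'/\widehat{\a}^{\,n}P'$ are isomorphisms. Since $P'$ is flat over $\widehat{A}$, each $P'/\widehat{\a}^{\,n}P'$ is a flat $\widehat{A}/\widehat{\a}^{\,n}$-module. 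The local criterion of flatness (\cite{CA}, Chapter III, \S 5) then shows that the $\widehat{\a}$-adically complete $\widehat{A}$-module $N$ is flat over $\widehat{A}$, which is exactly the required conclusion $\Lambda_{\a}(P)$ flat over $\Lambda_{\a}(A)$.

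The main obstacle is precisely this Noetherian input. Although the proposition is advertised as ``dual'' to Proposition \ref{prop-tor-of-inj-is-flasque}, the duality is imperfect: the latter rests on the fact that a direct limit of flasque modules is flasque, whereas the naive dual assertion --- that an inverse limit of flat modules is flat --- is false in general. One genuinely has to exploit the $\widehat{\a}$-adic structure of the inverse system $\{P'/\widehat{\a}^{\,n}P'\}$, the flatness of $P'$, and the Artin--Rees lemma (hence Noetherianness of $\widehat{A}$) in order to identify the quotients $N/\widehat{\a}^{\,n}N$ and bring the local flatness criterion to bear. By contrast, the reduction from $A$ to $\widehat{A}$ in the first step is purely formal, once one has the identification $A/\a^n \cong \widehat{A}/\widehat{\a}^{\,n}$.
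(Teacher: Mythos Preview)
Your overall strategy matches the paper's: reduce to free modules via additivity and the direct-summand argument, then invoke the fact that the $\a$-adic completion of a free module over a Noetherian ring is flat. The paper does exactly this in two lines, citing \cite{YE1}, Theorems~2.7 and~3.4(2), for the free case. Your explicit base-change to $\widehat{A}$ is a welcome clarification, since Yekutieli's results in \cite{YE1} are stated over a Noetherian base, and here only $\widehat{A}$ is assumed Noetherian; the identification $\Lambda_{\a}(F)\cong\Lambda_{\widehat{\a}}(\widehat{A}\otimes_A F)$ via $A/\a^n\cong\widehat{A}/\widehat{\a}^{\,n}$ is precisely what is needed to transport the problem there.

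Where your write-up diverges is in the endgame: instead of citing \cite{YE1}, you try to close the argument with Bourbaki's local flatness criterion. The criterion in \cite{CA}, III.\S5.2, Theorem~1, requires that $N$ be \emph{id\'ealement s\'epar\'e} for $\widehat{\a}$, i.e.\ that $\mathfrak{b}\otimes_{\widehat{A}} N$ be $\widehat{\a}$-adically separated for every finitely generated ideal $\mathfrak{b}$. You do not verify this, and it is not automatic: $\mathfrak{b}\otimes_{\widehat{A}} N$ is a cokernel of a map between complete modules, and showing such cokernels are separated over a Noetherian ring is itself a nontrivial fact of exactly the sort proved in \cite{YE1}. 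Your Artin--Rees step identifying $N/\widehat{\a}^{\,n}N$ with $P'/\widehat{\a}^{\,n}P'$ is fine, but it delivers only condition~(d) of Bourbaki's theorem; without the separatedness hypothesis that theorem does not apply, and Bourbaki does not contain the infinitely-generated flatness statement you need. So the cleanest fix is the one the paper takes: after your reduction to free $\widehat{A}$-modules, cite \cite{YE1} for the flatness of their completions.
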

\begin{proof}
Suppose $P\oplus K \cong F$, where $F$ is a free $A$-module. Since the completion functor is additive, and since a direct summand of a flat-module is flat, it is enough to show the claim for $F$. This follows immediately from combining \cite{YE1}, Theorem 2.7, and \cite{YE1}, Theorem 3.4(2).
\end{proof}
\section{Completion and torsion over Differential Graded Algebras}

The purpose of this section is to extend the Greenlees-May duality and the Matlis-Greenlees-May equivalence (Theorem \ref{thm-ring-mgm}) to the setting of super-commutative differential graded algebras. 
We begin by recalling some basic definitions and results concerning super-commutative differential graded algebras. In general, we shall follow the conventions of \cite{YZ1}, regarding DG-algebras. A more detailed account using the same notations can be found in \cite{GM1},  Chapter V.3, in a more restricted situation when the DG-algebras are over a field of characteristic  $0$. See also \cite{ML}, Chapter VI.7, where the theory is introduced for positive DG-algebras.

A ($\mathbb{Z}$-)graded algebra $A = \oplus_{i\in \mathbb{Z}} A^i$ is said to be super-commutative if for all $a \in A^i$ and $b \in A^j$, we have that $a\cdot b = (-1)^{ij} b\cdot a$, and $a^2 = 0$ if $i$ is odd. 
A graded algebra $A = \oplus_{i\in \mathbb{Z}} A^i$ is said to be non-positive if $A^i = 0$ for all $i>0$. A super-commutative non-positive graded algebra $A = \oplus_{i\le 0} A^i$ together with a derivation $d:A\to A$ of degree $+1$, such that $d\circ d = 0$, and such that $d(ab) = d(a)b + (-1)^iad(b)$ for any $a \in A^i$ and $b \in A^j$, is called a differential graded algebra or a DG-algebra. 
All DG-algebras in this paper are assumed to be super-commutative and non-positive.
If $A$ is a DG-algebra then the cohomology of $A$, $HA = \oplus_{i\le 0} H^i A$ is a graded-algebra. A DG-algebra homomorphism $u:A\to B$ is a homomorphism of graded algebras which commutes with the differentials. 
If $H(u):H(A)\to H(B)$ is an isomorphism of graded algebras then $u$ is said to be a quasi-isomorphism. 
Given a DG-algebra $A$, a DG $A$-module $M$ is a graded $A$-module $M = \oplus_{i \in \mathbb{Z}} M^i$, endowed with a degree $1$ $\mathbb{Z}$-linear homomorphism $d:M\to M$ such that $d(am) = d(a)m +(-1)^ia d(m)$ for $a \in A^i$ and $m\in M^j$, and such that $d \circ d =0$. 
A morphism $f:M\to N$ between two DG $A$-modules is an homogeneous morphism of degree $0$ of graded modules which commutes with the differentials. The category of DG $A$-modules is an abelian category, denoted by $\opn{DGMod} A$. A morphism $f:M \to N$ between two DG $A$-modules is called a quasi-isomorphism if it induces an isomorphism of graded-modules on cohomology. 
If $A$ is a DG-algebra, then $A^0$ is a commutative ring. We will consider rings as DG-algebras concentrated in degree zero. If $A$ is a ring, a DG $A$-module is a complex over $A$. For any DG-algebra $A$, any DG $A$-module is a complex over the ring $A^0$. In particular, $A$ is a (bounded above) complex over $A^0$.
Given two DG $A$-modules $M,N$, one can construct the DG $A$-modules $\opn{Hom}_A(M,N)$ and $M\otimes_A N$. See \cite{ML}, Chapter VI.7 for details of these constructions. The usual hom-tesnor adjunction is satisfied in this setting. As in the case of rings, we say that a DG $A$-module $P$ (respectively I) is K-projective (resp. K-injective) if for any acyclic DG $A$-module $M$, the DG $A$-module $\opn{Hom}_A(P,M)$ (resp. $\opn{Hom}_A(M,I)$) is acyclic. A DG $A$-module $F$ is called K-flat if for any acyclic DG $A$-module $M$, the DG $A$-module $F\otimes_A M$ is acyclic.
\par
Given a DG-algebra $A$, we denote by $\widetilde{\mrm{D}}(\opn{DGMod} A)$ the derived category obtained by inverting all its quasi-isomorphisms. See \cite{KE} for a detailed account about this construction. If $A$ is a ring, then $\mrm{D}(\opn{Mod} A) = \widetilde{\mrm{D}}(\opn{DGMod} A)$. For any DG-algebra $A$, the DG-algebra map $A^0 \to A$ give rise to forgetful functors 
$Q_A:\opn{DGMod} A \to \mrm{C}(\opn{Mod} A^0)$ and
$Q_A:\widetilde{\mrm{D}}(\opn{DGMod} A) \to \mrm{D}(\opn{Mod} A^0)$.

\begin{rem}\label{remark-lifting-quasi}
Let $A$ be a DG-algebra, and let $f:M\to N$ be a morphism between two DG $A$-modules. Then it is clear that $f$ is a quasi-isomorphism if and only if $Q_A(f):Q_A(M)\to Q_A(N)$ is a quasi-isomorphism.
\end{rem}

We now discuss the adic completion and torsion functors on the category of differential graded modules. 

\begin{dfn}
Let $A$ be a DG-algebra. Let $\mf{a} \subseteq A^0$ be a finitely generated ideal. Define $\Lambda_{\mf{a}},\Gamma_{\mf{a}}:\opn{DGMod}(A) \to \opn{DGMod}(A)$ by
\[
\Lambda_{\mf{a}} M = \varprojlim (M\otimes_{A^0} {A^0}/\mf{a}^n)
\]
and
\[
\Gamma_{\mf{a}} M = \varinjlim \opn{Hom}_{A^0}(A^0/\mf{a}^n,M)
\]
\end{dfn}

The limits in the above definition are taken in the category $\opn{DGMod}(A)$, so that $(\Lambda_{\mfrak{a}} M)^i = \Lambda_{\mfrak{a}}(M^i)$ and 
$(\Gamma_{\mfrak{a}} M)^i = \Gamma_{\mfrak{a}}(M^i)$ for each $i$.

\begin{rem}
It is easy to verify that over a super-commutative non-positive DG-algebra, $\Lambda_{\mf{a}} M$ and $\Gamma_{\mf{a}} M$ have the structure of DG $A$-modules.
\end{rem}

\begin{rem}
It is clear from the above definition that the forgetful functor $Q_A$ commutes with the $\Lambda_{\mf{a}}$ and $\Gamma_{\mf{a}}$ functors.
\end{rem}

\begin{rem}
We focus on completion and torsion with respect to ideals generated by degree $0$ elements. More generally, one might consider completion and torsion with respect to more complicated DG-ideals. 
\end{rem}

\begin{rem}
The DG-module $\Lambda_{\mf{a}} (A) = \oplus_{i\le 0} \Lambda_{\mf{a}} (A^i)$ has a structure of a (super-commutative non-positive) DG-algebra. Furthermore, for any DG $A$-module $M$, the DG $A$-modules $\Gamma_{\mf{a}} M$ and $\Lambda_{\mf{a}} M$ have the structure of DG $\Lambda_{\mf{a}}(A)$-modules. Thus, we also have functors $\Lambda_{\mf{a}},\Gamma_{\mf{a}}:\opn{DGMod}(A) \to \opn{DGMod}(\Lambda_{\mf{a}}(A))$.
\end{rem}
Similarly to Definition \ref{preadicdef}, given a DG-algebra $A$ and a finitely generated ideal $\a \subseteq A^0$, we will call the pair $(A,\a)$ a preadic DG-algebra. A map $f:(A,\a) \to (B,\b)$ will be called preadic if the map $f^0:(A^0,\a) \to (B^0,\b)$ is preadic. A preadic DG-algebra $(A,\a)$ such that the canonical map $\tau_A : A\to \Lambda_{\a}(A)$ is bijective is called an adic DG-algebra. A preadic DG-algebra $(A,\a)$ is called a weakly proregular preadic DG-algebra if the ideal $\a \subseteq A^0$ is weakly proregular.
We now wish to extend the homological theory of completion and torsion to the above setting. First, we restrict our attention to a class of DG-algebras that is wide enough for our applications, which is easier to work with from an homological point of view. 

As far as we know, the next definition is new:
\begin{dfn}\label{dfn-internally-flat}
\begin{enumerate}
\item A DG-algebra $A$ is said to be \textbf{internally projective} if it is a K-projective complex over $A^0$.
\item A DG-algebra $A$ is said to be \textbf{internally flat} if it is a K-flat complex over $A^0$.
\end{enumerate}

\end{dfn}
Clearly, any internally projective DG-algebra is internally flat. Any ring is internally projective. If for each $i<0$ the $A^0$-module $A^i$ is projective (respectively flat), then $A$ is internally projective (resp. internally flat).

These special types of DG-algebras are interesting from an homological point of view due to the next easy proposition:
\begin{prop}\label{prop-internal-flat-preserve-homological}
Let $A$ be a DG-algebra, and let $F,P$ and $I$ be K-flat, K-projective and K-injective DG-modules over $A$. If $A$ is internally flat then $Q_A(F)$ is a K-flat complex over $A^0$ and $Q_A(I)$ is a K-injective complex over $A^0$. If $A$ is internally projective then $Q_A(P)$ is a K-projective complex over $A^0$.
\end{prop}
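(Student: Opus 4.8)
The plan is to handle each of the three assertions by the same device: reduce a statement about acyclicity of a complex of $A^0$-modules to a statement about acyclicity of a DG $A$-module, using a change-of-rings (extension/restriction) adjunction together with the hypothesis that $A$ is K-flat (resp.\ K-projective) over $A^0$. Throughout I will use Remark \ref{remark-lifting-quasi}: a morphism of DG $A$-modules is a quasi-isomorphism iff it is one after applying $Q_A$; equivalently, a DG $A$-module is acyclic iff it is acyclic as a complex over $A^0$. I will also use the standard adjunctions for the restriction functor $Q_A$ and the extension functor $A \otimes_{A^0} (-)$, namely that for a complex $L$ over $A^0$ and a DG $A$-module $N$ there are natural isomorphisms of complexes $\opn{Hom}_{A^0}(Q_A(N), L) \cong \opn{Hom}_A(N, \opn{Hom}_{A^0}(A, L))$, $\opn{Hom}_{A^0}(L, Q_A(N)) \cong \opn{Hom}_A(A \otimes_{A^0} L, N)$, and $Q_A(N) \otimes_{A^0} L \cong Q_A(N \otimes_A (A \otimes_{A^0} L))$; these are just the usual hom-tensor adjunctions in the DG setting recalled before the proposition.

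For the K-flat case, let $F$ be a K-flat DG $A$-module, assume $A$ is internally flat, and let $L$ be an acyclic complex over $A^0$; I must show $Q_A(F) \otimes_{A^0} L$ is acyclic. Since $A$ is K-flat over $A^0$, the complex $A \otimes_{A^0} L$ is an acyclic DG $A$-module; since $F$ is K-flat over $A$, the DG $A$-module $F \otimes_A (A \otimes_{A^0} L)$ is acyclic; and by the adjunction isomorphism above this DG $A$-module is $Q_A$ of something isomorphic to $Q_A(F) \otimes_{A^0} L$, which is therefore acyclic over $A^0$ by Remark \ref{remark-lifting-quasi}. The K-injective case is dual: given a K-injective DG $A$-module $I$ over an internally flat $A$ and an acyclic complex $L$ over $A^0$, internal flatness makes $A \otimes_{A^0} L$ an acyclic DG $A$-module, K-injectivity of $I$ makes $\opn{Hom}_A(A \otimes_{A^0} L, I)$ acyclic, and the adjunction identifies this with $\opn{Hom}_{A^0}(L, Q_A(I))$, proving $Q_A(I)$ is K-injective over $A^0$. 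The K-projective case is the same argument with $\opn{Hom}_{A^0}(A, -)$ in place of $A \otimes_{A^0} (-)$: if $A$ is internally projective (hence K-projective over $A^0$) and $L$ is acyclic over $A^0$, then $\opn{Hom}_{A^0}(A, L)$ is an acyclic DG $A$-module, so $\opn{Hom}_A(P, \opn{Hom}_{A^0}(A, L)) \cong \opn{Hom}_{A^0}(Q_A(P), L)$ is acyclic, giving K-projectivity of $Q_A(P)$.

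I expect the only genuine point requiring care — the ``main obstacle'', though it is really just bookkeeping — is verifying that the three hom-tensor adjunction isomorphisms between DG $A$-modules and complexes over $A^0$ hold as stated (with the correct Koszul signs and the correct DG $A$-module structures on the mixed hom and tensor objects), and that each of them is a genuine isomorphism of \emph{complexes}, not merely of graded modules, so that it transports acyclicity. These are the standard DG versions of the restriction/extension adjunctions referred to in the paragraph preceding the proposition, so I would simply cite \cite{ML}, Chapter VI.7, rather than reprove them. Everything else is a formal chase through Remark \ref{remark-lifting-quasi} and the definitions of K-flat, K-projective, and K-injective DG-modules.
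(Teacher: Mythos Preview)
Your proposal is correct and follows essentially the same approach as the paper: the paper writes out only the K-injective case via the adjunction $\opn{Hom}_{A^0}(M,Q_A(I)) \cong \opn{Hom}_{A^0}(M,\opn{Hom}_A(A,I)) \cong \opn{Hom}_A(M\otimes_{A^0} A, I)$ and then says the other cases are proved similarly, which is exactly what you have spelled out. Your only addition is making the K-flat and K-projective adjunctions explicit, which the paper leaves to the reader.
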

\begin{proof}
Let $M$ be an acyclic complex over $A^0$. There is a sequence of isomorphisms of complexes:
\[
\opn{Hom}_{A^0}(M,Q_A(I)) \cong \opn{Hom}_{A^0}(M,\opn{Hom}_A(A,I)) \cong \opn{Hom}_A(M\otimes_{A^0} A, I)
\]
Since $A$ is K-flat over $A^0$, is follows that the DG $A$-module $M\otimes_{A^0} A$ is acyclic. Since $I$ is K-injective over $A$, it now follows that $I$ is K-injective over $A^0$.
The other claims are proved similarly, using the associativity of the tensor product.
\end{proof}

We now discuss preadic DG-algebra resolutions, and show that there are enough internally projective DG-algebras from the point of view of the category of adic noetherian rings, and essentially formally of finite type preadic maps.

\begin{dfn}
Let $(A,\a)$ and $(B,\b)$ be two preadic DG-algebras. A preadic map $f:(A,\a) \to (B,\b)$ is called an \textbf{adic quasi-isomorphism} if it is a quasi-isomorphism, and $\a \cdot B^0$ is an ideal of definition of the preadic ring $(B^0,\b)$. We say that $(A,\a)$ is an adic DG-algebra resolution of $(B,\b)$.
\end{dfn}

\begin{prop}\label{prop-existence-of-res}
\begin{enumerate}
\item Let $(B,\b)$ be a preadic DG-algebra. Then there is an internally projective preadic DG-algebra $(A,\a)$, and an adic quasi-isomorphism $(A,\a) \to (B,\b)$. If moreover $\b$ is weakly proregular, then $\a$ can also be chosen to be weakly proregular.
\item Let $\k$ be a noetherian ring, and let $(A,\a)$ be an adic ring which is an essentially formally of finite type $\k$-algebra. Then there is an internally projective weakly proregular preadic DG-algebra $(\widetilde{A},\widetilde{\a})$, such that the following holds: the map $\k \to A$ factors as $\k \to \widetilde{A} \to A$, where $\k \to \widetilde{A}$ is K-flat, $\k \to (\widetilde{A}^0,\widetilde{\a})$ is essentially formally of finite type, $(\widetilde{A},\widetilde{\a}) \to (A,\a)$ is an adic quasi-isomorphism, the ring $\widetilde{A}^0$ is noetherian, and for each $i<0$, $\widetilde{A}^i$ is a finitely generated projective $\widetilde{A}^0$-module.
\end{enumerate}
\end{prop}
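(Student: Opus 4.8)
The plan is to treat the two parts separately, both resting on the standard inductive construction of semifree DG-algebra resolutions; the real work lies in part~(2). For part~(1), I would resolve $B$ by a semifree DG-algebra over $B^0$ itself. By the usual construction (cf.\ \cite{YZ1}; see also \cite{ML}, Chapter~VI.7, in the positively graded case) one obtains a DG-algebra $A$ gotten from $B^0$, placed in degree $0$, by freely adjoining a graded set of variables in negative degrees --- polynomial variables in even degrees, exterior variables in odd degrees --- together with a DG-$B^0$-algebra quasi-isomorphism $A \to B$; at the $n$-th stage one adjoins variables in degree $-n$ so that the map becomes an isomorphism on cohomology in degrees $>-n$ and a surjection in degree $-n$. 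As a graded $B^0$-module $A$ is free, and it is concentrated in non-positive degrees, so $A$ is a bounded-above complex of free $A^0 = B^0$-modules, hence K-projective over $A^0$; thus $A$ is internally projective. Taking $\a := \b$, viewed as an ideal of $A^0 = B^0$, we get $\a \cdot B^0 = \b$, which is trivially an ideal of definition of $(B^0,\b)$, so $A \to B$ is an adic quasi-isomorphism; and if $\b$ is weakly proregular, then so is $\a = \b$.

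For part~(2) the crux is the construction of $\widetilde{A}^0$. Since $A/\a$ is essentially of finite type over $\k$, fix a surjection $C_0 \surj A/\a$ with $C_0 = S^{-1}\k[y_1,\dots,y_n]$ a localization of a polynomial $\k$-algebra; as $\k$ is noetherian $C_0$ is noetherian, and being a localization of a free $\k$-module it is flat over $\k$. Choose generators $a_1,\dots,a_m$ of $\a$, lift the composite $\k[y_1,\dots,y_n] \to C_0 \surj A/\a$ to a $\k$-algebra map $\k[y_1,\dots,y_n] \to A$, and extend it to $\psi_0 : C_0 \to A$ --- possible because $A$ is $\a$-adically complete, so any element which is a unit modulo $\a$ is already a unit. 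Put $C := C_0[[t_1,\dots,t_m]]$ and $\c := (t_1,\dots,t_m)C$; then $C$ is noetherian, $\c$-adically complete, and, as a preadic ring, essentially formally of finite type over $\k$ since $C/\c = C_0$. Let $\phi : C \to A$ be the continuous extension of $\psi_0$ with $t_k \mapsto a_k$. Then $\phi$ lifts the chosen surjection $C_0 \surj A/\a$, and the induced map of associated graded rings is surjective (in degree $1$ the $t_k$ go to the $a_k$, which generate $\a/\a^2$), so $\phi$ is surjective with finitely generated kernel $J$, and it carries the $\c$-adic topology of $C$ to the $\a$-adic topology of $A$. The one non-bookkeeping point is that $C$ is \emph{flat} over $\k$: as a $\k$-module $C \cong \prod_{\alpha \in \mathbb{N}^m} C_0$ is a countable product of copies of the flat $\k$-module $C_0$, and over the noetherian ring $\k$ an arbitrary product of flat modules is flat (Chase's theorem). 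Now set $\widetilde{A}^0 := C$ and $\widetilde{\a} := \c$.

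It remains, over the noetherian ring $C$, to build a semifree DG-$C$-algebra resolution $\widetilde{A}$ of $A = C/J$, just as in part~(1); since $J$ is finitely generated and $C$ is noetherian, at every stage the homology to be killed is finitely generated, so only finitely many variables are adjoined in each degree, and hence each $\widetilde{A}^i$ with $i<0$ is a finitely generated free $C = \widetilde{A}^0$-module (a monomial of degree $i$ can only involve the finitely many variables in degrees between $i$ and $-1$, with bounded exponents). The remaining assertions are then immediate: $\widetilde{A}^0 = C$ is noetherian and $\widetilde{A}^0/\widetilde{\a} = C_0$ is essentially of finite type over $\k$; $\widetilde{A}$ is a bounded-above complex of projective $\widetilde{A}^0$-modules, hence K-projective over $\widetilde{A}^0$, i.e.\ internally projective; $\widetilde{A}$ is a bounded-above complex of $\k$-flat modules ($C$ is flat over $\k$ and the $\widetilde{A}^i$ are free over $C$), hence K-flat over $\k$, which yields the factorization $\k \to \widetilde{A} \to A$ with $\k \to \widetilde{A}$ K-flat; $\widetilde{\a}$ is weakly proregular because $C$ is noetherian; and $\widetilde{\a} \cdot A = \a$ is an ideal of definition of $(A,\a)$ while $\widetilde{A} \to A$ is a quasi-isomorphism, so $(\widetilde{A},\widetilde{\a}) \to (A,\a)$ is an adic quasi-isomorphism.

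The hard part is the construction of $\widetilde{A}^0$ in part~(2): one needs a single ring that is simultaneously flat over $\k$, noetherian, essentially formally of finite type over $\k$ in the adic sense, and admits a topological surjection onto the complete ring $A$. Since no finite-type --- indeed no countably generated --- $\k$-algebra can surject onto $A$ once $\a$-adic completion is a genuinely infinite process, one is forced to the power series ring $C = C_0[[t_1,\dots,t_m]]$, and the only non-formal input is then that $C$ is flat over $\k$, which I would deduce from Chase's theorem on products of flat modules over a noetherian ring. Once $\widetilde{A}^0 = C$ is in hand, everything else --- the semifree resolution with finitely many generators in each degree over the noetherian ring $C$, and the ``bounded-above complex of flats/projectives'' verifications --- is routine.
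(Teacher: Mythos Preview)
Your proof is correct and follows the same architecture as the paper's: for part~(1) you take a semifree resolution of $B$ over $B^0$ and set $\a=\b$, exactly as the paper does (citing \cite{YZ1}, Proposition~1.7(1)); for part~(2) you first produce a flat, noetherian, essentially-formally-of-finite-type adic ring $\widetilde{A}^0$ surjecting onto $A$, and then take a degreewise-finite semifree resolution over it (the paper cites \cite{YZ1}, Proposition~1.7(3)).

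The only genuine difference is in how $\widetilde{A}^0$ is obtained. The paper simply invokes \cite{LNS}, Lemma~2.4.3 as a black box for the factorization $\k \to \widetilde{A}^0 \to A$. You instead construct $\widetilde{A}^0 = C_0[[t_1,\dots,t_m]]$ by hand --- lifting a presentation of $A/\a$ through the complete ring $A$, then verifying flatness over $\k$ via Chase's theorem (products of flats over a noetherian ring are flat). This is essentially what lies behind the cited lemma, so you have unpacked rather than bypassed it. Your explicit argument has the virtue of being self-contained and making clear exactly where noetherianity of $\k$ is used (namely, for Chase's theorem); the paper's citation is shorter but opaque on this point.
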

\begin{proof}
\begin{enumerate}
\item Applying the construction of a semi-free resolution of the DG-algebra map $B^0\to B$ in \cite{YZ1}, Proposition 1.7(1), we get a DG-algebra $A$, and a quasi-isomorphism $f:A\to B$. Moreover, by the construction in this particular case, $A^0 = B^0$, $f^0 = 1_{B^0}$, and for each $i<0$, $A^i$ is a free $A^0$-module. Taking $\a = \b$, we get the required adic DG-algebra resolution, and moreover, if $\b$ is weakly proregular, then so is $\a$.
\item By \cite{LNS}, Lemma 2.4.3, there is a factorization $\k \to \widetilde{A}^0 \to A$, of $\k \to A$, where $(\widetilde{A}^0,\widetilde{\a})$ is a noetherian adic ring, $\k \to \widetilde{A}^0$ is flat and essentially formally of finite type, $\a = A\cdot \widetilde{\a}$, and the map $\widetilde{A}^0 \to A$ is surjective. Applying the construction of a semi-free resolution of the finite ring map $\widetilde{A}^0 \to A$ in \cite{YZ1}, Proposition 1.7(3), we get a resolution $\k \to \widetilde{A} \to A$ with all the required properties.
\end{enumerate}
\end{proof}

Let $(A,\a)$ be a preadic DG-algebra. As for any additive functors, the derived functors $\mrm{R}\Gamma_{\a}, \mrm{L}\Lambda_{\a} :\widetilde{\mrm{D}}(\opn{DGMod} A) \to \widetilde{\mrm{D}}(\opn{DGMod} A)$ of the functors $\Gamma_{\a}$ and $\Lambda_{\a}$ exist. They are calculated using K-injective and K-projective resolutions respectively.

\begin{prop}
Let $(A,\a)$ be an internally flat preadic DG-algebra. Then the derived functors $\mrm{R}\Gamma_{\a}, \mrm{L}\Lambda_{\a}$ commute with the forgetful functor $Q_A$.
\end{prop}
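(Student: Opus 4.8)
The plan is to reduce the statement to the commutation of $\Gamma_{\a}$ and $\Lambda_{\a}$ (on the DG-module level) with $Q_A$, which has already been observed, together with the fact that internal flatness lets us choose resolutions in $\opn{DGMod}(A)$ whose underlying $A^0$-complexes are again of the right homological type. Recall that $\mrm{R}\Gamma_{\a}$ is computed by applying $\Gamma_{\a}$ to a K-injective resolution $M \to I$ in $\opn{DGMod}(A)$, and $\mrm{L}\Lambda_{\a}$ by applying $\Lambda_{\a}$ to a K-projective (e.g. K-flat) resolution $P \to M$. On the other side, $\mrm{R}\Gamma_{\a}$ over the ring $A^0$ is computed via a K-injective resolution of $Q_A(M)$ in $\mrm{C}(\opn{Mod} A^0)$, and $\mrm{L}\Lambda_{\a}$ via a K-flat resolution there.

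First I would treat $\mrm{R}\Gamma_{\a}$. Pick a K-injective resolution $M \xrightarrow{\simeq} I$ in $\opn{DGMod}(A)$; then by definition $\mrm{R}\Gamma_{\a}(M) = \Gamma_{\a}(I)$. Applying $Q_A$ gives $Q_A(\mrm{R}\Gamma_{\a}(M)) = Q_A(\Gamma_{\a}(I)) = \Gamma_{\a}(Q_A(I))$, using that $Q_A$ commutes with $\Gamma_{\a}$ (already noted in the excerpt). By Proposition \ref{prop-internal-flat-preserve-homological}, since $A$ is internally flat, $Q_A(I)$ is a K-injective complex over $A^0$; and $Q_A(M) \to Q_A(I)$ is still a quasi-isomorphism by Remark \ref{remark-lifting-quasi}. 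Hence $Q_A(I)$ is a legitimate K-injective resolution of $Q_A(M)$ over $A^0$, so $\Gamma_{\a}(Q_A(I))$ computes $\mrm{R}\Gamma_{\a}(Q_A(M))$ over $A^0$. Chaining these identifications gives a natural isomorphism $Q_A \circ \mrm{R}\Gamma_{\a} \cong \mrm{R}\Gamma_{\a} \circ Q_A$; naturality follows because all the identifications (the forgetful functor commuting with $\Gamma_{\a}$ on the nose, the comparison maps between resolutions) are natural.

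The case of $\mrm{L}\Lambda_{\a}$ is dual. Choose a K-flat (or K-projective) resolution $P \xrightarrow{\simeq} M$ in $\opn{DGMod}(A)$, so $\mrm{L}\Lambda_{\a}(M) = \Lambda_{\a}(P)$; apply $Q_A$ and use that $Q_A$ commutes with $\Lambda_{\a}$ to get $Q_A(\Lambda_{\a}(P)) = \Lambda_{\a}(Q_A(P))$. Again Proposition \ref{prop-internal-flat-preserve-homological} gives that $Q_A(P)$ is K-flat over $A^0$, and it is a resolution of $Q_A(M)$, so $\Lambda_{\a}(Q_A(P))$ computes $\mrm{L}\Lambda_{\a}(Q_A(M))$. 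One subtlety to check here is that $\mrm{L}\Lambda_{\a}$ over a possibly non-noetherian ring $A^0$ is indeed computable by K-flat resolutions and not only K-projective ones — but this is part of the standard setup recalled in Section 1 (following \cite{AJL}), and K-projective implies K-flat, so choosing $P$ K-projective over $A$ is safe. The only real point of the argument — and the place where internal flatness is essential rather than cosmetic — is Proposition \ref{prop-internal-flat-preserve-homological}: without it, forgetting a K-injective DG-module down to $A^0$ need not yield anything that computes derived torsion over $A^0$, and the whole comparison collapses. Since that proposition is already established, the present statement is a formal consequence.
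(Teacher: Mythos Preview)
Your proposal is correct and follows essentially the same argument as the paper's proof: take a K-projective (resp.\ K-injective) resolution over $A$, push it down via $Q_A$, invoke Proposition~\ref{prop-internal-flat-preserve-homological} to see the result is K-flat (resp.\ K-injective) over $A^0$, and use that $Q_A$ commutes with the underived $\Lambda_{\a}$ and $\Gamma_{\a}$. The paper writes out only the $\mrm{L}\Lambda_{\a}$ case and declares the other similar; your handling of the subtlety that $Q_A(P)$ is only K-flat (not K-projective) over $A^0$, and that this suffices for computing $\mrm{L}\Lambda_{\a}$ over a ring by the setup from Section~1, matches the paper's parenthetical observation exactly.
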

\begin{proof}
Let $M$ be a DG $A$-module. Let $P \cong M$ be a K-projective resolution of $M$ 
over $A$. Because of internal flatness, $Q_A(P)$ is a K-flat complex over $A^0$. (Note that we do not necessarily know that $Q_A(P)$ is K-projective over $A^0$). Hence, there is a sequence of functorial isomorphisms:
\[
Q_A(\mrm{L}\Lambda_{\a} M) \cong Q_A(\Lambda_{\a} P) = \Lambda_{\a}(Q_A(P)) \cong \mrm{L}\Lambda_{\a}(Q_A(P)) \cong \mrm{L}\Lambda_{\a} (Q_A(M))
\]
The proof of the claim for $\mrm{R}\Gamma_{\a}$ is similar.
\end{proof}

\begin{prop}
Let $(A,\a)$ be an internally flat preadic DG-algebra. Let $P$ be an acyclic K-flat DG $A$-module. Then the DG $A$-module $\Lambda_{\mf{a}} P$ is also acyclic. Thus, we can calculate $\mrm{L}\Lambda_{a}$ over $A$ using K-flat resolutions.
\end{prop}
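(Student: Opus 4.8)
The plan is to reduce the statement about DG-modules to the corresponding statement over the ring $A^0$, using the internal flatness hypothesis and the preceding proposition that $Q_A$ commutes with $\mrm{L}\Lambda_{\a}$. First I would recall that, over an ordinary ring, the analogous fact is known: if $\mathbf{a}$ is any finite sequence generating $\a$ (no weak proregularity needed here), then $\Lambda_{\a}$ applied to an acyclic K-flat complex is acyclic — this is because one can compute $\mrm{L}\Lambda_{\a}$ using K-flat resolutions over a ring, which is part of the Greenlees--May machinery recalled from \cite{AJL,PSY1}; more elementarily, for a bounded-above acyclic complex of flats the completion is computed by an inverse limit of surjective systems whose kernels also form a surjective system, and the Mittag-Leffler condition gives exactness. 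So the ring-level statement is available.

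Next I would run the following argument. Let $P$ be an acyclic K-flat DG $A$-module. By Remark \ref{remark-lifting-quasi}, $P$ acyclic as a DG $A$-module means $Q_A(P)$ is acyclic as a complex over $A^0$. By Proposition \ref{prop-internal-flat-preserve-homological}, since $A$ is internally flat, $Q_A(P)$ is a K-flat complex over $A^0$. Hence $Q_A(P)$ is an acyclic K-flat complex over the ring $A^0$, so by the ring-level fact $\Lambda_{\a}(Q_A(P))$ is acyclic over $A^0$. But the $\Lambda_{\a}$ functor on DG-modules is defined degreewise and commutes with $Q_A$ (this is one of the Remarks following the definition of $\Lambda_{\a}$ on $\opn{DGMod}(A)$), so $Q_A(\Lambda_{\a} P) = \Lambda_{\a}(Q_A(P))$ is acyclic, and therefore $\Lambda_{\a} P$ is acyclic as a DG $A$-module, again by Remark \ref{remark-lifting-quasi}.

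Finally, for the concluding clause ("Thus, we can calculate $\mrm{L}\Lambda_{\a}$ using K-flat resolutions"): given any DG $A$-module $M$, choose a K-flat resolution $F \iso M$ (these exist), and compare with a K-projective resolution $P \iso M$. The map $P \to F$ (lifted through the quasi-isomorphisms) has an acyclic K-flat mapping cone, so by what was just proved $\Lambda_{\a}$ of the cone is acyclic; since $\Lambda_{\a}$ is additive and commutes with mapping cones, $\Lambda_{\a}(P) \to \Lambda_{\a}(F)$ is a quasi-isomorphism, so $\Lambda_{\a}(F) \cong \mrm{L}\Lambda_{\a}(M)$ in $\widetilde{\mrm{D}}(\opn{DGMod} A)$.

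I expect the main obstacle to be purely in stating and citing the ring-level input cleanly: one needs that over a (possibly non-noetherian) ring, $\Lambda_{\a}$ preserves acyclic K-flat complexes, and one must be careful that this does \emph{not} require weak proregularity — it is a statement about resolutions, not about the explicit Koszul/telescope formula. If a direct citation is awkward, the fallback is to prove it by hand: a K-flat complex is a filtered colimit of bounded complexes of finite free modules in a suitable sense, or one argues that an acyclic K-flat complex over $A^0$ is a direct limit of complexes for which the completion is a surjective inverse system, invoking Mittag-Leffler. Everything else is formal bookkeeping with $Q_A$.
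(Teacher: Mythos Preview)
Your proposal is correct and follows essentially the same approach as the paper: reduce to $A^0$ via internal flatness and the fact that $Q_A$ commutes with $\Lambda_{\a}$, invoke the ring-level result (the paper cites \cite{AJL} directly for this, confirming your observation that weak proregularity is not needed), and lift acyclicity back via Remark~\ref{remark-lifting-quasi}. The only cosmetic difference is that for the final clause the paper simply cites \cite{RD}, Theorem I.5.1, whereas you spell out the mapping-cone argument explicitly; your extended discussion of fallback proofs for the ring-level input is unnecessary since the citation to \cite{AJL} suffices.
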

\begin{proof}
Since $A$ is internally flat, $Q_A(P)$ is a K-flat acyclic complex over $A^0$.
By \cite{AJL}, the complex $\Lambda_{\a}(Q_A(P))$ is acyclic, hence the complex $Q_A(\Lambda_{\a}(P))$ is acyclic. Since taking cohomology commutes with $Q_A$, the complex $\Lambda_{\a}(P)$ is also acyclic. The last claim now follows from \cite{RD}, Theorem I.5.1.
\end{proof}

\begin{rem}
As in the case of rings, given a preadic DG-algebra $(A,\a)$, there are canonical maps
\[
\tau^L_M: M \to \mrm{L}\Lambda_{\mf{a}}(M)
\]
and 
\[
\sigma^R_M: \mrm{R}\Gamma_{\mfrak{a}}(M) \to M. 
\]
The construction of these maps over $A^0$ was done in \cite{PSY1}, Proposition 2.7 and Proposition 2.10. This construction is identical if $A$ is a DG-algebra. If furthermore, $A$ is internally flat, then the image of $\tau^L_M$ and $\sigma^R_M$ under the functor $Q_A$ is equal to the maps described in \cite{PSY1}.
\end{rem}

We now turn to generalize Theorem \ref{thm-ring-mgm} to the setting of internally flat DG-algebras. We start with the first statement of claim (1) of the theorem.

If $A^0$ is a ring, and $\mathbf{a} = (a_1,\dots,a_n)$ is a finite sequence of elements in $A^0$, recall that the infinite dual Koszul complex (\cite{PSY1}, Section 3) is defined as follows: First, for $n=1$, one sets
\[
\opn{K}^{\vee}_{\infty}(A^0; (a)) := 
\bigl( \cdots \to 0 \to A^0 \xar{d} A^0[{a}^{-1}] \to 0 \to
\cdots \bigr) 
\]
concentrated in degrees $0$ and $1$. Here, the map $d$ is the localization map. In general, one sets:
\[
\opn{K}^{\vee}_{\infty}(A^0; \mathbf{a}) :=
\opn{K}^{\vee}_{\infty}(A^0; (a_1)) \otimes_{A^0} \opn{K}^{\vee}_{\infty}(A^0; (a_2)) \otimes_{A^0} \dots \otimes_{A^0} \opn{K}^{\vee}_{\infty}(A^0; (a_n)).
\]

If $f:A^0\to B^0$ is a ring map, and if $\mathbf{b} = f(\mathbf{a})$, then there is an isomorphism of complexes
\[
\opn{K}^{\vee}_{\infty}(A^0; \mathbf{a}) \otimes_{A^0} B^0 \cong \opn{K}^{\vee}_{\infty}(B^0; \mathbf{b}).
\]

Suppose $\a$ is the ideal generated by $\mathbf{a}$. In \cite{PSY1}, equation (3.19), a functorial morphism of complexes $v_{\mathbf{a},M} : \Gamma_{\a} (M) \to \opn{K}^{\vee}_{\infty}(A^0; \mathbf{a}) \otimes_{A^0} M$ was constructed. Let us recall the construction:

Again, for $n=1$, if $M$ is a single $A^0$-module, the inclusion map $\Gamma_{(a)}(M) \inj M$ give rise to a morphism of complexes
\[\xymatrixcolsep{4pc}
\xymatrix{
0 \ar[r] & \Gamma_{(a)} (M) \ar[r] \ar[d] & 0\\
0 \ar[r] &      M \ar[r] & M[a^{-1}] \ar[r] & 0
}
\]
which we denote by $v_{(a),M} : \Gamma_{(a)} (M) \to \opn{K}^{\vee}_{\infty}(A^0; (a)) \otimes_{A^0} M$. By totalization, we get an $A^0$-linear map $v_{(a),M} : \Gamma_{(a)} (M) \to \opn{K}^{\vee}_{\infty}(A^0; (a)) \otimes_{A^0} M$ for any complex $M \in \mrm{C}(\opn{Mod} A^0)$.

Composing the map $v_{(a_2),\Gamma_{(a_1)}(M)}: \Gamma_{(a_2)} ( \Gamma_{(a_1)} (M)) \to \opn{K}^{\vee}_{\infty}(A^0; (a_2)) \otimes_{A^0} \Gamma_{(a_1)}(M)$ with the map $1\otimes_{A^0} v_{(a_1),M}$ we obtain a map $v_{(a_1,a_2)} : \Gamma_{(a_1,a_2)} (M) \to \opn{K}^{\vee}_{\infty}(A^0; (a_1,a_2)) \otimes_{A^0} \Gamma_{(a_1)}(M)$.
Proceeding in this way, one gets the map of complexes $v_{\mathbf{a},M} : \Gamma_{\a} (M) \to \opn{K}^{\vee}_{\infty}(A^0; \mathbf{a}) \otimes_{A^0} M$.

Note further that $(\opn{K}^{\vee}_{\infty}(A^0; \mathbf{a}))^0 = A^0$, so the identity map $1_{A^0}$ gives a map of complexes $e_{\mathbf{a},M}: \opn{K}^{\vee}_{\infty}(A^0; \mathbf{a}) \otimes_{A^0} M \to M$. Clearly, for any complex $M$, there is an equality $e_{\mathbf{a},M} \circ v_{\mathbf{a},M} = \sigma_M: \Gamma_{\a}(M) \to M$.

Assume now that $(A,\a)$ is a preadic DG-algebra, and suppose that $\mathbf{a} = (a_1,\dots,a_n)$ is a finite sequence that generates $\a$. Let $M$ be a DG $A$-module. The inclusion map $\Gamma_{\a} (M) \to M$ is clearly an $A$-linear map. Hence, it follows that the above construction is also $A$-linear, so that the two map $v_{\mathbf{a},M} : \Gamma_{\a} (M) \to \opn{K}^{\vee}_{\infty}(A^0; \mathbf{a}) \otimes_{A^0} M$ and $e_{\mathbf{a},M} :\opn{K}^{\vee}_{\infty}(A^0; \mathbf{a}) \otimes_{A^0} M \to M$ are morphisms of DG $A$-modules.

\begin{lem}\label{lem-gamma-of-inj}
Let $(A,\a)$ be an internally flat weakly proregular preadic DG-algebra. Let $I$ be a K-injective DG $A$-module. Let $\mathbf{a}$ be a finite sequence of elements of $A^0$ that generates $\a$. Then the $A$-linear morphism
\[
v_{\mathbf{a},I} : \Gamma_{\a} (I) \to \opn{K}^{\vee}_{\infty}(A^0; \mathbf{a}) \otimes_{A^0} I
\]
is a quasi-isomorphism.
\end{lem}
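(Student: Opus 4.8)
The plan is to reduce the statement to the corresponding fact over the ring $A^0$ by applying the forgetful functor $Q_A$. By Remark \ref{remark-lifting-quasi}, the $A$-linear morphism $v_{\mathbf{a},I}$ is a quasi-isomorphism in $\opn{DGMod} A$ if and only if $Q_A(v_{\mathbf{a},I})$ is a quasi-isomorphism in $\mrm{C}(\opn{Mod} A^0)$. Now $Q_A$ commutes with $\Gamma_{\a}$ and with $(-)\otimes_{A^0}(-)$, the complex $\opn{K}^{\vee}_{\infty}(A^0;\mathbf{a})$ consists of $A^0$-modules, and by construction $v_{\mathbf{a},I}$ is the $A$-linear morphism lifting the $A^0$-linear map of \cite{PSY1}, equation (3.19). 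Hence $Q_A(v_{\mathbf{a},I})$ is precisely the morphism $v_{\mathbf{a},Q_A(I)}\colon \Gamma_{\a}(Q_A(I)) \to \opn{K}^{\vee}_{\infty}(A^0;\mathbf{a})\otimes_{A^0} Q_A(I)$ of complexes of $A^0$-modules.

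Since $A$ is internally flat and $I$ is a K-injective DG $A$-module, Proposition \ref{prop-internal-flat-preserve-homological} shows that $Q_A(I)$ is a K-injective complex over $A^0$. This is the one place where the hypothesis of internal flatness is used, and it is the conceptual heart of the argument: it is what allows $\Gamma_{\a}$ of a $K$-injective DG $A$-module to compute $\mrm{R}\Gamma_{\a}$ over $A^0$. Thus the lemma is reduced to the purely ring-theoretic assertion: if $\a\subseteq A^0$ is weakly proregular and generated by a finite sequence $\mathbf{a}$, and $J$ is a K-injective complex of $A^0$-modules, then $v_{\mathbf{a},J}\colon \Gamma_{\a}(J)\to \opn{K}^{\vee}_{\infty}(A^0;\mathbf{a})\otimes_{A^0} J$ is a quasi-isomorphism.

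For this last statement I would argue as follows. Since $J$ is K-injective, $\Gamma_{\a}(J)$ represents $\mrm{R}\Gamma_{\a}(J)$. On the other hand, $\opn{K}^{\vee}_{\infty}(A^0;\mathbf{a})$ is a bounded complex of flat $A^0$-modules, hence K-flat, so $\opn{K}^{\vee}_{\infty}(A^0;\mathbf{a})\otimes_{A^0} J$ computes the corresponding derived tensor product; combining the natural quasi-isomorphism between $\opn{Tel}(A^0;\mathbf{a})$ and $\opn{K}^{\vee}_{\infty}(A^0;\mathbf{a})$ (a quasi-isomorphism of K-flat complexes, hence still a quasi-isomorphism after applying $(-)\otimes_{A^0} J$) with Theorem \ref{thm-ring-mgm}(1), this complex likewise represents $\mrm{R}\Gamma_{\a}(J)$. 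The only point requiring genuine care is to check that under these identifications the comparison morphism really is $v_{\mathbf{a},J}$, and not merely some abstract isomorphism in $\mrm{D}(\opn{Mod} A^0)$; this is the main (mild) obstacle. For a single injective module $I_0$ it is immediate, since $v_{\mathbf{a},I_0}$ is the inclusion of $\Gamma_{\a}(I_0)=H^0\bigl(\opn{K}^{\vee}_{\infty}(A^0;\mathbf{a})\otimes_{A^0} I_0\bigr)$ and the higher cohomology of the target vanishes by weak proregularity; the general K-injective case then follows by representing $J$ by a complex of injective modules and a standard argument, and is in any event already contained in the proof of Theorem \ref{thm-ring-mgm} in \cite{PSY1}.
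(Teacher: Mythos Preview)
Your proof is correct and follows essentially the same approach as the paper: reduce via $Q_A$ and Remark \ref{remark-lifting-quasi} to the $A^0$-statement, use internal flatness to see that $Q_A(I)$ is K-injective over $A^0$, and then invoke the known ring-theoretic result. The only difference is cosmetic: where you sketch the final ring-level argument and eventually point to \cite{PSY1}, the paper simply cites \cite{PSY1}, Corollary 3.25 directly.
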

\begin{proof}
By Remark \ref{remark-lifting-quasi}, it is enough to show that the map $Q_A(v_{\mathbf{a},I}) : Q_A(\Gamma_{\a} (I)) \to Q_A(\opn{K}^{\vee}_{\infty}(A^0; \mathbf{a}) \otimes_{A^0} I)$ is an $A^0$-linear quasi-isomorphism. 
Clearly, $Q_A(\Gamma_{\a} (I)) = \Gamma_{\a}(Q_A(I))$,  $Q_A(\opn{K}^{\vee}_{\infty}(A^0; \mathbf{a}) \otimes_{A^0} I) =
\opn{K}^{\vee}_{\infty}(A^0; \mathbf{a}) \otimes_{A^0} Q_A(I)$, and 
$Q_A(v_{\mathbf{a},I}) = v_{\mathbf{a},Q_A(I)}$. Because of internal flatness, $Q_A(I)$ is K-injective over $A^0$. The result now follows from \cite{PSY1}, Corollary 3.25.
\end{proof}

\begin{thm}\label{thm-rgamma}
Let $(A,\a)$ be an internally flat weakly proregular preadic DG-algebra. Let $\mathbf{a}$ be a finite sequence of elements of $A^0$ that generates $\a$.
Then there is a functorial isomorphism
\[
v^{\mrm{R}}_{\mathbf{a},M} :\mrm{R}\Gamma_{\mfrak{a}} M \to \opn{K}^{\vee}_{\infty}(A^0; \mathbf{a}) \otimes_{A^0} M
\]
for any $M\in \widetilde{\mrm{D}}(\opn{DGMod} A),$
making the diagram in $\widetilde{\mrm{D}}(\opn{DGMod} A)$
\[
\xymatrix{
\mrm{R}\Gamma_{\mfrak{a}} M \ar[r]\ar[rd]_{\sigma_M^R} & \opn{K}^{\vee}_{\infty}(A^0; \mathbf{a}) \otimes_{A^0} M\ar[d]^{e_{\mathbf{a},M}}\\
& M
}
\]
commutative.
\end{thm}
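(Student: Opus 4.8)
The plan is to reduce the statement to the corresponding fact over the ring $A^0$, which is Theorem~\ref{thm-ring-mgm}(1) (more precisely its proof via \cite{PSY1}), by exploiting internal flatness together with the forgetful functor $Q_A$. First I would construct the morphism $v^{\mrm{R}}_{\mathbf{a},M}$. Choose a K-injective resolution $M \iso I$ in $\opn{DGMod} A$. Since $\Gamma_{\a}$ is computed by K-injective resolutions, $\mrm{R}\Gamma_{\a} M$ is represented by $\Gamma_{\a}(I)$, and I define $v^{\mrm{R}}_{\mathbf{a},M}$ to be the composite
\[
\mrm{R}\Gamma_{\a} M \cong \Gamma_{\a}(I) \xar{v_{\mathbf{a},I}} \opn{K}^{\vee}_{\infty}(A^0;\mathbf{a}) \otimes_{A^0} I \cong \opn{K}^{\vee}_{\infty}(A^0;\mathbf{a}) \otimes_{A^0} M,
\]
where the last isomorphism in $\widetilde{\mrm{D}}(\opn{DGMod} A)$ holds because $\opn{K}^{\vee}_{\infty}(A^0;\mathbf{a})$ is a bounded complex of flat $A^0$-modules, so tensoring it over $A^0$ (which makes sense on DG $A$-modules and yields a DG $A$-module, since $\mathbf{a}$ consists of degree $0$ elements) preserves quasi-isomorphisms. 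Functoriality in $M$ is immediate from functoriality of $v_{\mathbf{a},-}$ and the functoriality of K-injective resolutions in the derived category.

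Next I would verify that $v^{\mrm{R}}_{\mathbf{a},M}$ is an isomorphism. By Remark~\ref{remark-lifting-quasi} it suffices to check this after applying $Q_A$, i.e.\ to check that the underlying $A^0$-linear map is a quasi-isomorphism. By the previous propositions in this section, $Q_A$ commutes with $\mrm{R}\Gamma_{\a}$, with the tensor operation $\opn{K}^{\vee}_{\infty}(A^0;\mathbf{a})\otimes_{A^0} -$, and it sends the chosen K-injective resolution $I$ over $A$ to a K-injective complex over $A^0$ (internal flatness, Proposition~\ref{prop-internal-flat-preserve-homological}); moreover $Q_A(v_{\mathbf{a},I}) = v_{\mathbf{a},Q_A(I)}$. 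Then Lemma~\ref{lem-gamma-of-inj} (or directly \cite{PSY1}, Corollary 3.25) gives that $v_{\mathbf{a},Q_A(I)}$ is a quasi-isomorphism, hence so is $v^{\mrm{R}}_{\mathbf{a},M}$.

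Finally I would check commutativity of the triangle. At the level of the representative $I$, one has the strict equality of DG $A$-module maps $e_{\mathbf{a},I}\circ v_{\mathbf{a},I} = \sigma_I : \Gamma_{\a}(I)\to I$, recorded just before Lemma~\ref{lem-gamma-of-inj}. Passing to $\widetilde{\mrm{D}}(\opn{DGMod} A)$ and unwinding the definition of $\sigma^{\mrm{R}}_M$ as being induced by $\sigma_I$ under the identification $\mrm{R}\Gamma_{\a} M \cong \Gamma_{\a}(I)$ (together with the fact that $e_{\mathbf{a},M}$ is, by construction, $e_{\mathbf{a},I}$ transported along the quasi-isomorphisms), this equality becomes exactly the asserted commutativity. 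I expect the main obstacle to be purely bookkeeping: making sure that all the identifications $Q_A\Gamma_{\a} = \Gamma_{\a} Q_A$, $Q_A(\opn{K}^{\vee}_{\infty}\otimes_{A^0}-) = \opn{K}^{\vee}_{\infty}\otimes_{A^0}Q_A(-)$, and $Q_A(v_{\mathbf{a},-}) = v_{\mathbf{a},Q_A(-)}$ are genuinely compatible as DG $A$-module structures (not just $A^0$-module structures), and that $\sigma^{\mrm{R}}_M$ and $e_{\mathbf{a},M}$ are set up so that the strict identity $e_{\mathbf{a},I}\circ v_{\mathbf{a},I}=\sigma_I$ descends correctly; the homological content is entirely supplied by \cite{PSY1} via the $A^0$-level statements.
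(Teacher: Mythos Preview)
Your proposal is correct and follows essentially the same approach as the paper's own proof: choose a K-injective resolution $I$ of $M$, define $v^{\mrm{R}}_{\mathbf{a},M}$ via $v_{\mathbf{a},I}$, invoke Lemma~\ref{lem-gamma-of-inj} for the isomorphism, and deduce the triangle from the equality $e_{\mathbf{a},I}\circ v_{\mathbf{a},I}=\sigma_I$ together with $\sigma_I=\sigma^{\mrm{R}}_I$ for $I$ K-injective. Your write-up is simply more detailed (you re-explain the reduction via $Q_A$ that is already the content of the lemma), but the argument is the same.
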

\begin{proof}
Let $M\cong I$ be a K-injective resolution over $A$. The first claim now follows from the lemma. The equality $e_{\mathbf{a},I} \circ v_{\mathbf{a},I} = \sigma_I$, and the fact that $\sigma_I = \sigma^R_I$ since $I$ is K-injective, provides the second claim. 
\end{proof}

\begin{cor}\label{cor-rgamma-id}
Let $(A,\a)$ be an internally flat weakly proregular preadic DG-algebra. Then for any $M\in \widetilde{\mrm{D}}(\opn{DGMod} A),$ the morphism
\[
\sigma^R_{\mrm{R}\Gamma_{\a}(M)}:\mrm{R}\Gamma_{\a}(\mrm{R}\Gamma_{\a}(M)) \to \mrm{R}\Gamma_{\a}(M)
\]
is an isomorphism.
\end{cor}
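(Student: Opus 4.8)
The plan is to reduce the statement to the case where $A$ is an ordinary ring concentrated in degree zero (where idempotence of $\mrm{R}\Gamma_{\a}$ is classical), using the forgetful functor $Q_A$; I will also indicate a second, more self-contained route that extracts the corollary directly from Theorem~\ref{thm-rgamma}, which is presumably why it is stated as a corollary.

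For the reduction via $Q_A$: a morphism in $\widetilde{\mrm{D}}(\opn{DGMod} A)$ is an isomorphism if and only if it induces isomorphisms on all cohomology modules, and cohomology is unchanged by $Q_A$; hence $\sigma^R_{\mrm{R}\Gamma_{\a}(M)}$ is an isomorphism in $\widetilde{\mrm{D}}(\opn{DGMod} A)$ precisely when $Q_A(\sigma^R_{\mrm{R}\Gamma_{\a}(M)})$ is an isomorphism in $\mrm{D}(\opn{Mod} A^0)$. Since $A$ is internally flat, the forgetful functor $Q_A$ commutes with $\mrm{R}\Gamma_{\a}$ and carries the map $\sigma^R$ to the corresponding map $\sigma^R$ over $A^0$ (both facts were recorded earlier in this section). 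Therefore $Q_A(\sigma^R_{\mrm{R}\Gamma_{\a}(M)}) = \sigma^R_{\mrm{R}\Gamma_{\a}(Q_A(M))}$, and it suffices to prove the $A = A^0$ case of the corollary, i.e.\ that $\mrm{R}\Gamma_{\a}$ is idempotent over the ordinary ring $A^0$. This is well known: by Theorem~\ref{thm-ring-mgm}(1) one has $\mrm{R}\Gamma_{\a}(N) \cong \opn{Tel}(A^0;\mathbf{a}) \otimes_{A^0} N$, so $\mrm{R}\Gamma_{\a}(\mrm{R}\Gamma_{\a}(N)) \cong \opn{Tel}(A^0;\mathbf{a}) \otimes_{A^0} \opn{Tel}(A^0;\mathbf{a}) \otimes_{A^0} N$, and $\opn{Tel}(A^0;\mathbf{a}) \otimes_{A^0} \opn{Tel}(A^0;\mathbf{a}) = \opn{Tel}(A^0;(\mathbf{a},\mathbf{a}))$ is homotopy equivalent to $\opn{Tel}(A^0;\mathbf{a})$ since the concatenated sequence $(\mathbf{a},\mathbf{a})$ generates an ideal with the same radical; tracking $\sigma^R$ through this equivalence (or simply citing \cite{PSY1}) finishes the ring case.

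Alternatively, one stays inside the DG setting. Fix a finite sequence $\mathbf{a}$ generating $\a$; since $\a$ is weakly proregular, $\mathbf{a}$ is a weakly proregular sequence, so Theorem~\ref{thm-rgamma} applies. Applying it to $\mrm{R}\Gamma_{\a}(M)$ gives a factorization $\sigma^R_{\mrm{R}\Gamma_{\a}(M)} = e_{\mathbf{a},\mrm{R}\Gamma_{\a}(M)} \circ v^R_{\mathbf{a},\mrm{R}\Gamma_{\a}(M)}$ with $v^R_{\mathbf{a},\mrm{R}\Gamma_{\a}(M)}$ an isomorphism, so it is enough to show $e_{\mathbf{a},\mrm{R}\Gamma_{\a}(M)}$ is an isomorphism. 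Applying Theorem~\ref{thm-rgamma} to $M$ itself identifies $\mrm{R}\Gamma_{\a}(M)$ with $\opn{K}^{\vee}_{\infty}(A^0;\mathbf{a}) \otimes_{A^0} M$. Since $e_{\mathbf{a},N} = e_{\mathbf{a},A^0} \otimes_{A^0} 1_N$, where $e_{\mathbf{a},A^0}\colon \opn{K}^{\vee}_{\infty}(A^0;\mathbf{a}) \to A^0$ is the augmentation, $e_{\mathbf{a},-}$ is a natural transformation of endofunctors of $\widetilde{\mrm{D}}(\opn{DGMod} A)$ (note $\opn{K}^{\vee}_{\infty}(A^0;\mathbf{a})$ is K-flat over $A^0$, so $\opn{K}^{\vee}_{\infty}(A^0;\mathbf{a}) \otimes_{A^0} -$ preserves quasi-isomorphisms of DG $A$-modules). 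Hence it suffices to show that $e_{\mathbf{a},\opn{K}^{\vee}_{\infty}(A^0;\mathbf{a}) \otimes_{A^0} M}$ — the map obtained by applying $e_{\mathbf{a},A^0}$ to the leftmost factor of $\opn{K}^{\vee}_{\infty}(A^0;\mathbf{a}) \otimes_{A^0} \opn{K}^{\vee}_{\infty}(A^0;\mathbf{a}) \otimes_{A^0} M$ — is an isomorphism. Using K-flatness of $\opn{K}^{\vee}_{\infty}(A^0;\mathbf{a})$ over $A^0$ once more (a mapping-cone argument between bounded flat complexes), this reduces to the ring-theoretic claim that the augmentation $\opn{K}^{\vee}_{\infty}(A^0;\mathbf{a}) \otimes_{A^0} \opn{K}^{\vee}_{\infty}(A^0;\mathbf{a}) = \opn{K}^{\vee}_{\infty}(A^0;(\mathbf{a},\mathbf{a})) \to \opn{K}^{\vee}_{\infty}(A^0;\mathbf{a})$ is a quasi-isomorphism, which follows by induction on the length of $\mathbf{a}$ from the one-element fact that the cone of $\opn{K}^{\vee}_{\infty}(A^0;(a)) \to A^0$ is quasi-isomorphic to $A^0[a^{-1}]$ together with the vanishing of $A^0[a^{-1}] \otimes_{A^0} \opn{K}^{\vee}_{\infty}(A^0;\mathbf{a})$ when $a$ lies in the radical of $(\mathbf{a})$.

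The K-flatness/mapping-cone reductions and the manipulations with $Q_A$ and with the natural transformations $\sigma^R$, $v^R$, $e_{\mathbf{a},-}$ are all routine once the corresponding facts for internally flat DG-algebras (already established above) are in place. The only genuine content is the tensor-idempotence of the infinite dual Koszul complex in the last step — equivalently, via the first route, the idempotence of $\mrm{R}\Gamma_{\a}$ over the ordinary ring $A^0$ — and this is either the short computation sketched or a citation of \cite{PSY1}; I expect this to be the one point requiring actual (though minor) work.
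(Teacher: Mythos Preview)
Your proposal is correct, and your second route is exactly the paper's argument: the paper invokes Theorem~\ref{thm-rgamma} and then cites \cite{PSY1}, Lemma~3.29 for the single fact that the map $\opn{K}^{\vee}_{\infty}(A^0;\mathbf{a}) \otimes_{A^0} \opn{K}^{\vee}_{\infty}(A^0;\mathbf{a}) \to \opn{K}^{\vee}_{\infty}(A^0;\mathbf{a})$ is a quasi-isomorphism, whereas you additionally sketch why this holds. Your first route via $Q_A$ is a valid alternative not used in the paper; it trades the explicit Koszul computation for the already-recorded compatibility of $Q_A$ with $\mrm{R}\Gamma_{\a}$ and $\sigma^R$, reducing to the ring case of \cite{PSY1} directly---slightly less direct as a corollary of Theorem~\ref{thm-rgamma}, but equally short.
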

\begin{proof}
This is because, according to \cite{PSY1}, Lemma 3.29, if $\mathbf{a}$ is a finite sequence in $A^0$ that generates $\a$, then the map
\[
\opn{K}^{\vee}_{\infty}(A^0; \mathbf{a}) \otimes_{A^0} \opn{K}^{\vee}_{\infty}(A^0; \mathbf{a}) \to \opn{K}^{\vee}_{\infty}(A^0; \mathbf{a})
\]
is a quasi-isomorphism.
\end{proof}

Next, we generalize the second claim of Theorem \ref{thm-ring-mgm}(1) to the setting of DG-algebras. 

Let $(A^0,\a)$ be a preadic ring, and let $\mathbf{a}$ be a finite sequence that generates $\a$. Recall that the telescope complex $\opn{Tel}(A^0;\mathbf{a})$ was defined in Section 1. Following \cite{PSY1}, Section 4, we recall there is a sequence of perfect complexes $\opn{Tel}_j(A^0;\mathbf{a})$ approximating it:
For each $j$, if $n=1$, $\opn{Tel}_j(A^0;(a))$ is the subcomplex
\[ \opn{Tel}_j(A^0; a) :=
\bigl( \cdots \to 0 \to \opn{F}([0, j] , A^0) \xar{\d}
\opn{F}([0, j] , A^0) \to 0 \to \cdots \bigr) \]
of $\opn{Tel}(A^0; a)$, where $\opn{F}([0, j] , A^0)$ is the free $A^0$-module with basis $\delta_0,\delta_1,\dots,\delta_j$. If $\mathbf{a} = (a_1,\dots,a_n)$, then 
\[ \opn{Tel}_j(A^0; \bsym{a}) :=
\opn{Tel}_j(A^0; a_1) \otimes_{A^0} \cdots \otimes_{A^0} \opn{Tel}_j(A^0; a_n). \]
It is clear that $\opn{Tel}(A^0;\mathbf{a}) = \varinjlim \opn{Tel}_j(A^0;\mathbf{a})$

In \cite{PSY1}, equation (4.12), an inverse system of homomorphisms of complexes $\opn{tel}_{\mathbf{a},j}: \opn{Hom}_{A^0}(\opn{Tel}_j(A^0;\mathbf{a}),A^0) \to A^0/{\a}^j$ was defined.

Assume now that $(A,\a)$ is a preadic DG-algebra. Let $M$ be a DG $A$-module. For each $j$, since $\opn{Tel}_j(A^0;\mathbf{a})$ is a perfect complex, there is an isomorphism of complexes $\opn{Hom}_{A^0}(\opn{Tel}_j(A^0;\mathbf{a}),M) \to \opn{Hom}_{A^0}(\opn{Tel}_j(A^0;\mathbf{a}),A^0) \otimes_{A^0} M$, functorial in $M$. Clearly, this isomorphism is $A$-linear. Composing this isomorphism with $\opn{tel}_{\mathbf{a},j}\otimes_{A^0} 1_M$, we see that there is an $A$-linear functorial map $\opn{tel}_{\mathbf{a},M,j}: \opn{Hom}_{A^0}(\opn{Tel}_j(A^0;\mathbf{a}),M) \to M\otimes_{A^0} A/{\a}^j$. Let $\opn{tel}_{\mathbf{a},M} := \varprojlim \opn{tel}_{\mathbf{a},M,j}: \opn{Hom}_{A^0}(\opn{Tel}(A^0;\mathbf{a}),M) \to \Lambda_{\a}(M)$. This map was constructed in \cite{PSY1}, Definition 4.16. The discussion here shows that this map is $A$-linear.

According to \cite{PSY1}, Lemma 4.7, there is a quasi-isomorphism $w_{\mathbf{a}}: \opn{Tel}(A^0;\mathbf{a}) \to \opn{K}^{\vee}_{\infty}(A^0; \mathbf{a})$. Composing it with the map $e_{\mathbf{a}}$ defined above, we obtain a map $u_{\mathbf{a}}: \opn{Tel}(A^0;\mathbf{a}) \to A^0$. Hence, for any DG $A$-module $M$, there is an $A$-linear map
\[
\opn{Hom}_{A^0}(u_{\mathbf{a}},1_M) : M \to \opn{Hom}_{A^0}(\opn{Tel}(A^0;\mathbf{a}),M).
\]
It is easy to verify that 
\[
\opn{tel}_{\mathbf{a},M} \circ \opn{Hom}_{A^0}(u_{\mathbf{a}},1_M) = \tau_M.
\]
\begin{lem}
Let $(A,\a)$ be an internally flat weakly proregular preadic DG-algebra. Let $P$ be a K-flat DG $A$-module. Let $\mathbf{a}$ be a finite sequence of elements of $A^0$ that generates $\a$. Then the $A$-linear morphism
\[
\opn{tel}_{\mathbf{a},P} :\opn{Hom}_{A^0}(\opn{Tel}(A^0;\mathbf{a}),P) \to \Lambda_{\a}(P)
\]
is a quasi-isomorphism.
\end{lem}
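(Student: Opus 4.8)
The plan is to reduce the statement to the known ring-theoretic case via the forgetful functor $Q_A$, exactly as in the proof of Lemma \ref{lem-gamma-of-inj}. First I would invoke Remark \ref{remark-lifting-quasi}: a morphism of DG $A$-modules is a quasi-isomorphism if and only if its image under $Q_A$ is a quasi-isomorphism of complexes over $A^0$. So it suffices to show that $Q_A(\opn{tel}_{\mathbf{a},P})$ is an $A^0$-linear quasi-isomorphism.

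Next I would identify the source and target after applying $Q_A$. Since $Q_A$ is just the forgetful functor, it commutes with $\opn{Hom}_{A^0}(-,-)$ in the relevant variable, with $\Lambda_{\a}$, and with the formation of the telescope complex $\opn{Tel}(A^0;\mathbf{a})$ (which lives over $A^0$ to begin with). Hence $Q_A(\opn{Hom}_{A^0}(\opn{Tel}(A^0;\mathbf{a}),P)) = \opn{Hom}_{A^0}(\opn{Tel}(A^0;\mathbf{a}),Q_A(P))$, $Q_A(\Lambda_{\a}(P)) = \Lambda_{\a}(Q_A(P))$, and by naturality of the construction $Q_A(\opn{tel}_{\mathbf{a},P}) = \opn{tel}_{\mathbf{a},Q_A(P)}$, the corresponding map over the ring $A^0$ from \cite{PSY1}.

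Now I would use the hypothesis that $A$ is internally flat: by Proposition \ref{prop-internal-flat-preserve-homological}, $Q_A(P)$ is a K-flat complex over $A^0$. Also, $(A,\a)$ being a weakly proregular preadic DG-algebra means precisely that $\a \subseteq A^0$ is a weakly proregular ideal. Therefore the situation is exactly the hypothesis of the ring-level telescope statement for $\mrm{L}\Lambda_{\a}$ over $A^0$, namely \cite{PSY1}, and that result (or its consequence that $\opn{tel}_{\mathbf{a},Q_A(P)}$ is a quasi-isomorphism for K-flat $Q_A(P)$; compare Theorem \ref{thm-ring-mgm}(1)) gives that $\opn{tel}_{\mathbf{a},Q_A(P)}$ is a quasi-isomorphism. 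Pulling this back through $Q_A$ completes the proof.

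I do not expect a genuine obstacle here; the content has all been pushed into the earlier setup. The only point requiring a little care is the bookkeeping that $Q_A$ really does commute with all the functors involved and that $Q_A(\opn{tel}_{\mathbf{a},P})$ coincides with the map $\opn{tel}_{\mathbf{a},Q_A(P)}$ of \cite{PSY1} — but this is immediate from the fact, noted in the text preceding the lemma, that the construction of $\opn{tel}_{\mathbf{a},M}$ is $A$-linear and built out of $\opn{Hom}_{A^0}$, $\otimes_{A^0}$, and inverse limits, all of which the forgetful functor respects. If one wanted to avoid citing a K-flat version of the ring statement directly, one could instead take a K-flat resolution, compare with the K-injective/K-projective computation, and reduce to \cite{PSY1} that way, but invoking internal flatness plus Proposition \ref{prop-internal-flat-preserve-homological} is the cleanest route.
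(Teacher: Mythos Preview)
Your proposal is correct and is exactly the paper's argument: the paper's proof reads ``Identical to the proof of Lemma \ref{lem-gamma-of-inj}, by using \cite{PSY1}, Corollary 4.23,'' which is precisely the reduction via $Q_A$ and internal flatness that you spelled out. The only refinement is that the specific ring-level input is \cite{PSY1}, Corollary 4.23 (the K-flat case of the telescope comparison), rather than a generic reference to Theorem \ref{thm-ring-mgm}(1).
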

\begin{proof}
Identical to the proof of Lemma \ref{lem-gamma-of-inj}, by using \cite{PSY1}, Corollary 4.23.
\end{proof}

\begin{thm}\label{thm-llambda}
Let $(A,\a)$ be an internally flat weakly proregular preadic DG-algebra. Let $\mathbf{a}$ be a finite sequence of elements of $A^0$ that generates $\a$.
Then there is a functorial isomorphism
\[
\opn{tel}^{\mrm{L}}_{\mathbf{a},M}: \opn{Hom}_{A^0}(\opn{Tel}(A^0;\mathbf{a}),M) \to \mrm{L}\Lambda_{\mfrak{a}} M
\]
for any $M\in \widetilde{\mrm{D}}(\opn{DGMod} A),$
making the diagram in $\widetilde{\mrm{D}}(\opn{DGMod} A)$
\[
\xymatrix{
M \ar[d]_{\opn{Hom}_{A^0}(u_{\mathbf{a}},1_M)} \ar[rd]^{\tau^L_M} \\
\opn{Hom}_{A^0}(\opn{Tel}(A^0;\mathbf{a}),M) \ar[r] & \mrm{L}\Lambda_{\mfrak{a}} M
}
\]
commutative.
\end{thm}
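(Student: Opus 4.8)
The plan is to run exactly the argument used for Theorem~\ref{thm-rgamma}, but with K-injective resolutions systematically replaced by K-flat resolutions, relying on the fact (established in the proposition above) that over an internally flat DG-algebra $\mrm{L}\Lambda_{\a}$ may be computed by K-flat resolutions, and on the preceding lemma, which is the K-flat analogue of Lemma~\ref{lem-gamma-of-inj}.

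First I would observe that $\opn{Tel}(A^0;\mathbf{a})$ is a bounded complex of free $A^0$-modules, hence a K-projective complex over $A^0$; therefore the functor $\opn{Hom}_{A^0}(\opn{Tel}(A^0;\mathbf{a}),-)$ sends quasi-isomorphisms of complexes of $A^0$-modules to quasi-isomorphisms, and so, by Remark~\ref{remark-lifting-quasi}, sends quasi-isomorphisms of DG $A$-modules to quasi-isomorphisms. Consequently it descends to a functor $\widetilde{\mrm{D}}(\opn{DGMod} A)\to\widetilde{\mrm{D}}(\opn{DGMod} A)$, which is the source of the claimed isomorphism.

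Next, for $M\in\widetilde{\mrm{D}}(\opn{DGMod} A)$ I would pick a K-flat resolution $P\xrightarrow{\simeq}M$ over $A$, so that $\mrm{L}\Lambda_{\a}M\cong\Lambda_{\a}(P)$, and define $\opn{tel}^{\mrm{L}}_{\mathbf{a},M}$ as the composite
\[
\opn{Hom}_{A^0}(\opn{Tel}(A^0;\mathbf{a}),M)\xleftarrow{\ \simeq\ }\opn{Hom}_{A^0}(\opn{Tel}(A^0;\mathbf{a}),P)\xrightarrow{\ \opn{tel}_{\mathbf{a},P}\ }\Lambda_{\a}(P)=\mrm{L}\Lambda_{\a}M,
\]
where the left arrow is invertible by the previous paragraph and $\opn{tel}_{\mathbf{a},P}$ is a quasi-isomorphism by the preceding lemma. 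Functoriality of $\opn{tel}^{\mrm{L}}_{\mathbf{a},-}$ follows from the fact that $\opn{tel}_{\mathbf{a},-}$ is a natural transformation on $\opn{DGMod} A$ and that every morphism in $\widetilde{\mrm{D}}(\opn{DGMod} A)$ is representable by a genuine morphism between K-flat DG-modules, the same formal bookkeeping as in \cite{PSY1}. For the commutativity of the triangle in the statement, I would apply the already-recorded identity $\opn{tel}_{\mathbf{a},P}\circ\opn{Hom}_{A^0}(u_{\mathbf{a}},1_P)=\tau_P$, use that for K-flat $P$ the canonical map $\tau_P\colon P\to\Lambda_{\a}(P)$ represents $\tau^{\mrm L}_M$ (this is the K-flat counterpart of the equality $\sigma_I=\sigma^{\mrm R}_I$ exploited in Theorem~\ref{thm-rgamma}, and is built into the construction of $\tau^{\mrm L}$), and invoke naturality of $\opn{Hom}_{A^0}(u_{\mathbf{a}},-)$ to identify $\opn{Hom}_{A^0}(u_{\mathbf{a}},1_P)$ with $\opn{Hom}_{A^0}(u_{\mathbf{a}},1_M)$ under the isomorphism of the first paragraph.

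I do not expect a genuinely hard step here: the analytic content was already extracted into the K-flat version of the lemma and into the earlier proposition that $\mrm{L}\Lambda_{\a}$ can be computed with K-flat resolutions over an internally flat DG-algebra. The one point that deserves care is precisely the transition from K-projective/K-injective objects (where the canonical maps $\tau^{\mrm L}$ and $\sigma^{\mrm R}$ are literally the classical ones) to K-flat objects; once it is checked that $\tau_P=\tau^{\mrm L}_P$ for K-flat $P$, the remainder is formally identical to the proof of Theorem~\ref{thm-rgamma}.
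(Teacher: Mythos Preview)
Your proposal is correct and follows essentially the same approach as the paper: the paper's proof simply says to take a K-flat resolution $P\cong M$ over $A$, continue as in the proof of Theorem~\ref{thm-rgamma}, and use that $\tau_P=\tau^{\mrm L}_P$ for K-flat $P$. Your write-up just makes explicit the details (K-projectivity of $\opn{Tel}(A^0;\mathbf{a})$ over $A^0$, functoriality bookkeeping) that the paper leaves implicit.
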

\begin{proof}
Let $P\cong M$ be a K-flat resolution over $A$. Now continue as in the proof of Theorem \ref{thm-rgamma}, using the fact that $\tau_P = \tau^L_P$.
\end{proof}

\begin{cor}\label{cor-llambda-id}
Let $(A,\a)$ be an internally flat weakly proregular preadic DG-algebra. Then for any $M\in \widetilde{\mrm{D}}(\opn{DGMod} A),$ the morphism
\[
\tau^L_{\mrm{L}\Lambda_{\a}(M)}:\mrm{L}\Lambda_{\a}(M) \to \mrm{L}\Lambda_{\a}(\mrm{L}\Lambda_{\a}(M))
\]
is an isomorphism.
\end{cor}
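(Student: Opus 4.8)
The plan is to mimic the argument for the dual statement, Corollary \ref{cor-rgamma-id}, using the explicit model for $\mrm{L}\Lambda_{\a}$ provided by Theorem \ref{thm-llambda}. By that theorem, for any $M \in \widetilde{\mrm{D}}(\opn{DGMod} A)$ there is a functorial isomorphism $\mrm{L}\Lambda_{\a}(M) \cong \opn{Hom}_{A^0}(\opn{Tel}(A^0;\mathbf{a}),M)$, where $\mathbf{a}$ is a finite sequence in $A^0$ generating $\a$. Applying $\mrm{L}\Lambda_{\a}$ once more, the composite $\mrm{L}\Lambda_{\a}(\mrm{L}\Lambda_{\a}(M))$ is modeled by the iterated complex $\opn{Hom}_{A^0}(\opn{Tel}(A^0;\mathbf{a}), \opn{Hom}_{A^0}(\opn{Tel}(A^0;\mathbf{a}),M))$, and the map $\tau^{\mrm{L}}_{\mrm{L}\Lambda_{\a}(M)}$ corresponds, under these identifications, to $\opn{Hom}_{A^0}(\opn{Hom}_{A^0}(u_{\mathbf{a}},1),1)$ composed with the hom-tensor/hom-hom adjustments. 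So it suffices to show that this map is a quasi-isomorphism.

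First I would reduce to a statement over $A^0$. Since $(A,\a)$ is internally flat, the forgetful functor $Q_A$ commutes with $\mrm{L}\Lambda_{\a}$, and by Remark \ref{remark-lifting-quasi} a morphism of DG $A$-modules is a quasi-isomorphism if and only if $Q_A$ of it is a quasi-isomorphism of complexes over $A^0$. Moreover $Q_A$ commutes with $\opn{Hom}_{A^0}(\opn{Tel}(A^0;\mathbf{a}),-)$ and sends the $A$-linear map $\opn{Hom}_{A^0}(u_{\mathbf{a}},1_M)$ to $\opn{Hom}_{A^0}(u_{\mathbf{a}},1_{Q_A(M)})$. Hence I am reduced to the corresponding assertion in $\mrm{D}(\opn{Mod} A^0)$: that $\tau^{\mrm{L}}_{\mrm{L}\Lambda_{\a}(N)}$ is an isomorphism for any $N \in \mrm{D}(\opn{Mod} A^0)$, where $\a \subseteq A^0$ is weakly proregular. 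But this is exactly the idempotence of $\mrm{L}\Lambda_{\a}$ contained in Theorem \ref{thm-ring-mgm}(2): taking $M$ there to be $\mrm{R}\Gamma_{\a}$ of something, or more directly, it is the well-known fact (\cite{PSY1}) that over a weakly proregular ideal $\mrm{L}\Lambda_{\a}$ is an idempotent functor, i.e. $\tau^{\mrm{L}}_{\mrm{L}\Lambda_{\a}(N)}$ is an isomorphism; concretely this follows because $\opn{Tel}(A^0;\mathbf{a})$ is a bounded complex of flat modules and the map $u_{\mathbf{a}}: \opn{Tel}(A^0;\mathbf{a}) \to A^0$ induces an isomorphism $\opn{Tel}(A^0;\mathbf{a}) \otimes_{A^0} \opn{Tel}(A^0;\mathbf{a}) \to \opn{Tel}(A^0;\mathbf{a})$ after applying $\opn{Hom}$, which is the telescope analogue of \cite{PSY1}, Lemma 3.29 (the statement cited in the proof of Corollary \ref{cor-rgamma-id}).

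Alternatively, and perhaps more cleanly, one can stay inside the DG-setting and argue purely formally: the content of Theorem \ref{thm-llambda} is that $\mrm{L}\Lambda_{\a}$ is isomorphic, as a functor, to $\opn{Hom}_{A^0}(\opn{Tel}(A^0;\mathbf{a}),-)$, with $\tau^{\mrm{L}}$ corresponding to precomposition with $u_{\mathbf{a}}$. Then idempotence of $\mrm{L}\Lambda_{\a}$ is equivalent to the assertion that $\opn{Hom}_{A^0}(u_{\mathbf{a}},1): \opn{Hom}_{A^0}(\opn{Tel}(A^0;\mathbf{a}),M) \to \opn{Hom}_{A^0}(\opn{Tel}(A^0;\mathbf{a}),\opn{Hom}_{A^0}(\opn{Tel}(A^0;\mathbf{a}),M))$ is a quasi-isomorphism for all $M$, which by adjunction reduces to $\opn{Tel}(A^0;\mathbf{a}) \otimes_{A^0} \opn{Tel}(A^0;\mathbf{a}) \to \opn{Tel}(A^0;\mathbf{a})$ being a quasi-isomorphism; this last fact is standard over any ring (no weak proregularity needed for it, though weak proregularity is what makes the telescope compute $\mrm{L}\Lambda_{\a}$ in the first place). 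I expect the only real subtlety — the main obstacle — to be bookkeeping: checking carefully that, under the identifications of Theorem \ref{thm-llambda}, the canonical map $\tau^{\mrm{L}}_{\mrm{L}\Lambda_{\a}(M)}$ really does correspond to $\opn{Hom}_{A^0}(u_{\mathbf{a}},1)$ (using the compatibility of $\opn{tel}^{\mrm{L}}$ with the commutative triangle in Theorem \ref{thm-llambda}), so that one may legitimately transport the question to the telescope model; once that is in place the rest is immediate from Corollary \ref{cor-rgamma-id}'s method applied with $\opn{Hom}$ in place of $\otimes$.
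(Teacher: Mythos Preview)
Your proposal is correct and, in its second (``alternative'') formulation, is exactly the paper's argument: the paper says the proof is identical to that of Corollary~\ref{cor-rgamma-id}, invoking \cite{PSY1}, Lemma~6.9 (which is precisely the telescope statement you isolate, that $u_{\mathbf{a}}\otimes 1:\opn{Tel}(A^0;\mathbf{a})\otimes_{A^0}\opn{Tel}(A^0;\mathbf{a})\to\opn{Tel}(A^0;\mathbf{a})$ is a homotopy equivalence). Your first route via $Q_A$ and reduction to $A^0$ is a harmless detour that lands on the same citation; the bookkeeping you flag is handled by the commutative triangle in Theorem~\ref{thm-llambda}.
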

\begin{proof}
Identical to the proof of Corollary \ref{cor-rgamma-id}, using \cite{PSY1}, Lemma 6.9.
\end{proof}

We now generalize Theorem \ref{thm-ring-mgm}(2) to the setting of DG-algebras:

\begin{thm}\label{thm-RgammaLLambda}
Let $(A,\a)$ be an internally flat weakly proregular preadic DG-algebra. Then for any $M\in \widetilde{\mrm{D}}(\opn{DGMod} A)$, the morphism
\[
\mrm{R}\Gamma_{\a}(\tau^L_M) : \mrm{R}\Gamma_{\a} (M) \to \mrm{R}\Gamma_{\a}(\mrm{L}\Lambda_{\a}(M))
\]
is an isomorphism in $\widetilde{\mrm{D}}(\opn{DGMod} A)$.
\end{thm}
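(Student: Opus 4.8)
The plan is to reduce the assertion to the already-proved ring-theoretic statement, Theorem \ref{thm-ring-mgm}(2) applied to the commutative ring $A^0$ with the weakly proregular ideal $\a \subseteq A^0$, by pushing everything down along the forgetful functor $Q_A : \widetilde{\mrm{D}}(\opn{DGMod} A) \to \mrm{D}(\opn{Mod} A^0)$. The three ingredients I would invoke are: (i) by Remark \ref{remark-lifting-quasi}, a morphism in $\widetilde{\mrm{D}}(\opn{DGMod} A)$ is an isomorphism if and only if its image under $Q_A$ is an isomorphism in $\mrm{D}(\opn{Mod} A^0)$; (ii) since $(A,\a)$ is internally flat, $Q_A$ commutes with both $\mrm{R}\Gamma_{\a}$ and $\mrm{L}\Lambda_{\a}$, and, as recorded in the remark following the construction of the canonical maps, $Q_A$ carries $\tau^L_M$ to the canonical map $\tau^L_{Q_A(M)}$ of \cite{PSY1}; (iii) since $(A,\a)$ is weakly proregular, the ideal $\a\subseteq A^0$ is weakly proregular, so Theorem \ref{thm-ring-mgm}(2) is available over $A^0$.

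With these in hand the argument is short. Applying $Q_A$ to $\mrm{R}\Gamma_{\a}(\tau^L_M)$ and using (ii), one gets
\[
Q_A\bigl(\mrm{R}\Gamma_{\a}(\tau^L_M)\bigr) = \mrm{R}\Gamma_{\a}\bigl(Q_A(\tau^L_M)\bigr) = \mrm{R}\Gamma_{\a}\bigl(\tau^L_{Q_A(M)}\bigr) : \mrm{R}\Gamma_{\a}(Q_A M) \to \mrm{R}\Gamma_{\a}\bigl(\mrm{L}\Lambda_{\a}(Q_A M)\bigr),
\]
which is an isomorphism in $\mrm{D}(\opn{Mod} A^0)$ by Theorem \ref{thm-ring-mgm}(2). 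By (i), $\mrm{R}\Gamma_{\a}(\tau^L_M)$ is then an isomorphism in $\widetilde{\mrm{D}}(\opn{DGMod} A)$, as desired. To make (ii) precise I would take a K-injective resolution $M \cong I$ over $A$ to compute $\mrm{R}\Gamma_{\a}$ and a K-flat resolution $P \cong M$ over $A$ to compute $\mrm{L}\Lambda_{\a}$ (legitimate by the proposition that $\mrm{L}\Lambda_{\a}$ may be computed from K-flat resolutions over an internally flat DG-algebra), noting that by Proposition \ref{prop-internal-flat-preserve-homological} the complexes $Q_A(I)$ and $Q_A(P)$ are again K-injective, resp. K-flat, over $A^0$, so all the identifications are the expected ones.

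The main obstacle is not any genuine new difficulty but the bookkeeping in (ii): one must know that the abstract isomorphisms $Q_A(\mrm{R}\Gamma_{\a}M)\cong\mrm{R}\Gamma_{\a}(Q_AM)$ and $Q_A(\mrm{L}\Lambda_{\a}M)\cong\mrm{L}\Lambda_{\a}(Q_AM)$ are compatible with the canonical maps $\sigma^R$ and $\tau^L$, which is exactly the point of imposing internal flatness and is precisely what the relevant remark above asserts. An alternative, more self-contained route would bypass Theorem \ref{thm-ring-mgm}(2) as a black box: using Theorems \ref{thm-rgamma} and \ref{thm-llambda} one rewrites the morphism, after a K-flat resolution, as
\[
\opn{K}^{\vee}_{\infty}(A^0;\mathbf{a}) \otimes_{A^0} \opn{Hom}_{A^0}(u_{\mathbf{a}}, 1_M) : \opn{K}^{\vee}_{\infty}(A^0;\mathbf{a}) \otimes_{A^0} M \to \opn{K}^{\vee}_{\infty}(A^0;\mathbf{a}) \otimes_{A^0} \opn{Hom}_{A^0}(\opn{Tel}(A^0;\mathbf{a}), M),
\]
and invokes the corresponding quasi-isomorphism statement from \cite{PSY1}; but matching the canonical morphisms up correctly there is at least as delicate, so I would prefer the $Q_A$-reduction above for brevity.
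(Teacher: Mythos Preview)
Your argument is correct. Your primary route differs from the paper's: you push down along $Q_A$ at the outset and invoke the ring-theoretic MGM statement (Theorem \ref{thm-ring-mgm}(2)) as a black box, relying on the proposition that $Q_A$ commutes with $\mrm{R}\Gamma_{\a}$ and $\mrm{L}\Lambda_{\a}$ together with the remark that $Q_A$ carries $\tau^L_M$ to $\tau^L_{Q_A(M)}$. The paper instead follows what you call the ``alternative route'': it first uses the commutative diagrams of Theorems \ref{thm-rgamma} and \ref{thm-llambda} to rewrite $\mrm{R}\Gamma_{\a}(\tau^L_M)$ explicitly as (after swapping $\opn{K}^{\vee}_{\infty}$ for the quasi-isomorphic K-flat $\opn{Tel}$) the map $1_{\opn{Tel}(A^0;\mathbf{a})}\otimes_{A^0}\opn{Hom}_{A^0}(u_{\mathbf{a}},1_M)$, and only then applies $Q_A$ and \cite{PSY1}, Lemma 6.6. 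Your approach is shorter and more conceptual but leans on exactly the compatibility of $Q_A$ with $\tau^L$ that you flag as the bookkeeping burden; the paper's approach sidesteps that compatibility statement by working with explicit models throughout, trading a cleaner structural argument for a slightly longer computation. Both are sound, and they ultimately rest on the same lemma in \cite{PSY1}.
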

\begin{proof}
Applying the $\mrm{R}\Gamma_{\a}$ functor to the diagram in Theorem \ref{thm-llambda}, we see that it is enough to show that the $A$-linear map
\[
\mrm{R}\Gamma_{\a}( \opn{Hom}_{A^0}(u_{\mathbf{a}},1_M) ): \mrm{R}\Gamma_{\a}(M) \to  \mrm{R}\Gamma_{\a}(\opn{Hom}_{A^0}(\opn{Tel}(A^0;\mathbf{a}),M))
\]
is an isomorphism. Applying Theorem \ref{thm-rgamma}, it is enough to show that the map
\[
1_{\opn{K}^{\vee}_{\infty}(A^0; \mathbf{a})}\otimes_{A^0} \opn{Hom}_{A^0}(u_{\mathbf{a}},1_M) 
: \opn{K}^{\vee}_{\infty}(A^0; \mathbf{a})\otimes_{A^0}  M
\to  \opn{K}^{\vee}_{\infty}(A^0; \mathbf{a})\otimes_{A^0} \opn{Hom}_{A^0}(\opn{Tel}(A^0;\mathbf{a}),M)
\]
is an $A$-linear isomorphism. Note that $\opn{K}^{\vee}_{\infty}(A^0; \mathbf{a})$ is a K-flat complex over $A^0$, so we may replace it by the K-flat $A^0$-complex $\opn{Tel}(A^0;\mathbf{a})$ which is quasi-isomorphic to it. Hence, it is enough to show that the map 
\[
1_{\opn{Tel}(A^0;\mathbf{a})}\otimes_{A^0} \opn{Hom}_{A^0}(u_{\mathbf{a}},1_M) 
: \opn{Tel}(A^0;\mathbf{a})\otimes_{A^0}  M
\to  \opn{Tel}(A^0;\mathbf{a})\otimes_{A^0} \opn{Hom}_{A^0}(\opn{Tel}(A^0;\mathbf{a}),M)
\]
is an isomorphism. However, in \cite{PSY1}, Lemma 6.6, it was shown that the map
\[
Q_A(1_{\opn{Tel}(A^0;\mathbf{a})}\otimes_{A^0} \opn{Hom}_{A^0}(u_{\mathbf{a}},1_M)) = 
1_{\opn{Tel}(A^0;\mathbf{a})}
\otimes_{A^0} \opn{Hom}_{A^0}(u_{\mathbf{a}},1_{Q_A(M)})
\]
is a quasi-isomorphism. Hence, by Remark \ref{remark-lifting-quasi}, the map 
\[
1_{\opn{Tel}(A^0;\mathbf{a})}\otimes_{A^0} \opn{Hom}_{A^0}(u_{\mathbf{a}},1_M)
\]
is also a quasi-isomorphism, which proves the claim.
\end{proof}

Similarly, and using \cite{PSY1}, Lemma 6.2, one has:
\begin{thm}\label{thm-LLambdaRGamma}
Let $(A,\a)$ be an internally flat weakly proregular preadic DG-algebra. Then for any $M\in \widetilde{\mrm{D}}(\opn{DGMod} A)$, the morphism
\[
\mrm{L}\Lambda_{\a}(\sigma^R_M) : \mrm{L}\Lambda_{\a} (\mrm{R}\Gamma_{\a}(M)) \to \mrm{L}\Lambda_{\a}(M))
\]
is an isomorphism in $\widetilde{\mrm{D}}(\opn{DGMod} A)$.
\end{thm}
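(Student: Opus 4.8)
The plan is to mimic the proof of Theorem \ref{thm-RgammaLLambda}, which is the dual statement, swapping the roles of $\mrm{R}\Gamma_{\a}$ and $\mrm{L}\Lambda_{\a}$ and the roles of K-injective/K-flat resolutions. Concretely, I would apply the functor $\mrm{L}\Lambda_{\a}$ to the commutative triangle of Theorem \ref{thm-rgamma} (the one involving $v^{\mrm{R}}_{\mathbf{a},M}$, $\sigma^R_M$ and $e_{\mathbf{a},M}$). Since $v^{\mrm{R}}_{\mathbf{a},M}$ is already an isomorphism, this reduces the problem to showing that the $A$-linear map
\[
\mrm{L}\Lambda_{\a}(e_{\mathbf{a},M}) : \mrm{L}\Lambda_{\a}\bigl(\opn{K}^{\vee}_{\infty}(A^0;\mathbf{a}) \otimes_{A^0} M\bigr) \to \mrm{L}\Lambda_{\a}(M)
\]
is an isomorphism in $\widetilde{\mrm{D}}(\opn{DGMod} A)$.

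Next I would compute both sides using Theorem \ref{thm-llambda}, which identifies $\mrm{L}\Lambda_{\a}(-)$ with $\opn{Hom}_{A^0}(\opn{Tel}(A^0;\mathbf{a}),-)$ functorially. This turns the map above into
\[
\opn{Hom}_{A^0}\bigl(\opn{Tel}(A^0;\mathbf{a}), \opn{K}^{\vee}_{\infty}(A^0;\mathbf{a}) \otimes_{A^0} M\bigr) \to \opn{Hom}_{A^0}\bigl(\opn{Tel}(A^0;\mathbf{a}), M\bigr)
\]
induced by $e_{\mathbf{a},M}$. Replacing $\opn{K}^{\vee}_{\infty}(A^0;\mathbf{a})$ by the quasi-isomorphic K-flat complex $\opn{Tel}(A^0;\mathbf{a})$, and using that $\opn{Hom}_{A^0}(\opn{Tel}(A^0;\mathbf{a}),-)$ preserves quasi-isomorphisms between suitable complexes (as used throughout this section), I reduce to checking that the map
\[
\opn{Hom}_{A^0}\bigl(\opn{Tel}(A^0;\mathbf{a}), \opn{Tel}(A^0;\mathbf{a})\otimes_{A^0} M\bigr) \to \opn{Hom}_{A^0}\bigl(\opn{Tel}(A^0;\mathbf{a}), M\bigr)
\]
induced by $u_{\mathbf{a}} : \opn{Tel}(A^0;\mathbf{a}) \to A^0$ is a quasi-isomorphism. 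By Remark \ref{remark-lifting-quasi}, it suffices to check this after applying $Q_A$, i.e.\ at the level of complexes over $A^0$; and this is exactly what is established in \cite{PSY1}, Lemma 6.2, which is the ring-theoretic input the theorem statement already points to.

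The main obstacle, as in the previous theorems of this section, is bookkeeping of the $A$-linearity and the compatibility of all the identifications ($v^{\mrm{R}}$, $\opn{tel}^{\mrm{L}}$, the triangle of Theorem \ref{thm-rgamma}, the triangle of Theorem \ref{thm-llambda}) with the forgetful functor $Q_A$; once one knows that every map in sight is $A$-linear and that $Q_A$ detects quasi-isomorphisms (Remark \ref{remark-lifting-quasi}) and commutes with $\mrm{R}\Gamma_{\a}$, $\mrm{L}\Lambda_{\a}$, $\opn{Tel}(A^0;\mathbf{a})\otimes_{A^0}-$ and $\opn{Hom}_{A^0}(\opn{Tel}(A^0;\mathbf{a}),-)$ under the internal flatness hypothesis, the argument is a direct transcription of the proof of Theorem \ref{thm-RgammaLLambda}. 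No genuinely new idea is needed beyond citing \cite{PSY1}, Lemma 6.2 in place of \cite{PSY1}, Lemma 6.6.
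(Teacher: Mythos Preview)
Your proposal is correct and follows exactly the approach the paper intends: it is the dual of the proof of Theorem \ref{thm-RgammaLLambda}, applying $\mrm{L}\Lambda_{\a}$ to the triangle of Theorem \ref{thm-rgamma}, computing via Theorem \ref{thm-llambda}, and reducing to the $A^0$-level statement supplied by \cite{PSY1}, Lemma 6.2. The paper's own proof is just the sentence ``Similarly, and using \cite{PSY1}, Lemma 6.2,'' so your write-up is a faithful expansion of what the author had in mind.
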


\begin{cor}\label{cor-DG-GM}
Let $(A,\a)$ be an internally flat weakly proregular preadic DG-algebra. Then for any $M,N\in \widetilde{\mrm{D}}(\opn{DGMod} A)$, the morphisms
\[ \begin{aligned}
& \mrm{R}\opn{Hom}_A \bigl( \mrm{R} \Gamma_{\a} (M), \mrm{R} \Gamma_{\a} (N) \bigr)
\xar{\mrm{R}\opn{Hom}(1, \sigma^{\mrm{R}}_N)}
\mrm{R}\opn{Hom}_A \bigl( \mrm{R} \Gamma_{\a} (M), N \bigr) 
\\
& \qquad\xar{\mrm{R}\opn{Hom}(1, \tau^{\mrm{L}}_N)}
\mrm{R}\opn{Hom}_A \bigl( \mrm{R} \Gamma_{\a} (M), \mrm{L} \Lambda_{\a} (N) \bigr) 
\xleftarrow{\mrm{R}\opn{Hom}(\sigma^{\mrm{R}}_M, 1)}
\\ & \qquad  \qquad
\mrm{R}\opn{Hom}_A \bigl( M, \mrm{L} \Lambda_{\a} (N) \bigr)
\xleftarrow{\mrm{R}\opn{Hom}(\tau^{\mrm{L}}_M, 1)}
\mrm{R}\opn{Hom}_A \bigl( \mrm{L} \Lambda_{\a} (M), \mrm{L} \Lambda_{\a} (N) \bigr)
\end{aligned} \]
in $\widetilde{\mrm{D}}(\opn{DGMod} A)$ are isomorphisms.
\end{cor}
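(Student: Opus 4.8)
The plan is to follow the template of \cite{PSY1}, Theorem 6.12: everything reduces to a single derived adjunction between $\mrm{R}\Gamma_{\a}$ and $\mrm{L}\Lambda_{\a}$, and once that is in place the six-term diagram follows formally from the MGM isomorphisms of Theorems \ref{thm-RgammaLLambda} and \ref{thm-LLambdaRGamma}. First, note that the diagram consists of four morphisms, namely $\mrm{R}\opn{Hom}(1,\sigma^{\mrm{R}}_N)$, $\mrm{R}\opn{Hom}(1,\tau^{\mrm{L}}_N)$, $\mrm{R}\opn{Hom}(\sigma^{\mrm{R}}_M,1)$ and $\mrm{R}\opn{Hom}(\tau^{\mrm{L}}_M,1)$, and it is enough to show that each is an isomorphism in $\widetilde{\mrm{D}}(\opn{DGMod} A)$. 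The key step is to construct, functorially in $M,N\in\widetilde{\mrm{D}}(\opn{DGMod} A)$, an isomorphism
\[
\mrm{R}\opn{Hom}_A(\mrm{R}\Gamma_{\a} M, N)\;\cong\;\mrm{R}\opn{Hom}_A(M,\mrm{L}\Lambda_{\a} N)
\]
in $\widetilde{\mrm{D}}(\opn{DGMod} A)$. Fix a finite sequence $\mathbf{a}$ of elements of $A^0$ generating $\a$, and set $\opn{T} := \opn{Tel}(A^0;\mathbf{a})$; this is a bounded complex of countably generated free $A^0$-modules, hence simultaneously K-flat and K-projective over $A^0$. By Theorem \ref{thm-rgamma} together with the quasi-isomorphism $w_{\mathbf{a}}:\opn{T}\to\opn{K}^{\vee}_{\infty}(A^0;\mathbf{a})$ between K-flat $A^0$-complexes, there is a functorial isomorphism $\mrm{R}\Gamma_{\a} M\cong\opn{T}\otimes_{A^0} M\cong(\opn{T}\otimes_{A^0} A)\otimes_A M$ in $\widetilde{\mrm{D}}(\opn{DGMod} A)$, where $\opn{T}\otimes_{A^0} A$ is a K-flat DG $A$-module (for any acyclic DG $A$-module $X$, one has $(\opn{T}\otimes_{A^0} A)\otimes_A X=\opn{T}\otimes_{A^0} X$, which is acyclic since $\opn{T}$ is K-flat over $A^0$). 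Then the derived tensor-hom adjunction over $A$ and the restriction-extension adjunction give
\[
\mrm{R}\opn{Hom}_A(\mrm{R}\Gamma_{\a} M, N)\cong\mrm{R}\opn{Hom}_A\bigl(M,\mrm{R}\opn{Hom}_A(\opn{T}\otimes_{A^0} A, N)\bigr)\cong\mrm{R}\opn{Hom}_A\bigl(M,\opn{Hom}_{A^0}(\opn{T}, N)\bigr),
\]
the last equality because $\opn{T}$ is K-projective over $A^0$, so no resolution is needed; finally Theorem \ref{thm-llambda} identifies the DG $A$-module $\opn{Hom}_{A^0}(\opn{T}, N)$ with $\mrm{L}\Lambda_{\a} N$ in $\widetilde{\mrm{D}}(\opn{DGMod} A)$.

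With this adjunction at hand, I would dispatch the four maps by one recipe: apply an exact functor to a standard triangle and show the third vertex becomes $0$, using the MGM isomorphisms to kill a torsion or a completion. For $\mrm{R}\opn{Hom}(1_{\mrm{R}\Gamma_{\a} M},\sigma^{\mrm{R}}_N)$, apply $\mrm{R}\opn{Hom}_A(\mrm{R}\Gamma_{\a} M,-)$ to the triangle $\mrm{R}\Gamma_{\a} N\xrightarrow{\sigma^{\mrm{R}}_N} N\to C\to$; it then suffices that $\mrm{R}\opn{Hom}_A(\mrm{R}\Gamma_{\a} M, C)=0$, and by the adjunction this is $\mrm{R}\opn{Hom}_A(M,\mrm{L}\Lambda_{\a} C)$, which vanishes because $\mrm{L}\Lambda_{\a} C=0$ by Theorem \ref{thm-LLambdaRGamma} (the map $\mrm{L}\Lambda_{\a}(\sigma^{\mrm{R}}_N)$ being an isomorphism). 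The map $\mrm{R}\opn{Hom}(1,\tau^{\mrm{L}}_N)$ is treated identically with $C:=\opn{cone}(\tau^{\mrm{L}}_N)$: here $\mrm{R}\Gamma_{\a} C=0$ by Theorem \ref{thm-RgammaLLambda}, hence $\mrm{L}\Lambda_{\a} C\cong\mrm{L}\Lambda_{\a}\mrm{R}\Gamma_{\a} C=0$ by Theorem \ref{thm-LLambdaRGamma}. Dually, for $\mrm{R}\opn{Hom}(\tau^{\mrm{L}}_M,1_{\mrm{L}\Lambda_{\a} N})$ and $\mrm{R}\opn{Hom}(\sigma^{\mrm{R}}_M,1_{\mrm{L}\Lambda_{\a} N})$ I would apply $\mrm{R}\opn{Hom}_A(-,\mrm{L}\Lambda_{\a} N)$ to the triangles determined by $\tau^{\mrm{L}}_M$ and $\sigma^{\mrm{R}}_M$, use the adjunction in the form $\mrm{R}\opn{Hom}_A(C,\mrm{L}\Lambda_{\a} N)\cong\mrm{R}\opn{Hom}_A(\mrm{R}\Gamma_{\a} C, N)$, and then invoke $\mrm{R}\Gamma_{\a}(\opn{cone}\,\tau^{\mrm{L}}_M)=0$ (Theorem \ref{thm-RgammaLLambda}) and $\mrm{R}\Gamma_{\a}(\opn{cone}\,\sigma^{\mrm{R}}_M)\cong\mrm{R}\Gamma_{\a}\mrm{L}\Lambda_{\a}(\opn{cone}\,\sigma^{\mrm{R}}_M)=0$ (Theorems \ref{thm-LLambdaRGamma} and \ref{thm-RgammaLLambda}) respectively.

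I expect the main obstacle to be the careful construction of the adjunction $\mrm{R}\opn{Hom}_A(\mrm{R}\Gamma_{\a} M,N)\cong\mrm{R}\opn{Hom}_A(M,\mrm{L}\Lambda_{\a} N)$ in the DG setting: one must keep the homological grading of the Koszul and telescope complexes separate from the internal grading of $A$, verify that $\opn{T}\otimes_{A^0} A$ and $\opn{Hom}_{A^0}(\opn{T}, N)$ carry the expected DG $A$-module structures and that these agree with the outputs of $\mrm{R}\Gamma_{\a}$ and $\mrm{L}\Lambda_{\a}$ supplied by Theorems \ref{thm-rgamma} and \ref{thm-llambda} (this is where internal flatness, already built into those theorems, is used), and track the naturality of all the adjunction isomorphisms. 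Granting the adjunction, the remaining four-map argument is a formal diagram chase, essentially word for word the discrete case of \cite{PSY1}.
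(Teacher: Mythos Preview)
Your proposal is correct and follows essentially the same approach as the paper: the paper's proof simply cites \cite{PSY1}, Theorem 6.12, and lists as ingredients Theorems \ref{thm-rgamma}, \ref{thm-llambda}, \ref{thm-RgammaLLambda}, \ref{thm-LLambdaRGamma}, the hom-tensor adjunction, and the homotopy equivalence $\opn{Tel}(A^0;\mathbf{a})\otimes_{A^0}\opn{Tel}(A^0;\mathbf{a})\simeq\opn{Tel}(A^0;\mathbf{a})$, which is exactly the toolkit you assemble. Your cone-based argument packages the telescope idempotence through the MGM theorems rather than invoking it directly, but this is a cosmetic difference; you have supplied the details the paper leaves implicit.
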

\begin{proof}
As shown in \cite{PSY1}, Theorem 6.12, this follows immediately from Theorems \ref{thm-rgamma}, \ref{thm-llambda}, \ref{thm-RgammaLLambda}, \ref{thm-LLambdaRGamma}, the hom-tensor adjunction, and the fact that 
$\opn{Tel}(A^0;\mathbf{a}) \otimes_{A^0} \opn{Tel}(A^0;\mathbf{a})$ is homotopy equivalent to $\opn{Tel}(A^0;\mathbf{a})$.
\end{proof}

\begin{dfn}
Let $(A,\a)$ be a preadic DG-algebra. 
\begin{enumerate}
\item A DG-module $M$ is called cohomologically $\a$-torsion if the map $\sigma^R_M:\mrm{R}\Gamma_{\a} M \to M$ is an isomorphism in $\widetilde{\mrm{D}}(\opn{DGMod} A)$. The collection of all cohomologically $\a$-torsion DG-modules is a full triangulated subcategory of 
$\widetilde{\mrm{D}}(\opn{DGMod} A)$
denoted by $\widetilde{\mrm{D}}(\opn{DGMod} A)_{\opn{\a-tor}}$.
\item A DG-module $M$ is called cohomologically $\a$-adically complete if the map $\tau^L_M:M \to \mrm{L}\Lambda_{\a} M$ is an isomorphism in $\widetilde{\mrm{D}}(\opn{DGMod} A)$. The collection of all cohomologically $\a$-adically complete DG-modules is a full triangulated subcategory of 
$\widetilde{\mrm{D}}(\opn{DGMod} A)$
denoted by $\widetilde{\mrm{D}}(\opn{DGMod} A)_{\opn{\a-com}}$.
\end{enumerate}
\end{dfn}

\begin{rem}
The theory developed in this section shows that if $(A,\a)$ is an internally flat weakly proregular preadic DG-algebra, then, as in the case of rings, the functor
\[
\mrm{R}\Gamma_{\a}: \widetilde{\mrm{D}}(\opn{DGMod} A)_{\opn{\a-com}} \to \widetilde{\mrm{D}}(\opn{DGMod} A)_{\opn{\a-tor}}
\]
is an equivalence, with quasi-inverse $\mrm{L}\Lambda_{\a}$. We refer to this equivalence as the MGM (Matlis-Greenlees-May) equivalence. Furthermore, by Corollaries \ref{cor-rgamma-id} and \ref{cor-llambda-id}, these categories are equal to the essential images of the functors $\mrm{R}\Gamma_{\a}$ and $\mrm{L}\Lambda_{\a}$ respectively.
\end{rem}

\begin{rem}
With Theorems \ref{thm-rgamma} and \ref{thm-llambda} in view, we see that this equivalence is an example of a \textbf{Generalized Foxby equivalence}, in the sense of \cite{FJ}, Section 1.5.
\end{rem}

\begin{rem}\label{rem-koszul-dg}
Let $(A,\a)$ be an internally flat weakly proregular preadic DG-algebra. Let $\mathbf{a}$ be a finite sequence in $A^0$ that generates $\a$. Following the base change formulas for the infinite dual Koszul and telescope complexes, it is natural to introduce the following notation:
\[
\opn{K}^{\vee}_{\infty}(A; \mathbf{a}) := \opn{K}^{\vee}_{\infty}(A^0;\mathbf{a}) \otimes_{A^0} A
\]
and
\[
\opn{Tel}(A;\mathbf{a}) := \opn{Tel}(A^0;\mathbf{a}) \otimes_{A^0} A.
\]
It is clear that these are K-flat and K-projective DG $A$-modules respectively.
Using this notation, the associativity of the tensor product, and the hom-tensor adjunction, we may rewrite Theorems \ref{thm-rgamma} and \ref{thm-llambda} 
and deduce that for any DG-module $M$, there are functorial isomorphisms
\[
\mrm{R}\Gamma_{\a}(M) \cong \opn{K}^{\vee}_{\infty}(A; \mathbf{a})\otimes_A M
\]
and
\[
\mrm{L}\Lambda_{\a}(M) \cong \opn{Hom}_A(\opn{Tel}(A; \mathbf{a}),M).
\]
\end{rem}

The formulas for the $\mrm{R}\Gamma_{\a}$ and $\mrm{L}\Lambda_{\a}$ functors developed above are useful when they are considered as functors $\widetilde{\mrm{D}}(\opn{DGMod} A) \to \widetilde{\mrm{D}}(\opn{DGMod} A)$. More generally, one might consider them as functors $\widetilde{\mrm{D}}(\opn{DGMod} A) \to \widetilde{\mrm{D}}(\opn{DGMod} \Lambda_{\a}(A))$. The next two propositions gives (less explicit) formulas for these functors valid over $\Lambda_{\a}(A)$. As far as we know, they are new even in the case where $A=A^0$ is a ring.

\begin{prop}\label{prop-complete-rgamma}
Let $(A,\a)$ be an internally flat weakly proregular preadic DG-algebra. Set $\widehat{A} = \Lambda_{\a}(A)$. Let $\mathbf{a}$ be a finite sequence of elements of $A^0$ that generates $\a$, and let $\widehat{\mathbf{a}}$ be its image in $\widehat{A}^0$. Then there is an isomorphism
\[
\mrm{R}\Gamma_{\a} (-) \to (\opn{K}^{\vee}_{\infty}(\widehat{A}; \widehat{\mathbf{a}}) \otimes^{\mrm{L}}_A -)
\]
of functors
\[
\widetilde{\mrm{D}}(\opn{DGMod} A) \to \widetilde{\mrm{D}}(\opn{DGMod} \widehat{A}).
\]
\end{prop}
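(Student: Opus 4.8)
The plan is to build the comparison morphism out of the explicit $\opn{K}^{\vee}_{\infty}$-formula of Theorem \ref{thm-rgamma}, to reduce its bijectivity to a single completion-invariance statement for $\mrm{R}\Gamma_{\a}$, and finally to check that the resulting isomorphism respects the $\widehat{A}$-module structure. First, recall from Theorem \ref{thm-rgamma} and Remark \ref{rem-koszul-dg} that for every $M\in\widetilde{\mrm{D}}(\opn{DGMod} A)$ there is a functorial isomorphism $\mrm{R}\Gamma_{\a}(M)\cong \opn{K}^{\vee}_{\infty}(A^0;\mathbf{a})\otimes_{A^0}M$ in $\widetilde{\mrm{D}}(\opn{DGMod} A)$, and that $\opn{K}^{\vee}_{\infty}(A^0;\mathbf{a})$ is a bounded complex of flat $A^0$-modules, hence K-flat over $A^0$; in particular tensoring with it already computes $\mrm{R}\Gamma_{\a}$ over $A^0$ and, together with associativity of the tensor product, commutes with $-\otimes^{\mrm{L}}_A M$ for every $M$. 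Combining the base-change formula for the infinite dual Koszul complex with its definition over a DG-algebra gives $\opn{K}^{\vee}_{\infty}(\widehat{A};\widehat{\mathbf{a}})=\opn{K}^{\vee}_{\infty}(\widehat{A}^0;\widehat{\mathbf{a}})\otimes_{\widehat{A}^0}\widehat{A}=\opn{K}^{\vee}_{\infty}(A^0;\mathbf{a})\otimes_{A^0}\widehat{A}$, so that $\opn{K}^{\vee}_{\infty}(\widehat{A};\widehat{\mathbf{a}})\otimes^{\mrm{L}}_A M=\opn{K}^{\vee}_{\infty}(A^0;\mathbf{a})\otimes_{A^0}(\widehat{A}\otimes^{\mrm{L}}_A M)$. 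I would therefore define the comparison morphism as the composite
\[
\mrm{R}\Gamma_{\a}(M)\xrightarrow{\mrm{R}\Gamma_{\a}(c_M)}\mrm{R}\Gamma_{\a}(\widehat{A}\otimes^{\mrm{L}}_A M)\xrightarrow{v^{\mrm{R}}}\opn{K}^{\vee}_{\infty}(A^0;\mathbf{a})\otimes_{A^0}(\widehat{A}\otimes^{\mrm{L}}_A M)=\opn{K}^{\vee}_{\infty}(\widehat{A};\widehat{\mathbf{a}})\otimes^{\mrm{L}}_A M,
\]
where $c_M:M\to\widehat{A}\otimes^{\mrm{L}}_A M$ is the base-change map induced by $A\to\widehat{A}$ and $v^{\mrm{R}}$ is the isomorphism of Theorem \ref{thm-rgamma} applied to the DG $A$-module $\widehat{A}\otimes^{\mrm{L}}_A M$.

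The conceptual heart is to show that $\mrm{R}\Gamma_{\a}(c_M)$ is an isomorphism. Applying the triangulated functor $-\otimes^{\mrm{L}}_A M$ to the triangle $A\to\widehat{A}\to\operatorname{cone}(A\to\widehat{A})\to$ identifies $\operatorname{cone}(c_M)$ with $\operatorname{cone}(A\to\widehat{A})\otimes^{\mrm{L}}_A M$, and since $\mrm{R}\Gamma_{\a}$ commutes with $-\otimes^{\mrm{L}}_A M$, it suffices to prove that the natural map $\mrm{R}\Gamma_{\a}(A)\to\mrm{R}\Gamma_{\a}(\widehat{A})$ is an isomorphism. This is where internal flatness enters decisively: $A$ is a K-flat DG $A$-module, so $\mrm{L}\Lambda_{\a}(A)$ may be computed by $\Lambda_{\a}(A)=\widehat{A}$, and under this identification the canonical map $\tau^{\mrm{L}}_A:A\to\mrm{L}\Lambda_{\a}(A)$ is the completion map $A\to\widehat{A}$. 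Theorem \ref{thm-RgammaLLambda} now says exactly that $\mrm{R}\Gamma_{\a}(\tau^{\mrm{L}}_A):\mrm{R}\Gamma_{\a}(A)\to\mrm{R}\Gamma_{\a}(\widehat{A})$ is an isomorphism, which is what we wanted. Hence $\mrm{R}\Gamma_{\a}(\operatorname{cone}(c_M))\cong 0$ and $\mrm{R}\Gamma_{\a}(c_M)$ is an isomorphism for every $M$; combined with $v^{\mrm{R}}$ this produces a natural isomorphism $\mrm{R}\Gamma_{\a}(-)\to\opn{K}^{\vee}_{\infty}(\widehat{A};\widehat{\mathbf{a}})\otimes^{\mrm{L}}_A -$ of functors $\widetilde{\mrm{D}}(\opn{DGMod} A)\to\widetilde{\mrm{D}}(\opn{DGMod} A)$.

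It remains to promote this to an isomorphism of functors into $\widetilde{\mrm{D}}(\opn{DGMod}\widehat{A})$, i.e.\ to check that the composite above is $\widehat{A}$-linear. The first arrow is $\widehat{A}$-linear because $\Gamma_{\a}$ carries $\opn{DGMod} A$ into $\opn{DGMod}\widehat{A}$, sending every morphism of DG $A$-modules to a morphism of DG $\widehat{A}$-modules; moreover the $\widehat{A}$-structure it puts on $\mrm{R}\Gamma_{\a}(\widehat{A}\otimes^{\mrm{L}}_A M)$ is the torsion-submodule structure coming from the DG $\widehat{A}$-module $\widehat{A}\otimes^{\mrm{L}}_A M$, which is precisely the structure that $v^{\mrm{R}}$ transports to $\opn{K}^{\vee}_{\infty}(A^0;\mathbf{a})\otimes_{A^0}(\widehat{A}\otimes^{\mrm{L}}_A M)=\opn{K}^{\vee}_{\infty}(\widehat{A};\widehat{\mathbf{a}})\otimes^{\mrm{L}}_A M$. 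I expect the one genuinely delicate point to be showing that $v^{\mrm{R}}_{\mathbf{a},N}$ is $\widehat{A}$-linear for a DG $\widehat{A}$-module $N$ — equivalently, that $\mrm{R}\Gamma_{\a}$ computed over $A$ and over $\widehat{A}$ agree on DG $\widehat{A}$-modules. I would handle this by first recording that $(\widehat{A},\widehat{\a})$ is again an internally flat weakly proregular preadic DG-algebra, so that Theorem \ref{thm-rgamma} applies to it and identifies $\opn{K}^{\vee}_{\infty}(\widehat{A};\widehat{\mathbf{a}})\otimes_{\widehat{A}}N$ with $\mrm{R}\Gamma_{\widehat{\a}}(N)$ $\widehat{A}$-linearly; tracing through the construction of the maps $v$ out of the inclusions $\Gamma_{(a_i)}(-)\inj(-)$ and the localization maps — all of which are natural transformations of functors on modules and hence automatically $\widehat{A}$-linear once the input is a DG $\widehat{A}$-module — then gives the compatibility, and with it the asserted isomorphism of functors $\widetilde{\mrm{D}}(\opn{DGMod} A)\to\widetilde{\mrm{D}}(\opn{DGMod}\widehat{A})$.
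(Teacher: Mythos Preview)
Your argument is essentially the paper's, reorganized: both reduce to the single fact $\opn{K}^{\vee}_{\infty}(A;\mathbf{a})\otimes^{\mrm{L}}_A\widehat{A}\cong\opn{K}^{\vee}_{\infty}(A;\mathbf{a})$, which follows from $\widehat{A}\cong\mrm{L}\Lambda_{\a}(A)$ together with Theorem~\ref{thm-RgammaLLambda}. The paper packages this as Lemma~\ref{lem-tortens} (for torsion $N$ one has $\widehat{A}\otimes^{\mrm{L}}_A N\cong N$ in $\widetilde{\mrm{D}}(\opn{DGMod}\widehat{A})$) and then applies it to $N=\mrm{R}\Gamma_{\a}(M)$; you instead push $\widehat{A}$ inside first and apply $\mrm{R}\Gamma_{\a}$ afterwards. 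These are the same computation read in two orders.

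There is one concrete problem in your final paragraph. You assert that $(\widehat{A},\widehat{\a})$ is again an internally flat weakly proregular preadic DG-algebra, but neither claim is justified under the hypotheses of the proposition. Internal flatness of $\widehat{A}$ over $\widehat{A}^0$ requires additional assumptions (the paper only obtains it later, under the stronger hypotheses that each $A^i$ is projective over $A^0$ and that $\Lambda_{\a}(A^0)$ is noetherian, via Proposition~\ref{prop-completion-of-projective-is-flat}); and weak proregularity of $\widehat{\a}\subseteq\widehat{A}^0$ is not automatic either, since $A^0\to\widehat{A}^0$ need not be flat. So you cannot invoke Theorem~\ref{thm-rgamma} over $\widehat{A}$.

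Fortunately you do not need it. The $\widehat{A}$-linearity of $v^{\mrm{R}}_{\mathbf{a},N}$ for a DG $\widehat{A}$-module $N$ follows directly from the second half of your own sentence: the map $v_{\mathbf{a},I}$ is built from the inclusions $\Gamma_{(a_i)}(-)\hookrightarrow(-)$ and the identity in the Koszul direction, all of which are $\widehat{A}$-linear as soon as the input carries an $\widehat{A}$-action (this is exactly the observation the paper makes for $e_{\mathbf{a},M}$ in the proof of Lemma~\ref{lem-tortens}). Drop the unsupported claim about $(\widehat{A},\widehat{\a})$ and keep only that direct verification, and your proof is complete.
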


Before the proof, we need a lemma:
\begin{lem}\label{lem-tortens}
Let $(A,\a)$ be an internally flat weakly proregular preadic DG-algebra. Set $\widehat{A} = \Lambda_{\a}(A)$. Let $Q:\widetilde{\mrm{D}}(\opn{DGMod} \widehat{A}) \to \widetilde{\mrm{D}}(\opn{DGMod} A)$ be the forgetful functor. Then for any $M \in \widetilde{\mrm{D}}(\opn{DGMod} \widehat{A})$, such that $Q(M) \in \widetilde{\mrm{D}}(\opn{DGMod} A)_{\opn{\a-tor}}$, there is a functorial isomorphism
\[
\widehat{A} \otimes^{\mrm{L}}_A M \cong M
\]
in $\widetilde{\mrm{D}}(\opn{DGMod} \widehat{A})$.
\end{lem}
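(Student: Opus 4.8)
The plan is to reduce the statement to a known base-change fact over the ring $A^0$, using the internal flatness hypothesis to pass freely between DG-modules over $A$ (resp. $\widehat{A}$) and complexes over $A^0$ (resp. $\widehat{A}^0$). First I would recall that $\widehat{A} = \Lambda_{\a}(A)$ is again an internally flat (in fact the completion of the flat DG-module) preadic DG-algebra, with $\widehat{A}^0 = \Lambda_{\a}(A^0)$, and that $\widehat{A}^0$ is flat over $A^0$ by Proposition \ref{prop-completion-of-projective-is-flat} applied to a K-flat resolution — or more directly because $A^0 \to \widehat{A}^0$ is a flat map whenever the relevant torsion data is weakly proregular. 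The natural map $M \to \widehat{A} \otimes^{\mrm{L}}_A M$ (induced by $A \to \widehat{A}$) is the candidate isomorphism, and the goal is to check it is a quasi-isomorphism when $Q(M)$ is cohomologically $\a$-torsion.

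Next I would apply the forgetful functor $Q_A$ and use Remark \ref{remark-lifting-quasi}, which says a morphism of DG $A$-modules is a quasi-isomorphism iff it is so after applying $Q_A$; similarly over $\widehat{A}$. So it suffices to show $Q(M) \to Q(\widehat{A} \otimes^{\mrm{L}}_A M)$ is a quasi-isomorphism of complexes of $A^0$-modules. Choosing a K-flat resolution $P \iso M$ over $A$, internal flatness gives that $Q_A(P)$ is K-flat over $A^0$, so $\widehat{A} \otimes^{\mrm{L}}_A M$ is computed by $\widehat{A} \otimes_A P$, and upon forgetting this is $\widehat{A}^0 \otimes_{A^0} Q_A(P)$ (using that $\widehat{A}^0 \otimes_{A^0} A \cong \widehat{A}$ as DG $A$-modules up to the relevant identification, together with associativity of tensor product). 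Thus the claim becomes: for a complex $N := Q(M)$ of $\widehat{A}^0$-modules which is cohomologically $\a$-torsion over $A^0$, the natural map $N \to \widehat{A}^0 \otimes^{\mrm{L}}_{A^0} N$ is an isomorphism in $\mrm{D}(\opn{Mod} \widehat{A}^0)$. This is precisely the statement that over a weakly proregular ring, a derived-torsion complex is derived-complete-invariant under derived base change to the completion — it follows from the MGM machinery in Theorem \ref{thm-ring-mgm}: writing $N \cong \mrm{R}\Gamma_{\a} N$ and using $\mrm{R}\Gamma_{\a} \cong \opn{Tel}(A^0;\mathbf{a}) \otimes_{A^0} -$, together with the base change isomorphism $\opn{Tel}(A^0;\mathbf{a}) \otimes_{A^0} \widehat{A}^0 \cong \opn{Tel}(\widehat{A}^0;\widehat{\mathbf{a}})$ and $\mrm{R}\Gamma_{\widehat{\a}} \widehat{A}^0 \otimes^{\mrm{L}}_{\widehat{A}^0} - \cong \mrm{R}\Gamma_{\a} -$ on torsion complexes over $A^0$ (a flat-base-change statement for $\mrm{R}\Gamma$, cf. Remark \ref{remark-wpr-flat}).

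The main obstacle I anticipate is not conceptual but bookkeeping: carefully tracking that the various identifications ($\widehat{A}^0 \otimes_{A^0} A \cong \widehat{A}$, the behavior of $\otimes^{\mrm{L}}$ under the forgetful functors, and the $\widehat{A}^0$- versus $\widehat{A}$-module structures) are compatible and that the resulting quasi-isomorphism is the canonical one rather than merely an abstract isomorphism. In particular one must make sure that "$Q(M)$ cohomologically $\a$-torsion over $A^0$" is genuinely what is needed, i.e. that torsion-ness is detected at the level of $A^0$-complexes and is preserved when computing $\widehat{A}^0 \otimes^{\mrm{L}}_{A^0} -$. Once these identifications are in place, the proof is a short chain of functorial isomorphisms, closed off by Remark \ref{remark-lifting-quasi} to lift back to $\widetilde{\mrm{D}}(\opn{DGMod}\widehat{A})$.
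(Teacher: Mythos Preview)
There is a genuine gap. Your reduction hinges on two facts that are not available in the hypotheses of the lemma: that $A^0 \to \widehat{A}^0$ is flat, and that $\widehat{A}^0 \otimes_{A^0} A \cong \widehat{A}$. Neither holds in general. Proposition \ref{prop-completion-of-projective-is-flat} says $\Lambda_{\a}(P)$ is flat over $\Lambda_{\a}(A)$ (not over $A$), and it requires noetherianity of the completion, which is not assumed here. Weak proregularity alone does not make $A^0 \to \widehat{A}^0$ flat; the paper says so explicitly immediately after this lemma, remarking that ``we must use derived tensor product, because we do not know that $\widehat{A}$ is K-flat over $A$, even if $A=A^0$ is a ring, because we do not know if the map $A\to \widehat{A}$ is flat when $A$ is not noetherian.'' Likewise, $\widehat{A}^i = \Lambda_{\a}(A^i)$ need not agree with $\widehat{A}^0 \otimes_{A^0} A^i$ when $A^i$ is not finitely generated over $A^0$, so the identification $Q_A(\widehat{A}\otimes_A P) \cong \widehat{A}^0 \otimes_{A^0} Q_A(P)$ you rely on is not justified. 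Thus your passage to a base-change statement over $A^0$ collapses.

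The paper's proof avoids these issues entirely by staying at the DG level and never invoking flatness of the completion map. It uses the torsion hypothesis to replace $M$ by $\opn{K}^{\vee}_{\infty}(A;\mathbf{a}) \otimes_A M$ via Theorem \ref{thm-rgamma} (the map $e_{\mathbf{a},M}$ being $\widehat{A}$-linear), then uses associativity to move $\widehat{A}$ against the Koszul factor: $\widehat{A} \otimes^{\mrm{L}}_A M \cong (\opn{K}^{\vee}_{\infty}(A;\mathbf{a}) \otimes^{\mrm{L}}_A \widehat{A}) \otimes^{\mrm{L}}_A M$. The key computation is then $\opn{K}^{\vee}_{\infty}(A;\mathbf{a}) \otimes^{\mrm{L}}_A \widehat{A} \cong \opn{K}^{\vee}_{\infty}(A;\mathbf{a})$, which follows from $\widehat{A} \cong \mrm{L}\Lambda_{\a}(A)$ together with Theorems \ref{thm-rgamma} and \ref{thm-RgammaLLambda} (i.e.\ $\mrm{R}\Gamma_{\a}\mrm{L}\Lambda_{\a}(A) \cong \mrm{R}\Gamma_{\a}(A)$). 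This is the same MGM input you were reaching for, but applied directly over $A$ rather than after an unavailable reduction to $A^0$.
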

\begin{proof}
Let $\mathbf{a}$ be a finite sequence in $A^0$ that generates $\a$.
By Theorem \ref{thm-rgamma}, we see that the functorial map
\[
e_{\mathbf{a},M} : \opn{K}^{\vee}_{\infty}(A; \mathbf{a}) \otimes_A M \to M
\]
is a $A$-linear quasi-isomorphism. Note that both of these DG-modules have the structure of $\widehat{A}$-DG modules, so it is easy to verify that the map $e_{\mathbf{a},M}$ constructed above is a $\widehat{A}$-linear quasi-isomorphism.
It follows that there is a sequence of functorial isomorphisms
\[
\widehat{A} \otimes^{\mrm{L}}_A M \cong \widehat{A} \otimes^{\mrm{L}}_A (\opn{K}^{\vee}_{\infty}(A; \mathbf{a}) \otimes_A M) \cong 
 (\opn{K}^{\vee}_{\infty}(A; \mathbf{a}) \otimes^{\mrm{L}}_A \widehat{A}) \otimes^{\mrm{L}}_A M
\]
in $\widetilde{\mrm{D}}(\opn{DGMod} \widehat{A})$.
Since $\widehat{A} \cong \mrm{L}\Lambda_{\a}(A)$, it follows from Theorems \ref{thm-rgamma} and \ref{thm-RgammaLLambda}, that there is an isomorphism
\[
(\opn{K}^{\vee}_{\infty}(A; \mathbf{a}) \otimes^{\mrm{L}}_A \widehat{A}) \cong \opn{K}^{\vee}_{\infty}(A; \mathbf{a})
\]
in $\widetilde{\mrm{D}}(\opn{DGMod} A)$. 
Hence, there is a functorial isomorphism
\[
(\opn{K}^{\vee}_{\infty}(A; \mathbf{a}) \otimes^{\mrm{L}}_A \widehat{A}) \otimes^{\mrm{L}}_A M \cong 
\opn{K}^{\vee}_{\infty}(A; \mathbf{a})  \otimes^{\mrm{L}}_A M.
\]
The $\widehat{A}$-linear isomorphism $e_{\mathbf{a},M}$ from above now establishes the claim.
\end{proof}

Note that if $A$ was a noetherian ring, then this lemma would have followed from the fact that $\widehat{A}\otimes_A \Gamma_{\a} (M) \cong \Gamma_{\a} (M)$ for any $A$-module $M$. The point of this lemma is that the derived version of this fact is true even when we do not know that $\widehat{A}$ is flat over $A$.

We now prove Proposition \ref{prop-complete-rgamma}:
\begin{proof} 
Let $M \in \widetilde{\mrm{D}}(\opn{DGMod} A)$. According to the above lemma, there is a functorial isomorphism
\[
\mrm{R}\Gamma_{\a} (M) \cong \widehat{A} \otimes^{\mrm{L}}_A \mrm{R}\Gamma_{\a} (M)  
\]
in $\widetilde{\mrm{D}}(\opn{DGMod} \widehat{A})$.
By Theorem \ref{thm-rgamma}, there is an $A$-linear isomorphism
\[
\mrm{R}\Gamma_{\a} (M) \cong \opn{K}^{\vee}_{\infty}(A; \mathbf{a}) \otimes_A M.
\]
Hence, there is an $\widehat{A}$-linear isomorphism
\[
\mrm{R}\Gamma_{\a} (M) \cong \widehat{A} \otimes^{\mrm{L}}_A (\opn{K}^{\vee}_{\infty}(A; \mathbf{a}) \otimes_A M)
\]
which proves the claim.
\end{proof}

Again, in this  proposition, we must use derived tensor product, because we do not know that $\widehat{A}$ is K-flat over $A$, even if $A=A^0$ is a ring, because we do now know if the map $A\to \widehat{A}$ is flat when $A$ is not noetherian. 

In a similar manner, we have:
\begin{lem}\label{lem-comphom}
Let $(A,\a)$ be an internally flat weakly proregular preadic DG-algebra. Set $\widehat{A} = \Lambda_{\a}(A)$. Let $Q:\widetilde{\mrm{D}}(\opn{DGMod} \widehat{A}) \to \widetilde{\mrm{D}}(\opn{DGMod} A)$ be the forgetful functor. Then for any $M \in \widetilde{\mrm{D}}(\opn{DGMod} \widehat{A})$, such that $Q(M) \in \widetilde{\mrm{D}}(\opn{DGMod} A)_{\opn{\a-com}}$, there is a functorial isomorphism
\[
\mrm{R}\opn{Hom}_A(\widehat{A},M) \cong M
\]
in $\widetilde{\mrm{D}}(\opn{DGMod} \widehat{A})$.
\end{lem}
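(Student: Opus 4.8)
The plan is to dualize the proof of Lemma \ref{lem-tortens}, interchanging the roles of $\opn{K}^{\vee}_{\infty}$ and $\opn{Tel}$, of $\mrm{R}\Gamma_{\a}$ and $\mrm{L}\Lambda_{\a}$, and of $\otimes$ and $\opn{Hom}$. Fix a finite sequence $\mathbf{a}$ in $A^0$ generating $\a$, and use the notation $\opn{Tel}(A;\mathbf{a}) = \opn{Tel}(A^0;\mathbf{a})\otimes_{A^0} A$ of Remark \ref{rem-koszul-dg}; recall that this is a K-projective (in particular K-flat) DG $A$-module, so $\opn{Hom}_A(\opn{Tel}(A;\mathbf{a}),-)$ already computes $\mrm{R}\opn{Hom}_A(\opn{Tel}(A;\mathbf{a}),-)$.

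First I would use the hypothesis. Regarding $M$ as a DG $A$-module, it is cohomologically $\a$-adically complete, so $\tau^{\mrm{L}}_M\colon M\to\mrm{L}\Lambda_{\a}M$ is an isomorphism in $\widetilde{\mrm{D}}(\opn{DGMod} A)$. By Theorem \ref{thm-llambda} (in the reformulation of Remark \ref{rem-koszul-dg}) there is a functorial isomorphism $\opn{Hom}_A(\opn{Tel}(A;\mathbf{a}),M)\iso\mrm{L}\Lambda_{\a}M$ whose composition with the canonical $A$-linear map $\opn{Hom}_A(u_{\mathbf{a}},1_M)\colon M\to\opn{Hom}_A(\opn{Tel}(A;\mathbf{a}),M)$ equals $\tau^{\mrm{L}}_M$. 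Hence $\opn{Hom}_A(u_{\mathbf{a}},1_M)$ is a quasi-isomorphism; since it is obtained by applying $\opn{Hom}_A(-,M)$ to the $A$-linear morphism $u_{\mathbf{a}}\colon\opn{Tel}(A;\mathbf{a})\to A$ with $M$ a DG $\widehat{A}$-module, it is moreover $\widehat{A}$-linear. Thus $M\cong\opn{Hom}_A(\opn{Tel}(A;\mathbf{a}),M)$ in $\widetilde{\mrm{D}}(\opn{DGMod}\widehat{A})$.

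Next I would apply $\mrm{R}\opn{Hom}_A(\widehat{A},-)$ and use the K-projectivity of $\opn{Tel}(A;\mathbf{a})$ together with the derived Hom-tensor adjunction to obtain
\[
\mrm{R}\opn{Hom}_A(\widehat{A},M)\cong\mrm{R}\opn{Hom}_A\bigl(\widehat{A},\opn{Hom}_A(\opn{Tel}(A;\mathbf{a}),M)\bigr)\cong\mrm{R}\opn{Hom}_A\bigl(\widehat{A}\otimes^{\mrm{L}}_A\opn{Tel}(A;\mathbf{a}),M\bigr)
\]
in $\widetilde{\mrm{D}}(\opn{DGMod}\widehat{A})$. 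It then remains to identify $\widehat{A}\otimes^{\mrm{L}}_A\opn{Tel}(A;\mathbf{a})$ with $\opn{Tel}(A;\mathbf{a})$. Since $\opn{Tel}(A;\mathbf{a})$ is K-flat over $A$, this derived tensor product is computed by $\opn{Tel}(A;\mathbf{a})\otimes_A\widehat{A}$; base changing the quasi-isomorphism $w_{\mathbf{a}}\colon\opn{Tel}(A^0;\mathbf{a})\to\opn{K}^{\vee}_{\infty}(A^0;\mathbf{a})$ of K-flat $A^0$-complexes gives $\opn{Tel}(A;\mathbf{a})\otimes_A\widehat{A}\cong\opn{K}^{\vee}_{\infty}(A;\mathbf{a})\otimes_A\widehat{A}\cong\mrm{R}\Gamma_{\a}(\widehat{A})$, the last step by Remark \ref{rem-koszul-dg}. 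As $A$ is K-flat over itself, $\widehat{A}=\Lambda_{\a}(A)\cong\mrm{L}\Lambda_{\a}(A)$, so Theorem \ref{thm-RgammaLLambda} yields $\mrm{R}\Gamma_{\a}(\widehat{A})\cong\mrm{R}\Gamma_{\a}(A)\cong\opn{K}^{\vee}_{\infty}(A;\mathbf{a})\cong\opn{Tel}(A;\mathbf{a})$. Feeding this back gives $\mrm{R}\opn{Hom}_A(\widehat{A},M)\cong\mrm{R}\opn{Hom}_A(\opn{Tel}(A;\mathbf{a}),M)=\opn{Hom}_A(\opn{Tel}(A;\mathbf{a}),M)\cong M$, the last isomorphism being the one from the first step; every stage is functorial in $M$.

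As in Lemma \ref{lem-tortens}, the main obstacle is the bookkeeping of module structures: the identification $\widehat{A}\otimes^{\mrm{L}}_A\opn{Tel}(A;\mathbf{a})\cong\opn{Tel}(A;\mathbf{a})$ produced above a priori lives only in $\widetilde{\mrm{D}}(\opn{DGMod} A)$, so one must check that after applying $\mrm{R}\opn{Hom}_A(-,M)$ and composing with the two outer isomorphisms — which arise by applying $\opn{Hom}_A(-,M)$ to $u_{\mathbf{a}}$ and are therefore $\widehat{A}$-linear — the resulting comparison $\mrm{R}\opn{Hom}_A(\widehat{A},M)\cong M$ is one of DG $\widehat{A}$-modules. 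This is handled exactly as in the torsion case, by tracking stage by stage which $\widehat{A}$-action is being used.
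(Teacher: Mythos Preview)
Your argument is correct and follows essentially the same route as the paper's proof: use Theorem \ref{thm-llambda} to replace $M$ by $\opn{Hom}_A(\opn{Tel}(A;\mathbf{a}),M)$ via the $\widehat{A}$-linear quasi-isomorphism $\opn{Hom}_A(u_{\mathbf{a}},1_M)$, apply the derived hom--tensor adjunction, and then identify $\widehat{A}\otimes_A\opn{Tel}(A;\mathbf{a})\cong\opn{Tel}(A;\mathbf{a})$ via the same MGM-type argument as in Lemma \ref{lem-tortens}. Your explicit discussion of the $\widehat{A}$-linearity bookkeeping is a point the paper leaves implicit.
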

\begin{proof}
Let $\mathbf{a}$ be a finite sequence in $A^0$ that generates $\a$.
By Theorem \ref{thm-llambda}, the map
\[
\opn{Hom}_A(u_{\mathbf{a}},1_M) : M \to \opn{Hom}_A(\opn{Tel}(A;\mathbf{a}),M)
\]
is an $A$-linear quasi-isomorphism, and hence, an $\widehat{A}$-linear quasi-isomorphism. It follows from the derived tensor-hom adjunction that there is a sequence of isomorphisms
\[
\mrm{R}\opn{Hom}_A(\widehat{A},M) \cong \mrm{R}\opn{Hom}_A(\widehat{A},\opn{Hom}_A(\opn{Tel}(A;\mathbf{a}),M)) \cong \mrm{R}\opn{Hom}_A(\widehat{A} \otimes_A \opn{Tel}(A;\mathbf{a}),M)
\]
in $\widetilde{\mrm{D}}(\opn{DGMod} \widehat{A})$.
As in the proof of the previous lemma, there is an $A$-linear isomorphism
\[
\widehat{A} \otimes_A \opn{Tel}(A;\mathbf{a}) \cong \opn{Tel}(A;\mathbf{a}),
\]
and hence, there is an isomorphism
\[
\mrm{R}\opn{Hom}_A(\widehat{A} \otimes_A \opn{Tel}(A;\mathbf{a}),M) \cong 
\mrm{R}\opn{Hom}_A( \opn{Tel}(A;\mathbf{a}),M)
\]
in $\widetilde{\mrm{D}}(\opn{DGMod} \widehat{A})$.
The $\widehat{A}$-linear quasi-isomorphism $\opn{Hom}_A(u_{\mathbf{a}},1_M)$ now establishes the claim.
\end{proof}

From this lemma it follows that:
\begin{prop}\label{prop-complete-llambda}
Let $(A,\a)$ be an internally flat weakly proregular preadic DG-algebra. Set $\widehat{A} = \Lambda_{\a}(A)$. Let $\mathbf{a}$ be a finite sequence of elements of $A^0$ that generates $\a$, and let $\widehat{\mathbf{a}}$ be its image in $\widehat{A}^0$. Then there is an isomorphism
\[
\mrm{L}\Lambda_{\a} (-) \to \mrm{R}\opn{Hom}_A(\opn{Tel}(\widehat{A};\widehat{\mathbf{a}}),-)
\]
of functors
\[
\widetilde{\mrm{D}}(\opn{DGMod} A) \to \widetilde{\mrm{D}}(\opn{DGMod} \widehat{A}).
\]
\end{prop}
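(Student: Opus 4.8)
The plan is to imitate the proof of Proposition~\ref{prop-complete-rgamma}, using Lemma~\ref{lem-comphom} in place of Lemma~\ref{lem-tortens} and the telescope complex in place of the infinite dual Koszul complex. Fix $M\in\widetilde{\mrm{D}}(\opn{DGMod} A)$ and write $\widehat{A}=\Lambda_{\a}(A)$. The first step is to observe that the object $\mrm{L}\Lambda_{\a}(M)\in\widetilde{\mrm{D}}(\opn{DGMod}\widehat{A})$ becomes, after applying the forgetful functor to $\widetilde{\mrm{D}}(\opn{DGMod} A)$, precisely $\mrm{L}\Lambda_{\a}$ of $M$ computed over $A$; by Corollary~\ref{cor-llambda-id} this lies in $\widetilde{\mrm{D}}(\opn{DGMod} A)_{\opn{\a-com}}$, so Lemma~\ref{lem-comphom} applies to it and yields a functorial isomorphism
\[
\mrm{L}\Lambda_{\a}(M)\cong\mrm{R}\opn{Hom}_A\bigl(\widehat{A},\mrm{L}\Lambda_{\a}(M)\bigr)
\]
in $\widetilde{\mrm{D}}(\opn{DGMod}\widehat{A})$.

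Next I would invoke Theorem~\ref{thm-llambda}, in the form of Remark~\ref{rem-koszul-dg}: there is an $A$-linear functorial isomorphism $\mrm{L}\Lambda_{\a}(M)\cong\opn{Hom}_A(\opn{Tel}(A;\mathbf{a}),M)$, and since $\opn{Tel}(A;\mathbf{a})$ is K-projective over $A$ this Hom-complex computes $\mrm{R}\opn{Hom}_A(\opn{Tel}(A;\mathbf{a}),M)$. Substituting into the isomorphism above and applying the derived tensor-hom adjunction, I obtain a functorial isomorphism
\[
\mrm{L}\Lambda_{\a}(M)\cong\mrm{R}\opn{Hom}_A\bigl(\widehat{A},\mrm{R}\opn{Hom}_A(\opn{Tel}(A;\mathbf{a}),M)\bigr)\cong\mrm{R}\opn{Hom}_A\bigl(\widehat{A}\otimes^{\mrm{L}}_A\opn{Tel}(A;\mathbf{a}),M\bigr)
\]
in $\widetilde{\mrm{D}}(\opn{DGMod}\widehat{A})$.

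It then remains to identify $\widehat{A}\otimes^{\mrm{L}}_A\opn{Tel}(A;\mathbf{a})$ with $\opn{Tel}(\widehat{A};\widehat{\mathbf{a}})$. Since $\opn{Tel}(A;\mathbf{a})=\opn{Tel}(A^0;\mathbf{a})\otimes_{A^0} A$ is K-flat over $A$, the derived and ordinary tensor products coincide, and from the definition of $\opn{Tel}$ over a DG-algebra in Remark~\ref{rem-koszul-dg} together with the base change isomorphism $\opn{Tel}(A^0;\mathbf{a})\otimes_{A^0}\widehat{A}^0\cong\opn{Tel}(\widehat{A}^0;\widehat{\mathbf{a}})$ recalled in Section~1, one obtains an isomorphism of DG $\widehat{A}$-modules $\widehat{A}\otimes_A\opn{Tel}(A;\mathbf{a})\cong\opn{Tel}(\widehat{A}^0;\widehat{\mathbf{a}})\otimes_{\widehat{A}^0}\widehat{A}=\opn{Tel}(\widehat{A};\widehat{\mathbf{a}})$. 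Plugging this in, the chain of functorial isomorphisms becomes $\mrm{L}\Lambda_{\a}(M)\cong\mrm{R}\opn{Hom}_A(\opn{Tel}(\widehat{A};\widehat{\mathbf{a}}),M)$, which is the assertion.

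I do not expect a genuine obstacle here, since this is simply the $\mrm{L}\Lambda_{\a}$-counterpart of Proposition~\ref{prop-complete-rgamma}. The only points requiring care are bookkeeping ones: checking that the isomorphisms coming from Theorem~\ref{thm-llambda} and Lemma~\ref{lem-comphom} are genuinely $\widehat{A}$-linear, so that the argument takes place in $\widetilde{\mrm{D}}(\opn{DGMod}\widehat{A})$ and not merely in $\widetilde{\mrm{D}}(\opn{DGMod} A)$; confirming via Corollary~\ref{cor-llambda-id} that $\mrm{L}\Lambda_{\a}(M)$ is cohomologically $\a$-adically complete over $A$, which is what licenses the application of Lemma~\ref{lem-comphom}; and keeping the variances in the derived tensor-hom adjunction straight.
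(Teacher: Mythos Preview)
Your proposal is correct and follows essentially the same route as the paper, which simply says ``Similar to the proof of Proposition~\ref{prop-complete-rgamma}, using Lemma~\ref{lem-comphom}.'' In fact you spell out more detail than the paper does (e.g.\ citing Corollary~\ref{cor-llambda-id} to justify that $\mrm{L}\Lambda_{\a}(M)$ is cohomologically $\a$-adically complete before invoking Lemma~\ref{lem-comphom}, and tracking the $\widehat{A}$-linearity), but the structure---apply Lemma~\ref{lem-comphom}, use Theorem~\ref{thm-llambda}, apply tensor-hom adjunction, and identify $\widehat{A}\otimes_A\opn{Tel}(A;\mathbf{a})\cong\opn{Tel}(\widehat{A};\widehat{\mathbf{a}})$---is exactly the intended dualization.
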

\begin{proof}
Similar to the proof of Proposition \ref{prop-complete-rgamma}, using Lemma \ref{lem-comphom}.
\end{proof}

It follows immediately from Propositions \ref{prop-complete-rgamma}, \ref{prop-complete-llambda} and from Theorems \ref{thm-RgammaLLambda} and \ref{thm-LLambdaRGamma} that
\begin{cor}\label{cor-complete-rgamma-llambda}
Let $(A,\a)$ be an internally flat weakly proregular preadic DG-algebra. Then the there are isomorphisms
\[
\mrm{R}\Gamma_{\a} ( -) \cong \mrm{R}\Gamma_{\a} \circ \mrm{L}\Lambda_{\a} (-)
\]
and
\[
\mrm{L}\Lambda_{\a} ( -) \cong \mrm{L}\Lambda_{\a} \circ \mrm{R}\Gamma_{\a} (-)
\]
of functors
\[
\widetilde{\mrm{D}}(\opn{DGMod} A) \to \widetilde{\mrm{D}}(\opn{DGMod} \widehat{A}).
\]
\end{cor}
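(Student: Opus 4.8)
The plan is to obtain both isomorphisms directly from the MGM-type statements of Theorems \ref{thm-RgammaLLambda} and \ref{thm-LLambdaRGamma}; the only real work is to upgrade the $A$-linear isomorphisms produced there to isomorphisms in $\widetilde{\mrm{D}}(\opn{DGMod}\widehat{A})$.

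Consider first the natural transformation with component $\mrm{R}\Gamma_{\a}(\tau^L_M):\mrm{R}\Gamma_{\a}(M)\to\mrm{R}\Gamma_{\a}(\mrm{L}\Lambda_{\a}(M))$ at $M$. Since $\Gamma_{\a}$, and hence $\mrm{R}\Gamma_{\a}$, takes values in DG $\widehat{A}$-modules, this is a morphism in $\widetilde{\mrm{D}}(\opn{DGMod}\widehat{A})$; Proposition \ref{prop-complete-rgamma} makes this concrete by identifying $\mrm{R}\Gamma_{\a}(-)$ with $\opn{K}^{\vee}_{\infty}(\widehat{A};\widehat{\mathbf{a}})\otimes^{\mrm{L}}_A(-)$ as a functor $\widetilde{\mrm{D}}(\opn{DGMod} A)\to\widetilde{\mrm{D}}(\opn{DGMod}\widehat{A})$, so that $\mrm{R}\Gamma_{\a}\circ\mrm{L}\Lambda_{\a}$ — interpreted, as it must be, by first forgetting $\mrm{L}\Lambda_{\a}(M)$ back to a DG $A$-module — is likewise a well-defined functor into $\widetilde{\mrm{D}}(\opn{DGMod}\widehat{A})$. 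By Theorem \ref{thm-RgammaLLambda} the image of $\mrm{R}\Gamma_{\a}(\tau^L_M)$ under the forgetful functor $\widetilde{\mrm{D}}(\opn{DGMod}\widehat{A})\to\widetilde{\mrm{D}}(\opn{DGMod} A)$ is an isomorphism; since this forgetful functor is conservative (it does not alter the underlying complex, cf. Remark \ref{remark-lifting-quasi}), $\mrm{R}\Gamma_{\a}(\tau^L_M)$ is already an isomorphism in $\widetilde{\mrm{D}}(\opn{DGMod}\widehat{A})$. This yields the first isomorphism $\mrm{R}\Gamma_{\a}(-)\cong\mrm{R}\Gamma_{\a}\circ\mrm{L}\Lambda_{\a}(-)$.

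The second isomorphism $\mrm{L}\Lambda_{\a}(-)\cong\mrm{L}\Lambda_{\a}\circ\mrm{R}\Gamma_{\a}(-)$ is proved in exactly the same way, replacing $\mrm{R}\Gamma_{\a}(\tau^L_{(-)})$ by $\mrm{L}\Lambda_{\a}(\sigma^R_{(-)})$, Theorem \ref{thm-RgammaLLambda} by Theorem \ref{thm-LLambdaRGamma}, and Proposition \ref{prop-complete-rgamma} by Proposition \ref{prop-complete-llambda} (which identifies $\mrm{L}\Lambda_{\a}(-)$ with $\mrm{R}\opn{Hom}_A(\opn{Tel}(\widehat{A};\widehat{\mathbf{a}}),-)$ as a functor into $\widetilde{\mrm{D}}(\opn{DGMod}\widehat{A})$). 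The one point I would treat with care — and essentially the only place where anything beyond formal nonsense enters — is the bookkeeping of the $\widehat{A}$-module structures: one must check that $\mrm{R}\Gamma_{\a}\circ\mrm{L}\Lambda_{\a}$ (resp. $\mrm{L}\Lambda_{\a}\circ\mrm{R}\Gamma_{\a}$) is genuinely a functor into $\widetilde{\mrm{D}}(\opn{DGMod}\widehat{A})$, so that the statement even makes sense, and that the transformation of Theorem \ref{thm-RgammaLLambda} (resp. \ref{thm-LLambdaRGamma}) is a natural transformation of $\widehat{A}$-valued functors rather than merely of $A$-valued ones. This is settled by the explicit $\widehat{A}$-linear models of Propositions \ref{prop-complete-rgamma} and \ref{prop-complete-llambda}, together with the observation that $\Gamma_{\a}$ on DG $A$-modules factors through $\opn{DGMod}\widehat{A}$; once this is in hand, the remainder is conservativity of the forgetful functor $\widetilde{\mrm{D}}(\opn{DGMod}\widehat{A})\to\widetilde{\mrm{D}}(\opn{DGMod} A)$.
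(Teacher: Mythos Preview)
Your proof is correct and follows essentially the same route as the paper: the paper simply states that the corollary follows immediately from Propositions \ref{prop-complete-rgamma}, \ref{prop-complete-llambda} and Theorems \ref{thm-RgammaLLambda}, \ref{thm-LLambdaRGamma}, and your write-up spells out precisely how---namely, that the natural transformations $\mrm{R}\Gamma_{\a}(\tau^L_{(-)})$ and $\mrm{L}\Lambda_{\a}(\sigma^R_{(-)})$ are already $\widehat{A}$-linear and become isomorphisms upon applying the conservative forgetful functor to $\widetilde{\mrm{D}}(\opn{DGMod} A)$. Your careful tracking of the $\widehat{A}$-structure (using Propositions \ref{prop-complete-rgamma} and \ref{prop-complete-llambda}) is exactly the content hidden in the word ``immediately''.
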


Note that in the corollary, in the first isomorphism, $\mrm{R}\Gamma_{\a}$ is a functor $\widetilde{\mrm{D}}(\opn{DGMod} A) \to \widetilde{\mrm{D}}(\opn{DGMod} \widehat{A})$, while $\mrm{L}\Lambda_{\a}$ is a functor $\widetilde{\mrm{D}}(\opn{DGMod} A) \to \widetilde{\mrm{D}}(\opn{DGMod} A)$, and vice versa in the second isomorphism.

\section{Derived adic Hochschild cohomology}

In this section we discuss the main theme of this article, namely, the complete and torsion Derived Hochschild cohomology functors.

First, we recall the construction of the derived Hochschild cohomology functor from \cite{AILN}, under our additional assumption that all rings are commutative. Let $\k$ be a commutative ring, and let $A$ be a commutative $\k$-algebra. According to \cite{AILN}, Theorem 3,2, if $\k \to \widetilde{A} \to A$ is a K-flat DG-resolution of $\k \to A$, there is a functor
\[
\mrm{R}\opn{Hom}_{\widetilde{A}\otimes_{\k} \widetilde{A}} ( A, -\otimes^{\mrm{L}}_{\k} -) :\mrm{D}(\opn{Mod} A) \times \mrm{D}(\opn{Mod} A) \to \mrm{D}(\opn{Mod} A)
\]
If $\k \to \widetilde{B} \to A$ is another K-flat resolution of $\k \to A$, there is a canonical natural equivalence of functors:
\[
w^{\widetilde{A}\widetilde{B}}: \mrm{R}\opn{Hom}_{\widetilde{A}\otimes_{\k} \widetilde{A}} ( A, -\otimes^{\mrm{L}}_{\k} -) \to \mrm{R}\opn{Hom}_{\widetilde{B}\otimes_{\k} \widetilde{B}} ( A, -\otimes^{\mrm{L}}_{\k} -)
\]
Moreover, for every K-flat resolution $\k \to \widetilde{C} \to A$ of $\k \to A$, there is an equality:
\[
w^{\widetilde{A}\widetilde{C}} = w^{\widetilde{B}\widetilde{C}} \circ w^{\widetilde{A}\widetilde{B}}.
\]
With this theorem in hand, \cite{AILN} Remark 3.3 suggests the following natural notation: 
\[
\mrm{R}\opn{Hom}_{A\otimes^{\mrm{L}}_{\k} A} ( A, -\otimes^{\mrm{L}}_{\k} -)
\]
for the functor above, obtained by choosing some K-flat resolution of $\k \to A$. In the case where $A$ is a projective $\k$-module, the cohomology of the complex $\mrm{R}\opn{Hom}_{A\otimes^{\mrm{L}}_{\k} A} ( A, M\otimes^{\mrm{L}}_{\k} N)$ is the classical Hochschild cohomology 
$\mrm{HH}^n(A|\k; M\otimes^{\mrm{L}}_{\k} N)$ of the  complex $M\otimes^{\mrm{L}}_{\k} N$. This observation, made in \cite{AILN} Remark 3.3, explains the name - the Derived Hochschild cohomology functor, for this construction.

We now study variants of this construction in the adic category. 

\begin{dfn}
Let $\k$ be a commutative ring. Let $(A,\a)$ be a weakly proregular preadic $\k$-algebra.

\begin{enumerate}
\item The complete derived Hochschild cohomology functor is defined to be the functor:
\[
\mrm{L}\Lambda_{\a} (\mrm{R}\opn{Hom}_{A\otimes^{\mrm{L}}_{\k} A} ( A, -\otimes^{\mrm{L}}_{\k} -) )  : \mrm{D}(\opn{Mod} A) \times \mrm{D}(\opn{Mod} A) \to \mrm{D}(\opn{Mod} A)_{\opn{\a-com}}
\]
\item The torsion derived Hochschild cohomology functor is defined to be the functor:
\[
\mrm{R}\Gamma_{\a} (\mrm{R}\opn{Hom}_{A\otimes^{\mrm{L}}_{\k} A} ( A, -\otimes^{\mrm{L}}_{\k} -) )  : \mrm{D}(\opn{Mod} A) \times \mrm{D}(\opn{Mod} A) \to \mrm{D}(\opn{Mod} A)_{\opn{\a-tor}}
\]
\end{enumerate}
\end{dfn}
\begin{rem}
It is clear from the definition and the MGM equivalence (Theorem \ref{thm-ring-mgm}(2)) that these two functors are related via the following bifunctorial isomorphisms:
\[
\mrm{L}\Lambda_{\a} (\mrm{R}\Gamma_{\a} (\mrm{R}\opn{Hom}_{A\otimes^{\mrm{L}}_{\k} A} ( A, -\otimes^{\mrm{L}}_{\k} -) ) ) \to \mrm{L}\Lambda_{\a}  (\mrm{R}\opn{Hom}_{A\otimes^{\mrm{L}}_{\k} A} ( A, -\otimes^{\mrm{L}}_{\k} -) ) 
\]
and
\[
\mrm{R}\Gamma_{\a} (\mrm{R}\opn{Hom}_{A\otimes^{\mrm{L}}_{\k} A} ( A, -\otimes^{\mrm{L}}_{\k} -) )  \to
\mrm{R}\Gamma_{\a} ( \mrm{L}\Lambda_{\a} (\mrm{R}\opn{Hom}_{A\otimes^{\mrm{L}}_{\k} A} ( A, -\otimes^{\mrm{L}}_{\k} -) ) )
\]
\end{rem}

We now turn to study these functors. We first concentrate on the complete case.

\begin{lem}\label{lem-llamofhom}
Suppose $(B,\b)$ and $(C,\c)$ are two internally flat weakly proregular preadic DG-algebras. Assume $f:B\to C$ is a preadic DG-algebra map, and assume further that $\c = f(\b)\cdot B^0 \subseteq C^0$. Then there is an isomorphism
\[
\mrm{L}\Lambda_{\c} ( \mrm{R}\opn{Hom}_B(C,- ) ) \cong \mrm{R}\opn{Hom}_B (C,\mrm{L}\Lambda_{\b}(-))
\]
of functors
\[
\widetilde{\mrm{D}}(\opn{DGMod} B) \to \widetilde{\mrm{D}}(\opn{DGMod} C). 
\]
\end{lem}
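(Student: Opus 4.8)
The plan is to reduce the statement to the telescope-complex formula for $\mrm{L}\Lambda$ established in Theorem \ref{thm-llambda} (or its reformulation in Remark \ref{rem-koszul-dg}), and then to play off the hom-tensor adjunction against the base-change isomorphism for telescope complexes. First I would choose a finite sequence $\mathbf{b} = (b_1,\dots,b_n)$ of degree $0$ elements of $B^0$ generating $\b$, and let $\mathbf{c} = f^0(\mathbf{b})$ be its image in $C^0$; by hypothesis $\mathbf{c}$ generates $\c$, and since $\c$ is weakly proregular the sequence $\mathbf{c}$ may be used in the formulas over $C$. The key base-change fact is the canonical isomorphism of DG $C$-modules
\[
\opn{Tel}(C;\mathbf{c}) \cong \opn{Tel}(C^0;\mathbf{c}) \otimes_{C^0} C \cong (\opn{Tel}(B^0;\mathbf{b}) \otimes_{B^0} C^0) \otimes_{C^0} C \cong \opn{Tel}(B;\mathbf{b}) \otimes_B C,
\]
which follows from the base-change property of the telescope complex recalled in Section 1 together with the definition of $\opn{Tel}(-;\mathbf{a})$ in Remark \ref{rem-koszul-dg}.

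Next I would compute both sides. For a DG $B$-module $M$, Theorem \ref{thm-llambda} (as reformulated in Remark \ref{rem-koszul-dg}) gives a functorial isomorphism $\mrm{L}\Lambda_{\b}(M) \cong \opn{Hom}_B(\opn{Tel}(B;\mathbf{b}),M)$, so that
\[
\mrm{R}\opn{Hom}_B(C,\mrm{L}\Lambda_{\b}(M)) \cong \mrm{R}\opn{Hom}_B(C,\opn{Hom}_B(\opn{Tel}(B;\mathbf{b}),M)) \cong \mrm{R}\opn{Hom}_B(C \otimes_B \opn{Tel}(B;\mathbf{b}),M),
\]
using the derived hom-tensor adjunction (here $\opn{Tel}(B;\mathbf{b})$ is K-projective over $B$, by Remark \ref{rem-koszul-dg}, so the inner $\opn{Hom}_B$ is already derived). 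On the other side, the formula over $C$ gives $\mrm{L}\Lambda_{\c}(N) \cong \opn{Hom}_C(\opn{Tel}(C;\mathbf{c}),N)$ for a DG $C$-module $N$; applying this to $N = \mrm{R}\opn{Hom}_B(C,M)$ and using the standard adjunction isomorphism $\opn{Hom}_C(\opn{Tel}(C;\mathbf{c}),\mrm{R}\opn{Hom}_B(C,M)) \cong \mrm{R}\opn{Hom}_B(\opn{Tel}(C;\mathbf{c}),M)$ (which holds since $\opn{Tel}(C;\mathbf{c})$ is K-projective over $C$), we obtain
\[
\mrm{L}\Lambda_{\c}(\mrm{R}\opn{Hom}_B(C,M)) \cong \mrm{R}\opn{Hom}_B(\opn{Tel}(C;\mathbf{c}),M).
\]
Now the base-change isomorphism for telescope complexes identifies $\opn{Tel}(C;\mathbf{c})$ with $C \otimes_B \opn{Tel}(B;\mathbf{b})$, and the two expressions agree. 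To finish, I would check that all these isomorphisms are functorial in the DG-module and compatible, so that they assemble into an isomorphism of functors $\widetilde{\mrm{D}}(\opn{DGMod} B) \to \widetilde{\mrm{D}}(\opn{DGMod} C)$; this is routine since every isomorphism used (Theorem \ref{thm-llambda}, the adjunctions, and the base-change map) is natural.

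The main obstacle I anticipate is bookkeeping rather than conceptual: one must make sure that the telescope-complex formulas for $\mrm{L}\Lambda_{\b}$ over $B$ and for $\mrm{L}\Lambda_{\c}$ over $C$ are being applied correctly as functors with values in the \emph{right} category — in particular that $\mrm{L}\Lambda_{\c}(\mrm{R}\opn{Hom}_B(C,-))$ really is computed by $\opn{Hom}_C(\opn{Tel}(C;\mathbf{c}),-)$ applied to an object carrying its natural $C$-DG-module structure, and that the adjunction isomorphism $\opn{Hom}_C(\opn{Tel}(C;\mathbf{c}),\mrm{R}\opn{Hom}_B(C,-)) \cong \mrm{R}\opn{Hom}_B(\opn{Tel}(C;\mathbf{c}),-)$ is $C$-linear, not merely $B$-linear. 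Once the $C$-linearity of the restriction-of-scalars adjunction and of the base-change isomorphism for $\opn{Tel}$ is in place (both of which are elementary, as the relevant maps are visibly $C$-linear on the level of complexes), the argument closes.
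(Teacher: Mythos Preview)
Your proposal is correct and follows essentially the same route as the paper: choose generators, use the telescope-complex description of $\mrm{L}\Lambda$ on both sides, apply hom-tensor adjunction, and match the two via the base-change isomorphism for telescope complexes. The only cosmetic difference is that you phrase everything using the DG-module telescope $\opn{Tel}(B;\mathbf{b}) = \opn{Tel}(B^0;\mathbf{b})\otimes_{B^0} B$ of Remark \ref{rem-koszul-dg}, whereas the paper keeps the $B^0$- and $C^0$-level telescopes and unwinds the adjunctions one layer further; the two presentations are interchangeable by one more application of hom-tensor adjunction.
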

\begin{proof}
Let $\mathbf{b}$ be a finite sequence in $B^0$ that generates $\b$. Let $\mathbf{c} \subseteq C^0$ be the image of $\mathbf{b}$ under $f$. For each $M \in \widetilde{\mrm{D}}(\opn{DGMod} B)$, by Theorem \ref{thm-llambda}, there is a functorial isomorphism
\[
\opn{tel}^{\mrm{L}}_{\mathbf{a},\mrm{R}\opn{Hom}_B(C,M)}: \opn{Hom}_{C^0}(\opn{Tel}(C^0;\mathbf{c}),\mrm{R}\opn{Hom}_B(C,M)) \to \mrm{L}\Lambda_{\c} (\mrm{R}\opn{Hom}_B(C,M))
\]
in $\widetilde{\mrm{D}}(\opn{DGMod} C)$. Since $\opn{Tel}(C^0;\mathbf{c})$ is K-projective over $C^0$, by the derived hom-tensor adjunction, there is a functorial isomorphism
\[
\opn{Hom}_{C^0}(\opn{Tel}(C^0;\mathbf{c}),\mrm{R}\opn{Hom}_B(C,M)) \to 
\mrm{R}\opn{Hom}_B( \opn{Tel}(C^0;\mathbf{c}) \otimes_{C^0} C, M)
\]
in $\widetilde{\mrm{D}}(\opn{DGMod} C)$. 
Since $f(\mathbf{b}) = \mathbf{c}$, there is an isomorphism of complexes
$\opn{Tel}(C^0;\mathbf{c}) \cong \opn{Tel}(B^0;\mathbf{b}) \otimes_{B^0} C^0$.
Hence, using the derived hom-tensor adjunction twice, we see that there are functorial isomorphisms in $\widetilde{\mrm{D}}(\opn{DGMod} C)$:
\[
\begin{aligned}
\mrm{R}\opn{Hom}_B( \opn{Tel}(C^0;\mathbf{c}) \otimes_{C^0} C, M) \cong\\ 
\mrm{R}\opn{Hom}_B( (\opn{Tel}(B^0;\mathbf{b}) \otimes_{B^0} B) \otimes_B C , M) \cong\\ 
\mrm{R}\opn{Hom}_B(C,\opn{Hom}_{B^0}( \opn{Tel}(B^0;\mathbf{b}), M)).
\end{aligned}
\]
Using Theorem \ref{thm-llambda}, we see that there is a functorial isomorphism
\[
\opn{tel}^{\mrm{L}}_{\mathbf{b},M}:
\opn{Hom}_{B^0}( \opn{Tel}(B^0;\mathbf{b}), M) \to \mrm{L}\Lambda_{\b}(M)
\] 
Composing all the above maps gives the required functorial isomorphism.
\end{proof}

\begin{rem}
In the case where $B=C=B^0$, this lemma was proved in \cite{AJL}.
\end{rem}

\begin{lem}\label{lemma-rhom-with-completion}
Let $(B,\b)$ be an internally flat weakly proregular preadic DG-algebra. Set $\widehat{B} = \Lambda_{\b}(B)$. Let $Q:\widetilde{\mrm{D}}(\opn{DGMod} \widehat{B}) \to \widetilde{\mrm{D}}(\opn{DGMod} B)$ be the forgetful functor. Let $(C,\c)$ be a preadic DG-algebra, and let $\widehat{B} \to C$ be a preadic DG-algebra map. Then for any $M \in \widetilde{\mrm{D}}(\opn{DGMod} \widehat{B})$, such that $Q(M) \in \widetilde{\mrm{D}}(\opn{DGMod} B)_{\opn{\b-com}}$, there is a functorial isomorphism
\[
\mrm{R}\opn{Hom}_B(C,M) \cong \mrm{R}\opn{Hom}_{\widehat{B}}(C,M)
\]
in $\widetilde{\mrm{D}}(\opn{DGMod} C)$.
\end{lem}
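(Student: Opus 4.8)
The plan is to reduce the statement to Lemma \ref{lem-comphom} by means of the standard change-of-rings adjunction for the completion map $\tau_B \colon B \to \widehat{B}$, which is a morphism of DG-algebras since $\widehat{B} = \Lambda_{\b}(B)$ carries its natural DG-algebra structure. Concretely, the restriction-of-scalars functor $\widetilde{\mrm{D}}(\opn{DGMod}\widehat{B}) \to \widetilde{\mrm{D}}(\opn{DGMod} B)$ admits a right adjoint given by the coinduction functor $\mrm{R}\opn{Hom}_B(\widehat{B}, -)$, so that for any DG $\widehat{B}$-module $N$ and any DG $B$-module $M$ there is a functorial isomorphism
\[
\mrm{R}\opn{Hom}_B(N, M) \cong \mrm{R}\opn{Hom}_{\widehat{B}}\bigl(N, \mrm{R}\opn{Hom}_B(\widehat{B}, M)\bigr)
\]
in $\widetilde{\mrm{D}}(\opn{DGMod}\widehat{B})$. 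I would justify this by replacing $M$ with a K-injective resolution $I$ over $B$, observing that $\opn{Hom}_B(\widehat{B}, I)$ is then K-injective over $\widehat{B}$ (coinduction is right adjoint to the exact restriction functor, hence preserves K-injectives), and invoking the ordinary hom adjunction $\opn{Hom}_B(N, I) \cong \opn{Hom}_{\widehat{B}}\bigl(N, \opn{Hom}_B(\widehat{B}, I)\bigr)$ in $\opn{DGMod}$.

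Next, I would apply this with $N = C$, which is a DG $\widehat{B}$-module via the given map $\widehat{B} \to C$ and a DG $B$-module via the composite $B \to \widehat{B} \to C$; this is exactly the way $C$ is viewed on the two sides of the asserted isomorphism. Since $(B,\b)$ is internally flat and weakly proregular and $Q(M)$ is cohomologically $\b$-adically complete over $B$, Lemma \ref{lem-comphom} provides a functorial isomorphism $\mrm{R}\opn{Hom}_B(\widehat{B}, M) \cong M$ in $\widetilde{\mrm{D}}(\opn{DGMod}\widehat{B})$. Applying the functor $\mrm{R}\opn{Hom}_{\widehat{B}}(C, -)$ to this isomorphism and composing with the previous display yields
\[
\mrm{R}\opn{Hom}_B(C, M) \cong \mrm{R}\opn{Hom}_{\widehat{B}}\bigl(C, \mrm{R}\opn{Hom}_B(\widehat{B}, M)\bigr) \cong \mrm{R}\opn{Hom}_{\widehat{B}}(C, M),
\]
which is the desired functorial isomorphism; it lands in $\widetilde{\mrm{D}}(\opn{DGMod} C)$ because $C$ is a DG $\widehat{B}$-algebra.

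The only genuinely delicate point is the first step: checking that the change-of-rings adjunction survives the passage to the DG-derived categories, i.e. that coinduction along $B \to \widehat{B}$ preserves K-injectivity and that the underived adjunction passes to total derived functors with no flatness or projectivity hypothesis needed on $C$. I would emphasize that we never require $\widehat{B}$ or $C$ to be internally flat — all of the homological input (internal flatness and weak proregularity of $(B,\b)$) enters only through the application of Lemma \ref{lem-comphom}. Functoriality in $M$ is inherited from the functoriality asserted in Lemma \ref{lem-comphom} together with the functoriality of the adjunction isomorphism.
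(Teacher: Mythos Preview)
Your proof is correct and follows essentially the same approach as the paper: apply the derived change-of-rings (hom-tensor) adjunction $\mrm{R}\opn{Hom}_B(C,M) \cong \mrm{R}\opn{Hom}_{\widehat{B}}(C,\mrm{R}\opn{Hom}_B(\widehat{B},M))$, and then invoke Lemma~\ref{lem-comphom} to identify $\mrm{R}\opn{Hom}_B(\widehat{B},M)$ with $M$. The paper's proof is more terse, simply citing ``the derived hom-tensor adjunction'' without your explicit justification via K-injective resolutions, but the argument is the same.
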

\begin{proof}
By the derived hom-tensor adjunction, there is a functorial isomorphism
\[
\mrm{R}\opn{Hom}_B(C,M) \cong \mrm{R}\opn{Hom}_{\widehat{B}}(C,\mrm{R}\opn{Hom}_B(\widehat{B},M))
\]
in $\widetilde{\mrm{D}}(\opn{DGMod} C)$. 
Thus, it remains to show that there is a functorial isomorphism 
\[
\mrm{R}\opn{Hom}_B(\widehat{B},M) \cong M
\]
in $\widetilde{\mrm{D}}(\opn{DGMod} \widehat{B})$, but this follows from Lemma \ref{lem-comphom}.
\end{proof}

\begin{prop}\label{prop:internally-flat-tensor}
Let $\k$ be a ring, let $A$ be an internally flat DG-algebra, and assume that $A$ is K-flat over $\k$. Then the DG-algebra $A\otimes_{\k} A$ is also internally flat.
\end{prop}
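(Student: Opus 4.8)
The plan is to reduce the statement to two standard facts about K-flat complexes over ordinary rings together with a single associativity identity. Since $A$ is non-positive, $(A \otimes_{\k} A)^0 = A^0 \otimes_{\k} A^0$, so by the definition of internal flatness it suffices to show that $A \otimes_{\k} A$ is K-flat as a complex of $A^0 \otimes_{\k} A^0$-modules. I will use: (i) if $F$ is a K-flat complex over a ring $R$ and $R \to S$ is a ring homomorphism, then $F \otimes_R S$ is a K-flat complex over $S$; and (ii) if $F$ and $G$ are K-flat complexes over a ring $R$, then $F \otimes_R G$ is K-flat over $R$. Both follow immediately from the definition of K-flatness and the associativity of the tensor product of complexes, via the identifications $(F \otimes_R S) \otimes_S M \cong F \otimes_R M$ and $(F \otimes_R G) \otimes_R M \cong F \otimes_R (G \otimes_R M)$.

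Next I would use the two coordinate homomorphisms $\iota_1, \iota_2 : A^0 \to A^0 \otimes_{\k} A^0$, $\iota_1(a) = a \otimes 1$ and $\iota_2(a) = 1 \otimes a$. Internal flatness of $A$ means precisely that $A$ is a K-flat complex over $A^0$; hence, by fact (i), the base changes $A \otimes_{A^0, \iota_1} (A^0 \otimes_{\k} A^0) \cong A \otimes_{\k} A^0$ and $A \otimes_{A^0, \iota_2} (A^0 \otimes_{\k} A^0) \cong A^0 \otimes_{\k} A$ are K-flat complexes over $A^0 \otimes_{\k} A^0$. The heart of the argument is the canonical isomorphism
\[ A \otimes_{\k} A \;\cong\; (A \otimes_{\k} A^0) \otimes_{A^0 \otimes_{\k} A^0} (A^0 \otimes_{\k} A) \]
of complexes of $A^0 \otimes_{\k} A^0$-modules (in fact of DG-modules over the DG-algebra $A \otimes_{\k} A$ itself). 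Granting this, fact (ii) immediately yields that $A \otimes_{\k} A$ is K-flat over $A^0 \otimes_{\k} A^0$, i.e. internally flat, which is the assertion.

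Finally, to justify the displayed isomorphism one argues degree by degree: in degree $n$ the right-hand side is $\bigoplus_{i+j=n} (A^i \otimes_{\k} A^0) \otimes_{A^0 \otimes_{\k} A^0} (A^0 \otimes_{\k} A^j)$, and for each pair $(i,j)$ one has the canonical isomorphism $(A^i \otimes_{\k} A^0) \otimes_{A^0 \otimes_{\k} A^0} (A^0 \otimes_{\k} A^j) \cong A^i \otimes_{\k} A^j$ given on elementary tensors by $(m \otimes 1) \otimes (1 \otimes p) \mapsto m \otimes p$; summing over $i+j=n$ recovers $(A \otimes_{\k} A)^n$. One then checks compatibility with the differentials — immediate, since $A^0$ has zero differential and the differential on $A \otimes_{\k} A$ is the tensor differential — and with the module structures. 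I expect the only genuine work, and it is entirely routine, to be exactly this bookkeeping: tracking which of the two $A^0$-module structures on $A^0 \otimes_{\k} A^0$ is used in each tensor product, and checking the differentials. Note that this argument uses only internal flatness of $A$; the hypothesis that $A$ is K-flat over $\k$ is not needed for it.
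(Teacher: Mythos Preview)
Your argument is correct and is essentially the paper's proof: the paper writes the single isomorphism $M \otimes_{A^0 \otimes_{\k} A^0} (A \otimes_{\k} A) \cong A \otimes_{A^0} M \otimes_{A^0} A$ for an acyclic test complex $M$ and then invokes K-flatness of $A$ over $A^0$ twice, which is precisely your decomposition $(A \otimes_{\k} A^0) \otimes_{A^0 \otimes_{\k} A^0} (A^0 \otimes_{\k} A)$ after tensoring with $M$. Your closing remark that K-flatness of $A$ over $\k$ is never used is also borne out by the paper's own argument.
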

\begin{proof}
We must show that $A\otimes_{\k} A$ is K-flat over $A^0\otimes_{\k} A^0$. Let $M$ be an acyclic complex over $A^0\otimes_{\k} A^0$. To see that $M\otimes_{A^0\otimes_{\k} A^0} A\otimes_{\k} A$ is acyclic, note that there is an $A^0$-linear isomorphism of complexes
\[
M\otimes_{A^0\otimes_{\k} A^0} A\otimes_{\k} A \cong A\otimes_{A^0} M \otimes_{A^0} A.
\]
Since $A$ is K-flat over $A^0$, the result follows.
\end{proof}

Here is the main result of this section.

\begin{thm}\label{thm-main-c-formula}
Let $\k$ be a commutative ring. Let $(A,\a)$ be an adic ring which is a weakly proregular preadic $\k$-algebra. Let $\k \to \widetilde{A} \to A$ be a K-flat resolution of $\k \to A$ which is internally flat. Let $I\subseteq (\widetilde{A}\otimes_{\k} \widetilde{A})^0$ be a weakly proregular ideal, such that its image under the 
composed map 
\[
(\widetilde{A}\otimes_{\k} \widetilde{A})^0 \to \widetilde{A}^0 \to A
\]
is equal to $\a$. Then there is an isomorphism 
\[
v^{\widetilde{A},I}:
\mrm{L}\Lambda_{\a} (\mrm{R}\opn{Hom}_{A\otimes^{\mrm{L}}_{\k} A} ( A, -\otimes^{\mrm{L}}_{\k} -) )  
\cong 
\mrm{R}\opn{Hom}_{\Lambda_I(\widetilde{A}\otimes_{\k} \widetilde{A})} (A, \mrm{L}\Lambda_I ( - \otimes^{\mrm{L}}_{\k} - ) )
\]
of functors
\[
\mrm{D}(\opn{Mod} A) \times \mrm{D}(\opn{Mod} A) \to \mrm{D}(\opn{Mod} A)_{\opn{\a-com}}
\]
\end{thm}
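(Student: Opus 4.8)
The plan is to reduce the statement to the two key lemmas of this section, Lemma \ref{lem-llamofhom} and Lemma \ref{lemma-rhom-with-completion}, once we have verified that $\widetilde{B} := \widetilde{A}\otimes_{\k}\widetilde{A}$, equipped with the ideal $I$, is an internally flat weakly proregular preadic DG-algebra. So first I would set $\widetilde{B} := \widetilde{A}\otimes_{\k}\widetilde{A}$. Since $\widetilde{A}$ is internally flat and K-flat over $\k$, Proposition \ref{prop:internally-flat-tensor} shows $\widetilde{B}$ is internally flat; together with the hypothesis that $I$ is weakly proregular, the pair $(\widetilde{B},I)$ is an internally flat weakly proregular preadic DG-algebra, so the entire apparatus of Section 2 applies to it. By the construction recalled at the start of this section, the derived Hochschild cohomology functor $\mrm{R}\opn{Hom}_{A\otimes^{\mrm{L}}_{\k}A}(A,-\otimes^{\mrm{L}}_{\k}-)$ is represented, using the chosen resolution $\widetilde{A}$, by $\mrm{R}\opn{Hom}_{\widetilde{B}}(A,-\otimes^{\mrm{L}}_{\k}-)$, where $A$ is regarded as a DG $\widetilde{B}$-module via the multiplication map $\widetilde{B}\to\widetilde{A}\to A$ and, for $M,N\in\mrm{D}(\opn{Mod} A)$, the object $M\otimes^{\mrm{L}}_{\k}N$ is computed by a resolution that is K-flat over $\k$ and is a DG $\widetilde{B}$-module. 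By hypothesis this multiplication map carries $I$ onto $\a$, so it is a preadic DG-algebra map $f\colon(\widetilde{B},I)\to(A,\a)$ with $I\cdot A=\a$.

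Next, since the ring $A$ is trivially internally flat and $(A,\a)$ is weakly proregular by assumption, I would apply Lemma \ref{lem-llamofhom} to $f$ to obtain a functorial isomorphism
\[
\mrm{L}\Lambda_{\a}\bigl(\mrm{R}\opn{Hom}_{\widetilde{B}}(A,-)\bigr)\cong\mrm{R}\opn{Hom}_{\widetilde{B}}\bigl(A,\mrm{L}\Lambda_{I}(-)\bigr)
\]
of functors $\widetilde{\mrm{D}}(\opn{DGMod}\widetilde{B})\to\widetilde{\mrm{D}}(\opn{DGMod} A)$; evaluated on $M\otimes^{\mrm{L}}_{\k}N$ this identifies the left-hand side of the theorem with $\mrm{R}\opn{Hom}_{\widetilde{B}}(A,\mrm{L}\Lambda_{I}(M\otimes^{\mrm{L}}_{\k}N))$. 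Then, because $A$ is $\a$-adically complete, the map $f$ factors through $\Lambda_{I}(\widetilde{B})$, giving a preadic DG-algebra map $\Lambda_{I}(\widetilde{B})\to A$; and by Corollary \ref{cor-llambda-id} applied to $(\widetilde{B},I)$, the DG $\Lambda_{I}(\widetilde{B})$-module $\mrm{L}\Lambda_{I}(M\otimes^{\mrm{L}}_{\k}N)$ restricts to a cohomologically $I$-adically complete DG $\widetilde{B}$-module. Lemma \ref{lemma-rhom-with-completion} then yields
\[
\mrm{R}\opn{Hom}_{\widetilde{B}}\bigl(A,\mrm{L}\Lambda_{I}(M\otimes^{\mrm{L}}_{\k}N)\bigr)\cong\mrm{R}\opn{Hom}_{\Lambda_{I}(\widetilde{B})}\bigl(A,\mrm{L}\Lambda_{I}(M\otimes^{\mrm{L}}_{\k}N)\bigr),
\]
which is exactly the right-hand side. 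Composing the two isomorphisms defines $v^{\widetilde{A},I}$; naturality of each step in $M$ and $N$ makes it an isomorphism of bifunctors, and the target lands in $\mrm{D}(\opn{Mod} A)_{\opn{\a-com}}$ since $\mrm{L}\Lambda_{\a}$ does.

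The essential content is already carried by Lemmas \ref{lem-llamofhom} and \ref{lemma-rhom-with-completion} and by the DG Greenlees--May theory of Section 2, so what remains is bookkeeping; the most delicate points are keeping track of over which (DG-)ring each object is taken and checking that the preadic structures --- $I$ on $\widetilde{B}$, $\a$ on $A$, and $\widehat{I}$ on $\Lambda_{I}(\widetilde{B})$ --- are compatible, together with the verification that $M\otimes^{\mrm{L}}_{\k}N$ may genuinely be computed inside $\widetilde{\mrm{D}}(\opn{DGMod}\widetilde{B})$ by a resolution that is simultaneously K-flat over $\k$ (so that the derived tensor product is the correct one) and a DG $\widetilde{B}$-module (so that Section 2 applies). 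I expect this last compatibility, rather than any homological obstruction, to be the only real subtlety.
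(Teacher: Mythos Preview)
Your proposal is correct and follows essentially the same route as the paper's own proof: verify internal flatness of $\widetilde{A}\otimes_{\k}\widetilde{A}$ via Proposition~\ref{prop:internally-flat-tensor}, then apply Lemma~\ref{lem-llamofhom} followed by Lemma~\ref{lemma-rhom-with-completion}, composing the resulting isomorphisms. The only cosmetic difference is that you explicitly invoke Corollary~\ref{cor-llambda-id} to justify that $\mrm{L}\Lambda_I(M\otimes^{\mrm{L}}_{\k}N)$ is cohomologically $I$-adically complete, whereas the paper simply cites weak proregularity of $I$; both amount to the same thing.
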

\begin{proof}
Note that by Proposition \ref{prop:internally-flat-tensor}, the DG-algebra $\widetilde{A}\otimes_{\k} \widetilde{A}$ is internally flat.
Let $M,N \in \mrm{D}(\opn{Mod} A)$. By the definition of the derived Hochschild cohomology functor, there is a functorial isomorphism in $\mrm{D}(\opn{Mod} A)_{\opn{\a-com}}$:
\[
\mrm{L}\Lambda_{\a} (\mrm{R}\opn{Hom}_{A\otimes^{\mrm{L}}_{\k} A} ( A, M\otimes^{\mrm{L}}_{\k} N) )  
\cong 
\mrm{L}\Lambda_{\a} (\mrm{R}\opn{Hom}_{\widetilde{A}\otimes_{\k} \widetilde{A}} ( A, M \otimes^{\mrm{L}}_{\k} N) )  
\]
According to Lemma \ref{lem-llamofhom}, there is an isomorphism of functors
\[
\mrm{L}\Lambda_{\a} (\mrm{R}\opn{Hom}_{\widetilde{A}\otimes_{\k} \widetilde{A}} ( A, M \otimes^{\mrm{L}}_{\k} N) )  
\cong
\mrm{R}\opn{Hom}_{\widetilde{A}\otimes_{\k} \widetilde{A}} ( A, \mrm{L}\Lambda_I(M \otimes^{\mrm{L}}_{\k} N)  ).   
\]
The assumption that $I \cdot A = \a$, and the fact that $A$ is $\a$-adically complete, ensures that $A$ is a DG $\Lambda_I(\widetilde{A} \otimes_{\k} \widetilde{A})$-module. Since $I$ is weakly proregular, the DG-module $\mrm{L}\Lambda_I(M \otimes^{\mrm{L}}_{\k} N)$ is cohomologically $I$-adically complete, so that by Lemma \ref{lemma-rhom-with-completion}, there is a functorial isomorphism
\[
\mrm{R}\opn{Hom}_{\widetilde{A}\otimes_{\k} \widetilde{A}} ( A, \mrm{L}\Lambda_I(M \otimes^{\mrm{L}}_{\k} N)  ) \cong
\mrm{R}\opn{Hom}_{\Lambda_I(\widetilde{A}\otimes_{\k} \widetilde{A})} ( A, \mrm{L}\Lambda_I(M \otimes^{\mrm{L}}_{\k} N)  ).   
\]
Composing these three isomorphisms, we obtain the required functorial isomorphism $v^{\widetilde{A},I}$.
\end{proof}

If one is only interested in the functor in the right hand side, we immediately get the following independence result, following \cite{AILN}, Theorem 3.2:

\begin{cor}\label{cor-ind-c}
Let $\k$ be a commutative ring. Let $(A,\a)$ be an adic ring which is a weakly proregular preadic $\k$-algebra. For any K-flat resolution $\k \to \widetilde{A} \to A$ of $\k \to A$ which is internally flat, and for any weakly proregular ideal $I\subseteq (\widetilde{A}\otimes_{\k} \widetilde{A})^0$, such that its image under the 
composed map 
\[
(\widetilde{A}\otimes_{\k} \widetilde{A})^0 \to \widetilde{A}^0 \to A
\]
is equal to $\a$, there is a functor
\[
\mrm{R}\opn{Hom}_{\Lambda_I(\widetilde{A}\otimes_{\k} \widetilde{A})} (A, \mrm{L}\Lambda_I ( - \otimes^{\mrm{L}}_{\k} - ) ) :\mrm{D}(\opn{Mod} A) \times \mrm{D}(\opn{Mod} A) \to \mrm{D}(\opn{Mod} A)_{\opn{\a-com}}.
\]
For every K-flat resolution $\k \to \widetilde{B} \to A$ of $k \to A$, which is internally flat, and for every weakly proregular ideal $I'\subseteq (\widetilde{B}\otimes_{\k} \widetilde{B})^0$, such that its image under the 
composed map 
\[
(\widetilde{B}\otimes_{\k} \widetilde{B})^0 \to \widetilde{B}^0 \to A
\]
is equal to $\a$, there is a canonical isomorphism of functors
\[
w^{(\widetilde{A},I),(\widetilde{B},I')} :
\mrm{R}\opn{Hom}_{\Lambda_I(\widetilde{A}\otimes_{\k} \widetilde{A})} (A, \mrm{L}\Lambda_I ( - \otimes^{\mrm{L}}_{\k} - ) ) \to 
\mrm{R}\opn{Hom}_{\Lambda_{I'}(\widetilde{B}\otimes_{\k} \widetilde{B})} (A, \mrm{L}\Lambda_{I'} ( - \otimes^{\mrm{L}}_{\k} - ) ). 
\]
Furthermore, if $(\widetilde{A},I)$, $(\widetilde{B},I')$ and $(\widetilde{C},I'')$ are three such pairs, then there is an equality
\[
w^{(\widetilde{A},I),(\widetilde{C},I'')} = 
w^{(\widetilde{B},I'),(\widetilde{C},I'')} \circ
w^{(\widetilde{A},I),(\widetilde{B},I')}
\]
\end{cor}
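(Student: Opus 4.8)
The plan is to deduce the whole statement formally from Theorem~\ref{thm-main-c-formula} together with the independence of the derived Hochschild cohomology functor established in \cite{AILN}. First, the existence of the functor
$\mrm{R}\opn{Hom}_{\Lambda_I(\widetilde{A}\otimes_{\k} \widetilde{A})} (A, \mrm{L}\Lambda_I ( - \otimes^{\mrm{L}}_{\k} - ) )$, and the fact that it lands in $\mrm{D}(\opn{Mod} A)_{\opn{\a-com}}$, is exactly the content of Theorem~\ref{thm-main-c-formula}: that theorem supplies, for every admissible pair $(\widetilde{A},I)$, a functorial isomorphism
\[
v^{\widetilde{A},I}:
\mrm{L}\Lambda_{\a} (\mrm{R}\opn{Hom}_{A\otimes^{\mrm{L}}_{\k} A} ( A, -\otimes^{\mrm{L}}_{\k} -) )
\iso
\mrm{R}\opn{Hom}_{\Lambda_I(\widetilde{A}\otimes_{\k} \widetilde{A})} (A, \mrm{L}\Lambda_I ( - \otimes^{\mrm{L}}_{\k} - ) )
\]
of functors $\mrm{D}(\opn{Mod} A) \times \mrm{D}(\opn{Mod} A) \to \mrm{D}(\opn{Mod} A)_{\opn{\a-com}}$, whose source — the complete derived Hochschild cohomology functor — is canonical by its very construction through \cite{AILN}. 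We fix once and for all one presentation of this source functor.

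The isomorphism $w^{(\widetilde{A},I),(\widetilde{B},I')}$ is then defined to be the composite
\[
w^{(\widetilde{A},I),(\widetilde{B},I')} := v^{\widetilde{B},I'} \circ (v^{\widetilde{A},I})^{-1} ,
\]
which is visibly a functorial isomorphism between the two right-hand side functors. The cocycle identity is then immediate by cancellation:
\[
w^{(\widetilde{B},I'),(\widetilde{C},I'')} \circ w^{(\widetilde{A},I),(\widetilde{B},I')}
= v^{\widetilde{C},I''} \circ (v^{\widetilde{B},I'})^{-1} \circ v^{\widetilde{B},I'} \circ (v^{\widetilde{A},I})^{-1}
= v^{\widetilde{C},I''} \circ (v^{\widetilde{A},I})^{-1}
= w^{(\widetilde{A},I),(\widetilde{C},I'')} .
\]

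The only point needing care — and essentially the sole content beyond this formal cancellation — is to check that each $v^{\widetilde{A},I}$ is genuinely canonical, i.e.\ depends only on the pair $(\widetilde{A},I)$ and on the fixed presentation of the source. Unwinding the proof of Theorem~\ref{thm-main-c-formula}, $v^{\widetilde{A},I}$ is a composite of three functorial isomorphisms: the identification of the source with $\mrm{L}\Lambda_{\a}(\mrm{R}\opn{Hom}_{\widetilde{A}\otimes_{\k}\widetilde{A}}(A,-\otimes^{\mrm{L}}_{\k}-))$ given by the comparison maps $w^{\widetilde{A}\widetilde{B}}$ of \cite{AILN} relative to the reference resolution, followed by the canonical isomorphisms furnished by Lemma~\ref{lem-llamofhom} and Lemma~\ref{lemma-rhom-with-completion}. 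The relation $w^{\widetilde{A}\widetilde{C}} = w^{\widetilde{B}\widetilde{C}} \circ w^{\widetilde{A}\widetilde{B}}$ recorded above from \cite{AILN} ensures that replacing the reference presentation multiplies every $v^{\widetilde{A},I}$ on the source side by one and the same isomorphism, which cancels in the formula for $w^{(\widetilde{A},I),(\widetilde{B},I')}$; hence the $w$'s are independent of that auxiliary choice as well. No new technical obstacle arises here — the statement is entirely formal once Theorem~\ref{thm-main-c-formula} is available.
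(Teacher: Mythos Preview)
Your proof is correct and follows essentially the same approach as the paper: both define $w^{(\widetilde{A},I),(\widetilde{B},I')}$ by transporting through the isomorphisms $v^{\widetilde{A},I}$ of Theorem~\ref{thm-main-c-formula} and invoke the cocycle relation $w^{\widetilde{A}\widetilde{C}} = w^{\widetilde{B}\widetilde{C}} \circ w^{\widetilde{A}\widetilde{B}}$ from \cite{AILN}. The only cosmetic difference is bookkeeping: the paper writes $w^{(\widetilde{A},I),(\widetilde{B},I')} := v^{(\widetilde{B},I')} \circ \mrm{L}\Lambda_{\a}(w^{\widetilde{A}\widetilde{B}}) \circ (v^{(\widetilde{A},I)})^{-1}$ with the AILN comparison map explicit in the middle, whereas you absorb it into the $v$'s by fixing a reference resolution and then verify the result is independent of that choice --- the content is identical.
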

\begin{proof}
Define 
\[
w^{(\widetilde{A},I),(\widetilde{B},I')} := v^{(\widetilde{B},I')} \circ \mrm{L}\Lambda_{\a} ( w^{\widetilde{A} \widetilde{B}} ) \circ (v^{(\widetilde{A},I)})^{-1}.
\]
This is a composition of functorial isomorphisms, so it is also a functorial isomorphism. Since by \cite{AILN}, Theorem 3.2, there is an equality
\[
w^{\widetilde{A}\widetilde{C}} = w^{\widetilde{B}\widetilde{C}} \circ w^{\widetilde{A}\widetilde{B}}
\]
and since $\mrm{L}\Lambda_{\a}$ is a functor, we get the equality
\[
w^{(\widetilde{A},I),(\widetilde{C},I'')} = 
w^{(\widetilde{B},I'),(\widetilde{C},I'')} \circ
w^{(\widetilde{A},I),(\widetilde{B},I')}.
\]
\end{proof}

In the noetherian case, we can drop the word weakly proregular from the statement, and obtain the following:

\begin{cor}\label{cor-cformula-noetherian}
Let $\k$ be a noetherian ring. Let $(A,\a)$ be an adic ring which is an essentially formally of finite type preadic $\k$-algebra. Then for any adic K-flat resolution $\k \to (\widetilde{A},\widetilde{\a}) \to (A,\a)$ of $\k \to (A,\a)$, such that $(\widetilde{A}^0,\widetilde{\a})$ is essentially formally of finite type over $\k$, 
there is an isomorphism 
\[
\mrm{L}\Lambda_{\a} (\mrm{R}\opn{Hom}_{A\otimes^{\mrm{L}}_{\k} A} ( A, -\otimes^{\mrm{L}}_{\k} -) )  
\cong 
\mrm{R}\opn{Hom}_{\Lambda_{\widetilde{\a}^e} (\widetilde{A}\otimes_{\k} \widetilde{A})} (A, \mrm{L}\Lambda_{\widetilde{\a}^e} ( - \otimes^{\mrm{L}}_{\k} - ) )
\]
of functors
\[
\mrm{D}(\opn{Mod} A) \times \mrm{D}(\opn{Mod} A) \to \mrm{D}(\opn{Mod} A)_{\opn{\a-com}}
\]
where $\widetilde{\a}^e = \widetilde{\a} \otimes_{\k} \widetilde{A}^0 + \widetilde{A}^0 \otimes_{\k} \widetilde{\a}$ is the canonical ideal of definition of $\widetilde{A}^0 \otimes_{\k} \widetilde{A}^0$. Moreover, the ring $\Lambda_{\widetilde{\a}^e} (\widetilde{A}\otimes_{\k} \widetilde{A})^0$ is noetherian. 
\end{cor}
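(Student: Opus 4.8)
The plan is to read this corollary off from Theorem~\ref{thm-main-c-formula} by taking the weakly proregular ideal there to be $I=\widetilde{\a}^e$, so the substance lies in verifying the hypotheses of that theorem for the resolution $(\widetilde{A},\widetilde{\a})$ and this particular $I$, and in harvesting the noetherianness claim in passing. First I would record the elementary reductions: since $\widetilde{A}$ is non-positive, $(\widetilde{A}\otimes_{\k}\widetilde{A})^0=\widetilde{A}^0\otimes_{\k}\widetilde{A}^0$, so $\widetilde{\a}^e=\widetilde{\a}\otimes_{\k}\widetilde{A}^0+\widetilde{A}^0\otimes_{\k}\widetilde{\a}$ is genuinely an ideal of the degree-$0$ ring $\widetilde{A}^0\otimes_{\k}\widetilde{A}^0$; that $\widetilde{A}$ is internally flat, which for the resolutions furnished by Proposition~\ref{prop-existence-of-res}(2) holds because each $\widetilde{A}^i$ with $i<0$ is a finitely generated projective $\widetilde{A}^0$-module, so $\widetilde{A}$ is even internally projective; and that $\widetilde{A}^0$ is noetherian, being an adic ring essentially formally of finite type over the noetherian ring $\k$.

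The heart of the matter is the weak proregularity of $\widetilde{\a}^e$, and this I would obtain, together with the noetherianness of its completion in one stroke, by invoking Corollary~\ref{cor-wpr-of-efft} with \emph{both} preadic $\k$-algebras there equal to $(\widetilde{A}^0,\widetilde{\a})$: this algebra is flat over $\k$ (for the semi-free resolutions in use $\widetilde{A}^0$ is $\k$-flat), essentially formally of finite type over $\k$, and noetherian, so the corollary yields $(\widetilde{A}^0\otimes_{\k}\widetilde{A}^0,\ \widetilde{\a}^e)\in\mathcal{PAWN}$. Membership in $\mathcal{PAWN}$ simultaneously gives that $\widetilde{\a}^e$ is weakly proregular and that $\Lambda_{\widetilde{\a}^e}(\widetilde{A}^0\otimes_{\k}\widetilde{A}^0)=\Lambda_{\widetilde{\a}^e}(\widetilde{A}\otimes_{\k}\widetilde{A})^0$ is noetherian, which is precisely the ``moreover'' assertion. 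It then remains to check the image condition of Theorem~\ref{thm-main-c-formula}: under the multiplication map followed by $\widetilde{A}^0\to A$, the ideal $\widetilde{\a}^e$ maps onto $\widetilde{\a}$ and then onto $\widetilde{\a}\cdot A$; since $(\widetilde{A},\widetilde{\a})\to(A,\a)$ is an adic quasi-isomorphism this is an ideal of definition of $(A,\a)$, and for the resolution of Proposition~\ref{prop-existence-of-res}(2) one has $\widetilde{\a}\cdot A=\a$ on the nose, while for a general adic resolution the image is merely an ideal of definition of $(A,\a)$, which still suffices because both $\mrm{L}\Lambda$ and weak proregularity depend only on the adic topology.

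With all hypotheses verified, Theorem~\ref{thm-main-c-formula} with $\k\to\widetilde{A}\to A$ and $I=\widetilde{\a}^e$ delivers the displayed isomorphism of functors $\mrm{D}(\opn{Mod} A)\times\mrm{D}(\opn{Mod} A)\to\mrm{D}(\opn{Mod} A)_{\opn{\a-com}}$. Existence of a resolution with the required properties is exactly Proposition~\ref{prop-existence-of-res}(2), so the statement is non-vacuous; and when $A$ is already flat over $\k$ one may take $\widetilde{A}=A$ with $\widetilde{\a}=\a$, since a ring is internally projective and $A$ is then K-flat over $\k$.

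The one genuinely delicate point is internal flatness of the resolution one is permitted to substitute: a DG-algebra that is K-flat over $\k$ need not be K-flat over its own degree-$0$ part, so for the clause ``for any adic K-flat resolution'' one must either build internal flatness into the standing meaning of such resolutions or restrict attention to the internally projective semi-free ones, whose existence already covers the applications. A subsidiary bookkeeping point, used above, is that $\widetilde{A}^0$ is $\k$-flat, which is automatic for the semi-free resolutions here but is not a formal consequence of $\widetilde{A}$ being merely K-flat over $\k$; this flatness is what licenses the application of Corollary~\ref{cor-wpr-of-efft}.
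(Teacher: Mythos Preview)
Your proposal is correct and follows essentially the same approach as the paper: both verify that $\widetilde{\a}^e$ is weakly proregular via Corollary~\ref{cor-wpr-of-efft} and then feed this into Theorem~\ref{thm-main-c-formula} with $I=\widetilde{\a}^e$. The paper's proof is a terse two sentences asserting exactly this reduction; your version is more explicit about checking each hypothesis (the identification $(\widetilde{A}\otimes_{\k}\widetilde{A})^0=\widetilde{A}^0\otimes_{\k}\widetilde{A}^0$, the image condition, internal flatness), and you correctly flag the two points the paper treats as tacit---namely, that internal flatness of $\widetilde{A}$ and $\k$-flatness of $\widetilde{A}^0$ are genuinely needed but are not automatic for an arbitrary K-flat DG resolution, only for those produced by Proposition~\ref{prop-existence-of-res}(2). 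The paper's derivation of the noetherian claim from noetherianness of $\widetilde{A}^0/\widetilde{\a}\otimes_{\k}\widetilde{A}^0/\widetilde{\a}$ and your derivation via $\mathcal{PAWN}$-membership amount to the same argument.
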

\begin{proof}
Since $\k \to \widetilde{A}^0$ is flat, and essentially formally of finite type, by Corollary \ref{cor-wpr-of-efft}, we see that the ideal $\widetilde{\a}^e$ is weakly proregular, so the above is reduced to the statement of the theorem. The last claim follows from the fact that the ring $\widetilde{A} /\widetilde{\a} \otimes_{\k} \widetilde{A}/\widetilde{\a}$ is noetherian.
\end{proof}

\begin{rem}
Suppose $\k$ is a noetherian ring. Let $(A,\a)$ be an adic ring which is an essentially formally of finite type preadic $\k$-algebra. Then by Proposition \ref{prop-existence-of-res}(2), there is an adic K-flat resolution $(\widetilde{A},\widetilde{\a})$ which satisfy the assumptions of this corollary.
\end{rem}

\begin{rem}\label{rem-ae-is-wpr}
If $\k$ is a field, one can drop the finiteness assumption from $A$, and just assume that $A$ is noetherian. This is because, by \cite{PSY1}, Example 3.35, in that case the ideal $\a \otimes_{\k} A + A\otimes_{\k} \a$ is weakly proregular.
\end{rem}

\begin{rem}\label{rem-main-c-bimodule}
The proof of Theorem \ref{thm-main-c-formula} actually say more: Let $\k$ be  a commutative ring, and let $(A,\a)$ be an internally flat weakly proregular adic DG-algebra. Then for any weakly proregular ideal $I\subseteq A^0\otimes_{\k} A^0$, such that $I\cdot A^0 = \a$, there is an isomorphism
\[
\mrm{L}\Lambda_{\a} (\mrm{R}\opn{Hom}_{A\otimes_{\k} A} (A,-) ) \cong \mrm{R}\opn{Hom}_{\Lambda_I(A\otimes_{\k} A)}(A,\mrm{L}\Lambda_I(- ) )
\]
of functors
\[
\widetilde{\mrm{D}}(\opn{DGMod} (A\otimes_{\k} A)) \to \widetilde{\mrm{D}}(\opn{DGMod} A).
\]
Here, the left hand side is the derived completion of the (ordinary) Hochschild cohomology of the DG-algebra $A$ over $\k$, while the right hand side should be thought of as an adic Hochschild cohomology of the DG-algebra $A$ over $\k$. As far as we know, this result is new even in the case where $A=A^0$ is a commutative ring. In that case, assuming $A$ is flat over $\k$ the adic Hochschild cohomology functor
\[
\mrm{R}\opn{Hom}_{\widehat{A\otimes_{\k} A}}(A,-)
\]
was defined in \cite{HU}.
\end{rem}

\begin{lem}\label{lem-adic-cc}
Let $(A,\a)$ and $(B,\b)$ be two weakly proregular preadic rings. Suppose that $A\to B$ is a preadic ring map such that $\b = \a\cdot B$. Let $Q:\mrm{D}(\opn{Mod} B) \to \mrm{D}(\opn{Mod} A)$ be the forgetful functor. Let $M \in \mrm{D}(\opn{Mod} B)$ be such that $Q(M)$ is cohomologically $\a$-adically complete. Then $M$ is cohomologically $\b$-adically complete. 
\end{lem}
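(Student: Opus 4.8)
The plan is to reduce the assertion to the corresponding statement over $A$, using the explicit telescope-complex formula for $\mrm{L}\Lambda$ from Theorem \ref{thm-ring-mgm}(1) together with the fact that a morphism in $\mrm{D}(\opn{Mod} B)$ is invertible if and only if it is a quasi-isomorphism, which is detected by the forgetful functor $Q\colon \mrm{D}(\opn{Mod} B)\to\mrm{D}(\opn{Mod} A)$. Concretely, I want to show that $\tau^L_M\colon M\to\mrm{L}\Lambda_{\b}(M)$ is an isomorphism in $\mrm{D}(\opn{Mod} B)$, and I will do this by identifying $Q(\tau^L_M)$ with $\tau^L_{Q(M)}$, which is an isomorphism by hypothesis.

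First I would choose a finite sequence $\mathbf{a}$ in $A$ generating $\a$ (such a sequence exists since $(A,\a)$ is a weakly proregular preadic ring) and let $\mathbf{b}$ be its image in $B$. Since $\b=\a\cdot B$, the sequence $\mathbf{b}$ generates $\b$, and there are canonical isomorphisms of complexes $\opn{Tel}(B;\mathbf{b})\cong\opn{Tel}(A;\mathbf{a})\otimes_A B$ and $\opn{K}^{\vee}_{\infty}(B;\mathbf{b})\cong\opn{K}^{\vee}_{\infty}(A;\mathbf{a})\otimes_A B$, compatible with the canonical maps $u_{\mathbf{a}}\colon\opn{Tel}(A;\mathbf{a})\to A$ and $u_{\mathbf{b}}\colon\opn{Tel}(B;\mathbf{b})\to B$, so that $u_{\mathbf{b}}=u_{\mathbf{a}}\otimes_A 1_B$. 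By Theorem \ref{thm-ring-mgm}(1) over $B$ (legitimate since $\b$ is weakly proregular), there is a functorial isomorphism $\mrm{L}\Lambda_{\b}(M)\cong\opn{Hom}_B(\opn{Tel}(B;\mathbf{b}),M)$ under which $\tau^L_M$ is identified with $\opn{Hom}_B(u_{\mathbf{b}},1_M)$; this last compatibility is the ring case of Theorem \ref{thm-llambda}, cf. \cite{PSY1}. Applying $Q$ and the hom-tensor adjunction, together with the base-change isomorphism for $\opn{Tel}$, one gets an isomorphism of complexes of $A$-modules
\[
Q\bigl(\opn{Hom}_B(\opn{Tel}(B;\mathbf{b}),M)\bigr)\;\cong\;\opn{Hom}_A\bigl(\opn{Tel}(A;\mathbf{a}),Q(M)\bigr)
\]
which carries $Q\bigl(\opn{Hom}_B(u_{\mathbf{b}},1_M)\bigr)$ to $\opn{Hom}_A(u_{\mathbf{a}},1_{Q(M)})$, precisely because $u_{\mathbf{b}}=u_{\mathbf{a}}\otimes_A 1_B$. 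Invoking Theorem \ref{thm-ring-mgm}(1) once more, now over $A$ (using that $\a$ is weakly proregular), the right-hand side is $\mrm{L}\Lambda_{\a}(Q(M))$ and the map $\opn{Hom}_A(u_{\mathbf{a}},1_{Q(M)})$ is $\tau^L_{Q(M)}$. Hence $Q(\tau^L_M)$ is, via these natural identifications, the map $\tau^L_{Q(M)}$.

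Since $Q(M)$ is cohomologically $\a$-adically complete, $\tau^L_{Q(M)}$ is an isomorphism in $\mrm{D}(\opn{Mod} A)$, hence so is $Q(\tau^L_M)$. As $Q$ changes neither the underlying complex nor its cohomology, $\tau^L_M$ is a quasi-isomorphism, hence an isomorphism in $\mrm{D}(\opn{Mod} B)$, i.e. $M$ is cohomologically $\b$-adically complete. The \emph{main obstacle}, such as it is, is the step verifying that the canonical map $\tau^L_{(-)}$ is compatible with base change of the telescope complex and with the forgetful functor $Q$ — that $Q(\tau^L_M)$ really corresponds to $\tau^L_{Q(M)}$ and not merely to some other quasi-isomorphism. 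This is routine bookkeeping with the explicit constructions of $u_{\mathbf{a}}$, $w_{\mathbf{a}}$, $e_{\mathbf{a}}$ recalled in Section 1 (and mirrored by Theorem \ref{thm-llambda} in the DG setting), but it is where one must be careful about the precise form of the maps rather than just their existence.
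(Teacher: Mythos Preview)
Your argument is correct and is essentially the same as the paper's: both pick generators $\mathbf{a}$ of $\a$ with image $\mathbf{b}$ in $B$, use the telescope formula to identify $\tau^L$ with $\opn{Hom}(u,1)$, and then the base-change isomorphism $\opn{Tel}(B;\mathbf{b})\cong\opn{Tel}(A;\mathbf{a})\otimes_A B$ to reduce to the hypothesis on $Q(M)$. The paper condenses the last step into a single commutative square, whereas you spell out explicitly the compatibility $Q(\tau^L_M)=\tau^L_{Q(M)}$; your more careful tracking of the maps $u_{\mathbf{a}}$, $u_{\mathbf{b}}$ is a welcome addition but does not constitute a different approach.
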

\begin{proof}
Let $\mathbf{a}$ be a finite sequence that generates $\a$, and let $\mathbf{b}$ be the image of $\a$ in $B$.
Since $Q(M)$ is cohomologically $\a$-adically complete, by Theorem \ref{thm-llambda}, the natural map 
\[
Q(M) \to \opn{Hom}_A(\opn{Tel}(A;\mathbf{a}),Q(M))
\]
is a quasi-isomorphism. 
The commutative diagram
\[
\xymatrix{
M \ar[r]\ar[d] & \opn{Hom}_B(\opn{Tel}(B;\mathbf{b}),M) \ar[d]\\
Q(M) \ar[r] &\opn{Hom}_A(\opn{Tel}(A;\mathbf{a}),Q(M))
}
\]
now establishes the claim.
\end{proof}

Let $\k$ be a field, let $A$ be a $\k$-algebra, and let $M$ be an $A$-module. We denote by $\mrm{HH}^n(A|\k;M)$ the $n$-th Hochschild cohomology of $A$ with coefficients in $M$. If $A$ is equipped with an adic topology, we denote by $\widehat{\mrm{HH}}^n(A|\k;M)$ the $n$-th adic Hochschild cohomology of $A$ with coefficients in $M$. By definition 
\[
\widehat{\mrm{HH}}^n(A|\k;M) := \opn{Ext}^n_{\widehat{A\otimes_{\k} A}}(A,M).
\]
\begin{cor}\label{cor-comp-thm}
Let $\k$ be a field. Let $A$ be a $\k$-algebra. Assume that $A$ is noetherian and $\a$-adically complete with respect to some ideal $\a\subseteq A$. Let $M$ be a finitely generated $A$-module (or more generally, any bounded complex with $\a$-adically complete cohomologies). Then for all $n$, there is an isomorphism
\[
\mrm{HH}^n(A|\k;M) \cong \widehat{\mrm{HH}}^n(A|\k;M).
\]
\end{cor}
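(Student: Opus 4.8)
The plan is to evaluate the functorial isomorphism of Remark~\ref{rem-main-c-bimodule} on the $A\otimes_{\k}A$-module $M$, and then to check that the derived completion functors appearing on both sides act trivially on it, so that the two sides compute ordinary and adic Hochschild cohomology respectively.

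First, the hypotheses of Remark~\ref{rem-main-c-bimodule} are met. Since $\k$ is a field, $A$ is flat (in fact free) over $\k$ and, being a ring, is internally flat; by hypothesis it is $\a$-adically complete, and it is noetherian, so $\a\subseteq A$ is weakly proregular. Thus $(A,\a)$ is an internally flat weakly proregular adic DG-algebra concentrated in degree $0$. By Remark~\ref{rem-ae-is-wpr} the ideal $I:=\a^e=\a\otimes_{\k}A+A\otimes_{\k}\a\subseteq A\otimes_{\k}A$ is weakly proregular, and its image under the multiplication map $A\otimes_{\k}A\to A$ is $\a$, so $I\cdot A=\a$. Hence Remark~\ref{rem-main-c-bimodule} applies, and writing $\widehat{A\otimes_{\k}A}=\Lambda_{\a^e}(A\otimes_{\k}A)$ we get a functorial isomorphism
\[
\mrm{L}\Lambda_{\a}\bigl(\mrm{R}\opn{Hom}_{A\otimes_{\k}A}(A,-)\bigr)\;\cong\;\mrm{R}\opn{Hom}_{\widehat{A\otimes_{\k}A}}\bigl(A,\mrm{L}\Lambda_{\a^e}(-)\bigr)
\]
of functors $\widetilde{\mrm{D}}(\opn{DGMod}(A\otimes_{\k}A))\to\widetilde{\mrm{D}}(\opn{DGMod} A)$; here $M$ is viewed as an $A\otimes_{\k}A$-module via the multiplication map.

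The key step I would isolate is the claim that $M$, so regarded, is cohomologically $\a^e$-adically complete over $A\otimes_{\k}A$. To prove it, choose a finite sequence $\mathbf{a}^e$ generating $\a^e$ and let $\mathbf{a}$ be its image in $A$ (a finite sequence generating $\a$). Since $\a^e$ is weakly proregular, Theorem~\ref{thm-ring-mgm}(1) identifies $\mrm{L}\Lambda_{\a^e}(M)$ with $\opn{Hom}_{A\otimes_{\k}A}(\opn{Tel}(A\otimes_{\k}A;\mathbf{a}^e),M)$, compatibly with $\tau^{\mrm{L}}_M$. Because $M$ is restricted from $A$, the base-change isomorphism $\opn{Tel}(A\otimes_{\k}A;\mathbf{a}^e)\otimes_{A\otimes_{\k}A}A\cong\opn{Tel}(A;\mathbf{a})$ together with the hom-tensor (extension--restriction) adjunction for $A\otimes_{\k}A\to A$ rewrites this as $\opn{Hom}_A(\opn{Tel}(A;\mathbf{a}),M)$, which by Theorem~\ref{thm-ring-mgm}(1) over the noetherian ring $A$ is $\mrm{L}\Lambda_{\a}(M)$; and since $M$ is a finitely generated module over the noetherian ring $A$, $\mrm{L}\Lambda_{\a}(M)\cong\Lambda_{\a}(M)=M$ as $M$ is $\a$-adically complete. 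Tracing $\tau^{\mrm{L}}_M$ through these identifications shows it is an isomorphism, proving the claim. (For the parenthetical generalization one reduces, by d\'evissage in the triangulated subcategory of cohomologically complete complexes, to the cohomology modules of the bounded complex.)

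With the claim in hand the proof finishes formally. Applying $\mrm{R}\opn{Hom}_{\widehat{A\otimes_{\k}A}}(A,-)$ to the canonical isomorphism $\mrm{L}\Lambda_{\a^e}(M)\cong M$ (which is $\widehat{A\otimes_{\k}A}$-linear, the $\widehat{A\otimes_{\k}A}$-structure on $M$ being the canonical one through $\widehat{A\otimes_{\k}A}\to A$) gives an isomorphism $\mrm{R}\opn{Hom}_{\widehat{A\otimes_{\k}A}}(A,\mrm{L}\Lambda_{\a^e}(M))\cong\mrm{R}\opn{Hom}_{\widehat{A\otimes_{\k}A}}(A,M)$, whose $n$-th cohomology is $\opn{Ext}^n_{\widehat{A\otimes_{\k}A}}(A,M)=\widehat{\mrm{HH}}^n(A|\k;M)$. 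On the other hand, by Lemma~\ref{lem-llamofhom} applied to $A\otimes_{\k}A\to A$ and the claim,
\[
\mrm{L}\Lambda_{\a}\bigl(\mrm{R}\opn{Hom}_{A\otimes_{\k}A}(A,M)\bigr)\;\cong\;\mrm{R}\opn{Hom}_{A\otimes_{\k}A}\bigl(A,\mrm{L}\Lambda_{\a^e}(M)\bigr)\;\cong\;\mrm{R}\opn{Hom}_{A\otimes_{\k}A}(A,M),
\]
whose $n$-th cohomology is $\opn{Ext}^n_{A\otimes_{\k}A}(A,M)$; as $\k$ is a field, $A\otimes^{\mrm{L}}_{\k}A=A\otimes_{\k}A$, so this is the classical Hochschild cohomology $\mrm{HH}^n(A|\k;M)$. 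Combining with the functorial isomorphism of Remark~\ref{rem-main-c-bimodule} evaluated at $M$ and passing to cohomology yields $\mrm{HH}^n(A|\k;M)\cong\widehat{\mrm{HH}}^n(A|\k;M)$ for all $n$. The main obstacle is precisely the key step: showing that the derived $\a^e$-completion over the possibly non-noetherian ring $A\otimes_{\k}A$ leaves $M$ unchanged --- equivalently, that it coincides with the ordinary $\a$-adic completion of $M$ over the noetherian ring $A$ --- so that no derived correction terms intervene and the right-hand side genuinely computes $\opn{Ext}$ over the completed ring $\widehat{A\otimes_{\k}A}$.
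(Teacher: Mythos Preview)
Your proof is correct and follows essentially the same route as the paper: invoke Remark~\ref{rem-main-c-bimodule} (after checking $\a^e$ is weakly proregular via Remark~\ref{rem-ae-is-wpr}), then show $M$ is cohomologically $\a^e$-adically complete so that $\mrm{L}\Lambda_{\a^e}(M)\cong M$, and strip off the outer $\mrm{L}\Lambda_{\a}$ via Lemma~\ref{lem-llamofhom}. The only cosmetic difference is that the paper packages your telescope/base-change argument for the key step as a separate Lemma~\ref{lem-adic-cc}, and for the parenthetical generalization it quotes \cite[Theorem~1.21]{PSY2} rather than sketching the d\'evissage.
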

\begin{proof}
According to Remark \ref{rem-ae-is-wpr}, the ideal $\a^e = \a\otimes_{\k} A + A\otimes_{\k} \a$ is weakly proregular. Hence, using the variation of Theorem \ref{thm-main-c-formula} noted in Remark \ref{rem-main-c-bimodule}, we see that there is an isomorphism
\[
\mrm{L}\Lambda_{\a} \mrm{R}\opn{Hom}_{A\otimes_{\k} A}(A,M) \cong \mrm{R}\opn{Hom}_{\widehat{A\otimes_{\k} A}} (A,\mrm{L}\Lambda_{\a^e} (M)).
\]
By Lemma \ref{lem-llamofhom}, there is an isomorphism
\[
\mrm{L}\Lambda_{\a} \mrm{R}\opn{Hom}_{A\otimes_{\k} A}(A,M) \cong \mrm{R}\opn{Hom}_{A\otimes_{\k} A}(A,\mrm{L}\Lambda_{\a^e}(M)).
\]
Since $A$ is a complete noetherian ring, it follows that any finitely generated $A$-module is $\a$-adically complete. According to \cite{PSY2}, Theorem 1.21, if $M$ is a bounded complex with $\a$-adically complete cohomologies, $M$ is cohomologically $\a$-adically complete. Hence, by Lemma \ref{lem-adic-cc}, $M$, considered as an object of $\mrm{D}(\opn{Mod} A\otimes_{\k} A)$ is also cohomologically $\a^e$-adically complete. Thus, there are isomorphisms
\[
\mrm{R}\opn{Hom}_{A\otimes_{\k} A}(A,\mrm{L}\Lambda_{\a^e}(M)) \cong \mrm{R}\opn{Hom}_{A\otimes_{\k} A}(A,M)
\]
and
\[
\mrm{R}\opn{Hom}_{\widehat{A\otimes_{\k} A}}(A,\mrm{L}\Lambda_{\a^e}(M)) \cong \mrm{R}\opn{Hom}_{\widehat{A\otimes_{\k} A}}(A,M)
\]
which proves the claim.
\end{proof}

From this, it follows immediately that:
\begin{cor}
Let $\k$ be a field. Let $A$ be a $\k$-algebra. Assume that $A$ is noetherian and $\a$-adically complete with respect to some ideal $\a\subseteq A$. Let $M$ be a finitely generated $A$-module. Suppose that $(A,\a)$ is essentially formally of finite type over $\k$ (or, more generally, that $\Lambda_{\a^e}(A\otimes_{\k} A)$ is a noetherian ring). Then for all $n$, $\mrm{HH}^n(A|\k;M)$, the $n$-th Hochschild cohomology of $A$ over $\k$ with coefficients in $M$, is a finitely generated $A$-module.
\end{cor}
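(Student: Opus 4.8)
The plan is to reduce the statement, via Corollary \ref{cor-comp-thm}, to the finite generation of an $\opn{Ext}$-module over a noetherian ring. By Corollary \ref{cor-comp-thm} there is an isomorphism $\mrm{HH}^n(A|\k;M) \cong \widehat{\mrm{HH}}^n(A|\k;M) = \opn{Ext}^n_{\widehat{A\otimes_{\k} A}}(A,M)$, so it is enough to prove that $\opn{Ext}^n_{\widehat{A\otimes_{\k} A}}(A,M)$ is a finitely generated $A$-module for every $n$.

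First I would check that the ring $\widehat{A\otimes_{\k} A} = \Lambda_{\a^e}(A\otimes_{\k} A)$ is noetherian. Under the hypothesis that $\Lambda_{\a^e}(A\otimes_{\k} A)$ is noetherian this is immediate; if instead $(A,\a)$ is essentially formally of finite type over $\k$, then $A/\a$ is essentially of finite type over the field $\k$, hence so is $(A\otimes_{\k} A)/\a^e \cong A/\a \otimes_{\k} A/\a$, which is therefore noetherian, and then $\Lambda_{\a^e}(A\otimes_{\k} A)$ is noetherian by the criterion recalled in Section 1 (the completion is noetherian if and only if the quotient by the ideal of definition is). Next, the multiplication map $A\otimes_{\k} A \surj A$ carries $\a^e$ onto $\a$, hence is continuous, and since $A$ is $\a$-adically complete and $\Lambda_{\a^e}$ preserves surjections it induces a surjection $\widehat{A\otimes_{\k} A} \surj A$. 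Thus $A$ is a cyclic $\widehat{A\otimes_{\k} A}$-module, so $M$, being finitely generated over $A$, is finitely generated over $\widehat{A\otimes_{\k} A}$.

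Finally, since $\widehat{A\otimes_{\k} A}$ is noetherian and both $A$ and $M$ are finitely generated over it, $\opn{Ext}^n_{\widehat{A\otimes_{\k} A}}(A,M)$ is a finitely generated $\widehat{A\otimes_{\k} A}$-module for every $n$ (resolve $A$ by finite free modules and use that the cohomology of a complex of finitely generated modules over a noetherian ring is finitely generated). Moreover, because $A$ and $M$ carry $A$-module structures compatible, via the surjection $\widehat{A\otimes_{\k} A} \surj A$, with their $\widehat{A\otimes_{\k} A}$-module structures, the natural $\widehat{A\otimes_{\k} A}$-action on $\opn{Ext}^n_{\widehat{A\otimes_{\k} A}}(A,M)$ factors through $A$; a finite set of $\widehat{A\otimes_{\k} A}$-module generators is then a finite set of $A$-module generators, which gives the claim. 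There is no serious obstacle here: essentially all the content is contained in Corollary \ref{cor-comp-thm}, and the only points deserving a line of justification are the noetherianness of $\widehat{A\otimes_{\k} A}$ and the compatibility of the $A$-module structures on the $\opn{Ext}$-groups.
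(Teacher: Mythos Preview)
Your proof is correct and is exactly the argument the paper has in mind: the paper gives no proof beyond ``From this, it follows immediately that'', and the immediate deduction is precisely the one you spell out---pass to $\widehat{\mrm{HH}}^n(A|\k;M)=\opn{Ext}^n_{\widehat{A\otimes_{\k} A}}(A,M)$ via Corollary~\ref{cor-comp-thm}, observe that $\widehat{A\otimes_{\k} A}$ is noetherian and that $A$ (hence $M$) is finitely generated over it, and conclude by the standard noetherian $\opn{Ext}$ argument. Your justifications of the auxiliary points (noetherianness of the completion, surjectivity of $\widehat{A\otimes_{\k} A}\to A$, and the factoring of the module action through $A$) are all in order.
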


\begin{exa}
Let $\k$ be a noetherian ring, and let 
\[
(A,\a) = (\k[y_1,\dots,y_m][[x_1,\dots,x_n]], (x_1,\dots,x_n))
\]
where $m\ge 0$ and $n>0$. The ring $A\otimes_{\k} A$ is usually not noetherian (this is the case for example if $\k$ is a field, and $\opn{char}\k = 0$), so it is difficult to calculate $\mrm{HH}^n(A|\k; -)$. 

Let $\a^e = (x_1\otimes_{\k} 1,\dots, x_n\otimes_{\k} 1,1\otimes_{\k} x_1,\dots,1\otimes_{\k} x_n) \subseteq A\otimes_{\k} A$. Note that $\widehat{A\otimes_{\k} A} := \Lambda_{\a^e}(A\otimes_{\k} A) \cong \k[y_1,\dots,y_{2m}][[x_1,\dots,x_{2n}]]$.
Note also that by Corollary \ref{cor-wpr-of-efft}, the ideal $\a^e$ is weakly proregular.  Since $A$ is flat over $\k$, Theorem \ref{thm-main-c-formula} says that for any pair of complexes $M,N\in \mrm{D}(A)$, there is a functorial isomorphism
\[
\begin{aligned}
\mrm{L}\Lambda_{\a} (\mrm{R}\opn{Hom}_{A \otimes_{\k} A}(A,M\otimes^{\mrm{L}}_{\k} N) \cong\\
\mrm{R}\opn{Hom}_{\widehat{A\otimes_{\k} A}}(A,\mrm{L}\Lambda_{\a^e}( M\otimes^{\mrm{L}}_{\k} N))
\end{aligned}
\]
Note that $I = \ker(\widehat{A\otimes_{\k} A} \to A)$ is generated by a regular sequence of length $m+n$. Hence, the above $\mrm{R}\opn{Hom}$ may be calculated using the Koszul resolution of $A$ over $\widehat{A\otimes_{\k} A}$. Thus, by \cite{RD}, Corollary III.7.3, there is a functorial isomorphism
\[
\mrm{R}\opn{Hom}_{\widehat{A\otimes_{\k} A}}(A,\mrm{L}\Lambda_{\a^e}( M\otimes^{\mrm{L}}_{\k} N)) \cong  \omega \otimes^{\mrm{L}}_{\widehat{A\otimes_{\k} A}} \mrm{L}\Lambda_{\a^e}( M\otimes^{\mrm{L}}_{\k} N)[-(m+n)]
\]
where $\omega = \opn{Hom}_A(\wedge^{m+n}(I/I^2),A)$. It can be shown that $\wedge^{m+n}(I/I^2) \cong \widehat{\Omega}^{m+n}_{A/\k}$. But in this particular case of a ring of formal power series over a polynomial ring, $\widehat{\Omega}^{m+n}_{A/\k} \cong A$, so we see get that
\[
\mrm{L}\Lambda_{\a} (\mrm{R}\opn{Hom}_{A\otimes_{\k} A}(A,M\otimes^{\mrm{L}}_{\k} N)) \cong
A \otimes^{\mrm{L}}_{\widehat{A\otimes_{\k} A}} \mrm{L}\Lambda_{\a^e}( M\otimes^{\mrm{L}}_{\k} N)[-(m+n)]
\]

The right hand side should be thought of as an adic Hochschild homology functor. Thus, the above isomorphism is a variation, in the adic category, of the \textbf{Van den Bergh duality} (\cite{VdB1}, Theorem 1). 
\end{exa}

\begin{rem}
In the subsequent article \cite{SH}, we generalize the above example to the case where $\k$ is a noetherian ring, and $(A,\a)$ is an essentially formally of finite type and formally smooth $\k$-algebra with connected spectrum. In this case, $\widehat{\Omega}^1_{A/\k}$ is a projective $A$-module of finite rank $n$, and one can show that 
\[
\mrm{L}\Lambda_{\a} \mrm{R}\opn{Hom}_{A\otimes_{\k} A}(A, M\otimes^{\mrm{L}}_{\k} N) \cong \opn{Hom}_A(\widehat{\Omega}^n_{A/\k},A) \otimes^{\mrm{L}}_{\widehat{A\otimes_{\k} A}} \mrm{L}\Lambda_I(M\otimes^{\mrm{L}}_{\k} N)[-n]
\]
for any pair of complexes $M,N$ which satisfy suitable finiteness conditions, where $I=\ker(A\otimes_{\k} A\to A/\a)$. Moreover, the ideal $I$ is weakly proregular.
\end{rem}

\begin{lem}\label{lem-llamofllam}
Let $\k$ be a commutative ring. Let $(A,\a)$ be an adic ring which is a weakly proregular preadic $\k$-algebra. Let $\k \to \widetilde{A} \to A$ be a K-flat resolution of $\k \to A$ which is internally flat, and let $I\subseteq (\widetilde{A}\otimes_{\k} \widetilde{A})^0$ be a weakly proregular ideal, such that its image under the 
composed map 
\[
(\widetilde{A}\otimes_{\k} \widetilde{A})^0 \to \widetilde{A}^0 \to A
\]
is equal to $\a$. Then there are isomorphisms
\[
\mrm{L}\Lambda_I( - \otimes^{\mrm{L}}_{\k} -) \cong \mrm{L}\Lambda_I( \mrm{L}\Lambda_{\a}(-) \otimes^{\mrm{L}}_{\k} -)
\]
and
\[
\mrm{L}\Lambda_I( - \otimes^{\mrm{L}}_{\k} -) \cong \mrm{L}\Lambda_I( \mrm{R}\Gamma_{\a}(-) \otimes^{\mrm{L}}_{\k} -)
\]
of functors
\[
\mrm{D}(\opn{Mod} A) \times \mrm{D}(\opn{Mod} A) \to \widetilde{\mrm{D}}(\opn{DGMod} (\Lambda_I(A\otimes_{\k} A))).
\]
\end{lem}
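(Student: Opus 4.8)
I would prove the $\mathrm{L}\Lambda_\a$-version in detail; the $\mathrm{R}\Gamma_\a$-version is obtained by the same argument with $\sigma^{\mathrm R}$ in place of $\tau^{\mathrm L}$. First put $\widetilde B:=\widetilde A\otimes_\k\widetilde A$. By Proposition~\ref{prop:internally-flat-tensor} it is internally flat, so $(\widetilde B,I)$ is an internally flat weakly proregular preadic DG-algebra and all of Section~2 applies to it; fix a finite sequence $\mathbf c$ in $\widetilde B^0$ generating $I$, whose image $\mathbf a$ under $\widetilde B^0\to\widetilde A^0\to A$ generates $\a$ and is then a weakly proregular sequence for $(A,\a)$.

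\emph{Step 1 (reduction to a vanishing statement).} The canonical maps $\tau^{\mathrm L}_M\colon M\to\mathrm{L}\Lambda_\a M$ are natural in $M$, hence induce a natural transformation $\mathrm{L}\Lambda_I(-\otimes^{\mathrm L}_\k -)\to\mathrm{L}\Lambda_I(\mathrm{L}\Lambda_\a(-)\otimes^{\mathrm L}_\k -)$ of bifunctors. Since $-\otimes^{\mathrm L}_\k N$ and $\mathrm{L}\Lambda_I$ are triangulated, and since $\mathrm{cone}(\tau^{\mathrm L}_M)$ satisfies $\mathrm{L}\Lambda_\a(\mathrm{cone}\,\tau^{\mathrm L}_M)=0$ by Corollary~\ref{cor-llambda-id} — hence also $\mathrm{R}\Gamma_\a(\mathrm{cone}\,\tau^{\mathrm L}_M)=0$ by the MGM equivalence of Theorem~\ref{thm-ring-mgm}(2) — it suffices to show: \emph{if $C\in\mathrm{D}(\opn{Mod} A)$ satisfies $\mathrm{R}\Gamma_\a C=0$ (equivalently $\mathrm{L}\Lambda_\a C=0$), then $\mathrm{L}\Lambda_I(C\otimes^{\mathrm L}_\k N)=0$ for every $N$.}

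\emph{Step 2 (pass to the torsion side).} By the MGM equivalence for $(\widetilde B,I)$ (the Remark following Corollary~\ref{cor-DG-GM}, resp. Corollary~\ref{cor-complete-rgamma-llambda}) one has $\mathrm{L}\Lambda_I(C\otimes^{\mathrm L}_\k N)\cong\mathrm{L}\Lambda_I(\mathrm{R}\Gamma_I(C\otimes^{\mathrm L}_\k N))$, so it is enough to prove $\mathrm{R}\Gamma_I(C\otimes^{\mathrm L}_\k N)=0$.

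\emph{Step 3 (unwinding $\mathrm{R}\Gamma_I$).} By Theorem~\ref{thm-rgamma} and Remark~\ref{rem-koszul-dg} applied to $(\widetilde B,I)$, $\mathrm{R}\Gamma_I(C\otimes^{\mathrm L}_\k N)\cong\opn{K}^{\vee}_{\infty}(\widetilde B^0;\mathbf c)\otimes_{\widetilde B^0}(C\otimes^{\mathrm L}_\k N)$. Since $C$ and $N$ are complexes of $A$-modules, the $\widetilde A^0\otimes_\k\widetilde A^0$-action on $C\otimes^{\mathrm L}_\k N$ factors through $\widetilde A^0\otimes_\k\widetilde A^0\to A\otimes_\k A$; using the base-change formula for infinite dual Koszul complexes this rewrites as $\opn{K}^{\vee}_{\infty}(A\otimes_\k A;\bar{\mathbf c})\otimes_{A\otimes_\k A}(C\otimes^{\mathrm L}_\k N)$, where $\bar{\mathbf c}$ is the image of $\mathbf c$. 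On the other hand, taking a lift $\mathbf j$ of $\mathbf a$ to $\widetilde A^0$ and using the first-factor base-change together with $\mathrm{R}\Gamma_\a C=0$ gives $\opn{K}^{\vee}_{\infty}(\widetilde B^0;\mathbf j\otimes 1)\otimes_{\widetilde B^0}(C\otimes^{\mathrm L}_\k N)\cong\mathrm{R}\Gamma_\a(C)\otimes^{\mathrm L}_\k N=0$; that is, the ``first-factor'' dual Koszul complex already annihilates $C\otimes^{\mathrm L}_\k N$. The remaining task is to deduce from this the vanishing of $\opn{K}^{\vee}_{\infty}(\widetilde B^0;\mathbf c)\otimes_{\widetilde B^0}(C\otimes^{\mathrm L}_\k N)$, which reduces, via concatenation of sequences and the fact that these Koszul complexes depend only on the radical of the ideal, to a compatibility between $I$ and its image $\a$ inside $\widetilde B^0$.

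\emph{Expected main obstacle.} The hard part is exactly this last deduction in Step~3. The weak proregularity of $I$ must be used essentially here (for a general finitely generated $I$ with $\mu_0(I)A=\a$ one cannot conclude, so no purely formal manipulation with the Koszul/telescope formulas will suffice); the point is to show that under the hypotheses the ``first-factor part'' $\mathbf j\otimes 1$ is subordinate to $I$ in the precise sense needed to force $\opn{K}^{\vee}_{\infty}(\widetilde B^0;\mathbf c)\otimes-$ to factor through $\opn{K}^{\vee}_{\infty}(\widetilde B^0;\mathbf j\otimes 1)\otimes-$. I expect this to come out of comparing $\mathrm{R}\Gamma_I$ and $\mathrm{R}\Gamma_{I+(\mathbf j\otimes1)}$ on injective $\widetilde B^0$-modules and exploiting that, by weak proregularity of $I$, both have the same (concentrated in degree $0$) cohomology — but making this step precise, and correctly handling the passage between $\widetilde B^0$ and $A\otimes_\k A$ (where weak proregularity need not descend), is the delicate core of the argument.
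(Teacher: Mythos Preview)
Your reduction in Steps 1--2 and the first half of Step 3 (rewriting $\mrm{R}\Gamma_I$ via the infinite dual Koszul complex) coincides exactly with the paper's proof. Where you diverge is in the finish: the paper does not introduce any auxiliary first-factor sequence $\mathbf j\otimes 1$. Having arrived at
\[
\opn{K}^{\vee}_{\infty}(\widetilde B^{0};\widetilde{\mathbf a})\otimes^{\mrm L}_{\widetilde B^{0}}\bigl(\mrm{L}\Lambda_{\a}(M)\otimes^{\mrm L}_{\k}N\bigr),
\]
the paper simply invokes the base-change identity $\opn{K}^{\vee}_{\infty}(\widetilde B^{0};\widetilde{\mathbf a})\otimes_{\widetilde B^{0}}A\cong\opn{K}^{\vee}_{\infty}(A;\mathbf a)$ along the composed map $\widetilde B^{0}\to\widetilde A^{0}\to A$ (so $\mathbf a$ is the image of $\widetilde{\mathbf a}$ and generates $\a$), together with Theorem~\ref{thm-ring-mgm}, and concludes in one line. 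That is the entirety of the paper's Step~3.

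Your ``main obstacle'' --- comparing $\opn{K}^{\vee}_{\infty}$ for $\mathbf c$ with that for the concatenated sequence $(\mathbf c,\mathbf j\otimes1)$ --- is created by your choice to route the argument through $\mathbf j\otimes1$; those two sequences need not have the same radical in $\widetilde B^{0}$, so that comparison is genuinely problematic, and the paper never makes that move. On the other hand, your instinct that something delicate is happening is sound: the paper's base change is along the \emph{diagonal} $\widetilde B^{0}\to A$, whereas the $\widetilde B^{0}$-module structure on $M\otimes^{\mrm L}_{\k}N$ is through the two separate factors, so applying that identity to $M\otimes^{\mrm L}_{\k}N$ needs unpacking. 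The paper is terse at exactly this point; if you want to reconstruct it, work directly with the single sequence $\widetilde{\mathbf a}$ and its image $\mathbf a$ in $A$, rather than introducing a separate lift $\mathbf j$.
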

\begin{proof}
Let $M,N \in \mrm{D}(\opn{Mod} A)$.
For the first isomorphism, by Corollary \ref{cor-complete-rgamma-llambda}, it is enough to show that there is a functorial isomorphism
\[
\mrm{R}\Gamma_I( M \otimes^{\mrm{L}}_{\k} N) \cong \mrm{R}\Gamma_I( \mrm{L}\Lambda_{\a}(M) \otimes^{\mrm{L}}_{\k} N).
\]
Let $\widetilde{\mathbf{a}}$ be a finite sequence of elements in $\widetilde{A}^0\otimes_{\k} \widetilde{A}^0$ which generate $I$. By Theorem \ref{thm-rgamma}, there is a functorial isomorphism
\[
\mrm{R}\Gamma_I( \mrm{L}\Lambda_{\a}(M) \otimes^{\mrm{L}}_{\k} N) \cong 
\opn{K}^{\vee}_{\infty}(\widetilde{A}^0\otimes_{\k} \widetilde{A}^0; \widetilde{\mathbf{a}}) \otimes^{\mrm{L}}_{\widetilde{A}^0\otimes_{\k} \widetilde{A}^0} ( \mrm{L}\Lambda_{\a}(M) \otimes^{\mrm{L}}_{\k} N).
\]
Letting $\mathbf{a}$ be the image of $\widetilde{\mathbf{a}}$ in $A$, we see that $\mathbf{a}$ generates $\a$, so using the fact that 
\[
\opn{K}^{\vee}_{\infty}(\widetilde{A}^0\otimes_{\k} \widetilde{A}^0; \widetilde{\mathbf{a}}) \otimes_{\widetilde{A}^0\otimes_{\k} \widetilde{A}^0} A \cong \opn{K}^{\vee}_{\infty}(A;\mathbf{a})
\]
and by Theorem \ref{thm-ring-mgm}, we get the required functorial isomorphism. The second isomorphism is proved similarly.
\end{proof}

\begin{cor}\label{cor-domain-of-def}
Let $\k$ be a noetherian ring. Let $(A,\a)$ be an adic ring which is an essentially formally of finite type preadic $\k$-algebra. Then there are isomorphisms
\[
\mrm{L}\Lambda_{\a} \mrm{R}\opn{Hom}_{A\otimes^{\mrm{L}}_{\k} A}(A,-\otimes^{\mrm{L}}_{\k} -) \cong 
\mrm{L}\Lambda_{\a} \mrm{R}\opn{Hom}_{A\otimes^{\mrm{L}}_{\k} A}(A,\mrm{L}\Lambda_{\a}(-) \otimes^{\mrm{L}}_{\k} \mrm{L}\Lambda_{\a}(-))
\]
and 
\[
\mrm{L}\Lambda_{\a} \mrm{R}\opn{Hom}_{A\otimes^{\mrm{L}}_{\k} A}(A,-\otimes^{\mrm{L}}_{\k} -) \cong 
\mrm{L}\Lambda_{\a} \mrm{R}\opn{Hom}_{A\otimes^{\mrm{L}}_{\k} A}(A,\mrm{R}\Gamma_{\a}(-) \otimes^{\mrm{L}}_{\k} \mrm{R}\Gamma_{\a}(-))
\]
of functors
\[
\mrm{D}(\opn{Mod} A) \times \mrm{D}(\opn{Mod} A) \to \mrm{D}(\opn{Mod} A)_{\opn{\a-com}}.
\]
\end{cor}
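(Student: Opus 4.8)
The plan is to bracket the statement between two applications of Theorem \ref{thm-main-c-formula}, using Lemma \ref{lem-llamofllam} to move the completion (resp.\ torsion) operation onto the arguments in between. First I would invoke Proposition \ref{prop-existence-of-res}(2) to produce an internally projective (hence internally flat) adic K-flat resolution $\k \to (\widetilde A, \widetilde\a) \to (A,\a)$ with $(\widetilde A^0, \widetilde\a)$ essentially formally of finite type over $\k$, and put $I = \widetilde\a^e \subseteq (\widetilde A \otimes_{\k} \widetilde A)^0$. By Corollary \ref{cor-wpr-of-efft} the ideal $I$ is weakly proregular, and its image in $A$ equals $\a$; moreover, since $\k$ is noetherian and $A$ is essentially formally of finite type over it, $A$ is noetherian, so $\a$ is weakly proregular and the data $(A,\a)$ and $(\widetilde A, I)$ satisfy the hypotheses of both Theorem \ref{thm-main-c-formula} and Lemma \ref{lem-llamofllam}.

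Next, for $M, N \in \mrm{D}(\opn{Mod} A)$, I would chain functorial isomorphisms as follows. By Theorem \ref{thm-main-c-formula} with the data $(\widetilde A, I)$,
\[
\mrm{L}\Lambda_{\a} \mrm{R}\opn{Hom}_{A\otimes^{\mrm{L}}_{\k} A}(A, M\otimes^{\mrm{L}}_{\k} N) \cong \mrm{R}\opn{Hom}_{\Lambda_I(\widetilde A\otimes_{\k}\widetilde A)}(A, \mrm{L}\Lambda_I(M\otimes^{\mrm{L}}_{\k} N)).
\]
Applying the first isomorphism of Lemma \ref{lem-llamofllam} in the first argument, and then --- after invoking the symmetry of $\otimes^{\mrm{L}}_{\k}$ --- once more in the (former) second argument, which is legitimate since a completed complex is still an object of $\mrm{D}(\opn{Mod} A)$, we obtain
\[
\mrm{L}\Lambda_I(M\otimes^{\mrm{L}}_{\k} N) \cong \mrm{L}\Lambda_I(\mrm{L}\Lambda_{\a}(M)\otimes^{\mrm{L}}_{\k} N) \cong \mrm{L}\Lambda_I(\mrm{L}\Lambda_{\a}(M)\otimes^{\mrm{L}}_{\k}\mrm{L}\Lambda_{\a}(N)).
\]
Finally, applying $\mrm{R}\opn{Hom}_{\Lambda_I(\widetilde A\otimes_{\k}\widetilde A)}(A,-)$ and running Theorem \ref{thm-main-c-formula} in reverse, now for the pair $(\mrm{L}\Lambda_{\a}(M),\mrm{L}\Lambda_{\a}(N))$ in place of $(M,N)$,
\[
\mrm{R}\opn{Hom}_{\Lambda_I(\widetilde A\otimes_{\k}\widetilde A)}(A, \mrm{L}\Lambda_I(\mrm{L}\Lambda_{\a}(M)\otimes^{\mrm{L}}_{\k}\mrm{L}\Lambda_{\a}(N))) \cong \mrm{L}\Lambda_{\a} \mrm{R}\opn{Hom}_{A\otimes^{\mrm{L}}_{\k} A}(A, \mrm{L}\Lambda_{\a}(M)\otimes^{\mrm{L}}_{\k}\mrm{L}\Lambda_{\a}(N)).
\]
Composing these three isomorphisms yields the first assertion; the second is obtained identically, using the second isomorphism of Lemma \ref{lem-llamofllam} (with $\mrm{R}\Gamma_{\a}$ in place of $\mrm{L}\Lambda_{\a}$) in the middle step.

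The main point requiring care --- and the closest thing to an obstacle here --- is the middle step: Lemma \ref{lem-llamofllam} is phrased as a substitution in a single variable, so one must verify that iterating it over both slots, via the symmetry of the tensor product over $\k$, produces a genuine isomorphism of bifunctors on $\mrm{D}(\opn{Mod} A)\times\mrm{D}(\opn{Mod} A)$, and that each of the intermediate functors really takes values in $\widetilde{\mrm{D}}(\opn{DGMod}(\Lambda_I(\widetilde A\otimes_{\k}\widetilde A)))$ so that $\mrm{R}\opn{Hom}_{\Lambda_I(\widetilde A\otimes_{\k}\widetilde A)}(A,-)$ can legitimately be applied at each stage. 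Beyond that, everything is a formal concatenation of functorial isomorphisms already established above, and membership of the output in $\mrm{D}(\opn{Mod} A)_{\opn{\a-com}}$ is supplied directly by Theorem \ref{thm-main-c-formula}.
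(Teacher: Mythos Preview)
Your argument is correct and is essentially the paper's own proof spelled out in detail: the paper just says it follows from Corollary~\ref{cor-cformula-noetherian} and Lemma~\ref{lem-llamofllam} together with the symmetry of the derived tensor product, and your chain of isomorphisms is exactly the unpacking of that (with Corollary~\ref{cor-cformula-noetherian} being Theorem~\ref{thm-main-c-formula} plus Proposition~\ref{prop-existence-of-res}(2) and Corollary~\ref{cor-wpr-of-efft}). The care you take in iterating Lemma~\ref{lem-llamofllam} over both slots via symmetry is precisely what the paper's one-line proof gestures at.
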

\begin{proof}
This follows immediately from Corollary \ref{cor-cformula-noetherian} and the previous lemma, using the symmetry of the derived tensor product.
\end{proof}

This corollary shows that the complete derived Hochschild cohomology functor depends only on the images of the given complexes in $\mrm{D}(\opn{Mod} A)_{\opn{\a-com}}$, so one should focus on the functor
\[
\mrm{L}\Lambda_{\a} \mrm{R}\opn{Hom}_{A\otimes^{\mrm{L}}_{\k} A}(A,-\otimes^{\mrm{L}}_{\k} -):
\mrm{D}(\opn{Mod} A)_{\opn{\a-com}} \times \mrm{D}(\opn{Mod} A)_{\opn{\a-com}} \to \mrm{D}(\opn{Mod} A)_{\opn{\a-com}}.
\]

We now turn to study the torsion case. In this case our results are weaker then in the complete case, but are probably sufficient for most noetherian applications.

\begin{lem}\label{lem-gammaofhom}
Let $(B,\b)$ be an internally flat preadic DG-algebra. Let $(C,\c)$ be a weakly proregular preadic ring, and let $B\to C$ be a preadic DG-algebra map. Assume that $H^0(B)$ is a noetherian ring, that for each $i<0$, $H^i(B)$ is a finitely generated $H^0(B)$-module, and that $C$ is a finitely generated $H^0(B)$-module. Suppose that $\b \cdot C = \c$. Then for any DG $B$-module $M$ which has a bounded below cohomology, there is a functorial isomorphism
\[
\mrm{R}\Gamma_{\c} \mrm{R}\opn{Hom}_B(C,M) \cong \mrm{R}\opn{Hom}_B(C,\mrm{R}\Gamma_{\b}(M))
\]
in $\mrm{D}(\opn{Mod} C)$.
\end{lem}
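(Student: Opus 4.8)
The plan is to reduce the statement to a ``finite DG-module'' computation, where one can replace the telescope/Koszul presentation of $\mrm{R}\Gamma$ by a genuinely finite Koszul complex, so that $\mrm{R}\Gamma$ commutes with $\mrm{R}\opn{Hom}_B(C,-)$ for bookkeeping reasons. First I would pick a finite sequence $\mathbf{b}$ in $B^0$ generating $\b$, and let $\mathbf{c}$ be its image in $C$, so that $\mathbf{c}$ generates $\c$. Since $C$ is a finitely generated $H^0(B)$-module and $H^0(B)$ is noetherian, the ring $C$ is noetherian, hence $\c$ is automatically weakly proregular (so the hypothesis on $C$ is there mainly to make $H^*\mrm{R}\opn{Hom}_B(C,M)$ finite over $C$). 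The key point will be to produce, for $M$ with bounded below cohomology, a \emph{perfect} (i.e. bounded complex of finitely generated projectives over $C$) replacement of $\mrm{R}\opn{Hom}_B(C,M)$ in each cohomological degree, or more precisely a presentation of it by finite Koszul data over $C$, using that $H^0(B)$ is noetherian, $H^i(B)$ is finite over $H^0(B)$, and $C$ is finite over $H^0(B)$. This is exactly what is needed so that $\mrm{R}\Gamma_{\c}$, computed via $\opn{K}^{\vee}_{\infty}(C;\mathbf{c})\otimes_C -$, may be commuted past $\mrm{R}\opn{Hom}$.

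Concretely, I would argue as follows. By Theorem \ref{thm-rgamma} (applied over the ring $C$, which is weakly proregular) there is a functorial isomorphism
\[
\mrm{R}\Gamma_{\c}\mrm{R}\opn{Hom}_B(C,M) \cong \opn{K}^{\vee}_{\infty}(C;\mathbf{c}) \otimes_C \mrm{R}\opn{Hom}_B(C,M).
\]
Now $\opn{K}^{\vee}_{\infty}(C;\mathbf{c})$ is a bounded complex of flat $C$-modules, but it is not finitely generated, so one cannot naively move it inside the $\mrm{R}\opn{Hom}$. Instead I would use a limit argument: write $\opn{K}^{\vee}_{\infty}(C;\mathbf{c})$ (or rather the homotopy-equivalent $\opn{Tel}(C;\mathbf{c})$) as a direct limit $\varinjlim_j \opn{Tel}_j(C;\mathbf{c})$ of \emph{perfect} complexes over $C$ (this is the approximation recalled in Section 2). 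For each perfect complex $P_j := \opn{Tel}_j(C;\mathbf{c})$ one has, by the standard perfect-complex identity, a functorial isomorphism
\[
P_j \otimes_C \mrm{R}\opn{Hom}_B(C,M) \cong \mrm{R}\opn{Hom}_B\!\bigl(\opn{Hom}_C(P_j,C)\otimes_C C,\, M\bigr) \cong \mrm{R}\opn{Hom}_B\!\bigl(\opn{Hom}_C(P_j,C),M\bigr),
\]
and then one wants to pass to the limit over $j$. The boundedness-below hypothesis on $\mrm H(M)$, together with the noetherian/finiteness hypotheses (which bound the cohomological amplitude of $\mrm{R}\opn{Hom}_B(C,-)$ uniformly and make each $H^n$ finite over $C$), lets direct limits commute with the cohomology of $\mrm{R}\opn{Hom}_B(C,-)$ in the relevant degrees, so that $\varinjlim_j$ of the right-hand side computes $\mrm{R}\opn{Hom}_B(\opn{Hom}_C(\opn{Tel}(C;\mathbf{c}),C),M)$, and one identifies this, via the analogous perfect-complex manipulation over $B^0$ and Theorem \ref{thm-rgamma} again (now over $B$), with $\mrm{R}\opn{Hom}_B(C,\mrm{R}\Gamma_{\b}(M))$. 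Equivalently, and perhaps more cleanly, one can run the whole argument with the infinite dual Koszul complex: $\opn{K}^{\vee}_{\infty}(C;\mathbf{c}) \cong \opn{K}^{\vee}_{\infty}(B^0;\mathbf{b})\otimes_{B^0} C$, and using the self-duality up to shift of the \emph{finite} Koszul complexes $\opn{K}(B^0;\mathbf{b})$ one rewrites $\opn{K}^{\vee}_{\infty}(B^0;\mathbf{b})\otimes_{B^0}\mrm{R}\opn{Hom}_B(C,M)$ as $\mrm{R}\opn{Hom}_B(C, \opn{K}^{\vee}_{\infty}(B^0;\mathbf{b})\otimes_{B^0} M)$ after checking the finiteness conditions let the limit defining $\opn{K}^{\vee}_{\infty}$ commute with $\mrm{R}\opn{Hom}_B(C,-)$; the right-hand side is $\mrm{R}\opn{Hom}_B(C,\mrm{R}\Gamma_{\b}(M))$ by Theorem \ref{thm-rgamma}.

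The main obstacle, and the step I would spend the most care on, is precisely the interchange of the direct limit (defining $\mrm{R}\Gamma$, equivalently $\opn{K}^{\vee}_{\infty}$ or $\opn{Tel} = \varinjlim \opn{Tel}_j$) with the functor $\mrm{R}\opn{Hom}_B(C,-)$. In general $\mrm{R}\opn{Hom}$ does not commute with direct limits; here it works because of the three finiteness hypotheses: $H^0(B)$ noetherian, $H^i(B)$ finite over $H^0(B)$, and $C$ finite over $H^0(B)$, which together guarantee that $\mrm{R}\opn{Hom}_B(C,-)$ has bounded-below, finite-type cohomology and, on objects with bounded-below cohomology, can be computed by a complex which is degreewise finite over $C$ (or over $B^0$); for such a complex $\opn{Hom}$ into it does commute with the filtered colimit $\varinjlim_j$. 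I would isolate this as the technical heart of the proof — likely citing the analogous commutation lemma from \cite{PSY1} or \cite{YZ2} for the ring case and observing the DG-argument is formally identical once $Q_B$ is applied and internal flatness of $B$ is invoked — and treat the rest as a formal chain of adjunctions and the already-established Theorem \ref{thm-rgamma}.
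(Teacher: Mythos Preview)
Your overall architecture matches the paper's: represent $\mrm{R}\Gamma_{\c}$ by the telescope complex, base-change $\opn{Tel}(C;\mathbf{c}) \cong \opn{Tel}(B^0;\mathbf{b})\otimes_{B^0}C$, swap the telescope past $\mrm{R}\opn{Hom}_B(C,-)$, and then invoke Theorem~\ref{thm-rgamma} over $B$. The paper does exactly this, but the swap step is handled in one line: the finiteness hypotheses ($H^0(B)$ noetherian, each $H^i(B)$ finite over $H^0(B)$, $C$ finite over $H^0(B)$, and $M$ cohomologically bounded below) are precisely the hypotheses of \cite{YZ1}, Proposition~1.12(b), which asserts directly that the tensor-evaluation morphism
\[
\opn{Tel}(B^0;\mathbf{b}) \otimes_{B^0} \mrm{R}\opn{Hom}_B(C,M) \to \mrm{R}\opn{Hom}_B\bigl(C,\ \opn{Tel}(B^0;\mathbf{b}) \otimes_{B^0} M\bigr)
\]
is an isomorphism. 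No limit argument is needed: $\opn{Tel}(B^0;\mathbf{b})$ is a \emph{bounded} complex of free $B^0$-modules (concentrated in degrees $0$ through $n$), so this is a pure tensor-evaluation statement, not a colimit-commutation statement.

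Your attempt to do this step by hand via $\varinjlim_j \opn{Tel}_j$ is where you go astray. Dualising the perfect $P_j$ over $C$ gives $\mrm{R}\opn{Hom}_B(\opn{Hom}_C(P_j,C),M)$, but now the $\opn{Hom}_C(P_j,C)$ form an \emph{inverse} system, and you would need $\varinjlim_j \mrm{R}\opn{Hom}_B(Q_j,M) \cong \mrm{R}\opn{Hom}_B(\varprojlim_j Q_j,M)$, which is neither generally true nor what the finiteness hypotheses buy you; moreover $\opn{Hom}_C(\opn{Tel}(C;\mathbf{c}),C)$ is not an object one wants to see. Your second variant, phrased as ``letting the limit defining $\opn{K}^{\vee}_{\infty}$ commute with $\mrm{R}\opn{Hom}_B(C,-)$'', is closer in spirit but still misdiagnoses the issue: the point is not that a colimit commutes with $\mrm{R}\opn{Hom}$, but that tensoring with a bounded complex of flats commutes with $\mrm{R}\opn{Hom}_B(C,-)$ when $C$ is ``small'' and $M$ is bounded below. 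That is exactly the content of \cite{YZ1}, Proposition~1.12(b); either cite it, or reprove it by reducing (via the bounded amplitude of the telescope) to the case of a single flat $B^0$-module, where the finiteness of $C$ over $H^0(B)$ and the noetherian hypothesis do the work.
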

\begin{proof}
Let $\mathbf{b}$ be a finite sequence in $B^0$ that generates $\b$, and let $\mathbf{c}$ be its image in $C$. Since $\c$ is weakly proregular, by Theorem \ref{thm-ring-mgm}, there is a functorial isomorphism
\[
\mrm{R}\Gamma_{\c} \mrm{R}\opn{Hom}_B(C,M) \cong \opn{Tel}(C;\mathbf{c}) \otimes_{C}  \mrm{R}\opn{Hom}_B(C,M)
\]
in $\mrm{D}(\opn{Mod} C)$.
Since $\mathbf{b}C = \mathbf{c}$, there is an isomorphism of complexes
\[
\opn{Tel}(C;\mathbf{c}) \cong \opn{Tel}(B^0;\mathbf{b}) \otimes_{B^0} C.
\]
Hence, there is a functorial isomorphism in $\mrm{D}(\opn{Mod} C)$:
\[
\mrm{R}\Gamma_{\c} \mrm{R}\opn{Hom}_B(C,M) \cong 
\opn{Tel}(B^0;\mathbf{b}) \otimes_{B^0} \mrm{R}\opn{Hom}_B(C,M).
\]

The conditions of the lemma ensures that all conditions of \cite{YZ1}, Proposition 1.12(b) are satisfied. Hence, according to that proposition, the functorial morphism
\[
\opn{Tel}(B^0;\mathbf{b}) \otimes_{B^0} \mrm{R}\opn{Hom}_B(C,M) \to 
\mrm{R}\opn{Hom}_B(C, \opn{Tel}(B^0;\mathbf{b}) \otimes_{B^0} M)
\]
is an isomorphism. The lemma now follows from Theorem \ref{thm-rgamma}.
\end{proof}

\begin{prop}
Let $(B,\b)$ be an internally projective preadic DG-algebra, such that for each $i<0$, the $B^0$-module $B^i$ is projective. Assume that the ring $\Lambda_{\b}(B^0)$ is noetherian. Then the adic DG-algebra $(\Lambda_{\b}(B),\Lambda_{\b}(B)\cdot \b)$ is internally flat.
\end{prop}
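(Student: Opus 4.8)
The plan is to reduce the statement to Proposition~\ref{prop-completion-of-projective-is-flat} together with the elementary observation, recorded just after Definition~\ref{dfn-internally-flat}, that a super-commutative non-positive DG-algebra all of whose components are flat over its degree zero part is automatically internally flat. Write $\widehat{B} := \Lambda_{\b}(B)$. Since the inverse limit defining $\Lambda_{\b}$ is taken degreewise, one has $\widehat{B}^0 = \Lambda_{\b}(B^0)$ and $\widehat{B}^i = \Lambda_{\b}(B^i)$ for every $i < 0$, and $\widehat{B}$ is again a super-commutative non-positive DG-algebra, with $\widehat{B}^0$ noetherian by hypothesis.

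The key step is to apply Proposition~\ref{prop-completion-of-projective-is-flat} in each negative degree. For each $i < 0$ the $B^0$-module $B^i$ is projective by assumption, and $\Lambda_{\b}(B^0) = \widehat{B}^0$ is noetherian; hence that proposition gives that $\widehat{B}^i = \Lambda_{\b}(B^i)$ is a flat $\widehat{B}^0$-module. (For $i = 0$ there is nothing to prove.) It is here that the projectivity of the $B^i$, rather than mere flatness, is used: completion does not preserve flatness in general, whereas it does send projective (indeed free) modules over a ring with noetherian completion to flat modules over the completion.

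Having shown that every component of the non-positive DG-algebra $\widehat{B}$ is flat over $\widehat{B}^0$, I would conclude, by the observation following Definition~\ref{dfn-internally-flat}, that $\widehat{B}$ is a K-flat complex over $\widehat{B}^0$, i.e.\ internally flat. Since internal flatness is a property of the underlying DG-algebra alone, this is precisely the assertion for the adic DG-algebra $(\Lambda_{\b}(B), \Lambda_{\b}(B)\cdot\b)$.

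I do not expect a genuine obstacle: the only non-formal ingredient is Proposition~\ref{prop-completion-of-projective-is-flat}, which is already established, and the passage from ``flat components'' to ``K-flat complex'' for non-positive complexes is standard (a bounded-above complex of flat modules is a filtered colimit of its brutal truncations, which are bounded complexes of flats and hence K-flat). The only point requiring minor care is to keep track that forming $\Lambda_{\b}$ commutes with passing to the degree-$i$ part, so that the hypotheses of Proposition~\ref{prop-completion-of-projective-is-flat} are literally available in each degree.
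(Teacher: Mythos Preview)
Your proposal is correct and follows essentially the same approach as the paper, which simply says the result ``follows immediately from Proposition~\ref{prop-completion-of-projective-is-flat}.'' You have spelled out the implicit steps---degreewise completion, applying Proposition~\ref{prop-completion-of-projective-is-flat} in each negative degree, and invoking the remark after Definition~\ref{dfn-internally-flat} that componentwise flatness implies internal flatness---which is exactly what the paper intends.
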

\begin{proof}
This follows immediately from Proposition \ref{prop-completion-of-projective-is-flat}.
\end{proof}

\begin{lem}\label{lem-rgammahat-llambda}
Let $(B,\b)$ be an internally projective weakly proregular preadic DG-algebra, such that for each $i<0$, the $B^0$-module $B^i$ is projective. Suppose that the ring $\Lambda_{\b}(B^0)$ is noetherian. Let $\widehat{\b} = \b \cdot \Lambda_{\b}(B)$. Then there is an isomorphism 
\[
\mrm{R}\Gamma_{\widehat{\b}} \circ \mrm{L}\Lambda_{\b} (-) \cong \mrm{R}\Gamma_{\b} (-)
\]
of functors
\[
\widetilde{\mrm{D}}(\opn{DGMod} B) \to \widetilde{\mrm{D}}(\opn{DGMod} \Lambda_{\b}(B)).
\]
\end{lem}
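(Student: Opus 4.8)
The plan is to reduce, via Corollary \ref{cor-complete-rgamma-llambda}, to a comparison of the derived torsion functors $\mrm{R}\Gamma_{\b}$ over $B$ and $\mrm{R}\Gamma_{\widehat{\b}}$ over $\widehat{B} := \Lambda_{\b}(B)$ on $\widehat{B}$-modules, and then to carry out that comparison using the explicit infinite dual Koszul formula of Theorem \ref{thm-rgamma} together with the base change isomorphism for $\opn{K}^{\vee}_{\infty}$. To begin, note that $(\widehat{B},\widehat{\b})$ is an internally flat weakly proregular adic DG-algebra: internal flatness is the preceding proposition, adicness holds since $\widehat{B} = \Lambda_{\b}(B)$, and $\widehat{\b}$ is weakly proregular because $\widehat{B}^0 = \Lambda_{\b}(B^0)$ is noetherian; moreover $(B,\b)$ is internally flat (being internally projective) and weakly proregular by hypothesis. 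Thus Theorem \ref{thm-rgamma} and Corollary \ref{cor-complete-rgamma-llambda} apply to both DG-algebras.

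The heart of the argument is the claim that for every $N \in \widetilde{\mrm{D}}(\opn{DGMod}\widehat{B})$ there is a functorial isomorphism $\mrm{R}\Gamma_{\b}(N) \cong \mrm{R}\Gamma_{\widehat{\b}}(N)$ in $\widetilde{\mrm{D}}(\opn{DGMod}\widehat{B})$, where on the left $N$ is regarded over $B$ via the forgetful functor. To prove it, fix a finite sequence $\mathbf{b}$ in $B^0$ generating $\b$ and let $\widehat{\mathbf{b}}$ be its image in $\widehat{B}^0$. By Theorem \ref{thm-rgamma} (applied to $(B,\b)$ and to $(\widehat{B},\widehat{\b})$ respectively) there are functorial isomorphisms $\mrm{R}\Gamma_{\b}(N) \cong \opn{K}^{\vee}_{\infty}(B^0;\mathbf{b}) \otimes_{B^0} N$ and $\mrm{R}\Gamma_{\widehat{\b}}(N) \cong \opn{K}^{\vee}_{\infty}(\widehat{B}^0;\widehat{\mathbf{b}}) \otimes_{\widehat{B}^0} N$. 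Since $N$ is already a complex of $\widehat{B}^0$-modules, the base change isomorphism $\opn{K}^{\vee}_{\infty}(B^0;\mathbf{b}) \otimes_{B^0} \widehat{B}^0 \cong \opn{K}^{\vee}_{\infty}(\widehat{B}^0;\widehat{\mathbf{b}})$ together with the associativity of the tensor product produces an isomorphism of complexes $\opn{K}^{\vee}_{\infty}(B^0;\mathbf{b}) \otimes_{B^0} N \cong \opn{K}^{\vee}_{\infty}(\widehat{B}^0;\widehat{\mathbf{b}}) \otimes_{\widehat{B}^0} N$; exactly as in the proof of Lemma \ref{lem-tortens} one checks that this isomorphism is $\widehat{B}$-linear and compatible with the two functorial identifications, using that both $\mrm{R}\Gamma_{\b}(N)$ and $\opn{K}^{\vee}_{\infty}(B^0;\mathbf{b}) \otimes_{B^0} N$ carry the DG $\widehat{B}$-module structure coming from $N$ and that the map $v^{\mrm{R}}_{\mathbf{b},N}$ of Theorem \ref{thm-rgamma} respects it.

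With the claim in hand the proof finishes immediately: given $M \in \widetilde{\mrm{D}}(\opn{DGMod} B)$ the DG-module $\mrm{L}\Lambda_{\b}(M)$ is naturally an object of $\widetilde{\mrm{D}}(\opn{DGMod}\widehat{B})$, so the claim gives $\mrm{R}\Gamma_{\widehat{\b}}(\mrm{L}\Lambda_{\b}(M)) \cong \mrm{R}\Gamma_{\b}(\mrm{L}\Lambda_{\b}(M))$, and by Corollary \ref{cor-complete-rgamma-llambda} applied to $(B,\b)$ the right-hand side is functorially isomorphic to $\mrm{R}\Gamma_{\b}(M)$; composing yields the desired isomorphism of functors. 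The only delicate point -- and the main obstacle -- is the verification inside the claim that the isomorphism of Theorem \ref{thm-rgamma} over $B$, a priori living only in $\widetilde{\mrm{D}}(\opn{DGMod} B)$, upgrades to one in $\widetilde{\mrm{D}}(\opn{DGMod}\widehat{B})$ for $\widehat{B}$-module arguments and there agrees with the $\widehat{B}$-valued functor $\mrm{R}\Gamma_{\b}$; this is routine bookkeeping but should be spelled out.
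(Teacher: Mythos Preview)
Your overall strategy---reduce to comparing $\mrm{R}\Gamma_{\b}$ and $\mrm{R}\Gamma_{\widehat{\b}}$ on DG $\widehat{B}$-modules, then invoke Corollary~\ref{cor-complete-rgamma-llambda}---is sound and close in spirit to the paper's argument. The gap is precisely where you yourself flag it: you assert that the isomorphism $v^{\mrm{R}}_{\mathbf{b},N}:\mrm{R}\Gamma_{\b}(N)\cong\opn{K}^{\vee}_{\infty}(B^0;\mathbf{b})\otimes_{B^0}N$ of Theorem~\ref{thm-rgamma} ``respects'' the $\widehat{B}$-structure when $N$ is a $\widehat{B}$-module, and call this routine bookkeeping. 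It is not. That isomorphism is constructed by passing to a K-injective resolution $N\to I$ \emph{over $B$}; the resolution $I$ has no reason to carry a $\widehat{B}$-structure, so while $\Gamma_{\b}(I)$ is a $\widehat{B}$-module via the functor $\Gamma_{\b}$, the target $\opn{K}^{\vee}_{\infty}(B^0;\mathbf{b})\otimes_{B^0}I$ is only a DG $B$-module, and $v_{\mathbf{b},I}$ is only $B$-linear. This is not parallel to Lemma~\ref{lem-tortens}: there the relevant map is $e_{\mathbf{a},M}$, a concrete morphism applied directly to $M$ (no resolution), so $\widehat{A}$-linearity is immediate.

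The repair is to use Proposition~\ref{prop-complete-rgamma} rather than Theorem~\ref{thm-rgamma} on the $B$-side: it gives the $\widehat{B}$-valued formula $\mrm{R}\Gamma_{\b}(-)\cong\opn{K}^{\vee}_{\infty}(\widehat{B};\widehat{\mathbf{b}})\otimes^{\mrm{L}}_{B}-$. Applied to $Q(N)$ and rewritten via K-flatness of $\opn{K}^{\vee}_{\infty}(B;\mathbf{b})$ this becomes $\widehat{B}\otimes^{\mrm{L}}_{B}\bigl(\opn{K}^{\vee}_{\infty}(B;\mathbf{b})\otimes_{B}N\bigr)$; since $\opn{K}^{\vee}_{\infty}(B;\mathbf{b})\otimes_{B}N$ is cohomologically $\b$-torsion and carries the $\widehat{B}$-structure from $N$, Lemma~\ref{lem-tortens} collapses this to $\opn{K}^{\vee}_{\infty}(B;\mathbf{b})\otimes_{B}N\cong\opn{K}^{\vee}_{\infty}(\widehat{B}^0;\widehat{\mathbf{b}})\otimes_{\widehat{B}^0}N\cong\mrm{R}\Gamma_{\widehat{\b}}(N)$ in $\widetilde{\mrm{D}}(\opn{DGMod}\widehat{B})$. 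This is exactly the mechanism of the paper's proof (Proposition~\ref{prop-complete-rgamma} plus the torsion trick of Lemma~\ref{lem-tortens}), though the paper organizes it differently, working with an explicit K-flat resolution $P$ of $M$ and $N=\widehat{P}$ rather than isolating your general claim. In short, what you labeled the ``only delicate point'' is the entire substance of the lemma, and it does not reduce to bookkeeping without invoking Proposition~\ref{prop-complete-rgamma} and Lemma~\ref{lem-tortens}.
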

\begin{proof}
By the previous proposition, the DG-algebra $\widehat{B} := \Lambda_{\b}(B)$ is internally flat. Since $\widehat{B}^0$ is noetherian, it follows that the ideal $\widehat{\b}$ is weakly proregular.

Let $M\in \widetilde{\mrm{D}}(\opn{DGMod} B)$ be a DG-module.
Let $P \cong M$ be a K-flat resolution over $B$. 
By Proposition \ref{prop-complete-rgamma}, there is a functorial isomorphism
\[
\mrm{R}\Gamma_{\b} (M) \cong \opn{K}^{\vee}_{\infty}(\widehat{B}; \widehat{\mathbf{b}}) \otimes_B P
\]
in $\widetilde{\mrm{D}}(\opn{DGMod} \Lambda_{\b}(B))$.

On the other hand, by Theorem \ref{thm-rgamma}, there is a functorial isomorphism
\[
\mrm{R}\Gamma_{\widehat{\b}} \circ \mrm{L}\Lambda_{\b} (M) \cong 
\opn{K}^{\vee}_{\infty}(\widehat{B}; \widehat{\mathbf{b}}) \otimes_{\widehat{B}} \widehat{P} \cong \opn{K}^{\vee}_{\infty}(B; \mathbf{b}) \otimes_{B} \widehat{P} 
\]
in $\widetilde{\mrm{D}}(\opn{DGMod} \Lambda_{\b}(B))$.

Notice that the $\widehat{B}$-DG module $\opn{K}^{\vee}_{\infty}(B; \mathbf{b}) \otimes_{B} \widehat{P}$ satisfies the assumptions of Lemma \ref{lem-tortens}, so that it is functorially isomorphic to 
\[
(\opn{K}^{\vee}_{\infty}(B; \mathbf{b}) \otimes_{B} \widehat{P} ) \otimes^{\mrm{L}}_B \widehat{B}.
\]
By Theorems \ref{thm-rgamma} and \ref{thm-RgammaLLambda}, and the fact that $\widehat{P} \cong \mrm{L}\Lambda_{\b}(P)$, we see that there is a $B$-linear functorial isomorphism
\[
\opn{K}^{\vee}_{\infty}(B; \mathbf{b}) \otimes_{B} \widehat{P} \cong 
\opn{K}^{\vee}_{\infty}(B; \mathbf{b}) \otimes_{B} P.
\]
Hence, there is a $\widehat{B}$-linear functorial isomorphism
\[
(\opn{K}^{\vee}_{\infty}(B; \mathbf{b}) \otimes_{B} \widehat{P} ) \otimes^{\mrm{L}}_B \widehat{B} \cong 
(\opn{K}^{\vee}_{\infty}(B; \mathbf{b}) \otimes_{B} P) \otimes^{\mrm{L}}_B \widehat{B} \cong \opn{K}^{\vee}_{\infty}(\widehat{B}; \widehat{\mathbf{b}}) \otimes_B P.
\]
This proves the result.
\end{proof}

Here is the main result of this section in the torsion case.
\begin{thm}\label{thm-main-t-formula}
Let $\k$ be a commutative ring. Let $(A,\a)$ be an adic ring which is a weakly proregular preadic $\k$-algebra. Let $\k \to \widetilde{A} \to A$ be a K-flat resolution of $\k \to A$ which is internally projective, such that for each $i<0$, the $\widetilde{A}^0$-module $\widetilde{A}^i$ is projective. Let $I\subseteq (\widetilde{A}\otimes_{\k} \widetilde{A})^0$ be a weakly proregular ideal, such that its image under the 
composed map 
\[
(\widetilde{A}\otimes_{\k} \widetilde{A})^0 \to \widetilde{A}^0 \to A
\]
is equal to $\a$. Assume that the ring $\Lambda_I(\widetilde{A}\otimes_{\k} \widetilde{A})^0$ is noetherian, and that for each $i<0$, the  $H^0(\Lambda_I(\widetilde{A}\otimes_{\k} \widetilde{A}))$-module
$H^i(\Lambda_I(\widetilde{A}\otimes_{\k} \widetilde{A}))$ is finitely generated.
Then there is an isomorphism 
\[
u^{\widetilde{A},I}:
\mrm{R}\Gamma_{\a} (\mrm{R}\opn{Hom}_{A\otimes^{\mrm{L}}_{\k} A} ( A, -\otimes^{\mrm{L}}_{\k} -) )  
\cong 
\mrm{R}\opn{Hom}_{\Lambda_I(\widetilde{A}\otimes_{\k} \widetilde{A})} (A, \mrm{R}\Gamma_I ( - \otimes^{\mrm{L}}_{\k} - ) )
\]
of functors
\[
\mrm{D}(\opn{Mod} A)_{\opn{f.fd(\k)}} \times \mrm{D}(\opn{Mod} A)_{\opn{f.fd(\k)}} \to \mrm{D}(\opn{Mod} A)_{\opn{\a-tor}}
\]
where $\mrm{D}(\opn{Mod} A)_{\opn{f.fd(\k)}}$ denotes the category of complexes over $A$ which are of finite flat dimension over $\k$.
\end{thm}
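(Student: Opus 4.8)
The plan is to follow the template of the complete case (Theorem \ref{thm-main-c-formula}), but first to trade the torsion functor for the completion functor via the MGM identity $\mrm{R}\Gamma_{\a}\cong \mrm{R}\Gamma_{\a}\circ\mrm{L}\Lambda_{\a}$ (Theorem \ref{thm-ring-mgm}(2), available since $\a$ is weakly proregular), and only afterwards to push $\mrm{R}\Gamma_{\a}$ through the relevant $\mrm{R}\opn{Hom}$. Writing $\widehat{I}$ for the extension of $I$ to $\Lambda_I(\widetilde{A}\otimes_{\k}\widetilde{A})^0$, for $M,N$ of finite flat dimension over $\k$ I would produce the chain of functorial isomorphisms
\begin{align*}
\mrm{R}\Gamma_{\a}\bigl(\mrm{R}\opn{Hom}_{A\otimes^{\mrm{L}}_{\k}A}(A, M\otimes^{\mrm{L}}_{\k}N)\bigr)
&\cong \mrm{R}\Gamma_{\a}\bigl(\mrm{L}\Lambda_{\a}\,\mrm{R}\opn{Hom}_{A\otimes^{\mrm{L}}_{\k}A}(A, M\otimes^{\mrm{L}}_{\k}N)\bigr) \\
&\cong \mrm{R}\Gamma_{\a}\bigl(\mrm{R}\opn{Hom}_{\Lambda_I(\widetilde{A}\otimes_{\k}\widetilde{A})}(A, \mrm{L}\Lambda_I(M\otimes^{\mrm{L}}_{\k}N))\bigr) \\
&\cong \mrm{R}\opn{Hom}_{\Lambda_I(\widetilde{A}\otimes_{\k}\widetilde{A})}\bigl(A, \mrm{R}\Gamma_{\widehat{I}}\,\mrm{L}\Lambda_I(M\otimes^{\mrm{L}}_{\k}N)\bigr) \\
&\cong \mrm{R}\opn{Hom}_{\Lambda_I(\widetilde{A}\otimes_{\k}\widetilde{A})}\bigl(A, \mrm{R}\Gamma_I(M\otimes^{\mrm{L}}_{\k}N)\bigr).
\end{align*}
Here the first isomorphism is the MGM identity; the second is Theorem \ref{thm-main-c-formula} (note internally projective implies internally flat, so its hypotheses hold); the third is Lemma \ref{lem-gammaofhom} applied with $B=\Lambda_I(\widetilde{A}\otimes_{\k}\widetilde{A})$ and $C=A$; and the fourth is Lemma \ref{lem-rgammahat-llambda} applied to $(\widetilde{A}\otimes_{\k}\widetilde{A},I)$, which identifies $\mrm{R}\Gamma_{\widehat{I}}\circ\mrm{L}\Lambda_I$ with $\mrm{R}\Gamma_I$. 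Since each step is functorial in $M$ and $N$ this yields the asserted isomorphism of functors $u^{\widetilde{A},I}$, and the target is $\mrm{D}(\opn{Mod} A)_{\opn{\a-tor}}$ because the left-hand side is in the essential image of $\mrm{R}\Gamma_{\a}$.

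Before running this chain I would collect the structural facts needed by the two lemmas. Since $\widetilde{A}$ is internally projective with each $\widetilde{A}^i$ ($i<0$) projective over $\widetilde{A}^0$, a ``summand of a free module'' argument shows each graded piece $\bigoplus_{p+q=n}\widetilde{A}^p\otimes_{\k}\widetilde{A}^q$ of $\widetilde{A}\otimes_{\k}\widetilde{A}$ is projective over $(\widetilde{A}\otimes_{\k}\widetilde{A})^0=\widetilde{A}^0\otimes_{\k}\widetilde{A}^0$ (here one uses the reasoning of Proposition \ref{prop:internally-flat-tensor}); being bounded above, $\widetilde{A}\otimes_{\k}\widetilde{A}$ is then K-projective over its degree-zero part, so it is internally projective with projective terms. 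Together with the hypothesis that $\Lambda_I(\widetilde{A}\otimes_{\k}\widetilde{A})^0$ is noetherian, Proposition \ref{prop-completion-of-projective-is-flat} applied in each degree (exactly as in the proposition preceding Lemma \ref{lem-rgammahat-llambda}) shows $\Lambda_I(\widetilde{A}\otimes_{\k}\widetilde{A})$ is internally flat, and its noetherian degree-zero part makes $\widehat{I}$ weakly proregular; thus $(\Lambda_I(\widetilde{A}\otimes_{\k}\widetilde{A}),\widehat{I})$ is an internally flat weakly proregular preadic DG-algebra, as both lemmas demand. Next, the structure map $(\widetilde{A}\otimes_{\k}\widetilde{A})^0\to A$ factors through $\Lambda_I(\widetilde{A}\otimes_{\k}\widetilde{A})^0$ (because $I$ maps into $\a$ and $A$ is $\a$-adically complete) and then through $H^0$, giving a surjection $H^0(\Lambda_I(\widetilde{A}\otimes_{\k}\widetilde{A}))\surj A$; hence $A$ is a cyclic, in particular finitely generated, module over $H^0(\Lambda_I(\widetilde{A}\otimes_{\k}\widetilde{A}))$ and $\widehat{I}\cdot A=\a$, which together with the standing finiteness and noetherianity hypotheses on $\Lambda_I(\widetilde{A}\otimes_{\k}\widetilde{A})$ verifies all the assumptions of Lemma \ref{lem-gammaofhom}. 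Finally, Lemma \ref{lem-gammaofhom} requires its argument to have bounded-below cohomology, and this is precisely where the restriction to arguments of finite flat dimension over $\k$ is used: under that hypothesis $M\otimes^{\mrm{L}}_{\k}N$ has bounded-below cohomology, and since $I$ is weakly proregular $\mrm{L}\Lambda_I$ is represented by $\opn{Hom}$ out of the bounded telescope complex (Theorem \ref{thm-ring-mgm}(1)), so $\mrm{L}\Lambda_I(M\otimes^{\mrm{L}}_{\k}N)$ still has bounded-below cohomology.

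I expect the main obstacle to be the bookkeeping around which DG-algebra each object genuinely lives over: one must track the forgetful functors among $\widetilde{A}\otimes_{\k}\widetilde{A}$, $\Lambda_I(\widetilde{A}\otimes_{\k}\widetilde{A})$ and $\Lambda_I(\widetilde{A}\otimes_{\k}\widetilde{A})^0$, check that the intermediate objects $\mrm{L}\Lambda_I(M\otimes^{\mrm{L}}_{\k}N)$ and $\mrm{R}\Gamma_{\widehat{I}}\,\mrm{L}\Lambda_I(M\otimes^{\mrm{L}}_{\k}N)$ really carry $\Lambda_I(\widetilde{A}\otimes_{\k}\widetilde{A})$-DG-module structures, and verify that $\Lambda_I(\widetilde{A}\otimes_{\k}\widetilde{A})$ is internally flat. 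This last point — which forces passing through Proposition \ref{prop-completion-of-projective-is-flat} and hence requires $\widetilde{A}^i$ to be \emph{projective} over $\widetilde{A}^0$, not merely flat — together with the need for bounded-below cohomology of the arguments and for the cohomology of $\Lambda_I(\widetilde{A}\otimes_{\k}\widetilde{A})$ to be noetherian and finitely generated, is exactly why the torsion theorem carries hypotheses that are absent in the complete case.
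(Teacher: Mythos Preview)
Your proof is correct and follows essentially the same route as the paper: apply the MGM identity $\mrm{R}\Gamma_{\a}\cong\mrm{R}\Gamma_{\a}\circ\mrm{L}\Lambda_{\a}$, invoke Theorem \ref{thm-main-c-formula}, then push $\mrm{R}\Gamma_{\a}$ inside the $\mrm{R}\opn{Hom}$ via Lemma \ref{lem-gammaofhom} and finish with Lemma \ref{lem-rgammahat-llambda}. Your write-up is in fact more explicit than the paper's in verifying the hypotheses of the two lemmas (internal projectivity of $\widetilde{A}\otimes_{\k}\widetilde{A}$, internal flatness of its completion, finite generation of $A$ over $H^0(\Lambda_I(\widetilde{A}\otimes_{\k}\widetilde{A}))$, and the bounded-below condition), all of which the paper either asserts in one line or leaves implicit.
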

\begin{proof}
Notice that since $\widetilde{A}$ is internally projective, $\widetilde{A}\otimes_{\k} \widetilde{A}$ is also internally projective.
Let $M,N \in \mrm{D}(\opn{Mod} A)_{\opn{f.fd(\k)}}$. 
By Theorem \ref{thm-ring-mgm}, there is a functorial isomorphism in $\mrm{D}(\opn{Mod} A)$:
\[
\mrm{R}\Gamma_{\a} (\mrm{R}\opn{Hom}_{A\otimes^{\mrm{L}}_{\k} A} ( A, M\otimes^{\mrm{L}}_{\k} N) ) \cong 
\mrm{R}\Gamma_{\a} (\mrm{L}\Lambda_{\a} (\mrm{R}\opn{Hom}_{A\otimes^{\mrm{L}}_{\k} A} ( A, M\otimes^{\mrm{L}}_{\k} N)) )  
\]
By Theorem \ref{thm-main-c-formula}, there is a functorial isomorphism in $\mrm{D}(\opn{Mod} A)$:
\[
\mrm{R}\Gamma_{\a} (\mrm{L}\Lambda_{\a} (\mrm{R}\opn{Hom}_{A\otimes^{\mrm{L}}_{\k} A} ( A, M\otimes^{\mrm{L}}_{\k} N)) )  
\cong
\mrm{R}\Gamma_{\a}
(
\mrm{R}\opn{Hom}_{\Lambda_I(\widetilde{A}\otimes_{\k} \widetilde{A})} ( A, \mrm{L}\Lambda_I(M \otimes^{\mrm{L}}_{\k} N)  )
).
\]

Let $\widehat{I} = I\cdot \Lambda_I(\widetilde{A}^0 \otimes_{\k} \widetilde{A}^0)$. Since $M$ and $N$ have finite flat dimension over $\k$, it follows that the complex $M\otimes_{\k} N$ has bounded cohomologies. Hence, since $I$ is weakly proregular, Theorem \ref{thm-llambda} shows that the complex $\mrm{L}\Lambda_I(M \otimes^{\mrm{L}}_{\k} N)$ also has bounded cohomology. Thus, the conditions of Lemma \ref{lem-gammaofhom} are satisfied, so there is an $A$-linear functorial isomorphism
\[
\mrm{R}\Gamma_{\a}
(
\mrm{R}\opn{Hom}_{\Lambda_I(\widetilde{A}\otimes_{\k} \widetilde{A})} ( A, \mrm{L}\Lambda_I(M \otimes^{\mrm{L}}_{\k} N)  )
) \cong 
\mrm{R}\opn{Hom}_{\Lambda_I(\widetilde{A}\otimes_{\k} \widetilde{A})} ( A, 
\mrm{R}\Gamma_{\widehat{I}} (
\mrm{L}\Lambda_I(M \otimes^{\mrm{L}}_{\k} N) ) ).
\]
The result now follows from Lemma \ref{lem-rgammahat-llambda}.
\end{proof}

\begin{rem}
Again, as in Corollary \ref{cor-ind-c}, one may focus on the functor in the right hand side of the above theorem, and formulate an independence result as in that Corollary.
\end{rem}

\begin{rem}
If $\k$ is a noetherian ring, and $(A,\a)$ is an adic ring which is an essentially formally of finite type $\k$-algebra, then the resolution constructed in Proposition \ref{prop-existence-of-res}(2) satisfies the assumptions of the theorem.
\end{rem}

\section{Derived adic Hochschild homology}

In this short and final section we briefly discuss adic versions of the derived Hochschild homology functors. 

Again, we begin by recalling the construction in the discrete case from \cite{AILN}. Let $\k$ be a commutative ring, and let $A$ be commutative $\k$-algebra. According to \cite{AILN}, Theorem 3.9, if $\k \to \widetilde{A} \to A$ is a K-flat DG-resolution of $\k \to A$, there is a functor
\[
A \otimes^{\mrm{L}}_{\widetilde{A}\otimes_{\k} \widetilde{A}} \mrm{R}\opn{Hom}_{\k}(-,-) : \mrm{D}(\opn{Mod} A) \times \mrm{D}(\opn{Mod} A) \to \mrm{D}(\opn{Mod} A).
\]
Again, this construction is independent of the choice of a K-flat resolution of $\k \to A$, so one gets a functor denoted by
\[
A \otimes^{\mrm{L}}_{A\otimes^{\mrm{L}}_{\k} A} \mrm{R}\opn{Hom}_{\k}(-,-).
\]
As in the previous section, we look at the derived completion and derived torsion of this functor. 

\begin{dfn}
Let $\k$ be a commutative ring. Let $(A,\a)$ be a weakly proregular preadic $\k$-algebra.

\begin{enumerate}
\item The complete derived Hochschild homology functor is defined to be the functor:
\[
\mrm{L}\Lambda_{\a} (A \otimes^{\mrm{L}}_{A\otimes^{\mrm{L}}_{\k} A} \mrm{R}\opn{Hom}_{\k}(-,-) )  : \mrm{D}(\opn{Mod} A) \times \mrm{D}(\opn{Mod} A) \to \mrm{D}(\opn{Mod} A)_{\opn{\a-com}}
\]
\item The torsion derived Hochschild homology functor is defined to be the functor:
\[
\mrm{R}\Gamma_{\a} (A \otimes^{\mrm{L}}_{A\otimes^{\mrm{L}}_{\k} A} \mrm{R}\opn{Hom}_{\k}(-,-))  : \mrm{D}(\opn{Mod} A) \times \mrm{D}(\opn{Mod} A) \to \mrm{D}(\opn{Mod} A)_{\opn{\a-tor}}
\]
\end{enumerate}
\end{dfn}

\begin{thm}\label{thm-h-t-formula}
Let $\k$ be a commutative ring. Let $(A,\a)$ be an adic ring which is a weakly proregular preadic $\k$-algebra. Let $\k \to \widetilde{A} \to A$ be a K-flat resolution of $\k \to A$ which is internally flat. Let $I\subseteq (\widetilde{A}\otimes_{\k} \widetilde{A})^0$ be a weakly proregular ideal, such that its image under the 
composed map 
\[
(\widetilde{A}\otimes_{\k} \widetilde{A})^0 \to \widetilde{A}^0 \to A
\]
is equal to $\a$. Then there is an isomorphism 
\[
\mrm{R}\Gamma_{\a} (A \otimes^{\mrm{L}}_{A\otimes^{\mrm{L}}_{\k} A} \mrm{R}\opn{Hom}_{\k}(-,-)) \cong 
A \otimes^{\mrm{L}}_{\Lambda_I(\widetilde{A} \otimes_{\k} \widetilde{A})} \mrm{R}\Gamma_I(\mrm{R}\opn{Hom}_{\k}(-,-))
\]
of functors
\[
\mrm{D}(\opn{Mod} A) \times \mrm{D}(\opn{Mod} A) \to \mrm{D}(\opn{Mod} A)_{\opn{\a-tor}}.
\]
\end{thm}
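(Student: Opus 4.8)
The plan is to run the same three-step argument as in the proof of Theorem \ref{thm-main-c-formula}, with $\mrm{R}\Gamma$ in place of $\mrm{L}\Lambda$, exploiting the fact that $\mrm{R}\Gamma$ interacts with derived tensor products even more smoothly than $\mrm{L}\Lambda$ does with $\mrm{R}\opn{Hom}$. Write $B := \widetilde{A}\otimes_{\k}\widetilde{A}$; by Proposition \ref{prop:internally-flat-tensor} (using that $\widetilde{A}$ is internally flat and K-flat over $\k$) this is an internally flat DG-algebra, so $(B,I)$ is an internally flat weakly proregular preadic DG-algebra. Set $\widehat{B} := \Lambda_I(B)$. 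As in Theorem \ref{thm-main-c-formula}, the hypothesis that the image of $I$ in $A$ equals $\a$, together with the $\a$-adic completeness of $A$, endows $A$ with the structure of a DG $\widehat{B}$-module. Fix $M,N \in \mrm{D}(\opn{Mod} A)$ and put $X := \mrm{R}\opn{Hom}_{\k}(M,N) \in \widetilde{\mrm{D}}(\opn{DGMod} B)$. By the definition of the derived Hochschild homology functor (\cite{AILN}, Theorem 3.9 and the notation following it) there is a functorial isomorphism $A\otimes^{\mrm{L}}_{A\otimes^{\mrm{L}}_{\k} A}X \cong A\otimes^{\mrm{L}}_B X$, so it is enough to produce a functorial isomorphism $\mrm{R}\Gamma_{\a}(A\otimes^{\mrm{L}}_B X)\cong A\otimes^{\mrm{L}}_{\widehat{B}}\mrm{R}\Gamma_I(X)$.

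The first step is a torsion base-change identity, the analogue of Lemma \ref{lem-llamofhom}: for any $Y \in \widetilde{\mrm{D}}(\opn{DGMod} B)$ there is a functorial isomorphism $\mrm{R}\Gamma_{\a}(A\otimes^{\mrm{L}}_B Y)\cong A\otimes^{\mrm{L}}_B\mrm{R}\Gamma_I(Y)$. I would prove this by a short chain: choose a finite sequence $\widetilde{\mathbf{a}}$ in $B^0$ generating $I$, with image $\mathbf{a}$ in $A$ generating $\a$; since $\a$ is weakly proregular, Theorem \ref{thm-ring-mgm}(1) gives $\mrm{R}\Gamma_{\a}(A\otimes^{\mrm{L}}_B Y)\cong\opn{K}^{\vee}_{\infty}(A;\mathbf{a})\otimes_A(A\otimes^{\mrm{L}}_B Y)\cong\opn{K}^{\vee}_{\infty}(A;\mathbf{a})\otimes^{\mrm{L}}_B Y$ (the dual Koszul complex being K-flat over $A$), and the base-change formula for the infinite dual Koszul complex identifies $\opn{K}^{\vee}_{\infty}(A;\mathbf{a})$, as a complex of $A$-modules, with $\opn{K}^{\vee}_{\infty}(B;\widetilde{\mathbf{a}})\otimes_B A$ (notation as in Remark \ref{rem-koszul-dg}); rearranging the two K-flat factors $\opn{K}^{\vee}_{\infty}(B;\widetilde{\mathbf{a}})$ and $A$ by commutativity and associativity of the tensor product, and then invoking Theorem \ref{thm-rgamma} (in the form of Remark \ref{rem-koszul-dg}), turns this into $A\otimes^{\mrm{L}}_B(\opn{K}^{\vee}_{\infty}(B;\widetilde{\mathbf{a}})\otimes_B Y)\cong A\otimes^{\mrm{L}}_B\mrm{R}\Gamma_I(Y)$. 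Applying this with $Y = X$ yields $\mrm{R}\Gamma_{\a}(A\otimes^{\mrm{L}}_B X)\cong A\otimes^{\mrm{L}}_B\mrm{R}\Gamma_I(X)$.

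For the second step I would change the base ring from $B$ to $\widehat{B}$. By Proposition \ref{prop-complete-rgamma}, $\mrm{R}\Gamma_I(X)$ naturally lives in $\widetilde{\mrm{D}}(\opn{DGMod}\widehat{B})$, and its image under the forgetful functor to $\widetilde{\mrm{D}}(\opn{DGMod} B)$ is cohomologically $I$-torsion (Corollary \ref{cor-rgamma-id}). Hence Lemma \ref{lem-tortens}, applied to the pair $(B,I)$, gives $\widehat{B}\otimes^{\mrm{L}}_B\mrm{R}\Gamma_I(X)\cong\mrm{R}\Gamma_I(X)$, and therefore, by associativity of the derived tensor product,
\[
A\otimes^{\mrm{L}}_B\mrm{R}\Gamma_I(X)\cong A\otimes^{\mrm{L}}_{\widehat{B}}\bigl(\widehat{B}\otimes^{\mrm{L}}_B\mrm{R}\Gamma_I(X)\bigr)\cong A\otimes^{\mrm{L}}_{\widehat{B}}\mrm{R}\Gamma_I(X).
\]
Composing the isomorphisms of the last three displays, all of which are functorial in $M$ and $N$, produces the desired natural isomorphism of functors; and since the left-hand side factors through $\mrm{D}(\opn{Mod} A)_{\opn{\a-tor}}$, so does the composite, which takes care of the target category.

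I expect the main obstacle to be purely bookkeeping: one must check that the identifications produced in the base-change step, which at first sight are only $B$- (or even $B^0$-) linear, are genuinely $\widehat{B}$-linear and hence $A$-linear, so that the final composite is an isomorphism in $\mrm{D}(\opn{Mod} A)$ and not merely in $\mrm{D}(\opn{Mod} B^0)$; and that the hypotheses of Lemma \ref{lem-tortens} really apply, i.e. that $A$ is a $\widehat{B}$-module (this is where $I\cdot A=\a$ and the $\a$-adic completeness of $A$ enter) and that $\mrm{R}\Gamma_I(X)$ is $I$-torsion over $B$. Notably, in contrast with the torsion Hochschild cohomology statement (Theorem \ref{thm-main-t-formula}), no finiteness or noetherianity hypotheses on $M$, $N$ or on the resolution are needed here, precisely because $\mrm{R}\Gamma$ commutes with $\otimes^{\mrm{L}}$ unconditionally, whereas pulling $\mrm{R}\Gamma$ through an $\mrm{R}\opn{Hom}$ (Lemma \ref{lem-gammaofhom}) required such conditions.
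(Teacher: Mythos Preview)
Your proposal is correct and follows essentially the same route as the paper's own proof: first use the Koszul description of $\mrm{R}\Gamma_{\a}$ and the base-change formula $\opn{K}^{\vee}_{\infty}(A;\mathbf{a})\cong\opn{K}^{\vee}_{\infty}(B;\widetilde{\mathbf{a}})\otimes_B A$ to move the torsion functor inside and turn it into $\mrm{R}\Gamma_I$, then factor $A\otimes^{\mrm{L}}_B(-)$ through $\widehat{B}$ and kill the factor $\widehat{B}\otimes^{\mrm{L}}_B(-)$ via Lemma~\ref{lem-tortens}. Your invocation of Proposition~\ref{prop-complete-rgamma} to place $\mrm{R}\Gamma_I(X)$ in $\widetilde{\mrm{D}}(\opn{DGMod}\widehat{B})$ before applying Lemma~\ref{lem-tortens} is a point the paper leaves implicit, and your remark about the $A$-linearity bookkeeping is apt.
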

\begin{proof}
Let $M,N \in \mrm{D}(\opn{Mod} A)$. By definition of the derived Hochschild homology functor, there is a functorial isomorphism in $\mrm{D}(\opn{Mod} A)_{\opn{\a-tor}}$:
\[
\mrm{R}\Gamma_{\a} (A \otimes^{\mrm{L}}_{A\otimes^{\mrm{L}}_{\k} A} \mrm{R}\opn{Hom}_{\k}(M,N)) \cong 
\mrm{R}\Gamma_{\a} (A \otimes^{\mrm{L}}_{\widetilde{A}\otimes_{\k} \widetilde{A}} \mrm{R}\opn{Hom}_{\k}(M,N)).
\]
Let $\widetilde{\mathbf{a}}$ be a finite sequence of elements in $\widetilde{A}^0\otimes_{\k} \widetilde{A}^0$ which generates $I$, and let $\mathbf{a}$ be its image in $A$. Since $\a$ is weakly proregular, by Theorem \ref{thm-ring-mgm}, and the fact that $I\cdot A = \a$, it follows that there is a functorial isomorphism
\[
\mrm{R}\Gamma_{\a} (A \otimes^{\mrm{L}}_{\widetilde{A}\otimes_{\k} \widetilde{A}} \mrm{R}\opn{Hom}_{\k}(M,N)) \cong 
\opn{K}^{\vee}_{\infty}(\widetilde{A}\otimes_{\k} \widetilde{A};\widetilde{\mathbf{a}}) \otimes_{\widetilde{A}\otimes_{\k} \widetilde{A}} (A \otimes^{\mrm{L}}_{\widetilde{A}\otimes_{\k} \widetilde{A}} \mrm{R}\opn{Hom}_{\k}(M,N))
\]
in $\mrm{D}(\opn{Mod} A)$.
By Theorem \ref{thm-rgamma}, we get a functorial isomorphism
\[
\opn{K}^{\vee}_{\infty}(\widetilde{A}\otimes_{\k} \widetilde{A};\widetilde{\mathbf{a}}) \otimes_{\widetilde{A}\otimes_{\k} \widetilde{A}} (A \otimes^{\mrm{L}}_{\widetilde{A}\otimes_{\k} \widetilde{A}} \mrm{R}\opn{Hom}_{\k}(M,N)) \cong 
A \otimes^{\mrm{L}}_{\widetilde{A}\otimes_{\k} \widetilde{A}}  \mrm{R}\Gamma_I(\mrm{R}\opn{Hom}_{\k}(M,N)).
\]
By associativity of the derived tensor product, and the fact that $A$ is $I$-adically complete, there is a functorial isomorphism
\[
A \otimes^{\mrm{L}}_{\widetilde{A}\otimes_{\k} \widetilde{A}}  \mrm{R}\Gamma_I(\mrm{R}\opn{Hom}_{\k}(M,N)) \cong 
A \otimes^{\mrm{L}}_{\Lambda_I(\widetilde{A}\otimes_{\k} \widetilde{A})}
(\Lambda_I(\widetilde{A}\otimes_{\k} \widetilde{A})\otimes^{\mrm{L}}_{\widetilde{A}\otimes_{\k} \widetilde{A}}  \mrm{R}\Gamma_I(\mrm{R}\opn{Hom}_{\k}(M,N))).
\]
The result now follows from Lemma \ref{lem-tortens}.
\end{proof}

\begin{rem}
In the case where $A$ is flat over $\k$, so that one may take $\widetilde{A} = A$ in the above theorem, and assuming that $I = \a \otimes_{\k} A + A\otimes_{\k} \a$, the functor $A\otimes^{\mrm{L}}_{\widehat{A\otimes_{\k} A}} -$ (more precisely, its cohomologies) of the right hand side of this theorem, was extensively studied in the book \cite{HU}. 
\end{rem}

\begin{rem}
In \cite{VdB2}, Remark 7.1.1, there is an example, due to Yekutieli, which shows that if $\k=\mathbb{C}$, and $A = \mathbb{C}[[t]]$, then the complex $A\otimes^{\mrm{L}}_{A\otimes_{\k} A} A$ is unbounded, while the complex
$A\otimes^{\mrm{L}}_{\widehat{A\otimes_{\k} A}} A$ is bounded. Thus, it appears that unlike the results of the previous section, Theorem \ref{thm-h-t-formula} does not have a counterpart in the complete case.
\end{rem}

\end{document}